\def\subjclass#1{{\renewcommand{\thefootnote}{}%
		\footnote{\emph{Mathematics Subject Classification (2010):} #1}}}
\def\keywords#1{\par\medskip 	\noindent\textbf{Keywords.} #1}
\theoremstyle{plain}
\tikzset{node distance=2cm, auto}
\newtheorem{thm}{Theorem}[section]
\newtheorem{cor}[thm]{Corollary}
\newtheorem{prop}[thm]{Proposition}
\newtheorem{lm}[thm]{Lemma}
\newtheorem{aim}[thm]{Aim}
\newtheorem*{theorem*}{Theorem}
\newtheorem*{aim*}{Aim}
\newtheorem*{initialaim*}{Initial Aim}
\newtheorem*{conj*}{Conjecture}
\newtheorem*{cor*}{Corollary}
\newtheorem*{prop*}{Proposition}
\newtheorem*{df*}{Definition}
\newtheorem*{lm*}{Lemma}
\newtheorem*{example*}{Example}
\newtheorem*{notation*}{Notation}
\newtheorem*{prob*}{Problem}
\theoremstyle{remark}
\newtheorem{rem}[thm]{Remark}
\newtheorem{notation}[thm]{Notation}
\newcommand{\F}{\mathbb{F}}
\newcommand{\GenGp}[1]{\langle#1\rangle}
\newcommand{\Norm}[2]{{\rm N}_{#1}({#2})}
\newcommand{\Sp}{\operatorname{Sp}}
\newcommand{\tA}{\mathrm A}
\newcommand{\tC}{\mathrm C}
\newcommand{\tB}{\mathrm B}
\newcommand{\tD}{\mathrm D}
\newcommand{\bL}{{{\mathbf L}}}
\newcommand{\FF}{\ensuremath{\mathbb{F}}}
\newcommand{\bT}{{\mathbf T}}
\newcommand{\NNN}{\operatorname{N}}
\newcommand{\GF}{ {\bG^F}}
\newcommand{\Cent}{\operatorname{C}}
\newcommand{\wt}{\widetilde}
\newcommand{\wbG}{{\widetilde \bG}}
\newlist{asslist}{enumerate}{1} 
\setlist[asslist]{label=(\roman*), ref=\thethm(\roman*)}
\newlist{thmlist}{enumerate}{1} 
\setlist[thmlist]{label=(\alph*), ref=\thethm(\alph*)}
\newcommand{\calV}{{\mathcal V}}
\newcommand{\wtbL}{\widetilde\bL}
\newcommand{\Symm}{\mathfrak{S}}
\newcommand{\sgnSymm}{\mathfrak{S}^{\pm}}
\newcommand{\GL}{\operatorname{GL}}
\newcommand{\CSp}{\operatorname{CSp}}
\newcommand{\Irr}{\operatorname{Irr}}
\newcommand{\Ind}{\operatorname{Ind}}
\newcommand{\Res}{\operatorname{Res}}
\newcommand{\ol}{\overline}
\newcommand{\ul}{\underline}
\newcommand{\wh}{\widehat}
\newcommand{\bG}{{{\mathbf G}}}
\numberwithin{equation}{section}
\title[Characters of normalisers of $d$-split Levi subgroups in ${\rm Sp}_{2n}(q)$]{Characters of normalisers of $d$-split Levi subgroups in ${\rm Sp}_{2n}(q)$}
\begin{document}

\date{\today}
\author{Julian Brough}
\thanks{The research of the author is funded through the DFG (Project: BR 6142/1-1)}
\address{School of Mathematics and Natural Sciences, University of Wuppertal, Gau\ss str. 20, 42119 Wuppertal, Germany}
	\email{brough@uni-wuppertal.de} 
\begin{abstract}
	
		In this paper characters of the normaliser of $d$-split Levi subgroups in ${\rm Sp}_{2n}(q)$ are parametrised with a particular focus on the Clifford theory between the Levi subgroup and its normaliser.
This forms a key step in verifying the inductive Alperin--McKay condition from Sp{\"a}th via a criterion given by  Sp{\"a}th and the author.

		\keywords{Alperin--McKay conjecture, inductive Alperin--McKay condition, symplectic group}
\end{abstract}

\date{\today}
\subjclass{ 20C20 (20C33 20C34)}
\maketitle

\excludecomment{commentMe}

\vspace{-10mm}
\normalsize

\section{Introduction}
In the representation theory of finite groups there are  important conjectures which predict a strong relationship between the representations of a finite group and those of its $\ell$-local subgroups, for a prime $\ell$. The Alperin--McKay conjecture is one of these conjectures which refines the McKay conjecture using the notion of $\ell$-blocks introduced by Brauer. 

In \cite{IMNRedMcKay,AMSp} the McKay and the Alperin--McKay conjecture have been reduced to the verification of a so-called {\it inductive condition} for all finite simple groups and primes $\ell$. 
Moreover, to prove the Brauer height zero conjecture it remains to validate the inductive Alperin--McKay (iAM) condition for all simple groups \cite{NavSpBrHe0}.
Using the inductive McKay condition, Malle and Späth established the McKay conjecture for $\ell=2$ \cite{MalSpOddDeg}, while for odd primes due to the work of multiple authors the inductive McKay condition only remains open for simple groups of type ${\tD}$. 

For the iAM condition \cite{AMSp} dealt with simple groups of Lie type, when $\ell$ is the defining characteristic, and for alternating groups, when the prime $\ell$ is odd.
Additionally \cite{BAWMa} dealt with simple groups in certain low rank cases, while \cite{CabSpAMTypeA} dealt with the blocks of maximal defect in type $\tA$. 
In \cite{BrSpAM} Sp{\"a}th and the author provided a criterion to verify the iAM condition which is tailored towards groups of Lie type and was applied in type ${\tA}$.
In \cite{CabASFSp}, this criterion was successfully applied to the groups ${\rm Sp}_{2n}(q)$ for primes $\ell\mid q-1$. 
Furthermore in \cite{RuQIAM} Ruhstorfer used the results in type ${\tA}$ with a reduction to quasi-isolated blocks to prove the iAM condition for all blocks in type $\tA$ whenever $\ell\geq 5$.
Recently Ruhstorfer and the author have established the Alperin--McKay conjecture for all $2$-blocks of maximal defect \cite{BrRuAM2}.
There has also been a large amount of success in verifying the inductive (blockwise) Alperin weight condition via an analogous criterion given in \cite{BrSpAW}.

The labelling for blocks in groups of Lie type by $d$-cuspidal pairs, $(\bL,\lambda)$ as in \cite{CabEnRedGp}, provides strong evidence that the normaliser of $\bL$ should be a suitable local subgroup for the iAM condition (this was proven for type $\tA$ in \cite[Theorem 5.1]{BrSpAM}). 
The main result of this paper validates condition \cite[Theorem 2.4 (iv)]{BrSpAM} for finite groups of Lie type in type $\tC$ without the assumption that $\ell\mid q-1$:

\begin{thm}\label{thmA}
Let $p$ be a prime, $\bG:=\Sp_{2n}(\ol{\mathbb{F}}_p)$, $\wt\bG:=\CSp_{2n}(\ol{\mathbb{F}}_p)$, and 
		$F_p:\wbG\rightarrow \wbG$ a Frobenius endomorphism defining an $\FF_p$-structure.
For some positive integer $m$ let $F=F_p^m$ and $D:=\GenGp{\wh{F_p}}$ where $\wh{F}_p$ is the induced automorphism of $F_p$ on $\bG^F$.
Assume that, for some positive integer $d$, the subgroup $\bL$ is a $d$-split Levi subgroup of $({\bG}, F)$ and set
 $N:=\Norm{\GF}{{\bf L}}$, $\wt N:=\Norm{{\wt\bG^F}}{\bL}$, and $\wh{N}:=\Norm{\GF\rtimes D}{{\bf L}}$.
If $\psi\in \Irr ( N)$ lies above a $d$-cuspidal character of ${\bf L}^F$, then there exists a $\wt N$-conjugate $\psi_0$ of $\psi$ such that 
\begin{asslist}
\item \label{thm11a}$(\wt{N}\wh{N})_{\psi_0}=\wt{N}_{\psi_0}\wh{N}_{\psi_0}$, and
\item $\psi_0$ extends to $\wh{N}_{\psi_0}$.
\end{asslist}
\end{thm}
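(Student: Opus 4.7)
The plan is to reduce the theorem to an explicit Clifford-theoretic assembly using the known structure of $d$-split Levi subgroups in $\Sp_{2n}(q)$. Such a Levi admits a decomposition
$$\bL^F \cong \Sp_{2m}(q) \times \prod_{i} \GL_{a_i}(\eta_i q^{e_i})^{k_i}$$
with a (possibly trivial) symplectic factor and classical $\GL$- or $\GU$-factors (the sign $\eta_i \in \{\pm 1\}$ recording twisting), and $N/\bL^F$ is a corresponding relative Weyl group of wreath-product type $\prod_i (\Z/b_i\Z) \wr \Symm_{k_i}$. The quotient $\wt N / N$ has order dividing~$2$, while $\wh N / N$ embeds into~$D$.

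First I would analyse how $\wt N$ and $\wh N$ act on the $d$-cuspidal characters of $\bL^F$. Since such characters factor according to the direct product decomposition of $\bL^F$, their orbit structure reduces factor-by-factor. The aim is to choose $\psi_0$ as a $\wt N$-conjugate of $\psi$ whose underlying $d$-cuspidal $\lambda_0$ satisfies the factor-wise analogue of (i), namely $(\wt N \wh N)_{\lambda_0} = \wt N_{\lambda_0} \wh N_{\lambda_0}$. Propagating this upward via the Clifford correspondence between $\Irr(N \mid \lambda_0)$ and characters of the inertia quotient $N_{\lambda_0}/\bL^F$ then yields~(i), provided one can match the corresponding stabilisers of the Clifford correspondents in a compatible way, which is a routine book-keeping step given the explicit description of $N/\bL^F$.

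For (ii), the strategy is two-stage. First extend $\lambda_0$ to its full stabiliser in $\wt\bL^F \cdot \wh N_{\lambda_0}$ by assembling extensions on each tensor factor: for the $\GL$- and $\GU$-factors one invokes the constructions of \cite{BrSpAM}, while for the $\Sp_{2m}(q)$-factor one adapts the techniques of \cite{CabASFSp}. Second, lift the resulting extension through the Clifford correspondence to all of $\wh N_{\psi_0}$; here one uses that the Clifford correspondent of $\psi_0$ is a character of a wreath product, which extends canonically to its stabiliser in the outer wreath-product automorphism group by standard arguments from the Clifford theory of wreath products (essentially, Specht-type constructions on tensor factors of the underlying module).

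The main obstacle is the symplectic-factor part of the first stage: producing a canonical extension of a $d$-cuspidal character of $\Sp_{2m}(q)$ that is compatible with both the diagonal automorphism (coming from $\CSp_{2m}$) and the field automorphisms in $\wh N_{\lambda_0}$. Under the hypothesis $\ell \mid q-1$ this was handled in \cite{CabASFSp} using that the relevant blocks are of maximal defect; without this hypothesis the interaction between the diagonal twist and the field automorphisms is more delicate. I expect to treat it by a case analysis on the parity of $d$ and on the Lusztig series of $\lambda_0$, reducing where possible via a Gallagher-type argument based on a determinant-type character of $\lambda_0$ to obtain the required canonical extension.
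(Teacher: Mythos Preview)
Your structural setup is correct and your overall plan to use Clifford theory over the decomposition of $\bL^F$ is sound, but you have misidentified where the real difficulty lies, and the key technical ingredient is missing.

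You locate the ``main obstacle'' in the symplectic factor $\Sp_{2m}(q)$. In fact this is the \emph{easy} part: the relative Weyl group $N/\bL^F$ acts trivially on the symplectic factor (in the paper's notation, $[V_d^\mathcal{I},\bL_{\mathcal{I}_{-1}}]=1$), so the interaction of the diagonal and field automorphisms on that factor reduces immediately to the known result for $\Sp_{2m}(q)$ itself, namely \cite[Theorem~3.1]{CabSpMcKTypC}. No new work is needed there, and in particular no ``case analysis on the parity of $d$ and the Lusztig series'' of the symplectic constituent is required.

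The genuine difficulty is on the $\GL$/$\GU$ factors, and it is not the extension problem (ii) but rather the compatibility condition you dismiss as ``routine book-keeping''. The issue is this: the diagonal automorphism from $\wt N$ acts on each $\GL_s(\epsilon q^{d_0})$-factor via its centre, and the stabiliser $(W_d^\mathcal{I})_{\wt\lambda}$ can be a proper index-$2$ subgroup of $(W_d^\mathcal{I})_\lambda$. One must then show that every character of $(W_d^\mathcal{I})_\lambda$ above a given character of $(W_d^\mathcal{I})_{\wt\lambda}$ has the same stabiliser in the normaliser $K(\lambda)$; this is condition~\ref{thm41iii} of the criterion in \cite{BrSpAM}, and it is precisely what makes Theorem~\ref{StarConddSpLevi}(b) non-trivial. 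The paper pins down exactly when $(W_d^\mathcal{I})_\lambda\neq (W_d^\mathcal{I})_{\wt\lambda}$: it happens only on factors $L_\mathcal{O}\cong\GL_s(\epsilon q^{d_0})$ where $\lambda_\mathcal{O}$ has central character of order exactly~$2$ and the cyclic part of the relative Weyl group fixes $\lambda_\mathcal{O}$ up to its Sylow $2$-subgroup (Lemma~\ref{StabGCharTilL}). The crucial point, which your proposal does not contain, is Proposition~\ref{CuspTypeA}: a $d$-cuspidal character of $\GL_s(\epsilon q^{d_0})$ that is invariant under transpose-inverse has \emph{trivial} central character when $s\geq 2$. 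This kills the problematic factors for $s\geq 2$, and for $s=1$ only a single character remains, so the hypothesis of Proposition~\ref{KInva} is met. Without this input the ``book-keeping'' step does not go through.
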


To prove Theorem~\ref{thmA}, the criterion proven in \cite[Theorem 4.1]{BrSpAM} is considered.
Given $\bL$ a $d$-split Levi subgroup, the main step is to paramterise the characters of its normaliser via $\bL^F$ and the corresponding relative Weyl group.
Then with this parametrisation investigate the action of automorphisms.
In particular, the following is proven:

\begin{thm}\label{StarConddSpLevi}
Assume the setup of Theorem~\ref{thmA} and set $W=N/{\bf L}^F$ and $\wh{W}:=\wh{N}/{\bf L}^F$.
\begin{thmlist}
\item \label{thm12a} There exists a $\wh{W}$-equivariant extension map with respect to ${\bf L}^F\lhd N$.
\item \label{thm12b}
Let $\lambda$ be a $d$-cuspidal character of ${\bf L}^F$ and $\wt{\lambda}\in \Irr (({\bf L}Z({\bf \wt{G}}))^F\mid \lambda)$ and $\xi_0\in \Irr(W_{\wt\lambda})$.
Then each character $\xi\in \Irr(W_{\lambda}\mid \xi_0)$ is $\NNN_{\wh{W}}(W_{\wt\lambda},W_{\lambda})_{\xi_0}$-invariant.
\end{thmlist}
\end{thm}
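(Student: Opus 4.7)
The plan is to exploit the explicit structure of $d$-split Levi subgroups in type~$\tC$: one has $\bL^F\cong \prod_{i\in I}\GL_{a_i}(\epsilon_i q^{d_i})\times \Sp_{2m}(q)$, and correspondingly $W=N/\bL^F$ decomposes as a direct product $\prod_{j} W_j$ of complex reflection groups of type $G(e_j,1,n_j)$, indexed by $W$-orbits on the set of $\GL$-factors of $\bL^F$. The extended group $\wh{W}$ is generated by $W$ and the image of $\wh{F_p}$, which permutes the factors $W_j$ and acts on each by an explicit graph-type automorphism.

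\noindent\textbf{Part (a).} A $d$-cuspidal character factorises as $\lambda=\bigotimes_{i\in I}\lambda_i\otimes \lambda_0$, with $\lambda_0$ cuspidal on the $\Sp$-factor and each $\lambda_i$ cuspidal on its $\GL$-factor. I would first prove that $N_\lambda$ decomposes compatibly with $W_\lambda=\prod_j(W_j)_\lambda$, thereby reducing the construction of an extension of $\lambda$ to each wreath factor $C_{e_j}\wr\Symm_{n_j}$ stabilising a tensor power $\mu^{\otimes n_j}$ of a fixed cuspidal $\mu$. On such factors, extensions and their $\wh{W}$-equivariance have essentially been constructed in \cite{CabSpAMTypeA,BrSpAM} for type~$\tA$ and in \cite{CabASFSp} for type~$\tC$ with $\ell\mid q-1$; those local constructions transfer to the present setting, and the action of $\wh{F_p}$ either permutes isomorphic factors (equivariance is bookkeeping) or acts within a factor in a way already controlled by the cited work.

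\noindent\textbf{Part (b).} Since $Z(\wt\bG)$ is connected, $(\bL Z(\wt\bG))^F=\bL^F Z(\wt\bG)^F$, so that $(\bL Z(\wt\bG))^F/\bL^F\cong Z(\wt\bG)^F/Z(\bG)^F$ is cyclic and centralised by $N$. Sending $w\in W_\lambda$ to the linear character $\wt\lambda^w(\wt\lambda)^{-1}$ defines a homomorphism $W_\lambda\to \Irr((\bL Z(\wt\bG))^F/\bL^F)$ with kernel $W_{\wt\lambda}$, so $W_\lambda/W_{\wt\lambda}$ is cyclic. By Clifford theory each $\xi\in\Irr(W_\lambda\mid\xi_0)$ is induced from an extension of $\xi_0$ to $T:=(W_\lambda)_{\xi_0}$, and the extensions form a torsor under $\Irr(T/W_{\wt\lambda})$. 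To prove that any $g\in N_{\wh{W}}(W_{\wt\lambda},W_\lambda)_{\xi_0}$ fixes $\xi$, I would exhibit a canonical $g$-invariant extension coming from the wreath-product model of Part~(a), and then verify that $g$ acts trivially on $\Irr(T/W_{\wt\lambda})$ using the explicit description of the $\wh{F_p}$-action on $Z(\wt\bG)^F/Z(\bG)^F$ (the $p$-th power map on $\F_q^\times/\{\pm 1\}$).

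\noindent\textbf{Main obstacle.} The technically demanding step is Part~(a): guaranteeing that the locally chosen extensions on each wreath factor assemble into a globally $\wh{W}$-equivariant extension map, which is most delicate when $d$ is even so that $\GL$-factors become $\GU$-factors and $\wh{F_p}$ acts by a twisted field automorphism. Part~(b) depends on the construction of Part~(a) being compatible with the torsor of central characters, so the two parts are intertwined; the plan is to set up notation in Part~(a) flexibly enough to read off the requisite invariance for Part~(b).
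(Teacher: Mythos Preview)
Your plan for Part~(a) is broadly aligned with the paper: the paper also decomposes $L$ into a symplectic factor and products of $\GL_s(\epsilon q^{d_0})$, identifies $W$ with a product of $C_{2d_0}\wr\Symm_{t_s}$, and builds the extension map factor by factor via the type~$\tA$ extendibility result (Corollary~\ref{ExtGnSymm}, which rests on \cite{CabSpAMTypeA}). The paper's extra ingredient, which you leave implicit, is an explicit supplement $V_d^{\mathcal I}\leq \Cent_V(v)$ to $L$ inside $N$ (Lemma~\ref{ExtWeylGpExtMap}, Proposition~\ref{SuppLinN}) together with \cite[Proposition~4.2]{BrSpAM}, which separates the problem into (i) maximal extendibility for $H_d^{\mathcal I}\lhd V_d^{\mathcal I}E$ and (ii) extendibility for $L\lhd L\rtimes W_d^{\mathcal I}E$. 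This is what makes the ``gluing'' you worry about go through.

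Your plan for Part~(b), however, has a genuine gap. First, because $E=\langle\wh{F}_p\rangle$ centralises $V_d^{\mathcal I}$ (field automorphisms fix each ${\bf n}_\alpha(1)$), one has $[E,W_d^{\mathcal I}]=1$; the $\wh{F}_p$-action on $W$ is trivial, so $N_{\wh W}(W_{\wt\lambda},W_\lambda)_{\xi_0}$ reduces to $K(\lambda)_{\xi_0}$ with $K(\lambda)\leq W$. Your proposed control via ``the $\wh{F}_p$-action on $Z(\wt\bG)^F/Z(\bG)^F$'' is therefore beside the point: the elements that might swap the two extensions of $\xi_0$ are Weyl-group elements, specifically the symmetric-group part $S_\lambda$ permuting isomorphic $\GL$-factors (Lemma~\ref{StructK}). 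Nothing in your argument prevents such an $S_\lambda$-element from interchanging $\xi$ and $\xi\nu_\lambda$.

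Second, and more seriously, you never use that $\lambda$ is $d$-cuspidal, yet the statement fails for arbitrary $\lambda$. The paper's mechanism is: (i) construct $\nu_\lambda\in\Irr((W_d^{\mathcal I})_\lambda)$ with $W_{\wt\lambda}=\ker(\nu_\lambda)$ (Proposition~\ref{WtilLamKerNu}); (ii) show that $\nu_\lambda$ is supported only on those wreath factors $(\mathcal G_s)_\zeta\wr\Symm(\mathcal J_{\lambda,s}(\zeta))$ with $\zeta\in\mathcal R_s^1$, i.e.\ $\zeta$ has central character of order~$2$ and $(\mathcal G_s)_\zeta$ contains the Sylow $2$-subgroup of $\mathcal G_s$ (Lemma~\ref{StabGCharTilL}); (iii) for $s\geq 2$, invoke Proposition~\ref{CuspTypeA} to see that a $d$-cuspidal $\lambda_{\mathcal O}$ of $\GL_s(\epsilon q^{d_0})$ invariant under transpose-inverse has $Z(L_{\mathcal O})\leq\ker(\lambda_{\mathcal O})$, so $\mathcal J_{\lambda,s}(\zeta)=\varnothing$ for all $\zeta\in\mathcal R_s^1$; (iv) for $s=1$, $\mathcal R_1^1$ is a singleton. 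Under these conditions $(S_\lambda)_{\nu_\lambda}$ acts trivially on the ``bad'' part $H_1$, whence every extension $\xi$ is $K(\lambda)_{\xi_0}$-stable (Proposition~\ref{KInva}). The role of Proposition~\ref{CuspTypeA} is the missing idea in your outline; without it you cannot rule out the $S_\lambda$-obstruction.
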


In addition to the main theorem contributing to the validation of the iAM condition for ${\rm PSp}_{2n}(q)$, there are other consequences.
For groups of Lie type with abelian Sylow $\ell$-subgroups, suitable bijections for the iAM condition and the blockwise Alperin weight condition were constructed by Malle \cite[Theorem 2.9]{BAWMa} assuming \ref{thm12a} for analogous local subgroups.
As a consequence of the proof of Theorem~\ref{thmA}, the Alperin--McKay conjecture holds via \cite[Theorem 2.9]{BAWMa} for ${\rm Sp}_{2n}(q)$ at primes $\ell\geq 5$ where the Sylow $\ell$-subgroups are abelian.
Beyond the scope of the Alperin--McKay conjecture, recently Rossi \cite[Theorem 8.3]{RoCTC} showed for a $d$-cuspidal pair $(\bL,\lambda)$ that condition \ref{thm12a} yields an equivariant bijection between the $d$-Harish-Chandra series $\mathcal{E}(\bG^F,(\bL,\lambda))$ and $\Irr(\NNN_{\bG^F}(\bL)\mid\lambda)$.
Moreover, condition \ref{thm11a} forms an ingredient for validating the inductive Dade condition for ${\rm PSp}_{2n}(q)$ \cite[Theorem 9.2]{RoCTC}.

This paper is organised in the following way: Section~\ref{Prelim} outlines some notation required for sets and signed permutations, and includes some results that will be required in the later sections.
Section~\ref{LabdLe} is used to parametrise $d$-split Levi subgroups in any classical type via sets. 
In Section~\ref{dSplitLevi} the restriction to type $\tC$ is made and an explicit description of $d$-split Levi subgroups and their normalisers is provided.
Moreover, some important properties of the irreducible characters of these subgroups are discussed.
Finally in Section~\ref{CliffTh} the Clifford theory arising in the relative Weyl group is studied in order to complete the proof of Theorem~\ref{StarConddSpLevi}.

{\bf Acknowledgement.} 
The author thanks Professor Britta Sp{\"a}th for useful discussions on the subject and Professor Gunter Malle for providing helpful comments on a previous draft.

\section{Preliminary results}\label{Prelim}

\subsection{Sets and permutations}
\indent

The structure of $d$-split Levi subgroups and their normalisers will be described explicitly via certain permutations and signed permutations.
In this section the required notation will be set up and some properties will be stated.

\subsubsection{Signed permutations}
\indent

Let $\Omega$ be a finite set, then the symmetric group on $\Omega$ will be denoted by $\Symm(\Omega)$.
In particular, if $|\Omega|=n$, then $\Symm(\Omega)\cong \Symm_n$, the symmetric group on $n$ objects.
For a finite subset $\mathcal{J}\subset \mathbb{N}_{>0}$  denote by $\pm \mathcal{J}:=\{\pm i\mid i\in\mathcal{J}\}$  and the group of signed permutations on $\mathcal{J}$ is defined as
$$
\begin{array}{ccl}
\sgnSymm(\mathcal{J}) & := & \{\sigma\in \Symm(\pm\mathcal{J})\mid \sigma(-i)=-\sigma(i) \text{ for all } i\in\mathcal{J}\}.\\
\end{array}
$$
As with symmetric groups, for $n\in \mathbb{N}_{>0}$, denote by $\sgnSymm_n$ the signed permutations on $\ul{n}:=\{1,\dots, n\}$.
If $\mathcal{J}\subset\mathbb{N}_{>0}$ with $|\mathcal{J}|=n$, then $\sgnSymm(\mathcal{J})\cong \sgnSymm_n$.

Given a finite subset $\mathcal{J}\subset \mathbb{N}_{>0}$ 
and a cycle $(a_1,\dots, a_r)\in \Symm(\pm\mathcal{J})$, define the cycle $(a_1,\dots, a_r)':=(-a_1,\dots, -a_r)$.
Then for $m$ disjoint cycles $\sigma_1,\dots,\sigma_m\in\Symm(\pm\mathcal{J})$ define $(\sigma_1\cdots\sigma_m)':=\sigma_1'\cdots\sigma_m'$.
This yields a natural embedding of $\Symm(\mathcal{J})$ into $\sgnSymm(\mathcal{J})$ by identifying an element $\tau\in \Symm(\mathcal{J})$ with the element $\tau\tau'\in\sgnSymm(\mathcal{J})$.
It follows from the definition that $\sigma\in\sgnSymm(\mathcal{J})$ if and only if it is a product of elements of the form $(a_1,\dots, a_r,-a_1,\dots,-a_r)$ or $(a_1,\dots, a_r)(-a_1,\dots,-a_r)$ for some $a_1,\dots,a_r\in\pm \mathcal{J}$.
Therefore the map which sends both $(a_1,\dots, a_r)(-a_1,\dots,-a_r)$ and $(a_1,\dots, a_r,-a_1,\dots,-a_r)$ to $(|a_1|,\dots, |a_r|)\in\Symm(\mathcal{J})$ extends to a homomorphism $\ol{\color{white} a }:\sgnSymm(\mathcal{J})  \rightarrow  \Symm(\mathcal{J})$ with ${\rm ker}(\ol{\color{white} a })= \GenGp{ (i,-i)\mid i\in\mathcal{J}}\cong C_2^{|\mathcal{J}|}$.
For any element $\sigma\in \sgnSymm(\mathcal{J})$, denote by $\ol{\sigma}$ its image in $\Symm(\mathcal{J})$.
For $\tau\in\Symm(\mathcal{J})$ it follows that $\tau=\ol{\tau\tau'}$ and thus 
$$\sgnSymm(\mathcal{J})=\GenGp{ (i,-i)\mid i\in\mathcal{J}}\rtimes \Symm(\mathcal{J})\cong C_2\wr \Symm(\mathcal{J}).$$

\begin{lm}\label{CentsgnSymm}
Let $n\in \mathbb{N}_{>0}$ and $\sigma\in\sgnSymm_n$.
Then $$
\Cent_{\sgnSymm_n}(\sigma)\cong \left( \prod_{i=1}^n C_{2i}\wr \Symm_{m_{1,i}}\right) \times \left(\prod_{i=1, \text{ odd}}^n C_{2i}\wr \Symm_{m_{2,i}}\right)\times \left( \prod_ {i=1, \text{ even}}^n (C_{2}\times C_i)\wr \Symm_{m_{2,i}}\right),
$$
where $m_{1,i}$ equals the number of disjoint cycles $\alpha$ of $\sigma$ with length $2i$ and $\alpha=\alpha'$ while $2m_{2,i}$ is the number of disjoint cycles $\beta$ of $\sigma$ with length $i$ and $\beta\ne \beta'$.  
\end{lm}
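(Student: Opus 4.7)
The plan is to decompose $\sigma$ into negation-invariant orbit blocks on $\pm\ul n$, compute the centralizer of each block inside the corresponding smaller signed symmetric group, and assemble the contributions via wreath products.

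Writing $\sigma$ as a disjoint product of cycles in $\Symm(\pm\ul n)$, the identity $\sigma(-x)=-\sigma(x)$ forces each cycle $\alpha$ of $\sigma$ to appear together with $\alpha'$. I will group the cycles into two kinds of blocks: a \emph{type $1$} block is a single cycle $\alpha$ with $\alpha=\alpha'$ (necessarily of even length $2i$), while a \emph{type $2$} block is a pair $\{\beta,\beta'\}$ with $\beta\ne\beta'$ (two cycles of common length $i$). Each block is supported on a negation-stable subset $\pm\mathcal{J}_k\subset\pm\ul n$ with $|\mathcal{J}_k|=i$, and together the blocks partition $\pm\ul n$.

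Next, any $\tau\in\Cent_{\sgnSymm_n}(\sigma)$ commutes both with $\sigma$ and with the central element $s_0:=\prod_{j=1}^n(j,-j)\in\sgnSymm_n$ (which acts as negation on $\pm\ul n$), so it preserves the orbits of $\GenGp{\sigma,s_0}$ on $\pm\ul n$---precisely the blocks above. Since a type $1$ block of size $i$ carries a single $2i$-cycle of $\sigma$ while a type $2$ block of the same size carries two disjoint $i$-cycles, $\tau$ can only exchange blocks sharing both type and size. Letting $H_i^{(1)}$ (resp.\ $H_i^{(2)}$) denote the centralizer of a single type $1$ (resp.\ type $2$) block inside the signed symmetric group on its support, this produces the decomposition
\[
\Cent_{\sgnSymm_n}(\sigma)\cong \prod_{i=1}^n\bigl(H_i^{(1)}\wr\Symm_{m_{1,i}}\bigr)\times\prod_{i=1}^n\bigl(H_i^{(2)}\wr\Symm_{m_{2,i}}\bigr).
\]

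To conclude I will compute $H_i^{(1)}$ and $H_i^{(2)}$. For type $1$, $\sigma$ restricts to a $2i$-cycle $\alpha$, whose centralizer in $\Symm_{2i}$ is $\GenGp{\alpha}\cong C_{2i}$; since $\alpha$ itself lies in $\sgnSymm_i$ this yields $H_i^{(1)}\cong C_{2i}$. For type $2$, $\sigma$ restricts to $\beta\beta'$ with centralizer $C_i\wr\Symm_2$ in $\Symm_{2i}$; intersecting with $\sgnSymm_i$ one finds that the elements fixing each of $\beta,\beta'$ are exactly the powers of $\beta\beta'$, while the elements swapping them form a single coset represented by the sign-reversal $s$ on the block's support. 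Since $[s,\beta\beta']=1$, this gives $H_i^{(2)}\cong C_2\times C_i$, which coincides with $C_{2i}$ when $i$ is odd and is genuinely non-cyclic when $i$ is even---matching exactly how the two $m_{2,i}$-products are separated in the lemma. The main obstacle is the type $2$ computation: verifying that the cycle-swapping signed permutations commuting with $\beta\beta'$ do form a single coset of $\GenGp{\beta\beta'}$ represented by the sign-reversal $s$, and that $[s,\beta\beta']=1$; once this is in hand, combining the two block centralizers via the wreath product reduction yields the claimed formula.
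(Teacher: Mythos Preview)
Your proposal is correct and follows essentially the same approach as the paper: decompose $\sigma$ into its negation-invariant cycle blocks, observe that centralizing elements permute blocks of the same type and size, and then identify the block centralizers as $\GenGp{\alpha}\cong C_{2i}$ for type $1$ and $\GenGp{\beta\beta',\,s}\cong C_i\times C_2$ (which is $C_{2i}$ when $i$ is odd) for type $2$. The paper does the same thing but with explicit named generators $\tau_{1,i,j},\tau_{2,i,j}$ for the permuting elements and $\delta_{\beta_{i,j}}$ for your sign-reversal $s$, which it needs later to build elements of the extended Weyl group; your orbit-of-$\GenGp{\sigma,s_0}$ formulation is a clean way to justify the block decomposition.
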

\begin{proof}
After conjugation any element $\sigma\in\sgnSymm_n$ can be written as $$\sigma=\left(\prod_{i=1}^n\alpha_{i,1}\cdots\alpha_{i,m_{1,i}}\right)\left(\prod_{i=1}^n(\beta_{i,1}\beta_{i,1}')\cdots (\beta_{i,m_{2,i}}\beta_{i,m_{2,i}}')\right)$$
where each $\beta_{i,j}=\ol{\beta_{i,j}}$ and $\alpha_{i,j}=(a_{j,1},\dots, a_{j,i},-a_{j,1},\dots, -a_{j,i})$ with $\ol{\alpha_{i,j}}=(a_{j,1},\dots, a_{j,i})$.
Set $\mathcal{I}_{\alpha_{i,j}}:=\{k\in \mathbb{N}_{>0}\mid \alpha_{i,j}(k)\ne k\}\subset \{1,\dots,n\}$ and $\delta_{\alpha_{i,j}}:=\prod_{k\in \mathcal{I}_{\alpha_{i,j}}}(k,-k)$ and similarly define $\mathcal{I}_{\beta_{i,j}}$ and $\delta_{\beta_{i,j}}$.
The kernel of the projection map from $\Cent_{\sgnSymm_n}(\sigma)$ to $\Cent_{\Symm_n}(\ol{\sigma})$ is the subgroup generated by the involutions $\delta_{\alpha_{i,j}}$ and $\delta_{\beta_{i,j}}$. 
Furthermore, for $\mathcal{I}_{1,i}:=\bigsqcup_{j=1}^{m_{1,i}}\mathcal{I}_{\alpha_{i,j}}$ and $\mathcal{I}_{1,i}:=\bigsqcup_{j=1}^{m_{2,i}}\mathcal{I}_{\beta_{i,j}}$ it follows that
$$
\Cent_{\sgnSymm_n}(\sigma)=\left( \prod_{i=1}^n\Cent_{\sgnSymm(\mathcal{I}_{1,i})}(\alpha_{i,1}\cdots\alpha_{i,m_{1,i}})\right)\times \left( \prod_{i=1}^n\Cent_{\sgnSymm(\mathcal{I}_{2,i})}((\beta_{i,1}\beta_{i,1}')\cdots (\beta_{i,m_{2,i}}\beta_{i,m_{2,i}}'))\right).
$$

For $1\leq i\leq n$ and $1\leq j\leq m_{1,i}$ set $\tau_{1,i,j}:=(a_{j,1},a_{j+1,1})\cdots (a_{j,i},a_{j+1,i})\in \Cent_{\Symm(\mathcal{I}_{1,i})}(\ol{\alpha}_{i,1}\cdots\ol{\alpha}_{i,m_{1,i}})$ which satisfies $\ol{\alpha}_{i,j}^{\tau_{1,i,j}}=\ol{\alpha}_{i,j+1}$ and $[\ol{\alpha}_{i,k},\tau_{1,i,j}]=1$ whenever $k\ne j,j+1$.
Direct calculation shows that
$$\Cent_{\sgnSymm(\mathcal{I}_{1,i})}(\alpha_{i,1}\cdots\alpha_{i,m_{1,i}})= \left( \prod_{j=1}^{m_{1,i}} \GenGp{\alpha_{i,j}}\right)\rtimes \GenGp{\tau_{1,i,j}\tau_{1,i,j}'\mid 1\leq j<m_{1,i}}
 $$
as this group surjects onto $\Cent_{\Symm(\mathcal{I}_{1,i})}(\ol{\alpha}_{i,1}\cdots\ol{\alpha}_{i,m_{1,i}})$ with kernel generated by the $\alpha_{i,j}^i=\delta_{\alpha_{i,j}}$. 

Similarly define involutions $\tau_{2,i,j}\in \Cent_{\Symm(\mathcal{I}_{2,i})}(\beta_{i,1}\cdots\beta_{i,m_{2,i}})$ such that $\beta_{i,j}^{\tau_{2,i,j}}=\beta_{i,j+1}$ and $[\beta_{i,k},\tau_{1,i,j}]=1$ whenever $k\ne j,j+1$.
As before direct calculation shows that 
$$\Cent_{\sgnSymm(\mathcal{I}_{2,i})}(\beta_{i,1}\beta_{i,1}'\cdots\beta_{i,m_{2,i}}\beta_{i,m_{2,i}}')= \left( \prod_{j=1}^{m_{2,i}} \GenGp{\beta_{i,j}\beta_{i,j}',\delta_{\beta_{i,j}}}\right)\rtimes \GenGp{\tau_{2,i,j}\tau_{2,i,j}'\mid 1\leq j<m_{2,i}}.
 $$
The result now follows by observing  
\begin{equation*}
\GenGp{\beta_{i,j}\beta_{i,j}',\delta_{\beta_{i,j}}}\cong \left\{
     \begin{array}{lr}
      C_{2i} & i \text{ odd}\\
       C_i\times C_2 & i \text{ even}\\
      \end{array}
   \right.  \qedhere
\end{equation*}
\end{proof}

\subsubsection{Set partitions and permutations}\label{NotSets}\label{PermPart}
\indent

Given a subset $\mathcal{J}\subseteq\mathbb{N}_{>0}$, with $|\mathcal{J}|=k$, write $\mathcal{J}=\{\mathcal{J}(1),\dots,\mathcal{J}(k)\}$ with $\mathcal{J}(i)<\mathcal{J}(i+1)$ and set 
$$w'_{\mathcal{J}}:=\big( \mathcal{J}(1),\mathcal{J}(2),\dots, \mathcal{J}(k),-\mathcal{J}(1),-\mathcal{J}(2),\dots, -\mathcal{J}(k)\big)\in \sgnSymm(\mathcal{J}).$$ 
If $\mathcal{J},\mathcal{J}'\subset \mathbb{N}_{>0}$ with $|\mathcal{J}|=|\mathcal{J}'|=k$ and $\mathcal{J}\cap\mathcal{J}'=\emptyset$ set 
$$\tau_{\mathcal{J},\mathcal{J}'}:=\prod_ {i=1}^k (\mathcal{J}(i),\mathcal{J}'(i))(-\mathcal{J}(i),-\mathcal{J}'(i))= \prod_{r=0}^{k-1}\big( (\mathcal{J}(1),\mathcal{J}'(1))(-\mathcal{J}(1),-\mathcal{J}'(1))\big)^{(w'_{\mathcal{J}}w'_{\mathcal{J}'})^r}.$$

For a collection of sets $\mathcal{I}:=\{\mathcal{J}\mid \mathcal{J}\subseteq \mathbb{N}_{>0}\}$ and a positive integer $i\in \mathbb{N}{>0}$ set $\mathcal{I}_i:=\{\mathcal{J}\in\mathcal{I}\mid |\mathcal{J}|=i\}$ and $\ul{\mathcal{I}}:=\bigcup_{\mathcal{J}\in\mathcal{I}} \mathcal{J}$.
A collection of sets $\mathcal{I}=\{\mathcal{J}\mid \mathcal{J}\subseteq \mathbb{N}_{>0}\}$ is called a partition of $n$, denoted by $\mathcal{I}\vdash \ul{n}$, if $\ul{\mathcal{I}}=\ul{n}$ and the sets $\mathcal{J}$ are disjoint.
For a partition $\mathcal{I}\vdash \ul{n}$ set $w'_\mathcal{I}:=\prod_{\mathcal{J}\in\mathcal{I}}w'_\mathcal{J}$ and $w'_{\mathcal{I}_s}:=\prod_{\mathcal{J}\in\mathcal{I}_s}w'_\mathcal{J}$.
By the proof of Lemma~\ref{CentsgnSymm}, it follows that $\Cent_{\sgnSymm_n}(w'_\mathcal{I})=\prod_{i=1}^n\Cent_{\sgnSymm(\ul{\mathcal{I}}_i)}(w'_{\mathcal{I}_s})$ with 
$$\Cent_{\sgnSymm(\ul{\mathcal{I}}_i)}(w'_{\mathcal{I}_s})=\left(\prod_{j=1}^{m_i}\GenGp{w'_{\mathcal{J}_{i,j}}}\right)\rtimes \GenGp{\tau_{\mathcal{J}_{i,j}}\mid 1\leq j\leq m_i-1}\cong \prod_{i=1}^n C_{2i}\wr\Symm_{m_i},$$
where $\mathcal{I}_i=\{\mathcal{J}_{i,1},\dots,\mathcal{J}_{i,m_i}\}$ and $\tau_{\mathcal{J}_{i,j}}:=\tau_{\mathcal{J}_{i,j},\mathcal{J}_{i,j+1}}$ for $1\leq j< m_i$. 
Note for $1\leq j<j'\leq m_i$ direct calculation shows that $\tau_{\mathcal{I}_{i,j},\mathcal{I}_{i,j'}}=(\tau_{\mathcal{I}_{i,j}})^{\tau_{\mathcal{I}_{i,j+1}}\dots\tau_{\mathcal{I}_{i,j'-1}}}$.

Finally a specific partition is provided which will help describe $d$-split Levi subgroups and their normalisers.
For any two positive integers $k,m\in\mathbb{N}_{>0}$ and $1\leq i\leq m$ set $$\mathcal{J}_i^{k,m}:=\{i+jm\mid 0\leq j\leq k-1\}\subset \ul{km},$$
and $\mathcal{J}^{k,m}:=\{\mathcal{J}_i^{k,m}\mid 1\leq i\leq m\}\vdash \ul{km}$. 
Note that $\mathcal{J}_i^{k,m}(j)=i+(j-1)m$.
Moreover $$C_{\sgnSymm_{km}}(w'_{\mathcal{J}^{k,m}})=\left( \prod_{i=1}^{m}\langle w'_{\mathcal{J}^{k,m}_i}\rangle \right)\rtimes \langle \tau_{\mathcal{J}^{k,m}_i}\mid 1\leq i\leq m-1\rangle\cong C_{2k}\wr \Symm_m,$$
and whenever $k$ is odd $\Cent_{\sgnSymm_{n}}((w'_{\mathcal{J}^{k,m}})^2)=\Cent_{\sgnSymm_{n}}(w'_{\mathcal{J}^{k,m}})$.

\subsubsection{Normalisers for certain subgroups in $\sgnSymm_n$}\label{NormsgnSymm}
\indent 

The next result considers normalisers in a wreath product of any abelian group by a symmetric group, which will be useful later. However it can also be immediately applied to $\sgnSymm_n$.

\begin{cor}\label{NormInWrOfWr}
Let $G=A\wr \Symm_n$ with $A$ an abelian group and $\mathcal{I}\vdash \ul{n}$.
To each $\mathcal{J}\in\mathcal{I}$ associate a subgroup $A_\mathcal{J}\leq A$ and let $H_\mathcal{I}=\prod_{\mathcal{J}\in\mathcal{I}} A_\mathcal{J}\wr\Symm(\mathcal{J})\leq G$.
Set $Z_\mathcal{I}:=\prod_{\mathcal{J}\in\mathcal{I}}\Delta_\mathcal{J}A$, where $$\Delta_{\mathcal{J}}A=\{(a_1,\dots,a_n)\in A^n\mid a_i=1 \text{ if } i\not\in\mathcal{J} \text{ and } a_i=a_j \text{ if } i,j\in\mathcal{J}\},$$ and $$S_{H_\mathcal{I}}=\langle \tau_{\mathcal{J},\mathcal{J}'}\mid |\mathcal{J}|=|\mathcal{J}'|\text{ and } A_\mathcal{J}=A_{\mathcal{J}'}\rangle.$$
Then $$\NNN_G(H_\mathcal{I})=\left(Z_\mathcal{I}H_\mathcal{I} \right)\rtimes S_{H_\mathcal{I}}.$$
\end{cor}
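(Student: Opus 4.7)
The plan is to decompose $\NNN_G(H_\mathcal{I})$ via the natural projection $\pi\colon G = A \wr \Symm_n \to \Symm_n$, handling first the permutation part and then the base-group part. The easy direction, that the right-hand side is a subgroup of $\NNN_G(H_\mathcal{I})$, I would verify as follows. Since $A$ is abelian, each element of $\Delta_\mathcal{J} A$ is a constant tuple supported on $\mathcal{J}$, hence is centralized both by $A^n$ and by $\Symm(\mathcal{J})$; so $Z_\mathcal{I}$ centralizes $H_\mathcal{I}$ and $Z_\mathcal{I} H_\mathcal{I}$ is a subgroup. Each generator $\tau_{\mathcal{J},\mathcal{J}'}$ of $S_{H_\mathcal{I}}$ (defined exactly when $|\mathcal{J}| = |\mathcal{J}'|$ and $A_\mathcal{J} = A_{\mathcal{J}'}$) conjugates $A_\mathcal{J} \wr \Symm(\mathcal{J})$ onto $A_{\mathcal{J}'} \wr \Symm(\mathcal{J}')$ and interchanges $\Delta_\mathcal{J} A$ with $\Delta_{\mathcal{J}'} A$, so $S_{H_\mathcal{I}}$ normalizes both $H_\mathcal{I}$ and $Z_\mathcal{I}$. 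The intersection $S_{H_\mathcal{I}} \cap Z_\mathcal{I} H_\mathcal{I}$ is trivial because $Z_\mathcal{I} H_\mathcal{I}$ projects under $\pi$ into $\prod_\mathcal{J} \Symm(\mathcal{J})$, whereas no nontrivial product of the generators $\tau_{\mathcal{J},\mathcal{J}'}$ stabilizes every block of $\mathcal{I}$ setwise; this gives the semidirect product structure on the right-hand side.

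The real content is the reverse inclusion. Take $g = (a, \sigma) \in \NNN_G(H_\mathcal{I})$ with $a \in A^n$ and $\sigma \in \Symm_n$. I would first show that $\sigma$ induces a permutation of $\mathcal{I}$ preserving both $|\mathcal{J}|$ and $A_\mathcal{J}$. For a block $\mathcal{J}$ with $|\mathcal{J}| \geq 2$, applying conjugation to any transposition $(i,j) \in \Symm(\mathcal{J})$ and intersecting with $\prod_{\mathcal{J}''} \Symm(\mathcal{J}'')$ forces $\sigma(\mathcal{J})$ to be contained in a single block $\mathcal{J}' \in \mathcal{I}$; the symmetric argument with $\sigma^{-1}$ gives $\sigma(\mathcal{J}) = \mathcal{J}'$. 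Applying the same conjugation analysis to the base-group factor $A_\mathcal{J}^\mathcal{J}$ then yields $A_\mathcal{J} = A_{\mathcal{J}'}$. A counting argument pins down the singleton blocks: since $\sigma$ permutes the size-$\geq 2$ blocks bijectively, it also permutes the union of singleton blocks, and the same symmetry on $A_{\{i\}}$ (sitting in a single coordinate) forces $A_{\{i\}} = A_{\{\sigma(i)\}}$. Since every such block-permutation is realized by a product of the generators $\tau_{\mathcal{J},\mathcal{J}'}$, there is an $s \in S_{H_\mathcal{I}}$ with $s^{-1}\sigma \in \prod_\mathcal{J} \Symm(\mathcal{J}) \leq H_\mathcal{I}$; after replacing $g$ by a coset representative modulo $S_{H_\mathcal{I}} H_\mathcal{I}$, I may assume $g = (a, 1)$.

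It remains to show $a \in Z_\mathcal{I} \cdot \prod_\mathcal{J} A_\mathcal{J}^\mathcal{J}$. For each $\mathcal{J} \in \mathcal{I}$ with $|\mathcal{J}| \geq 2$ and any $i, j \in \mathcal{J}$, computing the conjugate of $(1,(i,j))$ by $(a,1)$ in the semidirect product shows that its base-group component is trivial outside $\{i,j\}$, with value $a_i^{-1}a_j$ at position $i$ and $a_j^{-1}a_i$ at position $j$. Membership of this conjugate in $H_\mathcal{I}$ therefore forces $a_i a_j^{-1} \in A_\mathcal{J}$, i.e.\ $a|_\mathcal{J}$ is constant modulo $A_\mathcal{J}$, so that $a|_\mathcal{J} \in \Delta_\mathcal{J} A \cdot A_\mathcal{J}^\mathcal{J}$. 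Singleton blocks impose no further constraint since $\Delta_{\{i\}}A$ already equals the full $i$-th coordinate factor $A$. Taking the product over all $\mathcal{J} \in \mathcal{I}$ places $a$ in $Z_\mathcal{I} \cdot \prod_\mathcal{J} A_\mathcal{J}^\mathcal{J} \subseteq Z_\mathcal{I} H_\mathcal{I}$, completing the proof.

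The main obstacle I anticipate lies in the block analysis of $\sigma$: the transposition argument is vacuous for singleton blocks, so one must first pin down the size-$\geq 2$ blocks using the symmetric-group part and then use a counting/complement argument together with the $A_\mathcal{J}$-compatibility to control the singletons. Once this bookkeeping is done cleanly, everything else reduces to direct calculation in the semidirect decomposition $A^n \rtimes \Symm_n$.
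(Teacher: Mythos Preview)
Your proposal is correct and follows essentially the same approach as the paper: project to $\Symm_n$ to see that the permutation part of a normalising element must permute the blocks of $\mathcal{I}$ preserving both size and the associated subgroup $A_\mathcal{J}$, peel off a suitable $s\in S_{H_\mathcal{I}}$, and then analyse the remaining base-group element via commutators with transpositions to land in $Z_\mathcal{I}\cdot\prod_\mathcal{J}A_\mathcal{J}^\mathcal{J}$. The only differences are expository: the paper invokes the known normaliser $\NNN_{\Symm_n}\bigl(\prod_\mathcal{J}\Symm(\mathcal{J})\bigr)=\bigl(\prod_\mathcal{J}\Symm(\mathcal{J})\bigr)\rtimes S_\mathcal{I}$ and then cuts down to $S_{H_\mathcal{I}}$ by looking at the induced action on $H_\mathcal{I}\cap A^n$, whereas you rederive this block structure by hand (and are more explicit about the singleton blocks and the easy inclusion, both of which the paper treats tersely).
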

\begin{proof}
Under the projection homomorphism from $A^n\rtimes \Symm_n$ to $\Symm_n$ the group $\NNN_G(H_\mathcal{I})$ gets mapped to a subgroup of $\NNN_{\Symm_n}(H_\mathcal{I}\cap \Symm_n)$.
For the subgroup $S_\mathcal{I}:=\langle \ol{\tau}_{\mathcal{J},\mathcal{J}'}\mid |\mathcal{J}|=|\mathcal{J}'|\rangle$ it follows that $\NNN_{\Symm_n}(H_\mathcal{I}\cap \Symm_n)=\left(\prod_{\mathcal{J}\in\mathcal{I}}\Symm(\mathcal{J})\right)\rtimes S_\mathcal{I}$.
The group $S_\mathcal{I}$ permutes the factors of $A^n$ and thus $\NNN_{S_\mathcal{I}}(H_\mathcal{I}\cap A^n)=S_{H_\mathcal{I}}$.
Moreover, by definition $S_{H_\mathcal{I}}\leq \NNN_G(H_\mathcal{I})$ and hence
$$
\NNN_G(H_\mathcal{I})=\NNN_{A^n}(H_\mathcal{I})H_\mathcal{I}\rtimes S_{H_\mathcal{I}}.
$$

Let $\sigma\in \prod_{\mathcal{J}\in\mathcal{I}}\Symm(\mathcal{J})$ and take $g=(g_1,\dots, g_n)\in\NNN_{A^n}(H_\mathcal{I})$ so that $[g,\sigma]=g^{-1}g^{\sigma^{-1}}\in H_\mathcal{I}\cap A^n$. 
Hence for each $i\in \mathcal{J}$ it follows that $g_{\sigma(i)}\in g_iA_\mathcal{J}$.
In particular, for $g_\mathcal{J}:=(a_1,\dots, a_n)$ with $a_i=g_i$ whenever $i\in\mathcal{J}$ and $a_i=1$ otherwise, there is an $h_\mathcal{J}=(h_1,\dots, h_n)$ with $h_i\in A_\mathcal{J}$ whenever $i\in\mathcal{J}$ and $h_i=1$ otherwise, such that $g_\mathcal{J}h_\mathcal{J}\in \Delta_\mathcal{J}A$. 
Thus for $h=\prod_{\mathcal{J}\in\mathcal{I}}h_\mathcal{J}$ then $gh\in Z_\mathcal{I}$ and so $\NNN_{A^n}(H_\mathcal{I})\leq Z_\mathcal{I}H_\mathcal{I}$.
Moreover $[Z_\mathcal{I},\sigma]=1$ for any $\sigma\in \prod_{\mathcal{J}\in\mathcal{I}}\Symm(\mathcal{J})$, as for any $g=(g_1,\dots, g_n)\in Z_\mathcal{I}$ and $i,j\in\mathcal{J}$ then $g_i=g_j$.
Hence $Z_\mathcal{I}\leq \NNN_{A^n}(H_\mathcal{I})$.

\end{proof}

\begin{rem}\label{NormProdsgnSymmSym}
Let $n\in \mathbb{N}_{>0}$ and $\mathcal{I}\vdash n$.
Assume that $\mathcal{I}=\mathcal{I}_1\sqcup\mathcal{I}_2$ and set  
$$H_1=\prod_{\mathcal{J}\in \mathcal{I}_1}\Symm (\mathcal{J})
\text{ and } H_2=\prod_{\mathcal{J}\in \mathcal{I}_2}\sgnSymm (\mathcal{J}).$$
Then $H:=H_1\times H_2$ is a subgroup of $\sgnSymm_n\cong C_2\wr \Symm_n$ for which, in the notation of Corollary~\ref{NormInWrOfWr}, $H=H_\mathcal{I}$ where $A_\mathcal{J}=C_2$ whenever $\mathcal{J}\in \mathcal{I}_2$ and $A_\mathcal{J}$ is trivial otherwise.
Therefore  $\NNN_{\sgnSymm_n}(H)=\left(Z_\mathcal{I}H_\mathcal{I} \right)\rtimes S_{H_\mathcal{I}}$ with $Z_\mathcal{I}=\langle \iota_{\mathcal{J}}\mid \mathcal{J}\in\mathcal{I}\rangle$, where $\iota_{\mathcal{J}}:=\prod_{i\in\mathcal{J}} (i,-i)$.
Moreover $$
\NNN_{\sgnSymm_n}(H)/H  =  
       \GenGp{\iota_\mathcal{J},\tau_{\mathcal{J},\mathcal{J}'}\mid \mathcal{J},\mathcal{J'}\in\mathcal{I}_1 \text{ and } |\mathcal{J}|=|\mathcal{J}'|}\times  \GenGp{\tau_{\mathcal{J},\mathcal{J}'}\mid \mathcal{J},\mathcal{J'}\in\mathcal{I}_2 \text{ and } |\mathcal{J}|=|\mathcal{J}'|}.
$$
In particular, $\NNN_{\sgnSymm_n}(H)/H$ is a product of symmetric groups acting on $\mathcal{I}_2$ and a product of signed permutation groups acting on $\mathcal{I}_1$.
\end{rem}

\subsection{Characters of ${\rm GL}_n(q)$ and ${\rm GU}_n(q)$}
\indent 

Consider first the following result about wreath products.

\begin{lm}\label{ExtensionwithWreath}
Let $X\lhd Y$, $E\leq Y$ be finite groups with $Y=XE$.
Take $n\in \mathbb{N}_{>0}$ and assume $\theta_1,\dots,\theta_n\in \Irr(X)$ extend to their inertia subgroup in $Y$.
Then $\theta:=\theta_1\times \dots\times \theta_n\in \Irr(X^n)$ extends to $(Y\wr \Symm_n)_\theta$.
\begin{proof}
As any $E$-conjugate of $\theta_i$ extends to its inertia subgroup in $XE$, after $E^n$-conjugation, it can be assumed for all $1\leq i\ne j\leq n$ that either $\theta_i=\theta_j$ or $\theta_i$ and $\theta_j$ are not $E$-conjugate characters of $X$.  

The character $\theta$ yields a partition $\mathcal{I}\vdash \ul{n}$ where for $\mathcal{J}\in \mathcal{I}$ two numbers $i,j\in \mathcal{J}$ if and only if $\theta_i=\theta_j$ as characters in $\Irr (X)$.
In particular, for $\mathcal{J}\in\mathcal{I}$ denote by $\theta_{0,\mathcal{J}}\in \Irr(X)$ the character such that $\theta_i=\theta_{0,\mathcal{J}}$ for each $i\in\mathcal{J}$.
Set $$X_{\mathcal{J}}=\{(x_1,\dots,x_n)\in X^n\mid x_i=1 \text{ if } i\not\in \mathcal{J}\}\cong X^{|\mathcal{J}|}$$ and define $\theta_\mathcal{J}\in\Irr(X_\mathcal{J})$ by $\theta_\mathcal{J}(x_1,\dots,x_n)=\prod_{i\in\mathcal{J}}\theta_{0,\mathcal{J}}(x_i)$.
Then
$$(Y\wr\Symm_n)_\theta=\prod_{\mathcal{J}\in\mathcal{I}} (XE\wr \mathfrak{S}(\mathcal{J}))_{\theta_{\mathcal{J}}}=\prod_{\mathcal{J}\in\mathcal{I}} (XE_{\theta_{0,\mathcal{J}}}\wr \mathfrak{S}(\mathcal{J}))$$
and it suffices to extend $\theta_\mathcal{J}$ to $ (XE_{\theta_{0,\mathcal{J}}}\wr \mathfrak{S}(\mathcal{J}))$.

In an analogous fashion define $(XE_{\theta_{0,\mathcal{J}}})_{\mathcal{J}}\leq (XE)^n$ and a fixed extension $\wt{\theta}_{0,\mathcal{J}}$ of $\theta_{0,\mathcal{J}}$ to $XE_{\theta_{0,\mathcal{J}}}$ yields an extension $\wt{\theta}_{\mathcal{J}}$ of $\theta_\mathcal{J}$ to $(XE_{\theta_{0,\mathcal{J}}})_\mathcal{J}$.
By construction, $\mathfrak{S}(\mathcal{J})_{\wt{\theta}_\mathcal{J}}=\mathfrak{S}(\mathcal{J})$ and by [Huppert, 25.5] $\wt{\theta}_\mathcal{J}$ extends to $XE_{\theta_{0,\mathcal{J}}}\wr \mathfrak{S}(\mathcal{J})$, which must also be an extension of $\theta_\mathcal{J}$.
\end{proof}
\end{lm}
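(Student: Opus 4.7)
My plan is to reduce the extension problem to a single-block case and then invoke a standard wreath-product extension result. First, I would normalise $\theta$ by conjugation under the diagonal group $E^n\leq Y\wr\Symm_n$, which does not affect whether an extension exists on the stabiliser. The hypothesis that each $\theta_i$ extends to its inertia subgroup in $Y$ is $E$-invariant on $\Irr(X)$, so the conjugation allows me to arrange that any two components $\theta_i$ and $\theta_j$ are either equal or not even $E$-conjugate. Grouping the indices by equality of the $\theta_i$'s then produces a partition $\mathcal{I}\vdash\ul n$.

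Writing $\theta_{0,\mathcal{J}}$ for the common character on a block $\mathcal{J}\in\mathcal{I}$, I would next establish the direct-product decomposition
$$(Y\wr\Symm_n)_\theta=\prod_{\mathcal{J}\in\mathcal{I}}\bigl(XE_{\theta_{0,\mathcal{J}}}\wr\Symm(\mathcal{J})\bigr).$$
The inclusion $\supseteq$ is immediate from the definitions; the reverse uses that a permutation moving an index between two different blocks would $E$-conjugate two characters belonging to different blocks, contradicting the normalisation. This reduces the problem to the following single-block statement: if $\theta_0\in\Irr(X)$ extends to $XE_{\theta_0}$, then the tensor power $\theta_0^{\otimes m}$ extends to $(XE_{\theta_0})\wr\Symm_m$.

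For this single-block step I would fix an extension $\wt\theta_0\in\Irr(XE_{\theta_0})$ given by hypothesis and form the $m$-fold external tensor product, which extends $\theta_0^{\otimes m}$ to $(XE_{\theta_0})^m$ and is manifestly invariant under the $\Symm_m$-action permuting the factors. A standard theorem on wreath products (Huppert, \emph{Endliche Gruppen I}, Lemma~25.5) then guarantees that such a symmetric-group invariant extension lifts to the full wreath product, concluding the argument. The only real obstacle I expect is the structural identification of $(Y\wr\Symm_n)_\theta$ as a product of smaller wreath products; once that bookkeeping is done, the extension step is a clean application of the cited wreath-product lemma together with the hypothesis on the individual $\theta_i$.
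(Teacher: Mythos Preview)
Your proposal is correct and follows essentially the same approach as the paper's proof: normalise by $E^n$-conjugation so that distinct components are not $E$-conjugate, partition the indices by equality of the $\theta_i$, decompose the stabiliser as a direct product of wreath products $XE_{\theta_{0,\mathcal J}}\wr\Symm(\mathcal J)$, and then on each block extend the tensor power via the $\Symm(\mathcal J)$-invariant extension and [Huppert, 25.5]. The paper's argument is identical in structure and cites the same wreath-product lemma.
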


\begin{notation}
Let $\epsilon\in\{\pm 1\}$, $n$ a positive integer and $q$ a prime power.
In the following denote by ${\rm GL}_n(\epsilon q)$ the group ${\rm GL}_n(q)$ when $\epsilon=1$ and the group ${\rm GU}_n(q)$ when $\epsilon=-1$.
Furthermore, ${\rm SL}_n(\epsilon q):=[{\rm GL}_n(\epsilon q),{\rm GL}_n(\epsilon q)]$.
\end{notation}

\begin{cor}\label{ExtGnSymm}
Let $G={\rm GL}_n(\epsilon q)$ and $E\leq {\rm Aut}(G)$ a subgroup of the so called {\it field} and {\it graph} automorphisms of $G$ as described in \cite[2.5.1]{GLS3}.
Then each $\chi\in \Irr(G)$ extends to $G\rtimes E_\chi$.
Moreover for any $m\in \mathbb{N}_{>0}$ each character of $G^m$ extends to its inertia subgroup in $(GE)\wr \Symm_m$.
\end{cor}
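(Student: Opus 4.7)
The first assertion is a standard extendibility statement for irreducible characters of $\GL_n(\epsilon q)$ under field and graph automorphisms, which has been established in the literature on the inductive conditions for type $\tA$. My plan is to invoke such a result directly, citing the relevant works of Cabanes--Sp{\"a}th and Malle on the Clifford theory of $\GL_n(\epsilon q)$. A direct sketch is also available: the field automorphisms of $G$ generate a cyclic subgroup, so any character stabilised by a field automorphism extends by Gallagher's criterion for cyclic quotients; when $\epsilon=1$ and the graph automorphism $g\mapsto (g^T)^{-1}$ is present in $E_\chi$, the corresponding extension is obtained by choosing a well-behaved lift of this involution, which is the nontrivial ingredient supplied by the cited results. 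Note that $G\rtimes E_\chi$ is precisely the inertia subgroup of $\chi$ in $G\rtimes E$, since $G$ is normal and $E$ is a complement.

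Granted the first assertion, the second is a direct application of Lemma~\ref{ExtensionwithWreath} with the choices $X:=G$ and $Y:=GE$. Then $X\lhd Y$ and $Y=XE$ by construction. The first assertion gives that each $\theta_i\in \Irr(X)$ extends to its inertia subgroup $(GE)_{\theta_i}=G\rtimes E_{\theta_i}$ in $Y$, which is exactly the hypothesis required by Lemma~\ref{ExtensionwithWreath}. Applying the lemma to $\theta=\theta_1\times\cdots\times\theta_m\in \Irr(G^m)$ then yields an extension of $\theta$ to $(Y\wr \Symm_m)_\theta$, which coincides with the inertia subgroup of $\theta$ in $(GE)\wr \Symm_m$, as required.

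The principal obstacle is the first assertion, i.e.\ locating a clean reference (or a self-contained argument) for the extendibility of characters of $\GL_n(\epsilon q)$ under field and graph automorphisms; in particular, the graph case for $\epsilon=1$ is the only point where real content is needed, the field part being handled by Gallagher. Once the first assertion is in place, the second is purely formal, reducing to a direct invocation of Lemma~\ref{ExtensionwithWreath} and the elementary identification of the inertia subgroup in a wreath product with the corresponding stabiliser in $Y\wr \Symm_m$.
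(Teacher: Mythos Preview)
Your proposal is correct and follows essentially the same approach as the paper: invoke the type-$\tA$ extendibility results of Cabanes--Sp{\"a}th for the first assertion, then apply Lemma~\ref{ExtensionwithWreath} with $X=G$, $Y=GE$ for the second. The paper is slightly more explicit about the first step, passing through a constituent $\chi_0\in\Irr(\SL_n(\epsilon q)\mid\chi)$ and its Clifford correspondent, then quoting \cite[Theorems 4.1 and 5.8(a)]{CabSpAMTypeA}; your ``direct sketch'' via cyclicity of the field automorphism group is not enough on its own when $E_\chi$ contains both field and graph automorphisms (so is not cyclic), but since you already flag this and intend to cite the literature, the gap is only one of precision rather than strategy.
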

\begin{proof}
By \cite[Theorem 4.1]{CabSpAMTypeA} there is a constituent $\chi_0\in \Irr ({\rm SL}_n(\epsilon q)\mid \chi)$ such that $\chi_0$ extends to ${\rm SL}_n(\epsilon q)\rtimes E_{\chi_0}$ and $(G\rtimes E)_{\chi_0}=G_{\chi_0}\rtimes E_{\chi_0}$.
Then $E_\chi\leq E_{\chi_0}$ and for $\wt{\chi}_0\in \Irr(G_{\chi_0})$   the Clifford correspondent to $\chi$, it follows that $E_\chi=E_{\wt{\chi}_0}$, that is $G\rtimes E_\chi=G\rtimes E_{\wt{\chi}_0}$.
Thus \cite[Theorem 5.8(a)]{CabSpAMTypeA} implies $\wt{\chi}_0$ will extend to its inertia subgroup in $G_{\chi_0}\rtimes E_{\chi}$ and hence the Mackey formula shows $\chi$ extends to $G\rtimes E_{\chi}$.
\end{proof}

The following result is adapted from \cite[Proposition 4.1]{SpTyDI} which dealt with $1$-cuspidal characters of ${\rm GL}_n(q)$.

\begin{prop}\label{CuspTypeA}
Let $n\geq 2$, $p$ a prime and $F$ a Steinberg endomorphism of ${\bf K}:={\rm GL}_n(\ol{\mathbb{F}}_q)$ with ${\bf K}^F\cong {\rm GL}_n(\epsilon q)$ for $q=p^f$ for some positive integer $f$.
Set $\gamma\in{\rm Aut}({\bf K}^F)$ to be given by transpose-inverse and let $\chi\in \Irr({\bf K}^F)$ with $\chi^\gamma=\chi$.
Assume that $ \chi$ is a $1$-cuspidal character when $\epsilon=1$ and $2$-cuspidal when $\epsilon=-1$.
Then $Z({\bf K}^F)\leq {\rm ker}(\chi)$.
\end{prop}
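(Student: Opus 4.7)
I proceed uniformly via Lusztig series. Write $\chi\in\cE(\bK^F,[s])$ for a semisimple $s\in\bK^{*F}$. The standard Langlands duality for $\bK=\GL_n$ identifies $\bK^{*F}/[\bK^*,\bK^*]^F$ with $\mu_{q-\epsilon}$ via the determinant, and pairs this group non-degenerately with $Z(\bK)^F$; hence $Z(\bK^F)\leq\ker(\chi)$ if and only if $\det(s)=1$.

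The $d$-cuspidality of $\chi$ (with $d=1$ if $\epsilon=1$ and $d=2$ if $\epsilon=-1$), combined with Brou\'e--Malle--Rouquier's Jordan decomposition of $d$-cuspidal characters, forces the eigenvalues of $s$ in $\ol{\F}_p^*$ to form a single $F$-orbit, of some size $k$ with common multiplicity $m=n/k$. For $\epsilon=1$, $1$-cuspidality of the Jordan correspondent further forces $m=1$: equivalently, $s$ is regular semisimple with centraliser the Coxeter torus. For $\epsilon=-1$, the non-existence of $2$-cuspidal unipotent characters of $\GU_n(q)$ for $n\geq 2$, obtained via Ennola duality from the HC-cuspidal unipotents of $\GL_n(q)$ (which exist only for $n=1$), rules out the sub-case $k=1$.

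The hypothesis $\chi^\gamma=\chi$ implies $\cE(\bK^F,s)=\cE(\bK^F,s^{-1})$, and hence $s\sim s^{-1}$ in $\bK^{*F}$. Letting $\zeta$ be any eigenvalue of $s$, this yields $\zeta^{-1}=\zeta^{(\epsilon q)^j}$ for some $0\leq j<k$, the exponent encoding the natural $F$-action $\zeta\mapsto\zeta^{\epsilon q}$ on eigenvalues. Iterating gives $\zeta=\zeta^{(\epsilon q)^{2j}}$, so the orbit-size constraint forces $k\mid 2j$ and thus $j=0$ or $j=k/2$; the case $j=0$ yields $\zeta^2=1$, contradicting orbit size $k\geq 2$. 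Hence $k$ is even, $j=k/2$, and the order of $\zeta$ divides $(\epsilon q)^{k/2}+1$. A direct computation of
\[
\det(s)=\Bigl[\prod_{i=0}^{k-1}\zeta^{(\epsilon q)^i}\Bigr]^{m}=\zeta^{m\sum_{i=0}^{k-1}(\epsilon q)^i},
\]
split according to the parity of $k/2$ (and the sign $\epsilon$), shows that the exponent is always a multiple of the order of $\zeta$, so $\det(s)=1$.

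\textbf{Main obstacle.} The subtle step is the Brou\'e--Malle--Rouquier Jordan decomposition reduction to the single-$F$-orbit condition on $s$; in the unitary case this requires Ennola duality to handle the $2$-cuspidal unipotent characters of $\GU_m(q^k)$. Once the orbit structure is in hand, the parity-tracking in the final determinant computation is routine but needs $k\equiv 0$ and $k\equiv 2\pmod 4$ to be treated separately.
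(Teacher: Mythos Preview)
Your approach is essentially the same as the paper's: both pass to the Lusztig series label $s$, use the Jordan decomposition of $d$-cuspidal characters to force the distinct eigenvalues of $s$ into a single $F$-orbit, use $\chi^\gamma=\chi$ to obtain $s\sim s^{-1}$, and finish with a determinant computation (which the paper outsources to \cite[Proposition~4.1]{SpTyDI}, whereas you carry it out directly). One small simplification for your write-up: since $\sum_{i=0}^{k-1}(\epsilon q)^i=\bigl((\epsilon q)^{k/2}+1\bigr)\sum_{i=0}^{k/2-1}(\epsilon q)^i$ and $\operatorname{ord}(\zeta)\mid(\epsilon q)^{k/2}+1$, the identity $\det(s)=1$ drops out uniformly, so no case split on the parity of $k/2$ or on $\epsilon$ is needed.
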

\begin{proof}
Denote by ${\bf K}^*:={\rm GL}_n(\ol{\mathbb{F}}_q)$ the dual group to ${\bf K}$.
For $q=p^f$ with $p$ a prime let $F_q=F_p^f$ with $F_p$ the standard Frobenius endomorphism on ${\bf K}^*$ which raises each matrix entry to the $p^{\rm th}$ power.
For $\epsilon\in \{\pm 1\}$ and $F^*:=\gamma^{\frac{1-\epsilon}{2}}\circ F_q$, up to some inner automorphism, it can be assumed that $({\bf K},F)$ is dual to the pair $({\bf K}^*,F^*)$.
In particular, $({\bf K}^*)^{F^*}\cong {\rm GL}_n(\epsilon q)$.
Set ${\bf T}_0^*$ to be the maximal torus consisting of diagonal matrices, which is $F^*$-stable, and let $W^*$ denote its associated Weyl group.
Observe that $W^*$ can be taken to consist of permutation matrices with entries are either zero or one, and on such matricies the actions of tranposition and inversion coincide.
Hence $F^*$ acts trivially this $W^*$.

The assumption of the proposition implies that for some semisimple element $s'\in({\bf K}^*)^{F^*}$ the Lusztig series $ \mathcal{E}({\bf K}^F,s')$ contains $\chi$ an $e$-cuspidal character, for $e=1$ when $\epsilon=1$ and $e=2$ when $\epsilon=-1$.
As ${\bf K}^*={\rm GL}_n(\ol{\mathbb{F}}_q)$ it follows that $C_{\bf K^*}(s')$ is connected.
Therefore \cite[Proposition 3.5.23]{GeckMal} implies that $\chi$ corresponds to a unipotent $e$-cuspidal character $\psi$ of $C_{\bf K^*}(s')^{F^*}$ and the Sylow $e$-torus of $Z(C_{\bf K^*}(s'))$ lies in $Z({\bf K}^*)$.

The element $s'$ lies in an $F^*$-stable maximal torus ${\bf T}^*$ \cite[Proposition 26.6]{TestMal}.
Moreover, by \cite[Remark 2.3.21]{GeckMal}, the bijection between $({\bf K}^*)^{F^*}$-conjugacy classes of $F^*$-stable maximal tori of ${\bf K}^*$ and the $F^*$-conjugacy classes of $W^*$ implies that there is an element $h$ such that $h^{-1}F^*(h)=w\in W^*$ and ${\bf T}^*=({\bf T}_0^*)^h$.
For this $h$ take $s\in{\bf T}_0^*$ such that $s^h=s'$ and note that 
$$(\Cent_{\bf K^*}(s'))^{F^*}=((\Cent_{\bf K^*}(s))^h)^{F^*}=(\Cent_{\bf K^*}(s))^{wF^*}.$$
Let $e_i$ denote the standard orthonormal basis of $\mathbb{R}^n$ and identify the root system arising from ${\bf T}_0^*$ with $\{e_i-e_j\mid 1\leq i,j\leq n,i\ne j\}$.
For a root $\alpha$ set $X_\alpha:=\langle x_{\alpha}(t)\mid t\in\ol{\mathbb{F}}_q \rangle$, where $x_{e_i-e_j}(t)-Id_n$ is the elementary matrix with entry $t$ at position $(i,j)$.
For the element $s={\rm diag}(s_1,\dots, s_n)\in {\bf T}_0^*$ with $s_i\in\ol{\mathbb{F}}_q^\times$, it follows that $$\Cent_{\bf K^*}(s)=\langle {\bf T}_0^*,X_{e_i-e_j}\mid s_i=s_j\rangle\cong\prod_{i=1}^r {\rm GL}_{n_i}(\ol{\mathbb{F}}_q),$$
where $s$ has $r$ distinct eigenvalues and each one occurs with multiplicity $n_i$.
Moreover, note that $w$ acts on this centraliser by permuting the copies on ${\rm GL}_{n_i}(\ol{\mathbb{F}}_q)$ as $s$ is $wF^*$-stable.
In particular, if $\lambda_1,\dots, \lambda_r$ denote the distinct eigenvalues of $s$, then the sets $\mathcal{J}_{i}:=\{j\mid s_j=\lambda_i\}$ are permuted by $w$ and 
$$\Cent_{\bf K^*}(s)^{wF^*}\cong \prod_{\mathcal{O}} {\rm GL}_{n_\mathcal{O}}(\epsilon_{\mathcal{O}}q^{d_\mathcal{O}})$$
where the product runs over the $w$-orbits on $\left\{ \mathcal{J}_{i}\mid 1\leq i\leq r\right\}$, $d_\mathcal{O}=|\mathcal{O}|$, $\epsilon_\mathcal{O}=\epsilon^{d_\mathcal{O}}$ and $n_\mathcal{O}=n_i$ with $\mathcal{J}_{i}\in\mathcal{O}$.
Thus the unipotent $e$-cuspidal character $\psi$ of $\Cent_{\bf K^*}(s)^{wF^*}$ decomposes as $\psi=\prod_{\mathcal{O}}\psi_{\mathcal{O}}$ with each $\psi_{\mathcal{O}}$ a unipotent $e$-cuspidal character of ${\rm GL}_{n_\mathcal{O}}(\epsilon_{\mathcal{O}}q^{d_\mathcal{O}})$.
In particular, as in \cite[Example 3.5.29]{GeckMal}, each $\psi_{\mathcal{O}}$ arises as either a unipotent $1$-cuspidal character of ${\rm GL}_{n_{\mathcal{O}}}(q^{m})$ or a unipotent $2$-cuspidal character of ${\rm GU}_{n_{\mathcal{O}}}(q^m)$ for some $m\in\mathbb{N}_{>0}$.
Then \cite[Corollary 4.6.5 and 4.6.7]{GeckMal} implies $\phi_\mathcal{O}$ corresponds to a partition of $n_\mathcal{O}$ which is a $1$-core which means each $n_\mathcal{O}=1$.
Hence $s$ has $n$ distinct eigenvalues and $\Cent_{{\bf K}^*}(s)$ is a torus, that is $\Cent_{\bf K^*}(s)={\bf T}_0^*$.
Expressing $w=\prod_{i=1}^ {k}\sigma_i$ as a product of disjoint cycles in $\mathfrak{S}_n$, it follows for $m_i=o(\sigma_i)$ that $$|({\bf T}_0^*)^{wF^*}|=\prod_{i=1}^{k}q^{m_i}-\epsilon^{m_i}.$$
However noting that $|Z({\bf K}^*)^{F^*}|=q-\epsilon\mid q^{m_i}-\epsilon^{m_i}$ and that the Sylow $e$-torus of $Z(C_{\bf K^*}(s'))$ lies in $Z({\bf K}^*)$ implies that $w$ must be an $n$-cycle.
Therefore as $s$ is fixed by  $wF^*$, this forces the $n$ distinct eigenvalues of $s$ to form an orbit of size $n$ under the action of $F^*$.
Now the argument as in \cite[Proposition 4.1]{SpTyDI} can be applied and finishes the proof.
\end{proof}

\section{Labelling $d$-split Levi subgroups in classical types}\label{LabdLe}

The main aim of this article is to study the representation theory of $d$-split Levi subgroups, which are characterised as the centralisers of $d$-tori.
In this section all classical types ${\tA}_n,$ $\tB_n$, $\tC_n$ and $\tD_n$ will be considered.

\subsection{Groups of Lie type}
\indent

Let $p$ be a prime and $\bG$ a simply connected simple algebraic group over an algebraic closure $\ol{\F}_p$ of $\F_p$.
Take ${\bf B}$ to be a Borel subgroup of $\bG$ with maximal torus ${\bf T}$ which in turn determines $\Phi$, $\Phi^+$ and $\Pi$ a set of roots, positive roots and simple roots respectively.
Set ${\bf N}:=\NNN_{\bG}({\bf T})$, and $W_{\Phi}:={\bf N}/{\bf T}$ the Weyl group of $\bG$ with corresponding homomorphism $\rho:{\bf N}\rightarrow W$.
Throughout, the Chevalley generators as given in \cite[1.12.1]{GLS3} are considered subject to the Chevalley relations.

\subsubsection{Chevalley relations}\label{ChevRel}
\indent

Every $\alpha\in \Phi$ determines a root subgroup $X_\alpha:=\GenGp{x_\alpha(t)\mid t\in \ol{\mathbb{F}}_q}$ of $\bG$ such that for two roots $\alpha,\beta\in \Phi$ and  $t,u\in \ol{\mathbb{F}}_q$, the commutator $[x_{\alpha}(t),x_{\beta}(u)]$ is a product of elements $x_{i\alpha+j\beta}(c_{i,j,\alpha,\beta}t^iu^j)$ for certain $c_{i,j,\alpha,\beta}\in \ol{\F}_q$ such that $i\alpha+j\beta\in \Phi$ for a pair of positive integers $i,j$. In particular, if there is no such pair of positive integers $i,j$ then $[X_\alpha,X_\beta]=1$. 

For each $t\in \ol{\mathbb{F}}_p^\times$ set ${\bf n}_\alpha(t):={\bf x}_\alpha(t){\bf x}_{-\alpha}(-t^{-1}){\bf x}_{\alpha}(t)$ and $ h_{\alpha}(t):={\bf n}_\alpha(1)^{-1}{\bf n}_{\alpha}(t)$ so that the maximal torus ${\bf T}=\GenGp{h_\alpha(t)\mid \alpha\in\Phi\text{ and }t\in  \ol{\mathbb{F}}_p^\times}$.
Given $\Pi=\{\alpha_1,\dots, \alpha_m\}$ it follows that $h_\alpha(t)=\prod\limits_{i=1}^m h_{\alpha_i}(t^{c_i})$ where $\alpha^\vee=\sum\limits_{i=1}^mc_i\alpha_i^\vee$ for $\alpha^\vee:=\frac{2}{(\alpha,\alpha)}\alpha$.
Furthermore, ${\bf n}_{\alpha}(1)^2=h_{\alpha}(-1)$ and $\prod_{i=1}^mh_{\alpha_i}(t_i)=1$ if and only if $t_i=1$ for all $i$.
As an immediate consequence, it follows that $h_{-\alpha}(t)=h_{\alpha}(t)^{-1}=h_{\alpha}(t^{-1})$ and ${\bf n}_{\alpha}(-1)={\bf n}_{\alpha}(1)^3$, that is $\GenGp{{\bf n}_{\alpha}(-1)}=\GenGp{{\bf n}_{\alpha}(1)}$.

For each root $\alpha\in \Phi$ let $r_\alpha$ denote the orthogonal reflection in the hyperplane $\alpha^\perp$.
Then $({\bf n}_{\alpha}(t))^{{\bf n}_\beta(1)}={\bf n}_{r_{\beta}(\alpha)}(c_{\alpha,\beta}t)$ for any two roots $\alpha,\beta\in\Phi$, $t\in \ol{\mathbb{F}}_p^\times$ and some $c_{\alpha,\beta}=\pm 1$. 
Thus $ {\bf n}_\alpha(t)^{{\bf n}_{\beta}(-1)}= {\bf n}_{r_\beta(\alpha)}(\pm t)$ as $r_{\beta}^3=r_{\beta}$ and so $\GenGp{ {\bf n}_\alpha(1)}^{{\bf n}_\beta(\pm1)}=\GenGp{  {\bf n}_{r_\beta(\alpha)}(1)}$.
Additionally $h_{\alpha}(t)^{{\bf n}_\beta(1)}=h_{r_\beta(\alpha)}(t)$.

For ease of notation ${\bf n}_\alpha:={\bf n}_\alpha (1)$ and $h_\alpha:=h_\alpha(-1)={\bf n}_\alpha^2$ will be adopted.
The {\it extended Weyl group} is defined to be $V_\Phi:=\GenGp{{\bf n}_\alpha\mid \alpha\in \Phi}$ and by construction  $\NNN_{\bG}({\bf T})={\bf T}V_\Phi$.
For $H_\Phi:=\GenGp{h_\alpha\mid \alpha\in\Phi}$ it follows that $|H_\Phi|=(2,q-1)^{|\Pi|}$, $H_\Phi={\bf T}\cap V_\Phi$ and $V_\Phi/H_\Phi\cong W_\Phi$.
In the following $\rho$ will denote the morphism $V_\Phi\rightarrow W_\Phi$.
\subsubsection{Endomorphisms of ${\bG}$}\label{endo}
\indent

Let $F_p$ be the field automorphism of $\bG$ which can be defined via the root subgroups $X_\alpha$, where $F_p(x_\alpha(t))=x_\alpha(t^p)$ for $t\in \ol{\F}_p$ and $\alpha\in \Phi$.
Any length-preserving automorphism of $\Phi$ stabilising $\Pi$, induces a graph automorphism $\gamma$ of $\bG$ satisfying $\gamma(x_\alpha(t))=x_{\gamma(\alpha)}(t)$ for $t\in \ol{\F}_p$ and $\alpha\in \pm\Pi$. 
Set $E_0\leq {\rm Aut}(\bG)$ to be the subgroup corresponding to the set of length-preserving automorphisms of $\Pi$ (denoted by $\Gamma_{\ol{K}}$ in \cite[1.15.5]{GLS3}).
Note that by construction the morphisms $F_p$ and $\gamma$ commute, where $\gamma$ is induced from any length-preserving automorphism of $\Pi$.

To construct the so called {\it diagonal automorphisms}  the notation of a regular embedding is required. The following is taken from \cite[4.1]{CabSpMcKTypC}.
Let $r$ denote the rank of $Z({\bf G})$ as an abelian group and ${\bf Z}\cong (\ol{\mathbb{F}}_q^\times)^r$ a torus of the same rank with an embedding of $Z(\bG)$. Set $\wt{\bG}:=\bG\circ_{Z({\bf G})}{\bf Z}$ to be the central product of $\bG$ and ${\bf Z}$ over $Z(\bG)$. Then the group $\wt{\bG}$ is a connected reductive group with connected centre and the embedding $\bG\rightarrow \wt{\bG}$ is a regular embedding in the sense of  \cite[15.1]{CabEnRedGp}.
Note that $\wt{\bf B}:={\bf B}{\bf Z}$ is a Borel subgroup of $\wt{\bG}$, $\wt{\bT}:=\bT{\bf Z}$ is a maximal torus and $\wt{\bf N}:=\NNN_{\wt{\bG}}(\wt{\bf T})={\bf N}{\bf Z}$. 
Moreover, as explained in \cite[Section 2.B]{MalSpOddDeg} the field and graph automorphisms introduced above can be extended to endomorphisms of $\wt{\bG}$ stabilising both $\wt{\bf B}$ and $\wt{\bf T}$, also denoted by $F_p$ and $\gamma$.

Throughout a Steinberg endomorphism $F:=F_p^m\gamma$ for some positive integer $m$ and $\gamma\in E_0$ will be considered.
Such an endomorphism $F$ determines an $\F_q$-structure on $\wt{\bG}$ where $q=p^m$. 
Let $G:=\bG^F$ and $\wt{G}:=\wt{\bG}^F$.
The order of the automorphisms $F_p$ and $\gamma$ on both groups $G$ and $\wt{G}$ coincides.
Moreover, the group $\wt{G}\rtimes \GenGp{F_p,E_0}$ is well defined and induces all automorphisms of $G$.
Let $e$ be a positive integer such that $(vF)^e=F_p^{em}$ for any $v\in V_\Phi$, set $E_1:=\Cent_{E_0}(\gamma)$ and $E:=C_{em}\times E_1$.
In particular, if $\phi$ denotes the automorphism of $\mathbb{Z}\Phi$ induced by $F$ then one could take $e=o(\phi){\rm exp}(E_1)|V_\Phi|$. 
An action of $E$ on $\bG^{F_p^{em}}$ is defined via the generating element $\wh{F}_p$ of $C_{em}$ acting as $F_p$ and the second factor acting as the standard graph automorphisms.
Then $\wh{F}$ denotes the element $\wh{F}_p^m\gamma$ of $E$ inducing the automorphism $F$ on $\bG^{F_p^{em}}$.

For any $F$-stable subgroup ${\bf K}\leq \bG$ the normaliser in $\bG^F\rtimes E$ is well defined and will be denoted by $(\bG^F\rtimes E)_{\bf K}$.

\subsubsection{Relative Weyl groups}\label{relWeylGp}
\indent

Let ${\bf L}$ be a Levi subgroup of $\bG$, that is, the centraliser of some torus in $\bG$.
The relative Weyl group of ${\bf L}$ in $\bG$ (see \cite[Section 4]{BMGenHC}) is defined to be 
$$W_{\bG}({\bf L}):=\NNN_{\bG}({\bf L})/{\bf L}\cong \NNN_{W_{\bG}}(W_{\bf L})/W_{\bf L}.$$
For $F$ a Steinberg endomorphism of $\bG$, if $\bL$ is $F$-stable, it follows that $\NNN_{\bG}({\bf L})$ is also $F$-stable.
Hence $F$ induces an automorphism $\sigma$ on $W_{\bG}(\bL)$ and by applying the Lang--Steinberg theorem, it follows that $$
\NNN_{\bG}({\bf L})^F/\bL^F\cong \Cent_{W_\bG(\bL)}(\sigma).
$$

\subsection{Weyl groups and extended Weyl groups}

\subsubsection{Weyl groups in classical type}\label{WeylGp}
\indent

Fix $\{e_i\}$ an orthonormal basis of $\mathbb{R}^n$, then as in \cite[1.8.8]{GLS3}, the root systems for classical types can be taken to be:
$$
\begin{array}{ccc}
\Phi_{A_{n-1}} & = & \{ e_i-e_j\mid 1\leq i\ne j\leq n\}\\
\Phi_{B_n} & = & \{ e_i\pm e_j, \pm e_i \mid 1\leq i\ne j\leq n\}\\
\Phi_{C_n} & = & \{ e_i\pm e_j, \pm 2e_i \mid 1\leq i\ne j\leq n\}\\
\Phi_{D_n} & = & \{ e_i\pm e_j\mid 1\leq i\ne j\leq n\}\\
\end{array}
$$

For systems of type $B_n$ and $C_n$ it will be assumed $n\geq 2$, while for those of type $D_n$ it will be assumed $n \geq 4$.

\begin{notation}
For $\mathcal{J}\subseteq \ul{n}$ and a root system $\Phi\subseteq \langle e_i\mid 1\leq i\leq n\rangle$ set $\Phi_\mathcal{J}=\Phi\cap \langle e_i\mid i\in\mathcal{J}\rangle$.
In particular, let $\Phi_{A_{\mathcal{J}}}:=\Phi_{A_{n-1}}\cap\langle e_i\mid i\in\mathcal{J}\rangle$ and similarly define $\Phi_{B_\mathcal{J}}$, $\Phi_{C_\mathcal{J}}$ and $\Phi_{D_\mathcal{J}}$.
Additionally for a partition $\mathcal{I}=\{\mathcal{J}_i\mid 1\leq i\leq r\}\vdash\ul{\mathcal{I}}$, then $\Phi_{A_{\mathcal{I}}}:=\sqcup_{i=1}^r \Phi_{A_{\mathcal{J}_i}}$.
\end{notation}

\begin{rem}\label{CanWeylGpHom}
Recall that if $\Phi$ is a root system of type $A_{n-1}$ ($n\geq 3$) or $D_n$ there is a length preserving graph automorphism $\gamma$ stabilising the set of simple roots $\Pi$, which has order $2$.
By defining $e_{-i}:=-e_i$, as in \cite[Notation 3.1]{SpSylowTori2}, for $\Phi$ of classical type as above there is a canonical homomorphism $f:\langle W_\Phi,\gamma\rangle \rightarrow \Symm_{\pm n}$ such that $x\cdot e_i=e_{f(x)(i)}$  for any $x\in \langle W_\Phi,\gamma\rangle$ .
Moreover, for $\ol{f}: \langle W_\Phi,\gamma\rangle \rightarrow \Symm_n$ given by composing $f$ with the projection map, each
$x\in \langle W,\gamma\rangle$ determines a sign function $\epsilon_{f(x)}:\ul{n}\rightarrow \{\pm 1\}$ via the equation $x\cdot e_i=\epsilon_{f(x)}(i) e_{\ol{f}(x)(i)}$.  

For $r_\alpha$, the reflection corresponding to a root $\alpha\in \Phi$, direct calculation yields $f(r_{e_i-e_j})=(i,j)(-i,-j)$, $f(r_{e_i+ e_j})=(i,-j)(-i,j)$ and $f(r_{2e_i})=f(e_i)=(i,-i)$.
Hence it follows that $f(W_{C_n})=f(W_{B_n})=\sgnSymm_n$ and $f(W_{A_{n-1}})=\Symm_n\leq \sgnSymm_n$.
Furthermore noting that $f(r_{e_i-e_j})f(r_{e_i+e_j})=(i,-i)(j,-j)$ implies $f(W_{D_n})=\langle (i,-i)(j,-j)\mid 1\leq i\ne j\leq n\rangle\rtimes \Symm_n$, which has index $2$ in $\sgnSymm_n$. 
\end{rem}

Up to $W_\Phi$-conjugation, the following set of simple roots $\Pi$ and thus also the corresponding set of positive roots $\Phi^+$ are fixed:
$$
\begin{array}{ccc}
\text{ Type } & \Pi & \Phi^+ \\
A_{n-1}& \{e_i-e_{i+1}\mid 1\leq i\leq n-1\} &  \{e_i-e_j\mid 1\leq i<j\leq n\}\\
{B_n} & \{e_i - e_{i+1}, e_n \mid 1\leq i\leq n-1\} &  \{e_i\pm e_j, e_k \mid 1\leq i<j\leq n, 1\leq k\leq n\}\\ 
{C_n} & \{e_i - e_{i+1}, 2e_n \mid 1\leq i\leq n-1\} & \{e_i\pm e_j, 2e_k \mid 1\leq i<j\leq n, 1\leq k\leq n\}\\ 
{D_n} & \{e_i - e_{i+1}, e_{n-1}+e_n \mid 1\leq i\leq n-1\} & \{e_i\pm e_j \mid 1\leq i<j\leq n\}\\ 
\end{array}
$$ 

For the set of simple roots $\Pi_{D_n}$ ($n\geq 4$) given above, the graph automorphism $\gamma$ of order $2$ swaps the vertices $e_{n-1}+e_{n}$ and $e_{n-1}-e_n$ and fixes the roots $e_i-e_{i+1}$ for $1\leq i\leq n-2$.
Hence by considering $W_{D_n}\leq W_{B_n}$, this graph automorphism is induced by the reflection $r_{e_n}$.
In particular, for this choice, $f(\gamma)=f(r_{e_n})=(n,-n)$.

\subsubsection{Extended Weyl group}\label{ExtWeylGp}
\indent

Recall from Section~\ref{ChevRel} that the extended Weyl group was defined to be $V_\Phi:=\GenGp{{\bf n}_\alpha\mid \alpha\in \Phi}$ and contained the normal subgroup $H_\Phi:=\GenGp{h_\alpha\mid \alpha\in\Phi}$ of order $(2,q-1)^{|\Pi|}$ such that $V_\Phi/H_\Phi\cong W_\Phi$.
If $\Phi_1,\Phi_2$ are orthogonal root subsystems of $\Phi$ and $\Pi_i$ is a set of  simple roots in $\Phi_i$, then $\Pi_1\sqcup\Pi_2$ forms a set of simple roots in $\Phi_1\sqcup\Phi_2$.
Moreover, $W_{\Phi_1\sqcup \Phi_2}\cong W_{\Phi_1}\times W_{\Phi_2}$ and by comparing group orders $H_{\Phi_1\sqcup \Phi_2}=H_{\Phi_1}\times H_{\Phi_2}$.
Hence $V_{\Phi_1}\cap V_{\Phi_2}\leq H_\Phi\cap V_{\Phi_i}=H_{\Phi_i}$ for $i=1,2$.
However $H_{\Phi_1}\cap H_{\Phi_2}$ is trivial and thus $V_{\Phi_1}\cap V_{\Phi_2}=1$.

\begin{cor}\label{IDisjVDirProd}
Let $\Phi$ be a root system of type $A_{n-1},C_n$ or $D_n$ and  $\mathcal{J}_1,\dots, \mathcal{J}_k$ disjoint subsets of $\ul{n}$.
Then $\langle V_{\Phi_{\mathcal{J}_i}}\mid 1\leq i\leq k\rangle= V_{\Phi_{\mathcal{J}_1}}\times \dots \times V_{\Phi_{\mathcal{J}_k}}$.
If furthermore there is some $\mathcal{J}\subset \ul{n}$ and elements $g_1,\dots, g_k\in V_{\Phi}$ such that $(V_{\Phi_\mathcal{J}})^{g_i}\leq V_{\Phi_{\mathcal{J}_i}}$, then the map 
$$
\begin{array}{ccc}
V_{\Phi_{\mathcal{J}}} & \rightarrow & V_{\Phi} \\
 x & \mapsto & \prod_{i=1}^k x^{g_i}\\
\end{array}
$$
is an injective group homomorphism.
\end{cor}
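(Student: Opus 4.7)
The plan is to (1) establish pairwise commutativity $[V_{\Phi_{\mathcal{J}_i}}, V_{\Phi_{\mathcal{J}_j}}]=1$ for $i\ne j$, (2) deduce the internal direct product statement (a) by induction on $k$ from the $k=2$ observation recorded in the paragraph preceding the corollary, and (3) derive the injective homomorphism statement (b) from (a) together with the fact that conjugation by each $g_i$ is an automorphism of $V_{\Phi}$.

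For (1), fix $i\ne j$ and roots $\alpha\in\Phi_{\mathcal{J}_i}$, $\beta\in\Phi_{\mathcal{J}_j}$. The key input is that in every root system of type $A_{n-1}$, $C_n$ or $D_n$ each root has the form $\pm e_a\pm e_b$ with $a\ne b$, or (only in type $C_n$) $\pm 2e_a$; in particular any single root involves at most two distinct indices from $\ul n$. Since $\mathcal{J}_i\cap\mathcal{J}_j=\emptyset$, a short case split on the number of indices used by $\alpha$ and $\beta$ shows that $c\alpha+d\beta\notin\Phi$ for any nonzero integers $c,d$: if at least one of $\alpha,\beta$ has support of size two the sum uses at least three distinct indices, which no root in these types supports; and in the remaining type-$C_n$ case $\alpha=\pm 2e_a$, $\beta=\pm 2e_b$ the coefficients $\pm 2c,\pm 2d$ are too large for a root $\pm e_x\pm e_y$, while having support of size two rules out $\pm 2e_x$. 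The Chevalley commutator relations of Section~\ref{ChevRel} then give $[x_{\epsilon\alpha}(s),x_{\epsilon'\beta}(t)]=1$ for all signs $\epsilon,\epsilon'$ and scalars $s,t$, so each ${\bf n}_\alpha=x_\alpha(1)x_{-\alpha}(-1)x_\alpha(1)$ commutes with ${\bf n}_\beta$, and generating gives $[V_{\Phi_{\mathcal{J}_i}},V_{\Phi_{\mathcal{J}_j}}]=1$.

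For (2), the base case $k=2$ is immediate from the paragraph preceding the corollary: since $\Phi_{\mathcal{J}_1}$ and $\Phi_{\mathcal{J}_2}$ are orthogonal root subsystems one has $V_{\Phi_{\mathcal{J}_1}}\cap V_{\Phi_{\mathcal{J}_2}}\le H_{\Phi_{\mathcal{J}_1}}\cap H_{\Phi_{\mathcal{J}_2}}=1$, and combined with (1) this yields $\langle V_{\Phi_{\mathcal{J}_1}},V_{\Phi_{\mathcal{J}_2}}\rangle=V_{\Phi_{\mathcal{J}_1}}\times V_{\Phi_{\mathcal{J}_2}}$. For $k\ge 3$ I would apply the inductive hypothesis to $\mathcal{J}_2,\dots,\mathcal{J}_k$, producing the direct product $V_{\Phi_{\mathcal{J}_2}}\times\cdots\times V_{\Phi_{\mathcal{J}_k}}$ inside $V_{\Phi_{\mathcal{J}_2\sqcup\cdots\sqcup\mathcal{J}_k}}$, and then reapply the $k=2$ case to the orthogonal pair $\mathcal{J}_1$ and $\mathcal{J}_2\sqcup\cdots\sqcup\mathcal{J}_k$.

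For (b), each $\phi_i\colon x\mapsto x^{g_i}$ is an injective group homomorphism $V_{\Phi_{\mathcal{J}}}\to V_{\Phi_{\mathcal{J}_i}}$, being the restriction to $V_{\Phi_{\mathcal{J}}}$ of conjugation by $g_i$, which is an automorphism of $V_\Phi$. Thanks to the pairwise commutativity from (1), factors indexed by distinct $i$ can be freely reordered, giving $\phi(xy):=\prod_i\phi_i(xy)=\prod_i\phi_i(x)\phi_i(y)=\bigl(\prod_i\phi_i(x)\bigr)\bigl(\prod_i\phi_i(y)\bigr)=\phi(x)\phi(y)$, so $\phi$ is a homomorphism; and if $\phi(x)=1$ the direct-product decomposition from (a) forces $\phi_i(x)=1$ for every $i$, whence $x=1$ by injectivity of any single $\phi_i$. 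The main obstacle is really the index-counting step in (1); this is also where the exclusion of type $B_n$ matters, since in $B_n$ one has $e_i+e_j\in\Phi$ for $\mathcal{J}_1=\{i\}$, $\mathcal{J}_2=\{j\}$ disjoint, so the corresponding root subgroups fail to commute.
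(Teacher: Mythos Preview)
Your proof is correct and follows essentially the same strategy as the paper: establish $[V_{\Phi_{\mathcal{J}_i}},V_{\Phi_{\mathcal{J}_j}}]=1$ via the Chevalley commutator relations, deduce the direct product by induction using the trivial-intersection fact from Section~\ref{ExtWeylGp}, and then read off the homomorphism and injectivity from the direct product structure. Your explicit index-counting in step~(1), showing that no $c\alpha+d\beta$ is a root, is in fact more careful than the paper's one-line appeal to ``orthogonal roots commute'' (which is not literally true in type~$C_n$, e.g.\ $e_1-e_2\perp e_1+e_2$ yet their sum $2e_1$ is a root), though the paper only applies that claim in the disjoint-support situation where your argument shows it does hold.
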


\begin{proof}
As $\Phi$ is a root system of type $A_{n-1}$, $C_n$ or $D_n$, the Chevalley relations imply $[x_\alpha(t),x_\beta(u)]=1$ for any pair of orthogonal roots $\alpha$ and $\beta$ in $\Phi$.
Thus for any $l\leq k-1$, it follows that $[V_{\Phi_{\mathcal{J}_1\cup\dots \cup \mathcal{J}_l}} , V_{\Phi_{\mathcal{J}_{l+1}}}]=1$.
Moreover, as the root subsystems $\Phi_{\mathcal{J}_1\cup\dots \cup \mathcal{J}_l}$ and $\Phi_{\mathcal{J}_{l+1}}$ are orthogonal, the intersection $V_{\Phi_{\mathcal{J}_1\cup\dots \cup \mathcal{J}_l}} \cap V_{\Phi_{\mathcal{J}_{l+1}}}=1$.
Hence the first statement follows by an inductive argument after observing that $\langle V_{\Phi_{\mathcal{J}_i}}\mid 1\leq i \leq l\rangle \leq V_{\Phi_{\mathcal{J}_1\cup\dots\cup \mathcal{J}_l}}$.

The defined map is a homomorphism of groups as the commutators $[V_{\Phi_{\mathcal{J}_i}},V_{\Phi_{\mathcal{J}_j}}]=1$ whenever $i\ne j$.
Moreover, $\langle V_{\Phi_{\mathcal{J}_i}}\mid 1\leq i\leq k-1 \rangle\cap V_{\Phi_{\mathcal{J}_k}}=1$, implies that the map must be injective.
\end{proof}

\begin{rem}
Classical type $B_n$ has to be excluded from Corollary~\ref{IDisjVDirProd} as $e_1$ and $e_2$ are orthogonal but $e_1+e_2\in\Phi_{B_n}$.
Thus for two orthogonal root subsystems $\Phi_1$ and $\Phi_2$ in $\Phi_{B_n}$, the groups $V_{\Phi_1}$ and $V_{\Phi_2}$ need not commute.
\end{rem}

\subsubsection{Parabolic root subsystems}
\indent
Let $\Phi$ be a root system.
A root subsystem $\Gamma$ of $\Phi$ is called parabolic if there is a set of simple roots $\Pi$ of $\Phi$ such that some subset $I\subset \Pi$ forms a set of simple roots of $\Gamma$.

\begin{lm}\label{ParaSubSys}
Let $\Phi\in \{A_{n-1}, B_n, C_n,D_n\}$ and $\Phi'$ a parabolic root subsystem.
Then up to $W_\Phi$-conjugation, there is a integer $ 0\leq m\leq n$ and a partition $\mathcal{I}\vdash \ul{m}$ such that
 $$\Phi'=\Phi_{\ul{n}\setminus \ul{m}}\sqcup \Phi_{A_{\mathcal{I}}},$$
except in type $D_n$ where there is also an additional case $\Phi'=r_{e_n}(\Phi_{A_{\ul{n}\setminus\ul{m}}})\sqcup \Phi_{A_{\mathcal{I}}}$. 
\begin{proof}
Up to $W_\Phi$-conjugation it can be assumed that $\Phi'$ is generated by a subset $I\subseteq \Pi$, for the $\Pi$ given in Section~\ref{WeylGp}.

If $I\subset \Pi_{A_{n-1}}$, then there exists a partition on $\ul{n}$ defined via $i\sim j$ if and only if $e_i-e_j\in I$.
Thus there exists a partition $\mathcal{I}$ of $\ul{n}$ such that $\Phi'=\Phi_{A_{\mathcal{I}}}$.

For types $B_n$ and $C_n$, if $e_n\not\in I$ (respectively $2e_n$), then $I\subseteq \Pi_{A_{n-1}}$ and the result follows.
Otherwise, there exists a maximal $m\leq n$ such that $e_{m-1}-e_m\not\in I$.
Then it follows that $\Phi'=(\Phi\cap \langle e_i\mid m\leq i\leq n\rangle)\sqcup \Delta$, with $\Delta$ a subsystem of $\Phi_{A_{m-2}}$.

Similarly in type $D_n$ it can be assumed that $e_{n-1}+e_n\in I$.
Let $m$ be maximal such that $e_{m-1}-e_m\not\in I$.
If $m<n$, it follows that $e_{i}-e_{j}\in \Phi'$ for $i,j\geq m$ and as $\Phi'$ is a subsystem $r_{e_{n-1}-e_j}(r_{e_n-e_i}(e_n+e_{n-1})=e_i+e_j\in \Phi'$.
Thus $\Phi'=(\Phi\cap \langle e_i\mid m\leq i\leq n\rangle)\sqcup \Delta$, with $\Delta$ a subsystem of $\Phi_{A_{m-2}}$.
If $m=n$, then $\{ e_n+e_{n-1},e_{n-2}-e_{n-1},\dots, e_{k}-e_{k+1}\}\subset I$ for some $k$ and by the same argument as before $\{e_n+e_i, e_i-e_j\mid k\leq i,j<n\}\subset \Phi'$.
Hence $\Phi'=\{e_n+e_i, e_i-e_j\mid k\leq i,j<n\}\sqcup \Delta$, with $\Delta$ a subsystem of $A_{k-2}$.
\end{proof}
\end{lm}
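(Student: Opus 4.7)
The plan is to exploit the fact that $W_\Phi$ acts transitively on the set of bases of $\Phi$, so up to $W_\Phi$-conjugation we may assume $\Phi'$ is generated by a subset $I$ of the fixed base $\Pi$ from Section~\ref{WeylGp}. From there the argument splits by type.

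For type $A_{n-1}$, the subset $I\subseteq \{e_i-e_{i+1}\mid 1\leq i\leq n-1\}$ determines a partition $\mathcal{I}$ of $\ul{n}$ whose blocks are the connected components of the graph on $\ul{n}$ with edges $\{i,i+1\}$ indexed by $e_i-e_{i+1}\in I$. Then $\Phi'=\Phi_{A_\mathcal{I}}$, which is the stated form with $m=n$.

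For types $B_n$ and $C_n$, the fixed base has a unique short/long simple root, $e_n$ or $2e_n$. If this root is not in $I$ we reduce immediately to type $A$. Otherwise let $m$ be the largest integer with $e_{m-1}-e_m\notin I$, with $m=1$ if no such index exists. Then $I$ contains every $e_i-e_{i+1}$ for $m\leq i\leq n-1$ together with the end root, and these generate the full $B/C$ subsystem on the coordinates $\{m,\ldots,n\}$, i.e.\ $\Phi_{\ul{n}\setminus \ul{m-1}}$. The remaining elements of $I$ lie in $\Pi_{A_{m-2}}$ and contribute the type-$A$ piece $\Phi_{A_\mathcal{I}}$ for a partition $\mathcal{I}\vdash \ul{m-1}$, yielding the claim after shifting indices.

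For type $D_n$ the base contains $e_{n-1}+e_n$ but not $e_{n-1}-e_n$; when $e_{n-1}+e_n\notin I$ the reduction to type $A$ applies. When $e_{n-1}+e_n\in I$, take $m$ maximal with $e_{m-1}-e_m\notin I$: if $m<n$, closure of the subsystem under reflections forces $e_i+e_j\in \Phi'$ for all $m\leq i<j\leq n$, producing the full $D$-piece on the last coordinates, and the remaining type-$A$ piece arises as before. If $m=n$, only $e_{n-1}+e_n$ and some chain $e_k-e_{k+1},\ldots,e_{n-2}-e_{n-1}$ lie in $I$, and closure under reflections yields the hybrid subsystem $\{e_n+e_i,\; e_i-e_j\mid k\leq i<j\leq n-1\}$, which one verifies is precisely $r_{e_n}(\Phi_{A_{\ul{n}\setminus \ul{k-1}}})$. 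This is the origin of the additional case recorded in the statement.

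The main obstacle is the $D_n$ analysis: one has to check that the $m=n$ configuration genuinely produces a parabolic subsystem that is \emph{not} $W_{D_n}$-conjugate to one on the standard locus, precisely because the reflection $r_{e_n}$ effecting the identification lies in $W_{B_n}\setminus W_{D_n}$. Once this is pinned down the remaining work is routine bookkeeping with root strings and reflections.
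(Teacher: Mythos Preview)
Your proposal is correct and follows essentially the same approach as the paper's proof: reduce to a subset $I$ of the fixed base $\Pi$, then case-split by type, using the presence or absence of the distinguished end root ($e_n$, $2e_n$, or $e_{n-1}+e_n$) and the maximal $m$ with $e_{m-1}-e_m\notin I$ to separate the non-$A$ piece from the residual type-$A$ partition. Your closing remark on why $r_{e_n}\notin W_{D_n}$ explains the genuine distinctness of the extra $D_n$ case is a helpful gloss the paper omits, but the core argument is the same.
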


\begin{rem}
If $n-m\geq 2$ in type $B_n,C_n$ (respectively  $n-m \geq 4$ type $D_n$), then $\Phi_{\ul{n}\setminus\ul{m}}=\Phi_{B_{\ul{n}\setminus\ul{m}}},\Phi_{C_{\ul{n}\setminus\ul{m}}}$ (respectively $\Phi_{D_{\ul{n}\setminus\ul{m}}}$).
The following table explicitly describes the structure of $\Phi_{\ul{n}\setminus\ul{m}}$ in the remaining cases:
$$
     \begin{array}{c|c|c|c}
     \text{ Type: } & {B_n, C_n} &  \multicolumn{2}{|c}{D_n}  \\
     n-m & 1 &  2  & 3 \\
     \Phi_{\ul{n}\setminus\ul{m}} & \{ e_n\}, \{2e_n\} &  \Phi_{A_{\mathcal{I}_{-1}}} \sqcup r_{e_n}(\Phi_{A_{\mathcal{I}_{-1}}}) & \{\pm e_i \pm e_j\} \\
\text{ Isom. Type}  & \Phi_{A_1}  & \Phi_{A_1}\times \Phi_{A_1}  & \Phi_{A_3} \\
     \end{array}
$$
\end{rem}

\begin{notation}\label{ParamI}
Let $\Phi$ a root system of type $\tA_n$, $\tB_n$, $\tC_n$ or $\tD_n$. 
For an integer $0\leq m\leq n$ and $\mathcal{I}\vdash \ul{m}$ set $\Phi_{m,\mathcal{I}}:=\Phi_{\ul{n}\setminus\ul{m}}\sqcup\Phi_{A_\mathcal{I}}$.
The case $\Phi$ is of type $\tD_n$ and $m=n-1$ is omitted as in this case $\Phi_{n-1,\mathcal{I}}=\Phi_{n,\mathcal{I}\sqcup\{n\}}$.
When $\Phi$ is of type $\tA_{n-1}$ then $\Phi_{m,\mathcal{I}}=\Phi_{n,\mathcal{I}\sqcup\{\ul{n}\setminus\ul{m}\}}$, hence it shall be assumed in this case $m=n$.
\end{notation}

\begin{cor}\label{xStabPara}
Let $\Phi$ be a root system of classical type, $\Phi'=\Phi_{m,\mathcal{I}}$ be a parabolic subsystem as in Notation~\ref{ParamI} and $x\in \langle W_\Phi,\gamma\rangle$. 
Then $\Phi'$ is $x$-stable if and only if $\mathcal{I}$ is $\ol{f}(x)$-stable and $\epsilon_{f(x)}\mid_{\mathcal{J}}$ is constant for all $\mathcal{J}\in \mathcal{I}$, for the functions $\ol{f}$ and $\epsilon_{f(x)}$ defined in Remark~\ref{CanWeylGpHom}.
\begin{proof}
This follows by noting that $\Phi_{\ul{n}\setminus\ul{m}}$ contains roots of the form $e_i$ or $\pm(e_i\pm e_j)$ and $x\cdot (e_i-e_j)=\epsilon_{f(x)}(i)e_{\ol{f}(x)(i)}-\epsilon_{f(x)}(j)e_{\ol{f}(x)(j)}$.
\end{proof}
\end{cor}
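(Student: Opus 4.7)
Both directions reduce to a direct analysis of the action of $x$ on roots via the formula $x\cdot e_i=\epsilon_{f(x)}(i)\,e_{\ol{f}(x)(i)}$ from Remark~\ref{CanWeylGpHom}. The two pieces of $\Phi'=\Phi_{\ul{n}\setminus\ul{m}}\sqcup\Phi_{A_\mathcal{I}}$ are distinguished by root shape: $\Phi_{A_\mathcal{I}}$ contains only roots $e_i-e_j$ with $i,j$ in a common block of $\mathcal{I}$ (so both indices lie in $\ul{m}$), while $\Phi_{\ul{n}\setminus\ul{m}}=\Phi\cap\langle e_i : i\in\ul{n}\setminus\ul{m}\rangle$.

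For $(\Leftarrow)$, assuming $\mathcal{I}$ is $\ol{f}(x)$-stable, so are $\ul{m}$ and $\ul{n}\setminus\ul{m}$. On a root $e_i-e_j\in\Phi_{A_\mathcal{J}}$, constancy of $\epsilon_{f(x)}|_\mathcal{J}$ yields $x\cdot(e_i-e_j)=\epsilon_{f(x)}(i)(e_{\ol{f}(x)(i)}-e_{\ol{f}(x)(j)})\in\Phi_{A_{\ol{f}(x)(\mathcal{J})}}\subseteq\Phi_{A_\mathcal{I}}$, and on $\Phi_{\ul{n}\setminus\ul{m}}$ the image is a root whose support lies in $\ol{f}(x)(\ul{n}\setminus\ul{m})=\ul{n}\setminus\ul{m}$, hence in $\Phi_{\ul{n}\setminus\ul{m}}$.

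For $(\Rightarrow)$, the plan is first to show $\ol{f}(x)$ preserves $\ul{n}\setminus\ul{m}$. In types $B$ and $C$ the roots $\pm e_k$ (respectively $\pm 2e_k$) for $k\in\ul{n}\setminus\ul{m}$ have no counterpart in $\Phi_{A_\mathcal{I}}$, so their $x$-images must remain in $\Phi_{\ul{n}\setminus\ul{m}}$, giving $\ol{f}(x)(k)\in\ul{n}\setminus\ul{m}$. In type $D$ with $n-m\geq 2$, applying $x$ simultaneously to $e_i-e_j$ and $e_i+e_j$ for $i,j\in\ul{n}\setminus\ul{m}$ shows that at most one of the resulting vectors $\epsilon_{f(x)}(i)e_{\ol{f}(x)(i)}\mp\epsilon_{f(x)}(j)e_{\ol{f}(x)(j)}$ can have the form $e_k-e_l$ needed to sit in $\Phi_{A_\mathcal{I}}$, so the other forces $\ol{f}(x)(i),\ol{f}(x)(j)\in\ul{n}\setminus\ul{m}$; the cases type $A$ or $\ul{n}=\ul{m}$ are vacuous. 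Given this, for each $\mathcal{J}\in\mathcal{I}$ with $|\mathcal{J}|\geq 2$ and distinct $i,j\in\mathcal{J}$, the image $x\cdot(e_i-e_j)$ has support in $\ol{f}(x)(\ul{m})=\ul{m}$ and lies in $\Phi'$, hence in $\Phi_{A_\mathcal{I}}$; matching the shape $e_k-e_l$ forces $\epsilon_{f(x)}(i)=\epsilon_{f(x)}(j)$ and places $\ol{f}(x)(i),\ol{f}(x)(j)$ in a common block $\mathcal{J}'\in\mathcal{I}$, so $\ol{f}(x)(\mathcal{J})\subseteq\mathcal{J}'$ and $\epsilon_{f(x)}|_\mathcal{J}$ is constant. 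Equality $\ol{f}(x)(\mathcal{J})=\mathcal{J}'$ follows by symmetrising with $x^{-1}$, and a brief pigeonhole (a non-singleton target of a singleton block would produce via $x^{-1}$ a block of size $\geq 2$ containing the singleton, a contradiction) handles the case $|\mathcal{J}|=1$. I expect the main technical hurdle to be the type $D$ case of the first step, where the absence of short roots forces the paired analysis of $e_i\pm e_j$ rather than a single short-root argument.
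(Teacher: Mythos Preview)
Your proof is correct and follows essentially the same approach as the paper's: both rest on the formula $x\cdot(e_i-e_j)=\epsilon_{f(x)}(i)e_{\ol{f}(x)(i)}-\epsilon_{f(x)}(j)e_{\ol{f}(x)(j)}$ together with the observation that roots in $\Phi_{\ul{n}\setminus\ul{m}}$ are distinguished from those in $\Phi_{A_\mathcal{I}}$ by their shape. The paper's proof is a one-line sketch giving only these two ingredients, whereas you have written out the case analysis (including the paired treatment of $e_i\pm e_j$ in type $\tD$ and the singleton pigeonhole) in full.
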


\begin{rem}
Let $\Gamma$ be a parabolic subsystem of $\Phi$ and let $w\in W_{\Phi}$ such that $\Gamma:=w\cdot \Phi_{m,\mathcal{I}}$ for $\Phi_{m,\mathcal{I}}$ as in Notation~\ref{ParamI}.
Then for $x\in \langle W_{\Phi},\gamma\rangle$, the system $\Gamma$ is $x$-stable if and only if $\Phi_{m,\mathcal{I}}$ is $x^w$-stable.
In particular the parabolic subsystem $r_{e_n}(\Phi_{A_{\ul{n}\setminus\ul{m}\sqcup \mathcal{I}}})$ in type $D_n$ can be dealt with by considering when $\Phi_{A_{\ul{n}\setminus\ul{m}\sqcup \mathcal{I}}}$ is $x^{(n,-n)}$-stable, 
 i.e. $\ol{f}(x^{(n,-n)})\cdot\mathcal{J}\in \mathcal{I}\sqcup \{\ul{n}\setminus\ul{m}\}$ and $\epsilon_{x^{(n,-n)}}|_\mathcal{J}$ is constant for each $\mathcal{J}\in\mathcal{I}\sqcup  \{\ul{n}\setminus\ul{m}\}$.
\end{rem}

\subsection{$d$-split Levi subgroups}
\indent

In the following the triality group arising in type $D_4$ will be excluded. A $d$-split Levi subgroup of ${\bG}$ is defined to be the centraliser of some $d$-torus of ${\bG}$ \cite[3.5.1]{GeckMal}.
Any $d$-torus lies within a maximal $d$-torus, and therefore any $d$-split Levi subgroup contains a minimal $d$-split Levi subgroup.
Moreover, for $({\bf G},F)$ a connected reductive group of Lie type, a Sylow $d$-torus is determined by a so called Sylow twist $vF$ (see \cite[Section 2]{SpSylowTori2}), for some element $v\in \langle V, {\bf T}\rangle$ with $V$ the extended Weyl group in ${\bf G}$.

\subsubsection{Sylow twists}
\indent

Assume that ${\bf G}^F$ is a classical group and ${\bf n}\in \NNN_{\bf G}({\bf T})$.
The element ${\bf n}F$ acts on ${\bf G}$ via conjugation and induces an element $\rho({\bf n})\phi\in\langle W,\gamma\rangle$ (as the triality group in type $D_4$ has been excluded).

\begin{rem}\label{SylowTwist}\label{d0Defn}
In \cite{SpSylowTori2} suitable Sylow twists have been provided, here only the required notation is recalled.

Let $d$ be a positive integer.
Set 
$$
\begin{array}{c|c|c} 
A_{n-1} &  {}^2A_{n-1} & B_n,C_n, D_n,{}^2D_n\\
\hline
d_0:=d 
&
 d_0:= \left\{
     \begin{array}{lr}
       d & 4\mid d\\
       d/2 & d\equiv 2 \text{ mod }4\\
       2d & d \text{ odd }\\
      \end{array}
   \right.
& 
 d_0:= \left\{
     \begin{array}{lr}
       d & d \text{ odd }\\
       d/2 & d \text{ even }\\
      \end{array}
   \right.
\\
\end{array}
$$
It can be determined whether ${\bf n}F$ is a Sylow $d$-twist of $({\bf G},F)$ by understanding the orbits of $\pi:=f(\rho({\bf n})\phi)$ on the set $\{\pm 1,\dots, \pm n\}$, for $f$ as defined in Remark~\ref{CanWeylGpHom} (see \cite[Remark 3.2]{SpSylowTori2}).
In particular, each orbit must have length $d_0$. 
\end{rem}

In the next section a condition upon the partition $\mathcal{I}$ will be provided to determine when the parabolic subsystem $\Phi_{m,\mathcal{I}}$ is $\ol{\pi}$-stable.
To achieve this the following result concerning powers of a primitive $d^{\rm th}$ root of unity and the sign function $\epsilon_\pi$ from Remark~\ref{CanWeylGpHom} is required. 

\begin{cor}\label{Znepm1}
Let $\zeta$ be a primitive $d^{\text{th}}$ root of unity and ${\bf n}F$ be a Sylow twist with $\pi:=f(\rho({\bf n})\phi)$.
If $\bG$ is of type $A_{n-1}$ then $\zeta^k\ne \epsilon_{\pi^k}(i)$ for any $0<k<d_0$.
Meanwhile if $\bG$ is of type $B_n,C_n,D_n$, then $\zeta^k\ne\pm \epsilon_{\pi^k}(i)$ for any $0<k<d_0$.
\end{cor}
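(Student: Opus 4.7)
The plan is to begin with the elementary observation that, for $\zeta$ a primitive $d$-th root of unity, $\zeta^k=1$ iff $d\mid k$ and $\zeta^k=-1$ iff $d$ is even and $k\equiv d/2\pmod d$. Combined with $\epsilon_{\pi^k}(i)\in\{1,-1\}$, this reduces the claim to showing that, for $0<k<d_0$, $\zeta^k$ cannot hit $\epsilon_{\pi^k}(i)$ (or $\pm\epsilon_{\pi^k}(i)$ in the classical types); the values of $d_0$ in Remark~\ref{d0Defn} are tailored so that this is automatic in most cases.

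For types $\tB_n$, $\tC_n$, $\tD_n$ I would prove the stronger statement $\zeta^k\ne\pm 1$ for $0<k<d_0$, which is independent of $\epsilon_{\pi^k}(i)$. By Remark~\ref{d0Defn}, $d_0=d$ when $d$ is odd and $d_0=d/2$ when $d$ is even; in the first case $\zeta^k\ne 1$ because $0<k<d$, and $\zeta^k=-1$ is impossible because $d$ is odd, while in the second case $\zeta^k\ne 1$ for the same reason and $\zeta^k=-1$ would force $k\ge d/2=d_0$.

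For type $\tA_{n-1}$ I would split according to whether the Frobenius involves a non-trivial graph automorphism. In the untwisted case $\pi\in\Symm_n\le\sgnSymm_n$, so $\epsilon_{\pi^k}(i)=1$ for all $i$ and $k$, and $d_0=d$ reduces the claim to $\zeta^k\ne 1$ for $0<k<d$. In the twisted case $\pi=w\cdot f(\gamma)$ with $w\in\Symm_n$ and, by Remark~\ref{CanWeylGpHom}, $f(\gamma)(i)=-(n+1-i)$, which flips the sign of every positive index; since $w$ preserves signs, a short induction gives $\epsilon_{\pi^k}(i)=(-1)^k$ for all $i\in\ul n$ and $k\ge 0$. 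One then compares $\zeta^k$ with $(-1)^k$ in the three sub-cases of Remark~\ref{d0Defn}: for $d_0=d/2$ (with $d\equiv 2\pmod 4$) $\zeta^k\ne\pm 1$ in range; for $d_0=d$ (with $4\mid d$) the only candidate $\zeta^k=-1$ occurs at $k=d/2$, which is even, so $(-1)^{d/2}=1\ne -1$; and for $d_0=2d$ (with $d$ odd) the only candidate $\zeta^k=1$ occurs at $k=d$, but $(-1)^d=-1\ne 1$. The main obstacle, modest as it is, is this last sub-case, where the range $0<k<d_0=2d$ does admit $k=d$ with $\zeta^d=1$, so the sign-flipping identity $\epsilon_{\pi^k}(i)=(-1)^k$ is indispensable; establishing this identity from the explicit form of $f(\gamma)$ is therefore the only non-routine step of the proof.
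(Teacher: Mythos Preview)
Your proof is correct and follows essentially the same approach as the paper's: both reduce to checking, case by case over the values of $d_0$ from Remark~\ref{d0Defn}, whether $\zeta^k\in\{\pm 1\}$ can occur in the range $0<k<d_0$, and in the twisted type~$\tA$ case both use the identity $\epsilon_{\pi^k}(i)=(-1)^k$ to rule out the borderline values $k=d$ (when $d$ is odd) and $k=d/2$ (when $4\mid d$). The only minor difference is that the paper cites \cite[6.9]{SpringReg} for $\epsilon_{\pi^k}(i)=(-1)^k$, whereas you derive it directly from the explicit form $f(\gamma)(i)=-(n+1-i)$; your argument here is self-contained and entirely adequate.
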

\begin{proof}
First consider types $B_n,C_n$ and $D_n$.
If $d$ is odd, then $d=d_0$ and hence $\zeta^k=\pm 1$ implies that $d$ divides $k$.
For $d$ even, $d_0=\frac{d}{2}$ and $\zeta^k=\pm 1$ implies that $\frac{d}{2}$ divides $k$.

In type $A_{n-1}$, if $\phi=1$, then $\pi\in\Symm_n\leq \sgnSymm_n$ but $\epsilon_{\pi^k}(i)=1$ and so $\zeta^k=1$ implies $d$ divides $k$.
Thus assume $\phi$ to be non-trivial so that by \cite[6.9]{SpringReg} $\epsilon_{\pi^k}(i)=(-1)^k$.
If $d$ is odd, then $d_0=2d$ and $\zeta^k=\pm 1$ implies $d$ divides $k$. However $\zeta^d=1\ne -1=(-1)^d=\epsilon_{\pi^d}(i)$.
If $d$ is even then $\zeta^k=\pm 1$ implies that $d/2$ divides $k$.
As $d_0=d/2$ when $d\equiv 2$ mod $4$, it remains to consider the case $4\mid d$, with $d_0=d$.
If $k=\frac{d}{2}$, then $\zeta^k=-1$ but $\epsilon_{\pi^k}(i)=(-1)^k=1$.
\end{proof}

\subsubsection{Root systems for $d$-split Levi subgroups}\label{RootSysdLevi}
\indent

Take $\{e_1,\dots, e_n\}$ the standard basis of $\mathbb{R}^n$ that is orthonormal with respect to $\langle \indent ,\indent \rangle$.
Then in Section~\ref{WeylGp} the root system of $\bG$ with respect to the fixed maximal torus $\bT$ was identified with $\Phi\subset \{e_i\pm e_j,\pm2e_i,\pm e_i\mid 1\leq i\ne j\leq n\}$.
Let $\Phi^\vee:=\{\alpha^\vee\mid \alpha\in\Phi\}$ denote the set of co-roots as defined in Section~\ref{ChevRel}.
Denote by $\calV$ the $\mathbb{C}$-vector space defined by $\calV:=\Phi^\vee\otimes_{\mathbb{Z}} \mathbb{C}$, which by identifying $\alpha^\vee$ with the co-character $h_\alpha$, coincides with $Y(\bT)\otimes_\mathbb{Z}\mathbb{C}$ for $Y(\bT)$ the group of all co-characters of $\bT$. 

Let  ${\bf n}F$ be a Sylow $d$-twist of $({\bf G},F)$ and $\pi:=f(\rho({\bf{n}})\phi)\in \sgnSymm_n$. 
For $\zeta\in \mathbb{C}$ a primitive $d^{\rm th}$-root of unity set ${\calV}(\pi,\zeta)$ to be the eigenspace of $\pi$ on ${\calV}$ with eigenvalue $\zeta$.
By Remark~\ref{CanWeylGpHom}, $\pi$ defines a function $\epsilon_{\pi}$ and $\ol{\pi}:=\ol{f}(\rho({\bf{n}})\phi)\in \Symm_n$. Hence
$$ \calV(\pi,\zeta)\subseteq\{ \sum_{i=1}^n a_ie_i\mid  a_i\in \mathbb{C} \text{ with } a_{\ol{\pi}^k(i)}=\epsilon_{\pi^k}(i) \zeta^{-k} a_i \text{ for each } 1\leq i\leq n \text{ and } 1\leq k\leq d_0\}.$$

Assume that ${\bL}$ is a $d$-split Levi subgroup in ${\bG}$ with respect to $(\bG, {\bf n}F)$.
As $\bL$ is the centraliser of some $d$-torus of $\bG$, after conjugation in $\bG^{{\bf n}F}$, it can be assumed that $\bT\leq \bL$.
Let  $\Phi_{{\bL}}$ be the root system of $\bL$ with respect to $\bT$.
Then $\Phi_{\bL}$ is a $\pi$-stable parabolic root subsystem of $\Phi$ with ${\bf L}=\GenGp{{\bf T}, X_\alpha\mid \alpha\in\Phi_{{\bf L}}}$.
According to \cite[Proposition 3.5.5]{GeckMal}, ${\bf L}=\Cent_{{\bf G}}(Z({\bf L})^\circ_d)$, where $Z(\wt{\bf L})^\circ_d$ denotes the Sylow $d$-torus of $Z({\bf L})^\circ$.
As in \cite[Section 3B]{BrSpAM}, this translates to  
\begin{equation}\label{eq:wStabRoot}
\left( {\calV}(\pi,\zeta)\cap \Phi_{\bL}^{\perp} \right)^{\perp}\cap \Phi=\Phi_{\bL}.
\end{equation}

\begin{rem}
Lemma~\ref{ParaSubSys} shows it suffices to focus on the $\pi$-stable parabolic subsystems of the form $\Phi_{m,\mathcal{I}}$ for some $m\leq n$ and $\mathcal{I}\vdash \ul{m}$ as defined in Notation~\ref{ParamI}. 
As it is assumed $m\ne n-1$ in type $\tD_n$ and $m=n$ in type $\tA_{n-1}$, it follows that
$$\Phi_{m,\mathcal{I}}^\perp\subseteq\{\sum_{i=1}^n a_ie_i\mid a_i=0 \text{ for all } i\in\ul{n}\setminus\ul{m} \text{ and } a_i=a_j\in\mathbb{C} \text{ whenever } \{i,j\}\subset\mathcal{J}\in\mathcal{I}\}. 
$$
By Corollary~\ref{xStabPara}, $\mathcal{I}$ is $\ol{\pi}$-stable and $\epsilon_\pi\mid_{\mathcal{J}}$ is constant for all $\mathcal{J}\in\mathcal{I}$.
The description of $\calV (\pi,\zeta)$ implies
$$\mathcal{I}_{-1}:=\{i\in\ul{n}\mid e_i\in \left( {\calV}(\pi,\zeta)\cap \Phi_{m,\mathcal{I}}^\perp \right)^{\perp}\}$$
is a $\ol{\pi}$-stable set containing $\ul{n}\setminus\ul{m}$.  
\end{rem}

\begin{lm}\label{PartwStableRootSubsys}
Let ${\bf n}\in\NNN_{\bf G}({\bf T})$ such that ${\bf n}F$ is a Sylow $d$-twist with respect to $\bT\leq \bG$.
Set $\pi=f(\rho({\bf{n}})\phi)$ and $\ol{\pi}=\ol{f}(\rho({\bf{n}})\phi)$.
Take $\Phi':=\Phi_{m,\mathcal{I}}$ a parabolic subsystem of $\Phi$, as defined in Notation~\ref{ParamI}, which is $\pi$-stable.
Then $\Phi'$ satisfies Equation {\rm (\ref{eq:wStabRoot})} if and only if $\mathcal{I}_{-1}\in\mathcal{I}\sqcup \{\ul{n}\setminus\ul{m}\}$ and each $\ol{\pi}$-orbit on $\mathcal{I}\setminus\mathcal{I}_{-1}$ has length $d_0$.
\begin{proof}
Set $\mathcal{U}:= {\calV}(\pi,\zeta)\cap \Phi_{m,\mathcal{I}}^\perp$.
Any element $a\in \mathcal{U}$ can be expressed as $a=\sum_{i=1}^n \langle a,e_i\rangle e_i$, thus set $a_i:=\langle a,e_i\rangle\in \mathbb{C}$.  

First assume that $\mathcal{U}^\perp\cap \Phi=\Phi'$.
If $i,j\in \mathcal{I}_{-1}$, then $e_i$, $2e_i$ and $e_i\pm e_j$ are in the vector space $\mathcal{U}^\perp$.
In particular $e_i-e_j\in \mathcal{U}^\perp\cap \Phi=\Phi_{\ul{n}\setminus\ul{m}}\sqcup\Phi_{A_{\mathcal{I}}}$.
Therefore $\mathcal{I}_{-1}\in \mathcal{I}\sqcup \{\ul{n}\setminus\ul{m}\}$.
Let $\mathcal{J}\in \mathcal{I}\setminus\mathcal{I}_{-1}$ and assume that $\ol{\pi}^k\cdot \mathcal{J}=\mathcal{J}$ for some $1\leq k\leq d_0$.
For any $i\in\mathcal{J}$, there is an $a\in \mathcal{U}$ such that $a_i\ne 0$ and $a_i=\epsilon_{\pi^k}(i)\zeta^{-k}a_{i}$.
Hence by Corollary~\ref{Znepm1}, $k=0$ or $d_0$.

Now consider the converse.
It suffices to show that $\mathcal{U}^\perp\cap \Phi\subseteq \Phi'=\Phi_{\mathcal{I}_{-1}}\sqcup \Phi_{A_{\mathcal{I}\setminus\mathcal{I}_{-1}}}$.
First take an $i\in \mathcal{I}_{-1}$ and $j\in\ul{n}\setminus\mathcal{I}_{-1}$.
If there is some $e_i\pm e_j\in \mathcal{U}^\perp$, then the coefficient $a_j=0$ for all $a\in \mathcal{U}$ and so $j\in \mathcal{I}_{-1}$.
Hence $$\mathcal{U}^\perp\cap\Phi\subseteq \Phi_{\mathcal{I}_{-1}} \sqcup \Phi_{\ul{n}\setminus\mathcal{I}_{-1}}.$$
The assumption about the action of $\ol{\pi}$ on $\mathcal{I}$ implies that for any $\mathcal{J}\in\mathcal{I}\setminus \mathcal{I}_{-1}$ the element $$a_\mathcal{J}:=\sum\limits_{i\in\mathcal{J}} e_i+\epsilon_{\pi}(i)\zeta^{-1} e_{\ol{\pi}(i)}+\dots + \epsilon_{\pi^{d_0-1}}(i)\zeta^{-(d_0-1)} e_{\ol{\pi}^{d_0-1}(i)}\in \mathcal{U}.$$
Thus for any $i\in \mathcal{J}$ neither $e_i$ or $2e_i$ lies in $\mathcal{U}^\perp\cap \Phi$.
If moreover there is a root $e_i- e_j\in \mathcal{U}^\perp\cap \Phi$ (respectively $e_i+e_j$) for some $j\in\ul{n}$, then $a_i= a_j$ (respectively $a_i=-a_j$) for all $a\in \mathcal{U}$.
The definition of $a_\mathcal{J}$ implies there exists some $0\leq k<d_0$ such that  $j\in \ol{\pi}^k(\mathcal{J})$  and thus $\zeta^k=\pm \epsilon_{\pi^k}(i)$ (in type $\tA$ the equation will be $\zeta^k= \epsilon_{\pi^k}(i)$).
Thus by Corollary~\ref{Znepm1}, $k=0$ and $j\in\mathcal{J}$.
The root $e_i-e_j$ is in $\Phi'$ and as $e_i\not\in \mathcal{U}^\perp$, the root $e_i+e_j\not\in \mathcal{U}^\perp\cap \Phi$.
Hence $\mathcal{U}^\perp\cap \Phi\subseteq \Phi_{\mathcal{I}_{-1}}\sqcup \Phi_{A_{\mathcal{I}\setminus\mathcal{I}_{-1}}}$.
\end{proof}
\end{lm}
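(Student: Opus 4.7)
The plan is to analyse $\mathcal{U} := \calV(\pi,\zeta) \cap \Phi_{m,\mathcal{I}}^\perp$ directly at the level of coefficients: an element $a = \sum_i a_i e_i$ lies in $\mathcal{U}$ precisely when the eigenspace equations $a_{\ol{\pi}^k(i)} = \epsilon_{\pi^k}(i)\zeta^{-k} a_i$ hold together with the orthogonality conditions coming from $\Phi_{m,\mathcal{I}} = \Phi_{\ul{n}\setminus\ul{m}} \sqcup \Phi_{A_\mathcal{I}}$, namely $a_i = 0$ for $i \in \ul{n}\setminus\ul{m}$ (forced in types $\tB, \tC, \tD$ by the short or long roots at those indices) and $a_i = a_j$ whenever $i, j$ share a block of $\mathcal{I}$. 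By its very definition $\mathcal{I}_{-1}$ is the set of indices $i$ for which $a_i = 0$ identically on $\mathcal{U}$, so $\mathcal{I}_{-1}$ is automatically $\ol{\pi}$-stable and contains $\ul{n}\setminus\ul{m}$.

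For the forward direction, assuming $\mathcal{U}^\perp \cap \Phi = \Phi'$, I would observe that for any $i, j \in \mathcal{I}_{-1}$ the roots $e_i \pm e_j$ (only $e_i - e_j$ in type $\tA$) lie in $\mathcal{U}^\perp \cap \Phi = \Phi'$, which pins $i$ and $j$ to the same block of $\mathcal{I} \sqcup \{\ul{n}\setminus\ul{m}\}$; this forces $\mathcal{I}_{-1}$ to coincide with a single such block. For the orbit-length condition, given $\mathcal{J} \in \mathcal{I}\setminus\mathcal{I}_{-1}$ and $k \geq 1$ with $\ol{\pi}^k\cdot\mathcal{J} = \mathcal{J}$, I can pick $i \in \mathcal{J}$ and $a \in \mathcal{U}$ with $a_i \neq 0$ (possible by definition of $\mathcal{I}_{-1}$); the equation $a_i = \epsilon_{\pi^k}(i)\zeta^{-k} a_i$ then yields $\zeta^k = \epsilon_{\pi^k}(i)$, and Corollary~\ref{Znepm1} forces $k = d_0$.

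For the converse, I would show $\mathcal{U}^\perp \cap \Phi \subseteq \Phi'$ in two stages. A root $e_i \pm e_j$ with $i \in \mathcal{I}_{-1}$ and $j \notin \mathcal{I}_{-1}$ lying in $\mathcal{U}^\perp$ would force $a_j = 0$ for every $a \in \mathcal{U}$, a contradiction, so $\mathcal{U}^\perp \cap \Phi \subseteq \Phi_{\mathcal{I}_{-1}} \sqcup \Phi_{\ul{n}\setminus\mathcal{I}_{-1}}$. For each $\mathcal{J} \in \mathcal{I}\setminus\mathcal{I}_{-1}$ the explicit test vector
\[
a_\mathcal{J} := \sum_{k=0}^{d_0-1} \sum_{i \in \mathcal{J}} \epsilon_{\pi^k}(i) \zeta^{-k} e_{\ol{\pi}^k(i)}
\]
is well-defined (by the constant-sign property of $\epsilon_\pi|_\mathcal{J}$ from Corollary~\ref{xStabPara} and the fact that the orbit has length exactly $d_0$) and lies in $\mathcal{U}$ by construction. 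Its nonvanishing coefficients exclude $\pm e_i$ and $\pm 2e_i$ from $\mathcal{U}^\perp$ for $i \in \mathcal{J}$, and any $e_i \pm e_j \in \mathcal{U}^\perp \cap \Phi$ with $i \in \mathcal{J}$ and $j \in \ol{\pi}^k\cdot \mathcal{J}'$ gives a coefficient relation $\zeta^k = \pm\epsilon_{\pi^k}(i)$ which by Corollary~\ref{Znepm1} forces $k = 0$ and $\mathcal{J}' = \mathcal{J}$, with $e_i + e_j$ ruled out by the same relation.

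The principal obstacle I anticipate is managing the subtle case distinction in Corollary~\ref{Znepm1} between type $\tA$ (where only $\zeta^k = \epsilon_{\pi^k}(i)$ matters) and types $\tB, \tC, \tD$ (where $\zeta^k = \pm\epsilon_{\pi^k}(i)$ must both be excluded because of the extra roots $e_i + e_j$), together with verifying that $a_\mathcal{J}$ is truly independent of the representative $i \in \mathcal{J}$ — which hinges on the interplay between the $\pi$-stability of $\Phi_{m,\mathcal{I}}$ and the orbit structure of $\ol{\pi}$ on $\mathcal{I}$.
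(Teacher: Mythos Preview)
Your proposal is correct and follows essentially the same argument as the paper's proof: the same subspace $\mathcal{U}$, the same two-direction analysis, the same use of the explicit eigenvector $a_\mathcal{J}$ in the converse, and the same appeals to Corollary~\ref{Znepm1} to pin down the orbit length. The only point worth making explicit in your write-up of the forward direction is that the equation $a_i = \epsilon_{\pi^k}(i)\zeta^{-k}a_i$ uses the orthogonality constraint $a_i = a_{\ol\pi^k(i)}$ (since $\ol\pi^k(i)$ lies in the same block $\mathcal{J}$), not $\ol\pi^k(i)=i$ itself; you have this ingredient in your plan but it is worth flagging at the moment of use.
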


\begin{rem}\label{PartLabLevi}
Observe that in types $\tB_n$, $\tC_n$ and $\tD_n$, if $\Phi_{m,\mathcal{I}}$ satisfies  Equation {\rm (\ref{eq:wStabRoot})}, then $\mathcal{I}_{-1}=\ul{n}\setminus\ul{m}$.
In type $\tA_{n-1}$ there is at most one set in $\mathcal{I}\vdash n$ which is $\ol{\pi}$-stable (this coincides with \cite[Section 3.4]{BrSpAM} where $0$ was used as an index instead of $-1$).

Hence given a pair $(\mathcal{I}_{-1},\mathcal{I})$, such that $\mathcal{I}_{-1}\subset \ul{n}$ is $\ol{\pi}$-stable and each $\ol{\pi}$-orbit on $\mathcal{I}\vdash \ul{n}\setminus\mathcal{I}_{-1}$ has length $d_0$, define $\Phi_{\mathcal{I}_{-1},\mathcal{I}}:=\Phi_{\mathcal{I}_{-1}}\sqcup \Phi_{A_\mathcal{I}}$.
Then the root system of a $d$-split Levi subgroup in $(\bG,{\bf n}F)$ is given by such a $\ol{\pi}$-stable parabolic root subsystem $\Phi_{\mathcal{I}_{-1},\mathcal{I}}$. 
\end{rem}

\section{Structures associated to $d$-split Levi subgroups in type $\mathrm{C}_{n}$}
\label{dSplitLevi}

\subsection{Setup}

\subsubsection{Notation}\label{CnSetup}
\indent

In type $\tC_n$ the simply connected simple group $\bG\cong {\rm Sp}_{2n}(\overline{\F}_p)$ with a regular embedding corresponding to $\wt{\bG}\cong{\rm CSp}_{2n}(\overline{\F}_p)$.
Assume throughout that the prime $p\ne 2$.
In particular, there are no graph automorphisms and thus any Steinberg endomorphism $F$ defining an $\mathbb{F}_q$ structure on $\bG$ is a power of $F_p$ and induces the trivial action on the Weyl group $W$.
The set of simple roots was identified with $\{e_1-e_2,e_2-e_3,\dots,e_{n-1}-e_n,2e_n\}$ (see Section~\ref{WeylGp}).
The Chevalley relations, Section~\ref{ChevRel}, imply $h_{2e_{n-1}}(t)=h_{e_{n-1}-e_{n}}(t)h_{2e_n}(t)$ and inductively for $1\leq i\leq n-2$ that $h_{{2e_i}}(t)= h_{e_{i}-e_{i+1}}(t)h_{2e_{i+1}}(t)$.
Hence the maximal torus ${\bf T}$ of ${\bG}$ is given by 
$$\bT=\GenGp{h_{2e_i}(t)\mid 1\leq i\leq n \text{ and } t\in \ol{\mathbb{F}}_p^\times}.$$
For any subset $\mathcal{J}\subseteq \ul{n}$, define $${\bf T}_{\mathcal{J}}=\GenGp{ h_{2e_i}(t)\mid i\in \mathcal{J} \text{ and } t\in \ol{\mathbb{F}}_p^\times}.$$
For $\mathcal{I}\vdash \ul{n}$, it follows that ${\bf T}=\prod_{\mathcal{J}\in \mathcal{I}}{\bf T}_{\mathcal{J}}$.
Note that a maximal torus in ${\wt{\bf G}}$ will not decompose across a partition in a similar way.
By \cite[Table 1.12.6]{GLS3}, the centre $Z(\bG)$ is generated by the involution $z=h_{2e_1}(-1)h_{2e_2}(-1)\cdots h_{2e_n}(-1)$. 

Recall that $\rho$ denoted the homomorphism $V_{\Phi_{{\tC}_n}}\rightarrow  W_{\Phi_{{\tC}_n}}$ and as with the maximal torus $\bT$ the kernel $H_{\Phi_{\tC_n}}=\langle h_{2e_i}(-1)\mid 1\leq i\leq n\rangle$.
For ease of notation $\rho$ will be identified with $f\circ \rho$, for $f$ from Remark~\ref{CanWeylGpHom}, so that $\rho: V_{\Phi_{{\tC}_n}}\rightarrow \sgnSymm_n$.
Similarly, set $\ol{\rho}:V_{\Phi_{{\tC}_n}}\rightarrow \Symm_n$.
For simplicity, denote by $V_{\tC_n}=V_{\Phi_{{\tC}_n}}$ and $W_{\tC_n}=W_{\Phi_{{\tC}_n}}$.

\subsubsection{The choice of Sylow twist}\label{vwSylTwist}
\indent

Fix a positive integer $d$ and take $d_0$ as in Remark~\ref{d0Defn}.
Let $l\leq n$ be maximal such that $d_0\mid l$ and set $a:=l/d_0$.
Note that this is equivalent to $l$ being maximal such that $d\mid 2l$.
Set ${\bf n}_i:={\bf n}_{e_i-e_{i+1}}={\bf n}_{e_i-e_{i+1}}(1)$ as in Section~\ref{ChevRel}.
Then for $v_0:= ({\bf n}_1\cdots {\bf n}_{l-1}{\bf n}_{2e_{l}}(-1))$ the element $v:=v_0^{2l/d}$ yields a Sylow $d$-twist \cite[5.A]{CabSpMcKTypC}.
It follows from Remark~\ref{CanWeylGpHom} that $w_0:=\rho(v_0)=(1,2,\dots,l,-1,-2,\dots, -l)$ and $w':=w_0^a=\prod_{i=1}^a w'_{\mathcal{J}_i^{d_0,a}}$, where $w'_{\mathcal{J}_i^{d_0,a}}$ are as defined in Section~\ref{PermPart}.
Hence for
$$w_{\mathcal{J}_i^{d_0,a}}:= \left\{
     \begin{array}{lr}
      w'_{\mathcal{J}_i^{d_0,a}} & d \text{ even; }\\
       (w'_{\mathcal{J}_i^{d_0,a}})^2 & d \text{ odd, }\\
      \end{array}
   \right.$$
it follows that $w:=\rho(v)=\prod_{i=1}^a w_{\mathcal{J}_i^{d_0,a}}$.
In particular, $w$ is a product of $d$-cycles and the image $\ol w$ in $\Symm_n$ is a product of $l/d_0$ cycles all of length $d_0$.
Moreover, by Section~\ref{PermPart}, the elements $w$ and $w'$ have the same centraliser.

To prove Theorem~\ref{thmA} the following criterion provided in \cite{BrSpAM} will be considered:

\begin{thm}\cite[Theorem 4.1]{BrSpAM} \label{MainThmReq}
Let $d$ be a positive integer and $\wtbL$ a $vF$-stable $d$-split Levi subgroup of $\wt{\bG}$.
Set $\bL=\wt{\bL}\cap \bG$, a $vF$-stable $d$-split Levi subgroup of $\bG$.
Assume for the subgroups $N:=\NNN_{{\bG}}(\bL)^{vF}$, $\wt{N}:=\NNN_{\wt{\bG}}(\bL)^{vF}$ and $\wh{N}:=(\Cent_{{\bG}^{F_0^{em}}E}(v\wh{F}))_\bL$ the following conditions are satisfied:
\begin{asslist}
\item\label{thm41i} There exists some set $\mathcal{T}\subseteq \Irr (L)$, such that 
\begin{enumerate}
\item $\wt{N}_{\xi}=\wt{L}_{\xi}N_{\xi}$ for every $\xi\in\mathcal{T}$,
\item $(\wt{N}\wh{N})_{\Ind_L^N(\xi)}=\wt{N}_{\Ind_L^N(\xi)}\wh{N}_{\Ind_L^N(\xi)}$ for every $\xi\in \mathcal{T}$, and 
\item $\mathcal{T}$ contains some $\wh N$-stable $\wt{L}$-transversal of $\Irr (L)$.
\end{enumerate}

\item \label{thm41ii}There exists an extension map $\Lambda$ with respect to $L\lhd N$ such that 
\begin{enumerate}
\item $\Lambda$ is $\wh{N}$-equivariant.
\item Every character $\xi\in \mathcal{T}$ has an extension $\wh{\xi}\in\Irr(\wh{N}_{\xi})$ with $\Res_{\NNN_{\xi}}^{\wh{N}_{\xi}}(\wh{\xi})=\Lambda(\xi)$ and $v\wh{F}\in \ker(\wh{\xi})$.
\end{enumerate}

\item \label{thm41iii}Let $W_d:=N/L$ and $\wh{W}_d:=\wh{N}/L$.
For $\xi \in \Irr (L)$ and $\wt{\xi}\in \Irr (\wt{L}_\xi\mid \xi)$ define $W_{\wt{\xi}}:=N_{\wt{\xi}}/L$, $W_\xi:=N_{\xi}/L$, $K:=\NNN_{W_d}(W_\xi,W_{\wt{\xi}})$ and $\wh{K}:=\NNN_{\wh{W}_d}(W_\xi,W_{\wt{\xi}})$.
Then for every $\eta_0\in\Irr (W_{\wt{\xi}})$ there exists  some $\eta\in \Irr (W_{\xi}\mid \eta_0)$ which is $\wh{K}_{\eta_0}$-invariant.
\end{asslist}
Then:
\begin{thmlist}
\item For every $\chi\in \Irr (\wt{N})$ there exists some $\chi_0\in \Irr (N\mid \chi)$ such that
\begin{enumerate}
\item $(\wt{N}\wh{N})_{\chi_0}=\wt{N}_{\chi_0}\wh{N}_{\chi_0}$, and
\item $\chi_0$ has an extension $\wt{\chi}_0$ to $\wh{N}_{\chi_0}$ with $v\wh{F}\in \ker (\wt{\chi}_0)$. 
\end{enumerate}
\item Moreover, there exists some $\wh N$-equivariant extension map with respect to $\wt L \lhd \wt N$ that is compatible with $\Irr(\wh N/N)$.
\end{thmlist}
\end{thm}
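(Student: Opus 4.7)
The plan is to build $\chi_0$ satisfying (a)(1),(2) through Clifford theory over $L\lhd N\leq\wt N$ together with the extension map $\Lambda$ from (ii), selecting a representative via (iii), and then obtain (b) by a fibrewise construction.

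Given $\chi\in\Irr(\wt N)$, I would first pick an $L$-constituent $\xi\in\Irr(L)$ of $\chi\mid_N$; by (i)(3), after a $\wt N$-conjugation, one may assume $\xi\in\mathcal T$. The extension $\Lambda(\xi)\in\Irr(N_\xi)$ together with Gallagher's theorem gives a parametrisation $\Irr(N\mid\xi)\leftrightarrow\Irr(W_\xi)$ via $\eta\mapsto\chi_0(\eta):=\Ind_{N_\xi}^N(\Lambda(\xi)\cdot\eta)$. Let $\wt\xi\in\Irr(\wt L_\xi\mid\xi)$ be the Clifford correspondent so that $\chi$ lies above $\wt\xi$. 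Because $\wt L/L$ is abelian, $\wt\xi$ together with $\Lambda(\xi)$ yields a canonical extension of $\wt\xi$ to $\wt N_{\wt\xi}=\wt L_\xi N_\xi$; Gallagher again parametrises $\Irr(\wt N\mid\wt\xi)$ by $\Irr(W_{\wt\xi})$, and I write $\wt\eta\in\Irr(W_{\wt\xi})$ for the parameter of $\chi$. By hypothesis (iii) there exists $\eta\in\Irr(W_\xi\mid\wt\eta)$ invariant under $\wh K_{\wt\eta}$; I set $\chi_0:=\chi_0(\eta)\in\Irr(N\mid\chi)$.

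For (a)(1) I would analyse $(\wt N\wh N)_{\chi_0}$ through successive Clifford layers: an element of this stabiliser fixes $\xi$, so by (i)(2) lies in $\wt N_\xi\wh N_\xi$; it fixes $\wt\eta$ (reconstructable from $\chi_0$ and $\wt\xi$), hence lies in $\wt N_{\wt\xi}\wh N_{\wt\xi}$; finally, the $\wh K_{\wt\eta}$-invariance of $\eta$ combined with the $\wh N$-equivariance of $\Lambda$ in (ii)(1) confines it to $\wt N_{\chi_0}\wh N_{\chi_0}$. For (a)(2), (ii)(2) extends $\Lambda(\xi)$ to $\wh N_\xi$ with $v\wh F\in\ker$; the $\wh K_{\wt\eta}$-invariance of $\eta$ then yields an extension of $\eta$ to its stabiliser in $\wh W_d$, with the normalisation $v\wh F\mapsto 1$ arranged by the standard vanishing-of-Schur-cocycle argument for wreath-type relative Weyl groups in classical type. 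Assembling these pieces through the $\chi_0(\eta)$-recipe produces the required extension of $\chi_0$ to $\wh N_{\chi_0}$ with $v\wh F$ in the kernel. Conclusion (b) follows fibrewise: for $\wt\xi'\in\Irr(\wt L)$ pick an $L$-constituent $\xi'$ and define $\wt\Lambda(\wt\xi')$ as the canonical extension of $\wt\xi'$ built from $\Lambda(\xi')$; $\wh N$-equivariance and compatibility with $\Irr(\wh N/N)$ then propagate from the corresponding properties of $\Lambda$ and the $v\wh F$-kernel normalisation.

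The main obstacle is the stabiliser identity in (a)(1): both $\wt N$-conjugation (coming from the regular embedding) and $\wh N$-conjugation (from field automorphisms) act on $\chi_0$, and separating their effects cleanly across the three Clifford layers $L\subseteq\wt L_\xi\subseteq\wt N_{\wt\xi}$ requires the data $\mathcal T$, $\Lambda$ from (i) and (ii) to be orchestrated in concert so that the nested stabilisers $N_\xi$, $W_{\wt\xi}$, $\wh K_{\wt\eta}$ compose correctly through Gallagher and induction. It is this delicate bookkeeping, rather than any single step, that is the crux of the proof and that drives the exact shape of the hypotheses (i)--(iii).
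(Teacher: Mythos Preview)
The paper does not give a proof of this theorem at all: it is quoted verbatim as \cite[Theorem 4.1]{BrSpAM} and only used as a black box criterion. The sentence following the statement merely notes that condition~\ref{thm41iii} has been simplified in the present setting because $E$ and $(W_d)_\xi/(W_d)_{\wt\xi}$ are cyclic. Hence there is no ``paper's own proof'' to compare your proposal against.

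That said, your outline is the right shape for how such a result is established in \cite{BrSpAM}: Clifford-parametrise $\Irr(N\mid\xi)$ and $\Irr(\wt N\mid\wt\xi)$ via $\Lambda$ and Gallagher, use (iii) to choose a suitable $\eta$, and push the stabiliser computation through the layers using (i) and the $\wh N$-equivariance of $\Lambda$. A couple of points in your sketch are loose. First, the equality $\wt N_{\wt\xi}=\wt L_\xi N_{\wt\xi}$ (not $\wt L_\xi N_\xi$) is what condition (i)(1) delivers, and the resulting identification $\wt N_{\wt\xi}/\wt L_\xi\cong W_{\wt\xi}$ needs this. Second, your appeal to a ``standard vanishing-of-Schur-cocycle argument for wreath-type relative Weyl groups'' for extending $\eta$ with $v\wh F$ in the kernel is not part of this abstract theorem: the theorem is stated for general $L\lhd N$ satisfying (i)--(iii), with no wreath structure assumed, and the extension of $\eta$ must come from the $\wh K_{\eta_0}$-invariance in (iii) together with cyclicity of the quotient, not from a type-specific argument. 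The type-$\tC$ wreath structure enters only when the present paper \emph{verifies} (i)--(iii), not in the proof of the abstract criterion itself.
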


Note that for condition \ref{thm41iii}, has been simplified as in this case the groups $E$ and $(W_d)_{\xi}/(W_d)_{\wt\xi}\cong {\rm Lin}(\tilde{L}_\eta/\tilde{L}_{\Lambda(\eta)})$ are cyclic groups.

To consider the conditions of the above Theorem, it will be helpful in the later sections if $v$ can be replaced by some other element in the extended Weyl group $V_{{\tC_n}}$.
The following lemma considers when this is possible.

\begin{lm}\label{TwistTwist}
Let $d$ be a positive integer, $vF$ the Sylow $d$-twist constructed above and $\bL$ a $vF$-stable $d$-split Levi subgroup of $({\bG},vF)$.
Assume there exists an ${\bf n}\in V_{{\tC_n}}$ such that ${\bf n}v^{-1}\in\bL$.
Then $\bL$ is an ${\bf n}F$-stable $d$-split Levi subgroup of $({\bG},{\bf n}F)$.
Moreover for $g\in\bL$ with ${\bf n}v^{-1}=g(vF)(g)^{-1}:=g ({}^vF(g))^{-1}$, the map 
$$
\begin{array}{ccc}
\iota: \wt{\bG}^{F_p^{em}}\rtimes E & \rightarrow &  \wt{\bG}^{F_p^{em}}\rtimes E\\
x & \mapsto & x^{g^{-1}}\\
\end{array}
$$
is an isomorphism which maps ${\bG}^{vF}\rtimes E$ onto $ {\bG}^{{\bf n}F}\rtimes E$ with $\iota(v\wh{F})={\bf n}\wh{F}$ and $\iota({\bL}^{vF})={\bL}^{{\bf n}F}$.
In particular, the conclusion of Theorem~\ref{MainThmReq} holds with respect to $vF$ if and only if it holds with respect to ${\bf n}F$. 
\end{lm}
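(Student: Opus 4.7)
The plan is a standard transport-of-structure argument: since ${\bf n}v^{-1}\in\bL$, the Steinberg endomorphisms $vF$ and ${\bf n}F$ of $\bG$ differ by the inner automorphism $\mathrm{Int}({\bf n}v^{-1})$ of $\bL$, and conjugation by a suitable $g\in\bL$ will intertwine all the relevant data. First I would invoke the Lang--Steinberg theorem for the connected reductive algebraic group $\bL$ with its restricted Steinberg endomorphism $vF$: since ${\bf n}v^{-1}\in\bL$, this yields $g\in\bL$ with $g(vF)(g)^{-1}={\bf n}v^{-1}$, equivalently ${\bf n}=gvF(g)^{-1}$. A direct computation then gives
\begin{equation*}
({\bf n}F)(\bL)=gvF(g)^{-1}F(\bL)F(g)v^{-1}g^{-1}=g(vF)(g^{-1}\bL g)g^{-1}=g(vF)(\bL)g^{-1}=\bL,
\end{equation*}
using $g\in\bL$ and the $vF$-stability of $\bL$. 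Writing $\bL=\Cent_{\bG}(\bS)$ for some $vF$-stable $d$-torus $\bS\leq Z(\bL)^{\circ}$, the conjugate $g\bS g^{-1}\leq Z(\bL)^{\circ}$ is a $d$-torus that is ${\bf n}F$-stable by the intertwining proved below, so $\bL=\Cent_{\bG}(g\bS g^{-1})$ is $d$-split with respect to ${\bf n}F$ as well.

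Next I would verify the properties of $\iota=\mathrm{Int}(g)$ (with the convention $x^{g^{-1}}=gxg^{-1}$). Working in the semidirect product one computes
\begin{equation*}
\iota(v\wh F)=g\cdot v\wh F\cdot g^{-1}=gvF(g)^{-1}\wh F={\bf n}\wh F,
\end{equation*}
using that $\wh F$ induces $F$ on $\bG$ by conjugation. Since $v\wh F$ (respectively ${\bf n}\wh F$) induces $vF$ (respectively ${\bf n}F$) on $\bG$, this intertwining gives $\iota\circ(vF)=({\bf n}F)\circ\iota$, hence $\iota(\bG^{vF})=\bG^{{\bf n}F}$ and, together with the evident identification of the $E$-cosets, $\iota(\bG^{vF}\rtimes E)=\bG^{{\bf n}F}\rtimes E$. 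Because $g\in\bL$, additionally $\iota(\bL)=\bL$ and therefore $\iota(\bL^{vF})=\bL^{{\bf n}F}$.

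For the last statement, the key observation is that $\iota$ restricts to isomorphisms between the triples $(N,\wt N,\wh N)$ defined via $vF$ and the analogous triples defined via ${\bf n}F$: the only non-trivial identification is for $\wh N=(\Cent_{\bG^{F_p^{em}}E}(v\wh F))_{\bL}$, which transfers because $\iota$ carries $v\wh F$ to ${\bf n}\wh F$ and sends $\bL$ to $\bL$. Pulling characters back along $\iota$ then transports every piece of data appearing in the hypotheses and conclusions of Theorem~\ref{MainThmReq} --- the sets $\mathcal T$, the inertia subgroups $N_\xi$, $\wt N_\xi$, $\wh N_\xi$, the extension map $\Lambda$, the kernel condition $v\wh F\in\ker(\wh\xi)$, and the relative Weyl group data --- so the theorem holds for $vF$ if and only if it holds for ${\bf n}F$. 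The main obstacle is bookkeeping: one must verify that $g$ can be chosen so that $\iota$ is a genuine self-isomorphism of $\wt\bG^{F_p^{em}}\rtimes E$ (requiring $g\in\wt\bG^{F_p^{em}}$, which can be arranged by adjusting $g$ within its $\bL^{vF}$-coset) and that the kernel condition on $v\wh F$ is preserved under $\iota$ thanks to the relation $\iota(v\wh F)={\bf n}\wh F$.
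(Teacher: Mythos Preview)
Your transport-of-structure strategy is the same as the paper's, but you skip the two verifications that carry the actual content.

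First, for $\iota$ to be a self-map of $\wt\bG^{F_p^{em}}\rtimes E$ one needs $g\in\bG^{F_p^{em}}$. Your proposed fix of ``adjusting $g$ within its $\bL^{vF}$-coset'' cannot help: any two solutions of the Lang equation differ by an element of $\bL^{vF}\leq\bG^{F_p^{em}}$, so either all solutions lie in $\bG^{F_p^{em}}$ or none do. In fact they all do. Since $e$ is chosen so that $(v'F)^e=F_p^{em}$ for every $v'\in V_{\tC_n}$ (hence $v^e={\bf n}^e=1$), and since $F$ fixes ${\bf n}v^{-1}\in V_{\tC_n}$, one computes the telescoping product
\[
gF_p^{em}(g)^{-1}=\prod_{i=0}^{e-1}(vF)^i\bigl(g\,(vF)(g)^{-1}\bigr)=\prod_{i=0}^{e-1}v^i({\bf n}v^{-1})v^{-i}={\bf n}^ev^{-e}=1.
\]

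Second, the ``evident identification of the $E$-cosets'' is not evident. The group $E$ is generated by $\wh F_p$, not by $\wh F=\wh F_p^m$, so knowing $\iota(v\wh F)={\bf n}\wh F$ does not give $\iota(\bG^{vF}\rtimes E)=\bG^{{\bf n}F}\rtimes E$. One has $\iota(\wh F_p)=gF_p(g)^{-1}\,\wh F_p$, so what is required is $gF_p(g)^{-1}\in\bG^{{\bf n}F}$ (indeed in $\bL^{{\bf n}F}$). The paper reduces this to $g^{-1}F_p(g)\in\bL^{vF}$ and then observes that $F_p(g)$ satisfies the same Lang equation as $g$ because $F_p$ fixes ${\bf n}v^{-1}$; hence $g^{-1}F_p(g)\in\bL^{vF}$. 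Without this step the claim that $\iota$ carries the group $\wh N$ associated to $vF$ onto the one associated to ${\bf n}F$ is not justified.
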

\begin{proof}
As ${\bf n}=lv$ for some $l\in \bL$, it follows that $\bL$ is ${\bf n}F$-stable.
The observation that $(lvF)(z)=(vF)(z)$, for any $z\in Z(\bL)$, implies $\bL$ is a $d$-split Levi subgroup of $({\bG},{\bf n}F)$. 

Note that $(vF)^e=F_p^{em}$ and $(vF)^i({\bf n}v^{-1})={}^{v^i}({\bf n}v^{-1})=v^i{\bf n}v^{-(i+1)}$.
Therefore $g\in{\bG}^{F_p^{em}}$ (and also $F_p(g)$) as $$gF_p^{em}(g)^{-1}=\prod_{i=0}^{e-1}((vF)^i(g)\cdot (vF)^{i+1}(g)^{-1})=\prod_{i=0}^{e-1}(vF)^i({\bf n}v^{-1})={\bf n}^ev^{-e}=1.$$ 
Recall that $E=\langle \wh{F}_p\rangle$, which acts trivially on the extended Weyl group $V$.
Hence, as $\wh{F}_p^{g^{-1}}=gF_p(g)^{-1}\wh{F}_p\in \wt{\bG}^{F_p^{em}}\rtimes E$, the map $\iota$ is an isomorphism.
Moreover, $g\in{\bL}^{F_p^{em}}$ and so $\iota$ restricts to an automorphism of $\wt{\bL}^{F_p^{em}}\rtimes E$. 

As $(vF)^e=({\bf n}F)^e=F_p^{em}$, it follows that $\bG^{vF}\leq \bG^{F_p^{em}}$ and $\bG^{{\bf n}F}\leq \bG^{F_p^{em}}$ 
Moreover $\iota (v\wh{F})={\bf n}\wh{F}$ which implies $\iota (\bG^{vF})=\iota ((\bG^{F_p^{em}})^{vF})=(\bG^{F_p^{em}})^{{\bf n}F}=\bG^{{\bf n}F}$ and similarly $\iota(\wt{\bG}^{vF})=\wt{\bG}^{{\bf n}F}$, $\iota (\bL^{vF})=\bL^{{\bf n}F}$ and $\iota(\wt{\bL}^{vF})=\wt{\bL}^{{\bf n}F}$.

As seen above $\iota(\wh{F}_p)=gF_p(g)^{-1}\wh{F}_p$ and thus to show $\iota(\bG^{vF}\rtimes E)=\bG^{{\bf n}F}\rtimes E$, it suffices to see that $gF_p(g)^{-1}\in \bL^{{\bf n}F}$.
Furthermore, as ${\bf n}=gv (F(g))^{-1}$, it follows that $({\bf n}F)(gF_p(g)^{-1})={}^{gv}(F(F_p(g))^{-1}F(g))$ and thus showing $gF_p(g)^{-1}\in \bL^{{\bf n}F}$ is equivalent to $g^{-1}F_p(g)\in \bL^{{\bf v}F}$.
Since ${\bf n}v^{-1}$ is fixed by $E$, then $F_p(g) ((vF)(F_p(g)))^{-1}=F_p({\bf n}v^{-1})={\bf n}v^{-1}$ and hence $g^{-1}F_p(g)\in \bL^{{\bf v}F}$.

Set ${\bf N}:=\NNN_{\bG}(\bL)$, $\wt{\bf N}:=\NNN_{\wt{\bG}}(\bL)$ and $\wh{\bf N}:=\NNN_{\bG\rtimes E}(\bL)$.
Then by the above properties of $\iota$, it follows that $\iota({\bf N}^{vF})={\bf N}^{{\bf n}F}$, $\iota(\wt{\bf N}^{vF})=\wt{\bf N}^{{\bf n}F}$ and $\iota(\wh{\bf N}^{vF})=\wh{\bf N}^{{\bf n}F}$.
Hence applying the isomorphism $\iota$ proves the equivalence of the following two statements:
 \begin{enumerate}
\item There exists some $\wh{\bf N}^{vF}$-equivariant extension map with respect to $\wt{\bf L}^{vF} \lhd \wt{\bf N}^{vF}$ that is compatible with $\Irr(\wh{\bf N}^{vF}/{\bf N}^{vF})$. Moreover, for every $\chi\in \Irr (\wt{\bf N}^{vF})$ there exists some $\chi_0\in \Irr ({\bf N}^{vF}\mid \chi)$ such that
\begin{itemize}
\item $(\wt{\bf N}^{vF}\wh{\bf N}^{vF})_{\chi_0}=\wt{\bf N}^{vF}_{\chi_0}\wh{\bf N}^{vF}_{\chi_0}$, and
\item $\chi_0$ has an extension $\wt{\chi}_0$ to $\wh{\bf N}^{vF}_{\chi_0}$ with $v\wh{F}\in \ker (\wt{\chi}_0)$. 
\end{itemize}
\item There exists some $\wh{\bf N}^{{\bf n}F}$-equivariant extension map with respect to $\wt{\bf L}^{{\bf n}F} \lhd \wt{\bf N}^{{\bf n}F}$ that is compatible with $\Irr(\wh{\bf N}^{{\bf n}F}/{\bf N}^{{\bf n}F})$. Moreover, for every $\chi\in \Irr (\wt{\bf N}^{{\bf n}F})$ there exists some $\chi_0\in \Irr ({\bf N}^{{\bf n}F}\mid \chi)$ such that
\begin{itemize}
\item $(\wt{\bf N}^{{\bf n}F}\wh{\bf N}^{{\bf n}F})_{\chi_0}=\wt{\bf N}^{{\bf n}F}_{\chi_0}\wh{\bf N}^{{\bf n}F}_{\chi_0}$, and
\item $\chi_0$ has an extension $\wt{\chi}_0$ to $\wh{\bf N}^{{\bf n}F}_{\chi_0}$ with ${\bf n}\wh{F}\in \ker (\wt{\chi}_0)$. 
\end{itemize}
\end{enumerate}
\end{proof}

\subsection{Some constructions associated to $w$}\label{NotPart}
\indent

Assume $(\mathcal{I}_{-1},\mathcal{I})$ satisfies Remark~\ref{PartLabLevi} with respect to $w$ as defined Section~\ref{vwSylTwist}.
That is $\mathcal{I}_{-1}\subset \ul{n}$ and $\mathcal{I}\vdash \ul{n}\setminus\mathcal{I}_{-1}$ are $\ol{w}$-stable with each $\ol{w}$-orbit on $\mathcal{I}$ having length $d_0$ and the sign function $\epsilon_w$, defined in Remark~\ref{CanWeylGpHom}, is constant on each $\mathcal{J}\in\mathcal{I}$.

\subsubsection{Some notation}\label{wPartNot}
\indent

Let $\mathcal{J}\in \mathcal{I}$.
Due to the assumptions on $\mathcal{I}$, the minimal integer $k>0$ such that $\epsilon_{w^k}(i)=-1$ must be the same for all $i\in \mathcal{J}$.
Moreover, the description $w=\prod_{i=1}^a w_{\mathcal{J}_i^{d_0,a}}$, implies each $\ol{w}$-orbit on $\mathcal{I}$ must contain a set $\mathcal{J}\subseteq \ul{a}$.
Let $\mathcal{O}\subset \mathcal{I}$ be a $\ol{w}$-orbit and set $\mathcal{J}_{\mathcal{O}}:=\ul{\mathcal{O}}\cap \ul{a}\in\mathcal{O}$.
Then, using the notation from Section~\ref{NotSets},
$\ul{\mathcal{O}}=\bigsqcup_{i\in\mathcal{J}_{\mathcal{O}}} \mathcal{J}_i^{d_0,a}$.

For each $1\leq s\leq n$ the set $\mathcal{I}_s:=\{ \mathcal{J}\in\mathcal{I}\mid |\mathcal{J}|=s\}$ is $\ol{w}$-stable.
In particular, $\mathcal{I}_s$ consists of $t_s:=\frac{|\mathcal{I}_s|}{d_0}$ $\ol{w}$-orbits each of which is determined by a set $\mathcal{J}_{\mathcal{O}}\subset\ul{a}$ of size $s$.
Let $\mathcal{O}^s_1,\dots,\mathcal{O}^s_{t_s}$ denote the $\ol{w}$-orbits in $\mathcal{I}_s$ and for $1\leq s\leq t_s$ set
$$Q_r^s:=\{Q^s_{r,1},\dots, Q^s_{r,s}\}\subset \mathcal{J}^{d_0,a}=\{\mathcal{J}^{d_0,a}_{i}\mid 1\leq i\leq a\},$$ 
such that $\ul{\mathcal{O}}_r^s=\ul{Q}_r^s$.
In particular, $\mathcal{J}_{\mathcal{O}_r^s}=\{Q^s_{r,i}(1)\mid 1\leq i\leq s\}$.
Furthermore, up to a choice of labelling, assume $r\leq Q_{r,1}(1)$ and for $1\leq i\leq s-1$ that $Q^s_{r,i}(1)<Q^s_{r,i+1}(1)$.
The restriction of $w'$ to $\pm\ul{\mathcal{O}}_r^s$ is given by
$$w'|_{\pm\ul{\mathcal{O}}_r^s}=w'_{Q_r^s}:=\prod_{i=1}^s w'_{Q_{r,i}^s}\in \sgnSymm(\ul{\mathcal{O}}_r^s).$$
Moreover, the restriction of $w'$ to $\pm \ul{\mathcal{I}}_s$ is given by $w'|_{\pm\ul{\mathcal{I}}_s}=\prod_{r=1}^{t_s} w'_{Q_{r}^s}\in \sgnSymm(\ul{\mathcal{I}}_s)$.

Finally $w'=w'|_{\pm{\mathcal{I}_{-1}}} \times w'|_{\pm\ul{\mathcal{I}}}$, where $w'|_{\pm{\mathcal{I}_{-1}}}\in \sgnSymm(\mathcal{I}_{-1})$ and $w'|_{\pm\ul{\mathcal{I}}}=\prod_{s=1}^nw'_{\pm\ul{\mathcal{I}}_s}\in \sgnSymm(\ul {\mathcal{I}})$ denote the restriction of $w'$ to $\pm\mathcal{I}_{-1}$ and respectively $\pm\ul{\mathcal{I}}$. 
Note that an analogous decomposition can be given for $w$.

\subsubsection{The action of $w$ on $\mathcal{I}$}\label{wactsI}
\indent 

The assumptions on $(\mathcal{I}_{-1},\mathcal{I})$ under the action of $w$ imply that $w'$ induces a permutation in $\sgnSymm (\mathcal{I})$ (respectively $\sgnSymm (\mathcal{I}_s)$) which will be denoted by $w'\mid_{\pm\mathcal{I}}$ (respectively $w'\mid_{\pm\mathcal{I}_s}$).
That is for $\mathcal{O}$ a $\ol{w}$-orbit in $\mathcal{I}$ the induced action of $w'$ on $\sgnSymm(\mathcal{O})$ is given by the cycle 
 $$w'|_{\pm \mathcal{O}}=(\mathcal{J}_\mathcal{O},\dots, w'^{d_0-1}(\mathcal{J}_\mathcal{O}),-\mathcal{J}_\mathcal{O},\dots, - w'^{d_0-1}(\mathcal{J}_\mathcal{O}))\in \sgnSymm(\mathcal{O}),$$
where $\mathcal{J}_\mathcal{O}=\ul{\mathcal{O}}\cap \ul{a}$.
Furthermore,  
$$w'|_{\pm\mathcal{I}}=\prod_{s=1}^nw'|_{\pm \mathcal{I}_s}=\prod_{s=1}^n\left(\prod_{r=1}^{t_s}w'|_{\pm\mathcal{O}_r^s} \right).$$
Similarly $w|_{\pm\mathcal{I}}$, $w|_{\pm\mathcal{I}_s}$ and $w|_{\pm\mathcal{O}}$ are defined.

\subsubsection{Centralisers of $w|_{\pm\mathcal{I}}$ and $w|_{\pm\ul{\mathcal{I}}}$}\label{CentwIinCentwulI}
\indent

In Proposition~\ref{RelWeylGpPart} it will be proven that $\prod_{s=1}\Cent_{\sgnSymm(\mathcal{I}_s)}(w|_{\pm\mathcal{I}_s})$ describes the structure of the relative Weyl group arising from the $d$-split Levi subgroup associated to the pair $(\mathcal{I}_{-1},\mathcal{I})$.
To construct the normaliser of this $d$-split Levi subgroup in Proposition~\ref{SuppLinN}, it will be helpful to have an explicit description of $\prod_{s=1}\Cent_{\sgnSymm(\mathcal{I}_s)}(w| _{\pm \mathcal{I}_s})$ as a subgroup of $  \Cent_{\sgnSymm (\ul{\mathcal{I}})}(w|_{\pm\ul{\mathcal{I}}})$, which can then be mimicked in the extended Weyl group.
For $\mathcal{O}_r^s$ a $\ol{w}$-orbit in $\mathcal{I}_s$, the cycle $w'|_{\pm\mathcal{O}_r^s}$ corresponds to the product of $s$ cycles given by $w|_{\pm\ul{\mathcal{O}}_r^s}=w'_{Q_r^s}$.
Hence the aim is to provide an explicit injection which takes every element in $\Cent_{\sgnSymm(\mathcal{I}_s)}(w|_{\pm\mathcal{I}_s})$ and converts them into a product of $s$ elements inside $\Cent_{\sgnSymm(\ul{\mathcal{I}}_s)}(w|_{\pm\ul{\mathcal{I}}_s})$.

Take $Q_1^s,\dots, Q^s_{t_s}$ representing the $\ol{w}$-orbits in $\mathcal{I}_s$ as explained in Section~\ref{wPartNot}, with each $Q^s_i=\{Q_{i,1}^s,\dots, Q_{i,s}^s\}\subset\mathcal{J}^{d_0,a}$.
By Section~\ref{PermPart} the group $\Cent_{\sgnSymm(\ul{\mathcal{I}}_s)}(w|_{\pm\ul{\mathcal{I}}_s})=\Cent_{\sgnSymm(\ul{\mathcal{I}}_s)}(w'|_{\pm\ul{\mathcal{I}}_s})$ is isomorphic to $ C_{2d_0}\wr \Symm_{s\cdot t_s}$ and can be explicitly written as
$$\Cent_{\sgnSymm(\ul{\mathcal{I}}_s)}(w|_{\pm\ul{\mathcal{I}}_s})=\left( \prod_{i=1}^{t_s}\prod_{j=1}^{s} \GenGp{w'_{Q_{i,j}^s}}\right) \rtimes \langle \tau_{Q_{i,j}^s,Q_{i,j+1}},\tau _{Q_{i,s}^s,Q_{i+1,1}} \mid 1\leq j\leq s-1 \text{ and } 1\leq i\leq t_s\rangle.$$

Recall for $1\leq i\leq t_s$ each orbit $\mathcal{O}_i^s$ in $\mathcal{I}_s$ was determined by $\mathcal{J}_{\mathcal{O}_{i}^s}:=\ul{a}\cap \ul{\mathcal{O}}_i^s$.
Thus the bijection 
$$\begin{array}{ccl}
\mathcal{I}_s& \rightarrow & \ul{t_s d_0}\\
(w')^j(\mathcal{J}_{\mathcal{O}_i^s})& \mapsto & i+j\cdot t_s
\end{array}$$ 
yields an isomorphism $\sgnSymm(\mathcal{I}_s)\cong \sgnSymm_{d_0t_s}$ under which $w'|_{\pm \mathcal{I}_s}$ is mapped to $w'_{\mathcal{J}^{d_0,t_s}}$ (as defined in Section~\ref{PermPart}). 
Thus applying Section~\ref{PermPart} implies
$$\Cent_{\sgnSymm(\mathcal{I}_s)}(w|_{\pm\mathcal{I}_s})\cong \Cent_{\sgnSymm_{d_0t_s}}\left(w_{\mathcal{J}^{d_0,t_s}}\right)=\left( \prod_{i=1}^{t_s}\langle w'_{\mathcal{J}^{d_0,t_s}_i}\rangle \right)\rtimes \langle \tau_{\mathcal{J}^{d_0,t_s}_i}\mid 1\leq i\leq t_s-1\rangle.$$

The centraliser of $w|_{\pm\ul{\mathcal{I}}_s}$ above was described via the subsets $Q_i^s\subset\mathcal{J}^{d_0,a}$.
Thus to convert from elements defined with respect to $\mathcal{J}^{d_0,t_s}$ to $\mathcal{J}^{d_0,a}$, observe that there is an isomorphism $\kappa_s^\circ$ which is induced by conjugation with the element $\prod_{j=1}^{d_0-1}(j\cdot t_s,j\cdot a)(-j\cdot t_s,-j\cdot a)\in\sgnSymm_{d_0a}$ 
 $$\begin{array}{cccc}
\kappa_s^\circ:& \Cent_{\sgnSymm_{d_0t_s}}\left(w_{\mathcal{J}^{d_0,t_s}}\right) & \rightarrow & \Cent_{\sgnSymm(\mathcal{I}_s)}(w|_{\pm\mathcal{I}_s})^\circ:= \left( \prod_{i=1}^{t_s}\langle w'_{\mathcal{J}^{d_0,a}_i}\rangle \right)\rtimes \langle \tau_{\mathcal{J}^{d_0,a}_i}\mid 1\leq i\leq t_s-1\rangle \\
 &   w'_{\mathcal{J}_i^{d_0,t_s}} & \mapsto & w'_{\mathcal{J}_i^{d_0,a}}\\
 & \tau_{\mathcal{J}_{i}^{d_0,t_s}} & \mapsto &  \tau_{\mathcal{J}_{i}^{d_0,a}}
\end{array}.$$
For $1\leq j\leq s$ set $\iota_{j}=\prod_{i=1}^{t_s}\tau_{\mathcal{J}_i^{d_0,a},Q^s_{i,j}}$, where $\tau_{\mathcal{J}_i^{d_0,a},Q^s_{i,j}}\in\sgnSymm_{d_0a}$ is the involution as defined in Section~\ref{PermPart}.
Then by construction $\ol{\tau}_{\mathcal{J}_i^{d_0,a},Q^s_{i,j}}(\mathcal{J}_i^{d_0,a})=Q^s_{i,j}$ for $1\leq i\leq t_s$.
Hence $\sgnSymm(\sqcup_{i=1}^{t_s}\mathcal{J}_i^{d_0,a})^{\iota_{j}}=\sgnSymm(\sqcup_{i=1}^{t_s}Q_{i,j}^s)$ with $ (w'_{\mathcal{J}^{d_0,a}_i})^{\iota_j}=w'_{Q^s_{i,j}}$ and $(\tau_{\mathcal{J}^{d_0,a}_i})^{\iota_j}=\tau_{Q_{i,j}^s,Q_{i+1,j}^s}$ for any $1\leq i\leq t_s$.
Therefore the map 
$$
\begin{array}{cccc}
\kappa_s: & \Cent_{\sgnSymm(\mathcal{I}_s)}(w|_{\pm\mathcal{I}_s})^\circ & \rightarrow & \Cent_{\sgnSymm(\ul{\mathcal{I}}_s)}(w|_{\pm\ul{\mathcal{I}}_s})\\
 & & & \\
&  x& \mapsto & \prod_{j=1}^s x^{\iota_j}\\
\end{array}
$$
is an injective group homomorphism.
Define $$W_d^{\mathcal{I}_s}:=\kappa_s\circ\kappa_s^\circ (\Cent_{\sgnSymm_{d_0t_s}}\left(w'_{\mathcal{J}^{d_0,t_s}}\right))\leq \Cent_{\sgnSymm(\ul{\mathcal{I}_s})}(w|_{\pm\ul{\mathcal{I}}_s})$$ which stabilises $\mathcal{I}_s$ and by construction  $W_d^{\mathcal{I}_s}$ induces $\Cent_{\sgnSymm(\mathcal{I}_s)}(w|_{\pm\mathcal{I}_s})$.
Furthermore combining these maps over all possible $s$ provides an injection
$$
\kappa\circ\kappa^\circ: \prod_{s=1}^n \Cent_{\sgnSymm_{d_0t_s}}(w_{\mathcal{J}^{d_0,t_s}}) \rightarrow \prod_{s=1}^n \Cent_{\sgnSymm(\ul{\mathcal{I}}_s)}(w_{\pm\ul{\mathcal{I}}_s})\leq \Cent_{\sgnSymm(\ul{\mathcal{I}})}(w|_{\pm\ul{\mathcal{I}}})\\
$$
whose image induces $\prod_{s=1}^n\Cent_{\sgnSymm(\mathcal{I}_s)}(w|_{\pm\mathcal{I}_s})$.
Set $W_d^\mathcal{I}=\prod_{s=1}^n W_d^{\mathcal{I}_s}$.

\subsection{Structures associated to $v$}
\indent

Recall from Sections~\ref{WeylGp} and ~\ref{ExtWeylGp} that for $\mathcal{J}\subset \ul{n}$ there is root subsystem given by $\Phi_\mathcal{J}=\Phi_{\tC_n}\cap \GenGp{\pm e_i\mid i\in\mathcal{J}}$ and a subgroup of the extended Weyl group given by $V_{\Phi_\mathcal{J}}=\GenGp{{\bf n}_\alpha\mid \alpha\in \Phi_\mathcal{J}}$ (see  ).
For ease of notation, in the following set $V_\mathcal{J}:=V_{\Phi_\mathcal{J}}$.

\subsubsection{The regular case}\label{SylTorV}
\indent

In \cite[Theorem 5.3]{CabSpMcKTypC} minimal $d$-split Levi subgroups, those where $\mathcal{I}=\mathcal{I}_1$, were studied.
In particular some explicit notation and results were given for the regular case ($\mathcal{I}_{-1}=\emptyset$), which will now be recalled.

By definition, the element $v$ lies in $V_{\ul{l}}$, for $l$ as defined in Section~\ref{vwSylTwist}.
Set $V_d:=\Cent_{V_{\ul{l}}}(v)$.
Then $\rho(V_d)=\Cent_{\sgnSymm_{l}}(\rho(v))$ with kernel 
$$H_d:={\rm ker}(\rho|_{V_d})=\langle h_{1}\rangle\times \dots\times \langle h_{a}\rangle\lhd V_d,$$ 
where $h_{i}:=\prod_{j\in\mathcal{J}_i^{d_0,a}}h_{2e_j}(-1)$ and maximal extendibility holds with respect to $H_d\lhd V_d$.

There is an element $c_{1}\in V_{\mathcal{J}_1^{d_0,a}}\cap V_d$ such that  $\rho( c_{1})=w'_{\mathcal{J}_1^{d_0,a}}$.
In addition, for $1\leq k<a$ the elements 
$p_{k}:=\prod_{i=0}^{d_0-1} ({\bf n}_{e_k-e_{k+1}})^{v^i}\in V_{\mathcal{J}_k^{d_0,a}\sqcup \mathcal{J}_{k+1}^{d_0,a}}\cap V_d$ satisfy the braid relations with $(p_{k})^2=h_{k}h_{k+1}$ and the permutation $\rho(p_{k})=\tau_{\mathcal{J}_{k}^{d_0,a}}$ induces the permutation $(\mathcal{J}_k^{d_0,a},\mathcal{J}_{k+1}^{d_0,a})(-\mathcal{J}_k^{d_0,a},-\mathcal{J}_{k+1}^{d_0,a})\in\sgnSymm(\mathcal{J}^{d_0,a})$.
Moreover
$$V_d=H_d\langle  c_{1}, p_{k}\mid 1\leq k\leq a-1\rangle.$$
Finally, for $K_d:=\langle H_d,c_{k}\mid 1\leq k\leq a\rangle$ with $c_{k}:=(c_{1})^{p_{1}\dots p_{k-1}}\in V_{\mathcal{J}_k^{d_0,a}}$, the map $\rho$ induces an isomorphism $V_d/K_d \cong \mathfrak{S}_{a}$.

\begin{lm}\label{CentralCVv}
The element $v':=\prod_{i=1}^{a}c_{i}$ is a central element of $V_d$ and $\rho(v')=w'$.
\end{lm}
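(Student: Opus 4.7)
The plan is to verify that $v'$ commutes with each of the generators of $V_d$, namely the elements of $H_d$, the element $c_1$, and each $p_k$ for $1\leq k\leq a-1$, and then to compute $\rho(v')$ from the definition of $c_k$. Throughout I rely on the fact that the subsets $\mathcal{J}_i^{d_0,a}$ are pairwise disjoint, so by Corollary~\ref{IDisjVDirProd} the subgroups $V_{\mathcal{J}_i^{d_0,a}}$ sit inside $V_{\underline{l}}$ as a direct product; in particular, elements from different factors commute. Since $c_i\in V_{\mathcal{J}_i^{d_0,a}}$ by construction, the $c_i$ all commute pairwise, which immediately gives $[v',c_1]=1$.

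To check $[v',H_d]=1$, note that for $j\ne i$ the element $h_j\in V_{\mathcal{J}_j^{d_0,a}}$ commutes with $c_i$ by the direct-product argument; so it only remains to verify $[c_i,h_i]=1$. Here one uses the Chevalley formula $h_\alpha(t)^{{\bf n}_\beta}=h_{r_\beta(\alpha)}(t)$ (from Section~\ref{ChevRel}) together with the fact that $h_{2e_k}(-1)$ is an involution (so signs are irrelevant) and that $\rho(c_i)=w'_{\mathcal{J}_i^{d_0,a}}$ cyclically permutes $\mathcal{J}_i^{d_0,a}$; conjugation by $c_i$ therefore permutes the factors of $h_i=\prod_{k\in\mathcal{J}_i^{d_0,a}}h_{2e_k}(-1)$ among themselves and fixes $h_i$.

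The step requiring the most care is checking $[v',p_k]=1$. For $i\notin\{k,k+1\}$, the direct-product argument again shows $[c_i,p_k]=1$, so it suffices to prove that $p_k$ commutes with $c_kc_{k+1}$. By the definition $c_{k+1}=c_1^{p_1\cdots p_k}=c_k^{p_k}$, and using $p_k^2=h_kh_{k+1}$ together with the fact that $h_{k+1}\in V_{\mathcal{J}_{k+1}^{d_0,a}}$ commutes with $c_k$ and that $[c_k,h_k]=1$ (just established), one gets $c_{k+1}^{p_k}=c_k^{p_k^2}=c_k^{h_kh_{k+1}}=c_k$. Combined with $[c_k,c_{k+1}]=1$, this yields $(c_kc_{k+1})^{p_k}=c_{k+1}c_k=c_kc_{k+1}$, as required.

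Finally, for the image under $\rho$, observe $\rho(c_k)=\rho(c_1)^{\rho(p_1)\cdots\rho(p_{k-1})}=(w'_{\mathcal{J}_1^{d_0,a}})^{\tau_{\mathcal{J}_1^{d_0,a}}\cdots\tau_{\mathcal{J}_{k-1}^{d_0,a}}}$. An inductive application of the identity $(w'_{\mathcal{J}})^{\tau_{\mathcal{J},\mathcal{J}'}}=w'_{\mathcal{J}'}$ (implicit in the definitions of Section~\ref{PermPart}) gives $\rho(c_k)=w'_{\mathcal{J}_k^{d_0,a}}$. Since the $w'_{\mathcal{J}_k^{d_0,a}}$ act on disjoint sets and hence commute in $\sgnSymm_l$, one obtains $\rho(v')=\prod_{i=1}^a w'_{\mathcal{J}_i^{d_0,a}}=w'$, completing the proof.
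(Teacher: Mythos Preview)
Your proof is correct and follows essentially the same approach as the paper: both use the direct-product decomposition from Corollary~\ref{IDisjVDirProd} to get commutativity between distinct factors, the relation $[c_i,h_i]=1$, and the key swap $(c_k)^{p_k}=c_{k+1}$, $(c_{k+1})^{p_k}=c_k$. Your version is more explicit, in particular in deriving $(c_{k+1})^{p_k}=c_k$ from $p_k^2=h_kh_{k+1}$ and in verifying $\rho(v')=w'$, which the paper leaves to the reader.
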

\begin{proof}
It can be seen that $K_d\GenGp{p_{k}\mid i\ne k,k+1}\leq \Cent_{V_d}(c_{i})$ as $[V_{\mathcal{J}_i^{d_0,a}},V_{\mathcal{J}_j^{d_0,a}}]=1$ for $1\leq i\ne j\leq a$ and $[h_{i},c_{i}]=1$.
The proof then follows by noting that 
\begin{align*}(c_{k}) ^{p_{k}}=c_{k+1} \text{ and } (c_{k+1})^{p_{k}}=c_{k}. &  \qedhere
\end{align*}
\end{proof}

For later applications, it will help if some additional elements are introduced.
For $k<k'$, define the elements $$p_{k,k'}:=(p_{k})^{p_{k+1}\dots p_{k'-1}}\in V_{\mathcal{J}_k^{d_0,a}\sqcup \mathcal{J}_{k'}^{d_0,a}}\cap V_d.$$
By construction conjugation with  $p_{k,k'}$ switches $V_{\mathcal{J}_k^{d_0,a}}$ and $V_{\mathcal{J}_{k'}^{d_0,a}}$, while centralising all other $V_{\mathcal{J}_j^{d_0,a}}$ whenever $j\ne k,k'$.

\subsubsection{Suitable subgroup of $\Cent_{V_{{\tC_n}}}(v)$}\label{SupplementLN}
\indent

The aim in this section is to construct a suitable subgroup of $V_d\leq C_{V_{{\tC_n}}}(v)$ which projects under $\rho$ on the embedding of $\prod_{s=1}^n\Cent_{\sgnSymm(\mathcal{I}_s)}(w|_{\pm\mathcal{I}_s})$ as a subgroup of $\Cent_{\sgnSymm(\ul{\mathcal{I}})}(w|_{\pm\ul{\mathcal{I}}})$ constructed in Section~\ref{CentwIinCentwulI}. 

\begin{lm}\label{ExtWeylGpExtMap}
There exists a subgroup $V_d^\mathcal{I}\leq \Cent_V(v)$ such that $\rho(V_d^\mathcal{I})=W_d^\mathcal{I}$ induces the group $\prod_{s=1}^n\Cent_{\sgnSymm(\mathcal{I}_s)}(w_{\mathcal{I}_s})$. Moreover, for $H_d^\mathcal{I}:={\rm ker}(\rho\mid_{V_d^\mathcal{I}})$ maximal extendibility holds with respect to $H_d^\mathcal{I}\lhd V_d^\mathcal{I}$.
\end{lm}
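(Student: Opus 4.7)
The plan is to construct $V_d^\mathcal{I}$ by lifting the generators of $W_d^\mathcal{I}$ from $\sgnSymm(\ul{\mathcal{I}})$ to $V_d$ via the building blocks $c_k$, $p_{k,k'}$ and $h_k$ introduced in Section~\ref{SylTorV}. For each $\ol{w}$-orbit $\mathcal{O}_r^s$ in $\mathcal{I}_s$ write $Q^s_{r,j} = \mathcal{J}^{d_0,a}_{k(r,j,s)}$ with $k(r,j,s) \in \ul{a}$, and set
\[
\tilde c_r^s := \prod_{j=1}^s c_{k(r,j,s)}, \qquad \tilde p_r^s := \prod_{j=1}^s p_{k(r,j,s),\, k(r+1,j,s)}, \qquad \tilde h_r^s := \prod_{j=1}^s h_{k(r,j,s)}.
\]
Define $V_d^{\mathcal{I}_s} := \langle \tilde c_r^s, \tilde p_{r'}^s \mid 1\leq r \leq t_s,\ 1\leq r'\leq t_s-1 \rangle$ and $V_d^\mathcal{I} := \prod_{s=1}^n V_d^{\mathcal{I}_s}$. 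Since the supports $\ul{Q}^s_r$ (over all $r,s$) partition a subset of $\ul{a}$, the various $c_k$ and $p_{k,k'}$ appearing in each product have pairwise disjoint supports in $\mathcal{J}^{d_0,a}$, so by Corollary~\ref{IDisjVDirProd} they commute; every product is therefore well defined, and the outer product over $s$ is direct.

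The containment $V_d^\mathcal{I} \leq V_d \leq \Cent_V(v)$ is immediate. Projecting via $\rho$ yields $\rho(\tilde c_r^s) = \prod_j w'_{Q^s_{r,j}}$ and $\rho(\tilde p_r^s) = \prod_j \tau_{Q^s_{r,j}, Q^s_{r+1,j}}$, which are precisely the images of $w'_{\mathcal{J}^{d_0,t_s}_r}$ and $\tau_{\mathcal{J}^{d_0,t_s}_r}$ under $\kappa_s \circ \kappa_s^\circ$. Consequently $\rho(V_d^{\mathcal{I}_s}) = W_d^{\mathcal{I}_s}$, so $\rho(V_d^\mathcal{I}) = W_d^\mathcal{I}$, with $H_d^\mathcal{I} = \ker(\rho|_{V_d^\mathcal{I}}) = \langle \tilde h_r^s\rangle$.

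For maximal extendibility, I would verify that $\tilde c_r^s, \tilde p_r^s, \tilde h_r^s$ satisfy inside $V_d^{\mathcal{I}_s}$ exactly the defining relations of $c_k, p_k, h_k$ inside the regular case of Section~\ref{SylTorV}, with $a$ replaced by $t_s$: $(\tilde p_r^s)^2 = \tilde h_r^s \tilde h_{r+1}^s$, the usual braid relations among the $\tilde p_r^s$, commutation of $\tilde c_r^s$ with $\tilde p_{r'}^s$ when $r'\notin\{r-1,r\}$, and $(\tilde c_r^s)^{\tilde p_r^s}=\tilde c_{r+1}^s$. Each such identity factors through the $j$-th coordinate and reduces to the corresponding relation among the $c_k, p_{k,k'}, h_k$, together with commutation across distinct $j$ via Corollary~\ref{IDisjVDirProd}. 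This exhibits $V_d^{\mathcal{I}_s}$ as an isomorphic copy of the regular-case extended Weyl group for $t_s$ cycles of length $d_0$, and \cite[Theorem 5.3]{CabSpMcKTypC} applied to this smaller regular setting yields maximal extendibility for $H_d^{\mathcal{I}_s} \lhd V_d^{\mathcal{I}_s}$. Since $V_d^\mathcal{I}$ decomposes as the internal direct product $\prod_s V_d^{\mathcal{I}_s}$ with $H_d^\mathcal{I}=\prod_s H_d^{\mathcal{I}_s}$, one obtains maximal extendibility for $H_d^\mathcal{I} \lhd V_d^\mathcal{I}$ by extending on each factor independently.

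The main obstacle I anticipate is the braid-relation bookkeeping for the $\tilde p_r^s$: for each fixed $j$ the elements $p_{k(r,j,s),k(r+1,j,s)}$ and $p_{k(r+1,j,s),k(r+2,j,s)}$ must braid correctly, while all cross-$j$ factors must commute by disjointness of their supports in $\mathcal{J}^{d_0,a}$. Once this verification is completed, the lemma reduces to invoking the regular case already treated in \cite[Theorem 5.3]{CabSpMcKTypC}.
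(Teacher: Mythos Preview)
Your strategy matches the paper's: lift $W_d^\mathcal{I}$ to $V_d$ by working on each $\mathcal{I}_s$ using the regular-case elements $c_k, p_k, h_k$, and then reduce maximal extendibility to the regular case. The paper's execution, however, sidesteps the relation-checking entirely. It first sets
\[
(V_d^{\mathcal{I}_s})^\circ := V_d \cap V_{\mathcal{J}_1^{d_0,a}\sqcup\dots\sqcup\mathcal{J}_{t_s}^{d_0,a}} = \langle h_i, c_1, p_k \mid 1\le i\le t_s,\ 1\le k\le t_s-1\rangle,
\]
which is literally the regular $V_d$ for $t_s$ blocks, and then defines $\kappa_s\colon x \mapsto \prod_{j=1}^s x^{\eta_j}$ with $\eta_j = \prod_{i=1}^{t_s} p_{i,k_{i,j}}$. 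Corollary~\ref{IDisjVDirProd} shows $\kappa_s$ is an injective group homomorphism, so $V_d^{\mathcal{I}_s} := \kappa_s((V_d^{\mathcal{I}_s})^\circ)$ is isomorphic to the regular case \emph{by construction}---no braid bookkeeping is needed. Your $\tilde c_r^s$ and $\tilde h_r^s$ in fact agree with $\kappa_s(c_r)$ and $\kappa_s(h_r)$, so you are implicitly doing the same thing; the point is that packaging everything as a single homomorphism $\kappa_s$ gives the isomorphism and the kernel computation $H_d^{\mathcal{I}_s} = \kappa_s(\langle h_1,\dots,h_{t_s}\rangle)$ for free.

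There is a genuine gap in your route as written. Verifying that $\tilde c_r^s, \tilde p_r^s, \tilde h_r^s$ satisfy the listed relations does not prove $V_d^{\mathcal{I}_s}$ is isomorphic to the regular $V_d$, because you have no presentation of the latter: satisfying some relations only yields a surjection from the abstract group presented by those relations, not from the regular $V_d$ itself. Relatedly, your claim $H_d^\mathcal{I}=\langle\tilde h_r^s\rangle$ needs justification: a priori $V_d^{\mathcal{I}_s}\cap H_d$ could contain other products of the $h_k$ (the ambient $H_d\cap V_{\ul{\mathcal{I}}_s}$ has order $2^{st_s}$, not $2^{t_s}$). Both issues dissolve once you use the injective $\kappa_s$. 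For extendibility, citing \cite[Theorem~5.3]{CabSpMcKTypC} after the isomorphism is established is fine; the paper chooses to reproduce that short argument on $(V_d^{\mathcal{I}_s})^\circ$ directly.
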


\begin{proof}
For $1\leq s\leq n$, in  Section~\ref{CentwIinCentwulI}, a subgroup $\Cent_{\sgnSymm(\mathcal{I}_s)}(w_{\mathcal{I}_s})^\circ\leq \Cent_{\sgnSymm(\ul{\mathcal{I}}_s)}(w_{\ul{\mathcal{I}}_s})$ was constructed.
In an analogous fashion define
$$
\begin{array}{ccl}
(V_d^{\mathcal{I}_s})^{\circ} &:= & \langle h_{i}\mid 1\leq i\leq t_s\rangle\langle c_{1},p_{k}\mid 1\leq k\leq t_s-1\rangle=V_d\cap V_{\mathcal{J}_1^{d_0,a}\sqcup \dots\sqcup \mathcal{J}_{t_s}^{d_0,a}}.
\end{array}$$
By construction $\rho((V_d^{\mathcal{I}_s})^{\circ})=\Cent_{\sgnSymm(\mathcal{I}_s)}(w_{\mathcal{I}_s})^\circ$ with $$(H_d^{\mathcal{I}_s})^{\circ}:={\rm ker}(\rho\mid_{(V_d^{\mathcal{I}_s})^{\circ}})=\langle h_{i}\mid 1\leq i\leq t_s\rangle.$$
Similarly set $(K_d^{\mathcal{I}_s})^{\circ}:= \langle h_{i}, c_{i} \mid 1\leq i\leq t_s\rangle$, so that $(V_d^{\mathcal{I}_s})^{\circ} /(K_d^{\mathcal{I}_s})^{\circ}\cong \mathfrak{S}_{t_s}$.

Take $Q_1^s,\dots, Q^s_{t_s}$ representing the $\ol{w}$-orbits in $\mathcal{I}_s$ as explained in Section~\ref{wPartNot}, where $Q^s_i=\{Q_{i,1}^s,\dots, Q_{i,s}^s\}\subset\mathcal{J}^{d_0,a}$.
By assumption, for $1\leq i\leq t_s$ and $1\leq j\leq s$, each $ Q^s_{i,j}(1)<Q^s_{i,j+1}(1)$ and if $Q^s_{i,1}=\mathcal{J}_{k}^{d_0,a}$ then $k\geq i$.
Hence for $1\leq i\leq t_s$ and  $1\leq j\leq s$, if $Q_{i,j}^s=\mathcal{J}_{k}^{d_0,a}$ then the elements $p_{i,k}$ from Section~\ref{SylTorV} are defined and their image under $\rho$ in $\sgnSymm_n$ is the involution $\tau_{\mathcal{J}_i^{d_0,a}, Q^s_{i,j}}$ as defined in Section~\ref{PermPart}.
Fix $1\leq k_{i,j}\leq a$ for $1\leq i\leq t_s$ and  $1\leq j\leq s$ such that $Q_{i,j}^s=\mathcal{J}_{k_{i,j}}^{d_0,a}$ and define $\eta_j:=\prod_{i=1}^{t_s}p_{i,k_{i,j}}$.
Then $\rho(\eta_j)=\iota_j$ for $\iota_j$ as defined in Section~\ref{CentwIinCentwulI}.
It follows for $x\in(V_d^{\mathcal{I}_s})^{\circ}$ and $1\leq j\leq s$ that $x^{\eta_j}\in V_d\cap V_ {Q_{1,j}^s\cup\dots\cup Q_{t_s,j}^s}$.
As for $1\leq j\ne j'\leq s$ the sets $\sqcup_{i=1}^{t_s}Q_{i,j}^s$ and $\sqcup_{i=1}^{t_s}Q_{i,j'}^s$ are disjoint, the map
$$
\begin{array}{cccc}
\kappa_s: & (V_d^{\mathcal{I}_s})^{\circ} & \rightarrow & V_d \cap V_{\ul{\mathcal{I}}_s} \\
 & & & \\
 & x& \mapsto & \prod_{j=1}^s x^{\eta_j}\\
\end{array}
$$
is an injective homomorphism by Corollary~\ref{IDisjVDirProd}.
Set $V_d^{\mathcal{I}_s}:=\kappa_s((V_d^{\mathcal{I}_s})^{\circ})$.
By construction $\rho(V_d^{\mathcal{I}_s})=W_d^{\mathcal{I}_s}$
and, as $h_{Q_i^s}:=\prod_{j=1}^{s} h_{Q_{i,j}^s}=\kappa_s(h_{i})$, it follows that $$H_d^{\mathcal{I}_s}:={\rm ker}(\rho|_{V_d^{\mathcal{I}_s}})=\langle h_{Q_i^s}\mid 1\leq i \leq t_s\rangle =\kappa_s((H_d^{\mathcal{I}_s})^{\circ}).$$ 

These maps can be combined over all possible $s$ to give an injection 
$$
\kappa: \prod_{s=1}^n (V_d^{\mathcal{I}_s})^{\circ} \rightarrow \prod_{s=1}^n V_d \cap V_{\ul{\mathcal{I}}_s} \leq C_V(v).
$$
Set $V_d^\mathcal{I}:=\kappa\left( \prod_{s=1}^n (V_d^{\mathcal{I}_s})^{\circ}\right)=\prod_{s=1}^n V_d^{\mathcal{I}_s}$ so that by construction $\rho(V_d^\mathcal{I})=W_d^\mathcal{I}$.
Furthermore $H_d^\mathcal{I}:={\rm ker}(\rho|_{V_d^{\mathcal{I}}})=\prod_{s=1}^nH_d^{\mathcal{I}_s}$.
Thus to prove maximal extendibility with respect to $H_d^\mathcal{I}\lhd V_d^\mathcal{I}$ it suffices to prove it with respect to $(H_d^{\mathcal{I}_s})^{\circ}\lhd (V_d^{\mathcal{I}_s})^{\circ}$.

The argument follows that presented in \cite[Theorem 5.3]{CabSpMcKTypC}.
Using the decomposition $(H_d^{\mathcal{I}_s})^{\circ}=\GenGp{h_{1}}\times\dots\times \GenGp{h_{t_s}}$, up to $(V_d^{\mathcal{I}_s})^{\circ}$-conjugation, any character $\lambda\in \Irr((H_d^{\mathcal{I}_s})^{\circ})$ can be written as $\lambda_1\times\dots\times \lambda_{t_s}$ where $o(\lambda_i)=1$ for all $i\leq k_0$ and $o(\lambda_i)=2$ for all $i>k_0$ for some $0\leq k_0\leq t_s$. 
As in the proof of Lemma~\ref{CentralCVv}, $(K_d^{\mathcal{I}_s})^{\circ}$ is abelian and thus $((V_d^{\mathcal{I}_s})^{\circ})_\lambda=(K_d^{\mathcal{I}_s})^{\circ}S$ for $S:=\GenGp{p_{k}\mid 1\leq k<t_s, \text{ } k\ne k_0}$.
Moreover, $$S\cap (K_d^{\mathcal{I}_s})^{\circ}=S\cap (H_d^{\mathcal{I}_s})^{\circ}=\GenGp{ p_{k}^2\mid 1\leq k\leq t_s, \text{ }k,\ne k_0}\leq {\rm ker}(\lambda),$$ that is, ${\rm Res}^{ (H_d^{\mathcal{I}_s})^{\circ}}_{S\cap (H_d^{\mathcal{I}_s})^{\circ}}(\lambda)$ is the trivial character.

Fix an extension map $\Lambda_1$ with respect to $\GenGp{ h_{1}}\lhd \GenGp{h_{1},  c_{1}}$  and denote by $\Lambda_k$ the extension map with respect to  $\GenGp{ h_{k}}\lhd \GenGp{h_{k},  c_{k}}$ given by conjugating $\Lambda_1$ with $p_{1}\cdots p_{k-1}$. 
Then $\wt{\lambda}:=\Lambda_1(\lambda_1)\times \cdots\times \Lambda_{t_s}(\lambda_{t_s})\in \Irr((K_d^{\mathcal{I}_s})^{\circ})$ is $p_{k}$-invariant for $1\leq k<t_s$ with $k\ne k_0$.
Hence $((V_d^{\mathcal{I}_s})^{\circ})_\lambda= ((V_d^{\mathcal{I}_s})^{\circ})_{\wt{\lambda}}$.
As ${\rm Res}^{ (K_d^{\mathcal{I}_s})^{\circ}}_{S\cap (H_d^{\mathcal{I}_s})^{\circ}}(\lambda)$ is the trivial character \cite[Lemma 4.1]{SpSylowTori2} implies $\wt{\lambda}$, and hence also $\lambda$, extends to $ ((V_d^{\mathcal{I}_s})^{\circ})_{{\lambda}}$.

\end{proof}

\subsection{Structure associated to $d$-split Levi subgroups of $\bG$}\label{StrdSpLev}
\indent

Take $v$ as defined in Section~\ref{vwSylTwist} so that $vF$ forms a Sylow $d$-twist and let $\bL$ be a $d$-split Levi subgroup with respect to $(\bG,vF)$.
As explained in Section~\ref{RootSysdLevi}, up to conjugation, it can be assumed that $\bT\leq \bL$ and the root system $\Phi_{\bL}$ coincides with $\Phi_{\mathcal{I}_{-1},\mathcal{I}}$ where $(\mathcal{I}_{-1},\mathcal{I})$ satisfies Remark~\ref{PartLabLevi} with respect to $w=\rho(v)\in\sgnSymm_n$. 
That is $\mathcal{I}_{-1}\subset \ul{n}$ and $\mathcal{I}\vdash \ul{n}\setminus\mathcal{I}_{-1}$ are $\ol{w}$-stable with each $\ol{w}$-orbit on $\mathcal{I}$ having length $d_0$ and the sign function $\epsilon_{w}$, defined in Remark~\ref{CanWeylGpHom}, is constant on each $\mathcal{J}\in\mathcal{I}$.
For such a pair $(\mathcal{I}_{-1},\mathcal{I})$, the corresponding $d$-split Levi subgroup in $\bG$ will be denoted 
\[
\bL_{\mathcal{I}_{-1},\mathcal{I}}:=\GenGp{\bT, {\bf X}_{\alpha}\mid \alpha\in\Phi_{\mathcal{I}_{-1},\mathcal{I}}}.
\]

As in Section~\ref{CnSetup}, $\bT=\bT_{\mathcal{I}_{-1}}\times \prod_{\mathcal{J}\in\mathcal{I}}\bT_\mathcal{J}$.
For $\mathcal{I}_{-1}$ and any $\mathcal{J}\in \mathcal{I}$ define 
$$
\bL_{\mathcal{I}_{-1}}:=\GenGp{ \bT_{\mathcal{I}_{-1}}, X_{\alpha}\mid \alpha\in \Phi_{\mathcal{I}_{-1}}}
\text{ and }
\bL_{\mathcal{J}}:=\GenGp{ \bT_{\mathcal{J}}, X_{\alpha}\mid \alpha\in \Phi_{A_\mathcal{J}}}.
$$
The definition of $\Phi_{\mathcal{I}_{-1},\mathcal{I}}=\Phi_{\mathcal{I}_{-1}}\sqcup \Phi_{A_{\mathcal{I}}}$ from Remark~\ref{PartLabLevi} implies that
\[
\bL_{\mathcal{I}_{-1},\mathcal{I}}=\bL_{\mathcal{I}_{-1}}\times \prod_{\mathcal{J}\in\mathcal{I}} \bL_{\mathcal{J}}\cong \Sp_{2|\mathcal{I}_{-1}|}(\ol{\F}_p)\times \prod_{\mathcal{J}\in\mathcal{I}} \GL_{|\mathcal{J}|}(\ol{\F}_p)\cong \Sp_{2|\mathcal{I}_{-1}|}(\ol{\F}_p)\times \prod_{s=1}^n \GL_{s}(\ol{\F}_p)^{|\mathcal{I}_s|}.
\]
Moreover the associated Weyl group as a subgroup of $W_\bG\cong \sgnSymm_n$ is given by 
$$
W_{\bL_{\mathcal{I}_{-1},\mathcal{I}}}= W_{C_{\mathcal{I}_{-1}}}\times \prod\limits_{\mathcal{J}\in\mathcal{I}} W_{A_\mathcal{J}}=\sgnSymm (\mathcal{I}_{-1})\times \prod\limits_{\mathcal{J}\in\mathcal{I}} \Symm(\mathcal{J}).
$$
\subsubsection{Fixed points of $\bL_{\mathcal{I}_{-1},\mathcal{I}}$}\label{ChoiceOfn}
\indent

Let ${\bf n}\in V$ such that $ \rho({\bf n})\cdot w^{-1}\in W_{{\bL}_{\mathcal{I}_{-1},\mathcal{I}}}=\NNN_{{\bL}_{\mathcal{I}_{-1},\mathcal{I}}}({\bf T})/{\bf T}$.
Then ${\bf n}v^{-1}\in {\bL}_{\mathcal{I}_{-1},\mathcal{I}}$ and thus $\iota({\bL}_{\mathcal{I}_{-1},\mathcal{I}}^{vF})={\bL}_{\mathcal{I}_{-1},\mathcal{I}}^{{\bf n}F}$ for the isomorphism $\iota$ from  Lemma~\ref{TwistTwist}.
Assume, furthermore, that $ \rho({\bf n})\cdot w^{-1}\in  \sgnSymm(\mathcal{I}_{-1})\leq W_{{\bL}_{\mathcal{I}_{-1},\mathcal{I}}}\cong \sgnSymm(\mathcal{I}_{-1})\times \prod_{\mathcal{J}\in\mathcal{I}}\Symm(\mathcal{J})$.
For each $\mathcal{J}\in\mathcal{I}$ it follows that ${}^{{\bf n}}(\bL_{\mathcal{J}})={}^v(\bL_{\mathcal{J}})=\bL_{\ol w(\mathcal{J})}$.
For each $\ol{w}$-orbit $\mathcal{O}$ in $\mathcal{I}$ set $\bL_{\mathcal{O}} := \prod_{\mathcal{J}\in \mathcal{O}} \bL_{\mathcal{J}}$ so that ${}^{\bf n}(\bL_{\mathcal{O}})=\bL_{\mathcal{O}}$.
There is a $\langle {\bf n}, F_p\rangle$-stable decomposition
\[
\bL_{\mathcal{I}_{-1},\mathcal{I}}=\bL_{\mathcal{I}_{-1}}\times \prod_{s=1}^n\left( \prod_{i=1}^{t_s} \bL_{\mathcal{O}^s_i}\right).
\]
Moreover, it follows that $$L_{\mathcal{O}_i^s}:=\bL_{\mathcal{O}^s_{i}}^{{\bf n}F_q}
\cong \left\{
     \begin{array}{lr}
       {\rm GL}_s(q^d) & d \text{ odd; }\\
       {\rm GU}_s(q^{d_0}) & d \text{ even. }\\
      \end{array}
   \right.  $$
Thus for $\epsilon=(-1)^{d+1}$
\[
L_{\mathcal{I}_{-1},\mathcal{I}}:=\bL_{\mathcal{I}_{-1},\mathcal{I}}^{{\bf n}F_q}
\cong \CSp_{2|\mathcal{I}_{-1}|}(q)\times 
\prod_{s=1}^n \GL_{s}(\epsilon q^{d_0})^{t_s}.
\]

\begin{lm}\label{CentreLO}
Let $\mathcal{O}$ be a $\ol{w}$-orbit in $\mathcal{I}$ and ${\bf n}\in V$ as above. 
Set $Z_\mathcal{O}:=Z(L_\mathcal{O})$, $\epsilon=(-1)^{d+1}$ and denote by $C_{q^{d_0}-\epsilon}$ the cyclic subgroup of order $q^{d_0}-\epsilon$ in $\ol{\mathbb{F}}_q^\times$.
There is an isomorphism $\Theta_\mathcal{O}:C_{q^{d_0}-\epsilon}\rightarrow Z_{\mathcal{O}}$, such that $\Theta_\mathcal{O}(t)=\prod_{k\in\ul{\mathcal{O}}}h_{2e_k}(t_k)$ where for $i\in\mathcal{J}_\mathcal{O}$ and $0\leq j\leq d_0-1$:
$$
t_{ja+i}:=\left\{
\begin{array}{lr}
     t^{q^j} & d \text{ even; }\\
\left\{
\begin{array}{lr}
     t^{q^{\frac{j}{2}}} & j \text{ even; }\\
t^{-q^{\frac{d_0+j}{2}}}  & j\text{ odd; }\\
      \end{array}
   \right.      & d \text{ odd.}\\   
      \end{array}
   \right.
   $$   
In particular, $h_{\mathcal{O}}:=\prod_{k\in \ul{\mathcal{O}}}h_{2e_k}(-1)=\Theta_\mathcal{O}(-1)\in Z(L_{\mathcal{O}})$.
\end{lm}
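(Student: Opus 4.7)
The plan is to describe $Z(\bL_\mathcal{O})$ explicitly as a subtorus of $\bT_{\ul{\mathcal{O}}}$ and then impose $({\bf n}F_q)$-invariance. Since $\bL_\mathcal{O}=\prod_{\mathcal{J}\in\mathcal{O}} \bL_\mathcal{J}$ and $\bL_\mathcal{J}$ has root system $\Phi_{A_\mathcal{J}}$, using the pairing $(e_i-e_j)(h_{2e_k}(t))=t^{\delta_{ik}-\delta_{jk}}$ (arising from $(2e_k)^\vee=e_k$ in type $\tC$) shows that $Z(\bL_\mathcal{J})=\{\prod_{k\in\mathcal{J}} h_{2e_k}(s) : s\in\ol{\F}_p^\times\}$. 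Hence every $z\in Z(\bL_\mathcal{O})$ can be written as $\prod_{k\in\ul{\mathcal{O}}} h_{2e_k}(s_{\mathcal{J}(k)})$ for one scalar $s_\mathcal{J}\in\ol{\F}_p^\times$ per $\mathcal{J}\in\mathcal{O}$, where $\mathcal{J}(k)$ is the unique $\mathcal{J}\in\mathcal{O}$ containing $k$.

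To extract the $({\bf n}F_q)$-fixed points, I would use that the hypothesis $\rho({\bf n})\cdot w^{-1}\in\sgnSymm(\mathcal{I}_{-1})$ implies ${\bf n}$ acts on $\bT_{\ul{\mathcal{O}}}$ exactly as $w$ does: conjugation sends $h_{2e_k}(t)\mapsto h_{2e_{\ol w(k)}}(t^{\epsilon_w(k)})$, via $h_{-\alpha}(t)=h_\alpha(t^{-1})$, and then $F_q$ raises $t$ to the $q$th power. Since $\epsilon_w$ is constant on each $\mathcal{J}\in\mathcal{O}$, the fixed-point equation reduces to $s_{\ol w(\mathcal{J})}=s_\mathcal{J}^{q\epsilon_w(\mathcal{J})}$ for every $\mathcal{J}\in\mathcal{O}$. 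Setting $\mathcal{J}_i:=\ol w^i(\mathcal{J}_\mathcal{O})$, $\epsilon_i:=\epsilon_w(\mathcal{J}_i)$ and $t:=s_{\mathcal{J}_\mathcal{O}}$, iteration yields $s_{\mathcal{J}_m}=t^{q^m\prod_{l=0}^{m-1}\epsilon_l}$, and closing the cycle after $d_0$ steps forces $t^{q^{d_0}\prod_i\epsilon_i-1}=1$. The required equality $\prod_i\epsilon_i=\epsilon$ then follows from the observation that $(w')^{d_0}$ acts as $k\mapsto-k$ on $\ul l$ (by the form of the cycle $w'_{\mathcal{J}_i^{d_0,a}}$): for $d$ even ($w=w'$) this gives $\prod_i\epsilon_i=-1=\epsilon$, while for $d$ odd ($w=(w')^2$) one has $w^{d_0}=(w')^{2d_0}=1$ and $\prod_i\epsilon_i=+1=\epsilon$. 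This produces the cyclic group $C_{q^{d_0}-\epsilon}$ and determines the map $\Theta_\mathcal{O}$.

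To match the explicit formulas it remains to locate each $\mathcal{J}_m$ among the blocks $\{ja+i:i\in\mathcal{J}_\mathcal{O}\}$ indexed by $0\le j\le d_0-1$ and compute the partial products $\prod_{l<m}\epsilon_l$. In the $d$ even case $\ol w$ shifts $\mathcal{J}_\mathcal{O}$ uniformly by $a$, so $\mathcal{J}_m$ corresponds to $j=m$, the only sign flip occurs at $l=d_0-1$, and all earlier partial products are $+1$; this yields $t_{ja+i}=t^{q^j}$. In the $d$ odd case $\ol w$ shifts by $2a$ with wrap-around in the $2d_0$-cycle producing a sign flip, so $\mathcal{J}_m$ corresponds to $j_m:=2m\bmod d_0$ with partial product $(-1)^{\lfloor 2m/d_0\rfloor}$; rewriting $m=j_m/2$ when $j_m$ is even (giving exponent $q^{j_m/2}$) and $m=(j_m+d_0)/2$ when $j_m$ is odd (giving exponent $-q^{(j_m+d_0)/2}$, with $d_0$ odd ensuring integrality) produces the two claimed cases. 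The identity $\Theta_\mathcal{O}(-1)=h_\mathcal{O}$ is then immediate since $q$ is odd and $(-1)^{\pm q^k}=-1$. The main obstacle is the careful bookkeeping of the signs $\epsilon_i$ in the $d$ odd case, where the action of $\ol w$ on $\mathcal{J}_\mathcal{O}$ is a hop-by-two with wrap-around rather than a simple translation.
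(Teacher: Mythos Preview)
Your proof is correct and follows essentially the same route as the paper: both identify $Z(\bL_\mathcal{J})$ as the diagonal copy of $\ol{\F}_p^\times$ inside $\bT_\mathcal{J}$, then impose $({\bf n}F)$-invariance on $Z(\bL_\mathcal{O})$ to obtain recursive relations among the scalars and solve them explicitly. The paper writes out the Lang map coordinate-by-coordinate in the index $ja+i$, while you organise the same computation via the $\ol w$-orbit $\mathcal{J}_0,\mathcal{J}_1,\ldots,\mathcal{J}_{d_0-1}$ and the running sign product $\prod_{l<m}\epsilon_l$, matching to the statement's formula at the end; the content is the same.
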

\begin{proof}
From the Chevalley relations, given two roots $\alpha,\beta\in \Phi$, $t\in \ol{\mathbb{F}}_q$ and $u\in \ol{\mathbb{F}}_q^\times$, then $x_{\alpha}(t)^{h_\beta(u)}=x_{\alpha}(u^{\langle \alpha,\beta\rangle} t)$; thus $[x_{e_i-e_j}(t),h_{2e_i}(u)h_{{2e_j}}(u)]=1$ and for $k\not\in \{i,j\}$ then $[x_{e_i-e_j}(t),h_{{2e_k}}(u)]=1$.
Hence for $\mathcal{J}\in\mathcal{O}$, then $Z({\bf L}_\mathcal{J})=\GenGp{\prod_{i\in\mathcal{J}} h_{2e_i}(u)\mid u\in \ol{\mathbb{F}}_q^\times}\cong \ol{\mathbb{F}}_q^\times$.

Let $h=\prod_{k\in\ul{\mathcal{O}}} h_{2e_k}(t_k)\in {\bf T}_\mathcal{O}:=\langle h_{2e_i}(t_i)\mid i\in\ul{\mathcal{O}} \text{ and }t_i\in\ol{\mathbb{F}}_q^\times \rangle$.
Set $\mathcal{L}_{{\bf n}F}$ to be the Lang map corresponding to ${\bf n}F$, that is $\mathcal{L}_{{\bf n}F}(h)=h^{-1}({\bf n}F)(h)$.
Then $\mathcal{L}_{{\bf n}F}(h)=\prod_{k\in\ul{\mathcal{O}}}h_{2e_k}(t'_k)$, for suitable $t'_k\in \ol{\mathbb{F}}_q^\times$.
As $h_{-\alpha}(t)=h_\alpha(t^{-1})$ for any root $\alpha\in \Phi$ and $t\in \ol{\mathbb{F}}_q^\times$ it follows for $i\in\mathcal{J}_\mathcal{O}$ and $0\leq j\leq d_0-1$ that:
$$
t'_{ja+i}=\left\{
\begin{array}{lr}
    \left\{ \begin{array}{lr}
    t_{(d_0-1)a+i}^{-q}t_i^{-1} & j=0; \\
    t_{(j-1)a+i}^qt_{ja+i}^{-1} & 1\leq j\leq d_0-1;\\  
      \end{array}
   \right.     & d \text{ even; }\\
 & \\
\left\{
\begin{array}{lr}
     t_{(d_0-2)a+i}^{-q}t_{i}^{-1} & j=0; \\
     -t_{(d_0-1)a+i}^{-q} t_{a+i}^{-1}& j=1; \\
     t_{(j-2)a+i}^qt_{ja+i}^{-1}  & 2\leq j\leq d_0-1;\\
      \end{array}
   \right.      & d \text{ odd.}\\   
      \end{array}
   \right.
   $$ 
Thus $\mathcal{L}_{{\bf n}F}(h)=1$ if and only if $t'_{ja+i}=1$ for $i\in\mathcal{J}_\mathcal{O}$ and $0\leq j\leq d_0-1$.
In other words:
$$
t_{ja+i}=\left\{
\begin{array}{lr}
    \left\{ \begin{array}{lr}
    t_{(d_0-1)a+i}^{-q} & j=0; \\
    t_{(j-1)a+i}^q & 1\leq j\leq d_0-1;\\  
      \end{array}
   \right.     & d \text{ even; }\\
 & \\
\left\{
\begin{array}{lr}
     t_{(d_0-2)a+i}^{-q} & j=0; \\
     t_{(d_0-1)a+i}^{-q} & j=1; \\
     t_{(j-2)a+i}^q  & 2\leq j\leq d_0-1;\\
      \end{array}
   \right.      & d \text{ odd.}\\   
      \end{array}
   \right.
   $$ 
In particular, each $t_k^{q^{d_0}-\epsilon}=1$.

Combining the description of $Z(\bL_\mathcal{J})$ above with the description for $\mathcal{L}_{{\bf n}F}(h)=1$ it follows that the map $\Theta_\mathcal{O}$ defined in the statement maps $C_{q^{d_0}-\epsilon}$ into $L_\mathcal{O}\cap Z(\bL_{\mathcal{O}})\leq Z_\mathcal{O}$.
In particular, $\Theta_\mathcal{O}$ is an isomorphism.
\end{proof}

\subsection{The normaliser of ${\bf L}_{\mathcal{I}_{-1},\mathcal{I}}$}
\indent

Assume the pair $(\mathcal{I}_{-1},\mathcal{I})$ satisfies Remark~\ref{PartLabLevi} with respect to $w=\rho(v)\in\sgnSymm_n$ from Section~\ref{vwSylTwist}.

\begin{prop}\label{RelWeylGpPart}
Let ${\bf n}\in V$ such that $w^{-1}\rho({\bf n})\in\sgnSymm(\mathcal{I}_{-1})$.
Set $L_{\mathcal{I}_{-1},\mathcal{I}}=\bL_{\mathcal{I}_{-1},\mathcal{I}}^{{\bf n}F}$ and $N_{\mathcal{I}_{-1},\mathcal{I}}:=\NNN_{\bG}(\bL_{\mathcal{I}_{-1},\mathcal{I}})^{{\bf n}F}$.
Then
\[
N_{\mathcal{I}_{-1},\mathcal{I}}/L_{\mathcal{I}_{-1},\mathcal{I}}\cong \prod\limits_{s=1}^n \Cent_{\sgnSymm(\mathcal{I}_s)}(w|_{\pm \mathcal{I}_s}) \cong \prod\limits_{s=1}^n C_{2d_0}\wr \Symm_{t_s}.
\]
\end{prop}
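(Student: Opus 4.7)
The plan is to apply the standard identification from Section~\ref{relWeylGp}: since $\bL_{\mathcal{I}_{-1},\mathcal{I}}$ is ${\bf n}F$-stable, one has
\[
N_{\mathcal{I}_{-1},\mathcal{I}}/L_{\mathcal{I}_{-1},\mathcal{I}} \cong \Cent_{W_{\bG}(\bL_{\mathcal{I}_{-1},\mathcal{I}})}(\sigma),
\]
where $\sigma$ is the automorphism of $W_{\bG}(\bL_{\mathcal{I}_{-1},\mathcal{I}})=\NNN_{W_{\tC_n}}(W_{\bL_{\mathcal{I}_{-1},\mathcal{I}}})/W_{\bL_{\mathcal{I}_{-1},\mathcal{I}}}$ induced by ${\bf n}F$. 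Thus the proof reduces to two tasks: identifying the relative Weyl group as a product, and computing the centraliser of $\sigma$ in it.

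First I would identify the relative Weyl group. By Section~\ref{StrdSpLev} we have $W_{\bL_{\mathcal{I}_{-1},\mathcal{I}}}=\sgnSymm(\mathcal{I}_{-1})\times\prod_{\mathcal{J}\in\mathcal{I}}\Symm(\mathcal{J})$, which is precisely the subgroup $H$ considered in Remark~\ref{NormProdsgnSymmSym} with the single $\sgnSymm$-factor indexed by $\mathcal{I}_{-1}$ and all $\Symm$-factors indexed by the partition $\mathcal{I}$. Since the $\sgnSymm$-part has only one component, the factor $\GenGp{\tau_{\mathcal{J},\mathcal{J}'}\mid \mathcal{J},\mathcal{J}'\in\mathcal{I}_2,|\mathcal{J}|=|\mathcal{J}'|}$ in that remark is trivial, and Remark~\ref{NormProdsgnSymmSym} yields
\[
W_{\bG}(\bL_{\mathcal{I}_{-1},\mathcal{I}})= \GenGp{\iota_{\mathcal{J}},\tau_{\mathcal{J},\mathcal{J}'}\mid \mathcal{J},\mathcal{J}'\in\mathcal{I},\,|\mathcal{J}|=|\mathcal{J}'|}\cong \prod_{s=1}^{n}\sgnSymm(\mathcal{I}_s),
\]
where $\iota_\mathcal{J}$ plays the role of $(\mathcal{J},-\mathcal{J})$ and $\tau_{\mathcal{J},\mathcal{J}'}$ the role of $(\mathcal{J},\mathcal{J}')(-\mathcal{J},-\mathcal{J}')$ inside $\sgnSymm(\mathcal{I}_s)$ (for $|\mathcal{J}|=|\mathcal{J}'|=s$).

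Next I would pin down $\sigma$. Since $p\neq 2$ and there is no graph automorphism in type $\tC_n$ (Section~\ref{CnSetup}), $F$ acts trivially on $W_{\tC_n}$, so $\sigma$ is induced by conjugation with $\rho({\bf n})$. The hypothesis $\rho({\bf n})w^{-1}\in\sgnSymm(\mathcal{I}_{-1})\leq W_{\bL_{\mathcal{I}_{-1},\mathcal{I}}}$ shows that $\rho({\bf n})$ and $w$ coincide modulo $W_{\bL_{\mathcal{I}_{-1},\mathcal{I}}}$, so $\sigma$ is the automorphism induced by $w$. A direct check on the generators shows $\iota_\mathcal{J}^{w}=\iota_{\ol{w}(\mathcal{J})}$ and $\tau_{\mathcal{J},\mathcal{J}'}^{w}=\tau_{\ol{w}(\mathcal{J}),\ol{w}(\mathcal{J}')}$ (using the fact that $w$ preserves $\mathcal{I}_{-1}$ setwise, so any sign change produced by $w$ on an element of $\mathcal{J}$ is absorbed in $(i,-i)$). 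Tracking these images through the identification with $\prod_s \sgnSymm(\mathcal{I}_s)$ shows that $\sigma$ acts on the $s$-th factor as conjugation by $w|_{\pm\mathcal{I}_s}$, where the latter is the signed permutation of $\mathcal{I}_s$ recalled in Section~\ref{wactsI}.

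Finally, to compute the centraliser: by the description in Section~\ref{wactsI}, on each $\ol{w}$-orbit $\mathcal{O}\subseteq\mathcal{I}_s$ the element $w|_{\pm\mathcal{I}_s}$ restricts to a single cycle of length $2d_0$ of the shape $(\mathcal{J}_\mathcal{O},\dots,-\mathcal{J}_\mathcal{O},\dots)$, which satisfies $\alpha=\alpha'$ in the notation of Lemma~\ref{CentsgnSymm}. There are $t_s$ such orbits, so $w|_{\pm\mathcal{I}_s}$ contributes $m_{1,d_0}=t_s$ and all other $m_{1,i}$, $m_{2,i}$ vanish. Lemma~\ref{CentsgnSymm} then gives $\Cent_{\sgnSymm(\mathcal{I}_s)}(w|_{\pm\mathcal{I}_s})\cong C_{2d_0}\wr\Symm_{t_s}$, and assembling over $s$ yields the stated isomorphism. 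The main obstacle I expect is the careful bookkeeping in the second paragraph, i.e.\ verifying that the signed permutation action of $w$ on $\mathcal{I}$ really matches the conjugation action on the generators $\iota_\mathcal{J},\tau_{\mathcal{J},\mathcal{J}'}$ of the quotient; everything else is then a straightforward application of the general results in Section~\ref{Prelim}.
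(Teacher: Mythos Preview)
Your proof is correct and follows essentially the same route as the paper: identify the relative Weyl group via Remark~\ref{NormProdsgnSymmSym} as $\prod_s\sgnSymm(\mathcal{I}_s)$, observe that ${\bf n}F$ acts on this quotient as conjugation by $w|_{\pm\mathcal{I}}$, and then invoke Lemma~\ref{CentsgnSymm}. One small slip: your description of the cycle structure (``a single cycle of length $2d_0$ with $\alpha=\alpha'$'') is that of $w'|_{\pm\mathcal{O}}$, not of $w|_{\pm\mathcal{O}}$; when $d$ is odd the latter is $(w'|_{\pm\mathcal{O}})^2$, a pair $\beta\beta'$ of $d_0$-cycles, but Lemma~\ref{CentsgnSymm} (or the remark at the end of Section~\ref{PermPart} that $w$ and $w'$ have the same centraliser) still yields $C_{2d_0}\wr\Symm_{t_s}$, so the conclusion is unaffected.
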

\begin{proof}
Set ${\bf N}_{\mathcal{I}_{-1},\mathcal{I}}:=\NNN_{\bG}(\bL_{\mathcal{I}_{-1},\mathcal{I}})$.
The relative Weyl group (Section~\ref{relWeylGp}) is given by 
$$
W_{\bG}(\bL_{\mathcal{I}_{-1},\mathcal{I}})=\bf N_{\mathcal{I}_{-1},\mathcal{I}}/{{\bL}}_{\mathcal{I}_{-1},\mathcal{I}}\cong \NNN_{W_\bG}(W_{\bL_{\mathcal{I}_{-1},\mathcal{I}}})/W_{\bL_{\mathcal{I}_{-1},\mathcal{I}}}.
$$
Applying Remark~\ref{NormProdsgnSymmSym}, to the description of $W_{{\bf L}_{\mathcal{I}_{-1},\mathcal{I}}}\leq W_{\bf G}$ given in Section~\ref{StrdSpLev} implies
$$\NNN_{W_{\bG}}( W_{\bL_{\mathcal{I}_{-1},\mathcal{I}}})/W_{\bL_{\mathcal{I}_{-1},\mathcal{I}}}\cong \prod_{s=1}^n \sgnSymm(\mathcal{I}_s).$$

By assumption, ${\bf n}F$ induces conjugation by $w|_{\pm\mathcal{I}}=\prod_{s=1}^{n}w|_{\pm\mathcal{I}_s}$ in the relative Weyl group $W_{\bG}(\wt{\bf L}_{\mathcal{I}_{-1},\mathcal{I}})$.
Combining Section~\ref{relWeylGp} with Lemma~\ref{CentsgnSymm}, it follows that 
\begin{align*}{\bf N}_{\mathcal{I}_{-1},\mathcal{I}}^{{\bf n}F}/{\bf L}_{\mathcal{I}_{-1},\mathcal{I}}^{{\bf n}F}\cong \Cent_{W_{\bG}({\bf L}_{\mathcal{I}_{-1},\mathcal{I}})}(w|_{\pm\mathcal{I}})=\prod\limits_{s=1}^n \Cent_{\sgnSymm(\mathcal{I}_s)}(w|_{\pm \mathcal{I}_s})\cong \prod\limits_{s=1}^n C_{2d_0}\wr \Symm_{t_s}. &  \qedhere
\end{align*}

\end{proof}

\subsubsection{Choosing a suitable $v_{\mathcal{I}}\in V_d^\mathcal{I}$ for ${\bf n}$}\label{vI}
\indent

Using the notation from the proof of Lemma~\ref{ExtWeylGpExtMap}, define $v'_\mathcal{I}:=\prod_{s=1}^n\kappa_s (\prod_{i=1}^{t_s} c_{i})\in V_d^\mathcal{I}$ and set 
$$v_\mathcal{I}:= \left\{
     \begin{array}{lr}
       v'_\mathcal{I} & d \text{ odd; }\\
       (v'_\mathcal{I})^2 & d \text{ even. }\\
      \end{array}
   \right.  $$
Observe that $\rho(v'_\mathcal{I})=w'|_{\pm\ul{\mathcal{I}}}$ and so $w^{-1}\cdot \rho(v_\mathcal{I})\in \sgnSymm(\mathcal{I}_{-1})$.

\begin{prop}\label{SuppLinN}
Assume $(\mathcal{I}_{-1},\mathcal{I})$ satisfies Remark~\ref{PartLabLevi} with respect to $w$ from Section~\ref{vwSylTwist}.
For $v_{\mathcal{I}}$ defined as above, set $L_{\mathcal{I}_{-1},\mathcal{I}}=\bL_{\mathcal{I}_{-1},\mathcal{I}}^{v_\mathcal{I}F}$ and $N_{\mathcal{I}_{-1},\mathcal{I}}=\NNN_{\bG}(\bL_{\mathcal{I}_{-1},\mathcal{I}})^{v_\mathcal{I} F}$.
Then for $V_d^\mathcal{I}$ as in Lemma~\ref{ExtWeylGpExtMap}, $v'_\mathcal{I}\in Z(V_d^\mathcal{I})$  and

\[
N_{\mathcal{I}_{-1},\mathcal{I}}=L_{\mathcal{I}_{-1},\mathcal{I}}V_d^\mathcal{I}.
\]
Furthermore $H_d^\mathcal{I}:=H\cap V_d^\mathcal{I}\leq Z(L_{\mathcal{I}_{-1},\mathcal{I}})$ and maximal extendibility holds with respect to $H_d^\mathcal{I}\lhd V_d^\mathcal{I}$.
\end{prop}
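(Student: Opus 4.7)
The plan is to verify each component of the statement by reducing to the regular case of Section~\ref{SylTorV} via the homomorphism $\kappa$ of Lemma~\ref{ExtWeylGpExtMap} and then using a counting argument against Proposition~\ref{RelWeylGpPart}. First, for centrality of $v'_\mathcal{I}$ in $V_d^\mathcal{I}$, the decomposition $v'_\mathcal{I}=\prod_s\kappa_s(\prod_i c_i)$ places the $s$-th factor in $V_{\ul{\mathcal{I}}_s}$, and since the $\ul{\mathcal{I}}_s$ are disjoint the factors commute with $V_d^{\mathcal{I}_{s'}}$ for $s\ne s'$ by Corollary~\ref{IDisjVDirProd}; the remaining assertion that $\kappa_s(\prod c_i)$ centralises $V_d^{\mathcal{I}_s}$ transports via the injective homomorphism $\kappa_s$ from the centrality of $\prod_{i=1}^{t_s} c_i$ in $(V_d^{\mathcal{I}_s})^\circ$ supplied by Lemma~\ref{CentralCVv}. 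From this, together with the fact that $F$ fixes each Chevalley generator ${\bf n}_\alpha(1)$, conjugation by $v_\mathcal{I}F$ (with $v_\mathcal{I}$ a power of $v'_\mathcal{I}$) fixes $V_d^\mathcal{I}$ pointwise. Combined with the observation that $\rho(V_d^\mathcal{I})=W_d^\mathcal{I}$ permutes $\mathcal{I}_s$ within each $s$ and acts trivially on $\mathcal{I}_{-1}$, so that it normalises the root subsystem $\Phi_{\mathcal{I}_{-1},\mathcal{I}}$ and hence $\bL_{\mathcal{I}_{-1},\mathcal{I}}$, this yields the inclusion $V_d^\mathcal{I}\leq N_{\mathcal{I}_{-1},\mathcal{I}}$.

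Next I show $V_d^\mathcal{I}\cap L_{\mathcal{I}_{-1},\mathcal{I}}=H_d^\mathcal{I}$ and that $H_d^\mathcal{I}\leq Z(L_{\mathcal{I}_{-1},\mathcal{I}})$. The kernel of $\rho|_{V_d^\mathcal{I}}$ is $H_d^\mathcal{I}$, so for the containment it suffices to show that $W_d^\mathcal{I}\cap W_{\bL_{\mathcal{I}_{-1},\mathcal{I}}}=1$, where $W_{\bL_{\mathcal{I}_{-1},\mathcal{I}}}=\sgnSymm(\mathcal{I}_{-1})\times\prod_{\mathcal{J}\in\mathcal{I}}\Symm(\mathcal{J})$. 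Any non-trivial element of $W_d^\mathcal{I}$ either moves a set $\mathcal{J}\in\mathcal{I}_s$ to a distinct set (forbidden in $W_{\bL_{\mathcal{I}_{-1},\mathcal{I}}}$) or acts on some $Q_{i,j}^s$ by a non-identity power of the $2d_0$-cycle $w'_{Q_{i,j}^s}\in\sgnSymm(Q_{i,j}^s)$; since no nontrivial power of such a single $2d_0$-cycle lies in $\Symm(Q_{i,j}^s)$ (any power either moves $Q_{i,j}^s(1)$ off itself or flips its sign), no such element can lie in $W_{\bL_{\mathcal{I}_{-1},\mathcal{I}}}$. For the reverse inclusion and centrality, each generator $h_{Q_i^s}$ of $H_d^\mathcal{I}$ coincides with $h_{\mathcal{O}_i^s}$ of Lemma~\ref{CentreLO} and hence lies in $Z(L_{\mathcal{O}_i^s})$; it commutes with the remaining factors $L_{\mathcal{I}_{-1}}$ and $L_{\mathcal{O}_j^t}$ (for $(j,t)\ne(i,s)$) of $L_{\mathcal{I}_{-1},\mathcal{I}}$ via the Chevalley identity $x_\alpha(t)^{h_{2e_k}(-1)}=x_\alpha((-1)^{\langle\alpha,2e_k\rangle}t)$, since $2e_k\perp\alpha$ whenever $k$ lies outside the support of $\alpha$.

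Comparing orders then yields the product decomposition: $[V_d^\mathcal{I}:H_d^\mathcal{I}]=|W_d^\mathcal{I}|=\prod_s|C_{2d_0}\wr\Symm_{t_s}|=[N_{\mathcal{I}_{-1},\mathcal{I}}:L_{\mathcal{I}_{-1},\mathcal{I}}]$ by Proposition~\ref{RelWeylGpPart}, so $L_{\mathcal{I}_{-1},\mathcal{I}}V_d^\mathcal{I}=N_{\mathcal{I}_{-1},\mathcal{I}}$. Maximal extendibility with respect to $H_d^\mathcal{I}\lhd V_d^\mathcal{I}$ has already been proven in Lemma~\ref{ExtWeylGpExtMap}. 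The main obstacle is the computation of the intersection $W_d^\mathcal{I}\cap W_{\bL_{\mathcal{I}_{-1},\mathcal{I}}}$: a priori one must worry that a product of sign-cycle contributions inside several $Q_{i,j}^s$ combined with set-permutations between different $Q$'s could accidentally produce a sign-free permutation preserving each $\mathcal{J}$, so the argument hinges on a careful cycle-type analysis of the generators of $W_d^\mathcal{I}$ supplied in Section~\ref{CentwIinCentwulI}.
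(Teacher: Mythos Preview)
Your proof is correct and follows essentially the same route as the paper: centrality of $v'_\mathcal{I}$ via Lemma~\ref{CentralCVv} transported through $\kappa_s$, the inclusion $H_d^\mathcal{I}\leq Z(L_{\mathcal{I}_{-1},\mathcal{I}})$ via Lemma~\ref{CentreLO}, the index comparison against Proposition~\ref{RelWeylGpPart}, and maximal extendibility from Lemma~\ref{ExtWeylGpExtMap}. The paper's proof asserts $\bL_{\mathcal{I}_{-1},\mathcal{I}}\cap V_d^\mathcal{I}=H_d^\mathcal{I}$ in one line without justification; your explicit cycle analysis showing $W_d^\mathcal{I}\cap W_{\bL_{\mathcal{I}_{-1},\mathcal{I}}}=1$ is a genuine (and correct) elaboration of that step, so what you flag as the ``main obstacle'' is really the only place where you add content beyond the paper rather than a gap in your own argument.
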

\begin{proof}
By Lemma~\ref{CentreLO} $H_d^\mathcal{I}\leq Z({L}_{\mathcal{I}_{-1},\mathcal{I}})$ and by Lemma~\ref{ExtWeylGpExtMap} maximal extendibility holds with respect to $H_d^\mathcal{I}\lhd V_d^\mathcal{I}$.
As in the proof of Lemma~\ref{CentralCVv}, the element $\prod_{i=1}^{t_s} c_{i}$ lies in $Z((V_d^{\mathcal{I}_s})^{\circ})$ and hence $v_\mathcal{I}'\in Z(V_d^\mathcal{I})$.

For $x\in V_d^\mathcal{I}$ and $\mathcal{J}\in\mathcal{I}$, it follows that $({\bL}_\mathcal{J})^x={\bL}_{\ol{\rho}(x)(\mathcal{J})}$.
Hence $V_d^\mathcal{I}\leq \NNN_{\bG}({\bL}_{\mathcal{I}_{-1},\mathcal{I}})$ and ${\bL}_{\mathcal{I}_{-1},\mathcal{I}}\cap V_d^\mathcal{I}=H_d^\mathcal{I}$.
Therefore $({\bL}_{\mathcal{I}_{-1},\mathcal{I}}V_d^\mathcal{I})^ {v_\mathcal{I}F}={L}_{\mathcal{I}_{-1},\mathcal{I}}V_d^\mathcal{I}\leq N_{\mathcal{I}_{-1},\mathcal{I}}$ and the quotient ${L}_{\mathcal{I}_{-1},\mathcal{I}} V_d^\mathcal{I}/{L}_{\mathcal{I}_{-1},\mathcal{I}}\cong W_d^\mathcal{I}$.
By construction $W_d^\mathcal{I}$ induces $\Cent_{W_{\bG}({\bf L}_{\mathcal{I}_{-1},\mathcal{I}})}(w|_{\pm\mathcal{I}})$ as a subgroup of $\Cent_{\sgnSymm (\ul{\mathcal{I}})}(w|_{\pm\ul{\mathcal{I}}})$.
Hence Proposition~\ref{RelWeylGpPart} implies $N_{\mathcal{I}_{-1},\mathcal{I}}=L_{\mathcal{I}_{-1},\mathcal{I}}V_d^\mathcal{I}$.
\end{proof}

\begin{rem}
The rational behind replacing $v$ in the previous setup is that $v$ need not always lie in $V_d^\mathcal{I}$, for example whenever $v$ acts non-trivial on $\Phi_{-1}$.
Moreover, the element $v'_\mathcal{I}$ here resembles the decomposition of the Sylow twist in type A \cite{BrSpAM}.
\end{rem}

\subsection{The structure of $d$-split Levi subgroups of $\wt{\bG}$}
\indent 

Assume the pair $(\mathcal{I}_{-1},\mathcal{I})$ satisfies Remark~\ref{PartLabLevi} with respect to $w=\rho(v)\in\sgnSymm_n$ from Section~\ref{vwSylTwist}.
Let $\bL_{\mathcal{I}_{-1},\mathcal{I}}$ denote the corresponding $d$-split Levi subgroup in $(\bG,vF)$ containing the maximal torus $\bT$, as described in Section~\ref{StrdSpLev}.

Recall that $\wt{\bG}$ has a connected centre with $\wt{\bG}=\bG Z(\wt{\bG})$. 
Thus $\wt{\bL}_{\mathcal{I}_{-1},\mathcal{I}}:=\bL_{\mathcal{I}_{-1},\mathcal{I}}Z(\wt{\bG})$ is a $d$-split Levi subgroup in $(\wt{\bG},vF)$.
Take $v_I$ defined in Section~\ref{vI} and set $\wt{L}_{\mathcal{I}_{-1},\mathcal{I}}:=\wt{\bL}_{\mathcal{I}_{-1},\mathcal{I}}^{v_IF}$ and $L_{\mathcal{I}_{-1},\mathcal{I}}:=\bL_{\mathcal{I}_{-1},\mathcal{I}}^{v_\mathcal{I}F}$.

\begin{rem}\label{FixPreImInv}
The quotient $\wt{L}_{\mathcal{I}_{-1},\mathcal{I}}/L_{\mathcal{I}_{-1},\mathcal{I}}\cong \wt{G}/G\cong C_{q-1}$ is cyclic.
Hence there is some $g\in {\bf L}_{\mathcal{I}_{-1},\mathcal{I}}$ and $x\in Z(\wt{\bG})$ such that $\wt{L}_{\mathcal{I}_{-1},\mathcal{I}}=\langle L_{\mathcal{I}_{-1},\mathcal{I}},gx\rangle$.
Let $\mathcal{L}_{v_\mathcal{I}F}$ denote the Lang map with respect to $v_\mathcal{I}F$.
As $x\in Z(\wt{\bG})$, it follows that $1=\mathcal{L}_{v_\mathcal{I}F}(gx)=\mathcal{L}_{v_\mathcal{I}F}(g)\mathcal{L}_{v_\mathcal{I}F}(x)$, that is $\mathcal{L}_{v_\mathcal{I}F}(g)$ lies in ${\bf L}_{\mathcal{I}_{-1},\mathcal{I}}\cap Z(\wt{\bG})=Z(\bG)=\langle z\rangle$, for an involution $z$ from Section~\ref{CnSetup}.


\end{rem}

\begin{cor}\label{IntTilLIndex2}
Let $x\in Z(\wt{\bG})$ and $g\in  {\bf L}_{\mathcal{I}_{-1},\mathcal{I}}$ such that  $\wt{L}_{\mathcal{I}_{-1},\mathcal{I}}=\langle L_{\mathcal{I}_{-1},\mathcal{I}},gx\rangle$.
Then $\wt{L}_{\mathcal{I}_{-1},\mathcal{I}}=\langle L_{\mathcal{I}_{-1},\mathcal{I}},hx\rangle$ for any $h\in  {\bf L}_{\mathcal{I}_{-1},\mathcal{I}}$ with $\mathcal{L}_{v_\mathcal{I}F}(h)=z$ and $\GenGp{L_{\mathcal{I}_{-1},\mathcal{I}},x^2}$ has index two in $\wt{L}_{\mathcal{I}_{-1},\mathcal{I}}$.
\end{cor}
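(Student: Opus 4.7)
The plan is to handle both assertions by direct manipulation of the Lang map $\mathcal{L}_{v_\mathcal{I}F}(y) := y^{-1}(v_\mathcal{I}F)(y)$, exploiting that $z \in Z(\bG)$ is a central involution. I would first record the standard cocycle identity
\[
\mathcal{L}_{v_\mathcal{I}F}(ab) \;=\; b^{-1}\mathcal{L}_{v_\mathcal{I}F}(a)b \cdot \mathcal{L}_{v_\mathcal{I}F}(b),
\]
and note that applying it to $g \cdot g^{-1} = 1$, together with the fact that $z$ is central with $z^2 = 1$, yields $\mathcal{L}_{v_\mathcal{I}F}(g^{-1}) = z$.

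For the first assertion, I would compute
\[
\mathcal{L}_{v_\mathcal{I}F}(hg^{-1}) \;=\; g\,\mathcal{L}_{v_\mathcal{I}F}(h)\,g^{-1} \cdot \mathcal{L}_{v_\mathcal{I}F}(g^{-1}) \;=\; gzg^{-1}\cdot z \;=\; z^2 \;=\; 1,
\]
so $hg^{-1} \in L_{\mathcal{I}_{-1},\mathcal{I}}$. From this, both $hx = (hg^{-1})\cdot gx$ and $gx = (hg^{-1})^{-1}\cdot hx$ lie in the subgroup generated by $L_{\mathcal{I}_{-1},\mathcal{I}}$ together with the other element, giving $\GenGp{L_{\mathcal{I}_{-1},\mathcal{I}},hx} = \GenGp{L_{\mathcal{I}_{-1},\mathcal{I}},gx} = \wt{L}_{\mathcal{I}_{-1},\mathcal{I}}$.

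For the second assertion, since $x \in Z(\wt{\bG})$ commutes with $g$, we have $(gx)^2 = g^2 x^2$. An analogous computation gives
\[
\mathcal{L}_{v_\mathcal{I}F}(g^2) \;=\; g^{-1}\mathcal{L}_{v_\mathcal{I}F}(g)g \cdot \mathcal{L}_{v_\mathcal{I}F}(g) \;=\; g^{-1}zg\cdot z \;=\; z^2 \;=\; 1,
\]
so $g^2 \in L_{\mathcal{I}_{-1},\mathcal{I}}$, and consequently $x^2\,L_{\mathcal{I}_{-1},\mathcal{I}} = (gx)^2\,L_{\mathcal{I}_{-1},\mathcal{I}}$ in the quotient $\wt{L}_{\mathcal{I}_{-1},\mathcal{I}}/L_{\mathcal{I}_{-1},\mathcal{I}}$. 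By Remark~\ref{FixPreImInv} this quotient is cyclic of order $q-1$, generated by the image of $gx$. Hence $\GenGp{L_{\mathcal{I}_{-1},\mathcal{I}},x^2}/L_{\mathcal{I}_{-1},\mathcal{I}}$ is exactly the subgroup of squares in this cyclic group, and since the standing assumption $p \ne 2$ forces $q-1$ to be even, this subgroup has index two. There is no real technical obstacle: the proof is mechanical once one keeps track of which elements are central (so their $g$-conjugates collapse) and uses $z^2 = 1$; the oddness of $p$ is precisely the input that delivers the even-order cyclic quotient needed for the index-two conclusion.
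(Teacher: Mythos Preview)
Your proposal is correct and follows essentially the same route as the paper: both arguments show $gh^{-1}$ (equivalently $hg^{-1}$) lies in $L_{\mathcal{I}_{-1},\mathcal{I}}$ by computing its Lang image using that $\mathcal{L}_{v_\mathcal{I}F}(g)$ and $\mathcal{L}_{v_\mathcal{I}F}(h)$ are the central involution $z$, and both deduce the index-two statement from $g^2\in L_{\mathcal{I}_{-1},\mathcal{I}}$. The paper is terser (it writes out $\mathcal{L}_{v_\mathcal{I}F}(gh^{-1})$ directly rather than invoking the cocycle identity, and simply asserts $g^2\in L_{\mathcal{I}_{-1},\mathcal{I}}$), while you spell out the cocycle manipulation and the parity argument for $q-1$ explicitly; the content is the same.
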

\begin{proof}
By assumption $\mathcal{L}_{v_\mathcal{I}F}(gh^{-1})=hg^{-1}F(g)F(h)^{-1}=\mathcal{L}_{v_\mathcal{I}F}(g)\mathcal{L}_{v_\mathcal{I}F}(h)^{-1}$ as $hF(h)^{-1}=F(h)^{-1}h=(h^{-1}F(h))^{-1}=z$.
Hence $gh^{-1}\in L_{\mathcal{I}_{-1},\mathcal{I}}$ and $\langle L_{\mathcal{I}_{-1},\mathcal{I}},gx\rangle=\langle L_{\mathcal{I}_{-1},\mathcal{I}},hx\rangle$.
The second result follows from the fact that $g^2\in L_{\mathcal{I}_{-1},\mathcal{I}}$.
\end{proof}

\subsubsection{An element $z^+\in {\bf L}_{\mathcal{I}_{-1},\mathcal{I}}$ with $\mathcal{L}_{v_\mathcal{I}F}(z^+)=z$}\label{LangPreImage}
\indent

Recall from Section~\ref{CnSetup} that the central involution $z=h_{2e_1}(-1)\cdots h_{2e_n}(-1)$.
For each $\ol{w}$-orbit $\mathcal{O}\subseteq\mathcal{I}$, set $z_\mathcal{O}:=\prod_{k\in \ul{\mathcal{O}}}h_{2e_k}(-1)$ and similarly $z_{\mathcal{I}_{-1}}:=\prod_{k\in \mathcal{I}_{-1}}h_{2e_k}(-1)$ so that $z=z_{\mathcal{I}_{-1}}\times\prod_\mathcal{O} z_\mathcal{O}$.
In particular, using the notation from Section~\ref{CnSetup}, $z_\mathcal{O}\in \bT_\mathcal{O}$ and $z_{\mathcal{I}_{-1}}\in \bT_{\mathcal{I}_{-1}}$.
As $v_\mathcal{I}F$ fixes ${\bf T}_{\mathcal{I}_{-1}}$ and $\bf T_\mathcal{O}$, there will be a $z_\mathcal{O}^+\in {\bf T}_\mathcal{O}$ and $z_{\mathcal{I}_{-1}}^+\in \bT_{\mathcal{I}_{-1}}$ such that $\mathcal{L}_{v_\mathcal{I}F}(z_{\mathcal{O}}^+)=z_{\mathcal{O}}$ and $\mathcal{L}_{v_\mathcal{I}F}(z_{\mathcal{I}_{-1}}^+)=z_{\mathcal{I}_{-1}}$.
Assume that a choice of $z_{\mathcal{I}_{-1}}^+\in{\bf T}_{\mathcal{I}_{-1}}$ has been fixed.
Here an explicit description of a choice for $z_\mathcal{O}^+$ is given.

For an element $h=\prod_{k\in\ul{\mathcal{O}}} h_{2e_k}(t_k)\in \bT_\mathcal{O}$ repeating the argument in the proof of Lemma~\ref{CentreLO} shows that $\mathcal{L}_{v_\mathcal{I}F}(h)=z_\mathcal{O}$ if and only if for $i\in\mathcal{J}_\mathcal{O}$ and $0\leq j\leq d_0-1$:

$$
t_{ja+i}=\left\{
\begin{array}{lr}
    \left\{ \begin{array}{lr}
    -t_{(d_0-1)a+i}^{-q} & j=0; \\
    -t_{(j-1)a+i}^q & 1\leq j\leq d_0-1;\\  
      \end{array}
   \right.     & d \text{ even; }\\
 & \\
\left\{
\begin{array}{lr}
     -t_{(d_0-2)a+i}^{-q} & j=0; \\
     -t_{(d_0-1)a+i}^{-q} & j=1; \\
     -t_{(j-2)a+i}^q  & 2\leq j\leq d_0-1;\\
      \end{array}
   \right.      & d \text{ odd.}\\   
      \end{array}
   \right.
   $$
In particular, for $\epsilon=(-1)^{d+1}$ and $i\in\mathcal{J}_\mathcal{O}$ each $t_i^{q^{d_0}-\epsilon}=(-1)^{d_0}$.  
Fix $\psi\in\ol{\mathbb{F}}_q^\times$ such that $\psi^{q^{d_0}-\epsilon}=(-1)^{d_0}$ and define 
$z_{\mathcal{O}}^+:=\prod_{k\in\ul{\mathcal{O}}}h_{2e_k}(t_k)$, where for $i\in\mathcal{J}_\mathcal{O}$ and $0\leq j\leq d_0-1$:

$$
t_{ja+i}:=\left\{
\begin{array}{lr}
     (-1)^j & d_0 \text{ even; }\\
(-1)^j\psi^{q^j}  & d\text{ even and } d_0 \text{ odd; }\\
\left\{
\begin{array}{lr}
     (-1)^{\frac{j}{2}}\psi^{q^{\frac{j}{2}}} & j \text{ even; }\\
(-1)^{\frac{d_0+j}{2}}\psi^{-q^{\frac{d_0+j}{2}}}  & j\text{ odd; }\\
      \end{array}
   \right.      & d \text{ odd.}\\   
      \end{array}
   \right.
   $$   

The element $z^+:=z_{\mathcal{I}_{-1}}^+\times \prod z_{\mathcal{O}}^+$ satisfies $\mathcal{L}_{vF}(z^+)=z$. 
For each $1\leq s\leq n$, set $z_{s}^+=\prod_{\mathcal{O}}z_{\mathcal{O}}^+$ running over all $\ol{w}$-orbits $\mathcal{O}\subset \mathcal{I}_s$ so that $z^+=z_{\mathcal{I}_{-1}}^+\times\prod_{s=1}^n z_{s}^+$.

\begin{rem}\label{zcirc}
Take $z_\mathcal{O}^+$ as constructed above.
For a fixed $0\leq j\leq d_0-1$, the definition implies $t_{ja+i}=t_{ja+i'}$ for all $i,i'\in \mathcal{J}_\mathcal{O}$.
Applying the proof of Lemma~\ref{CentreLO} shows $\prod_{i\in\mathcal{J}_\mathcal{O}}h_{2e_{ja+i}}(t_{ja+i})\in Z({\bf L}_{\ol{w}^j(\mathcal{J}_\mathcal{O})})$.   
Hence $z_{\mathcal{O}}^+\leq Z(\bf{L}_\mathcal{O})$.

In the case $d_0$ is even, then $(z_{\mathcal{O}}^+)^2=1$.
When $d_0$ is odd then Lemma~\ref{CentreLO} implies that $\langle (z_{\mathcal{O}}^+)^2\rangle =Z_\mathcal{O}$, as $\langle \psi^2\rangle\cong C_{q^{d_0}-\epsilon}\leq \ol{\mathbb{F}}_p$.
\end{rem}

\subsubsection{A larger group containing $\wt{L}_{\mathcal{I}_{-1},\mathcal{I}}$}\label{TiL0}
 \indent
 
To study the group $\wt{L}_{\mathcal{I}_{-1},\mathcal{I}}$ and its irreducible characters in Corollary~\ref{IntTilLNhat} and Section~\ref{WtilLam}, it will be convenient to construct a larger group, $\wt{L}_{\mathcal{I}_{-1},\mathcal{I}}^{+}$.

Let $z^+\in{\bf T}$ be the element constructed in Section~\ref{LangPreImage}.
The group $\wt{L}_{\mathcal{I}_{-1},\mathcal{I}}=\GenGp{{L}_{\mathcal{I}_{-1},\mathcal{I}}, z^+ x}$ embeds into $\langle L_{\mathcal{I}_{-1},\mathcal{I}},z^+\rangle\circ \langle x\rangle$.
Using the description of $z^+$, for each $\ol{w}$-orbit $\mathcal{O}\subseteq \mathcal{I}$, set $\wt{L}_{\mathcal{O}}^+:=\langle L_{\mathcal{O}},z_{\mathcal{O}}^+\rangle$ and $\wt{L}_{\mathcal{I}_{-1}}^+:=\langle L_{\mathcal{I}_{-1}},z_{\mathcal{I}_{-1}}^+\rangle$.
Then
$$\wt{L}_{\mathcal{I}_{-1},\mathcal{I}}\hookrightarrow \wt{L}_{\mathcal{I}_{-1},\mathcal{I}}^+:=\left(\wt{L}_{\mathcal{I}_{-1}}^+ \times \prod_{\mathcal{O}} \wt{L}_{\mathcal{O}}^+\right)\circ \langle x\rangle.$$

By Remark~\ref{zcirc}, for each $\ol{w}$-orbit $\mathcal{O}\subseteq \mathcal{I}$, the group $Z_{\mathcal{O}}^+:=\langle z_{\mathcal{O}}^+\rangle$ is a subgroup of $Z(\bf{L}_\mathcal{O})$.
Thus $$\wt{L}_{\mathcal{O}}^+ \cong Z_{\mathcal{O}}^+\circ_{\langle (z_{\mathcal{O}}^+)^2\rangle}L_\mathcal{O}$$ which will be a direct product when $d_0$ is even.
Finally the group $\wt{L}_{\mathcal{I}_{-1}}^+$ induces diagonal automorphisms, arising from ${\rm CSp}_{2|\mathcal{I}_{-1}|}(q)$, on the factor $L_{\mathcal{I}_{-1}}\cong {\rm Sp}_{2|\mathcal{I}_{-1}|}(q)$.

\begin{lm}\label{InTilL}
The group $\wt{L}_{\mathcal{I}_{-1},\mathcal{I}}^+$ normalises $L_{\mathcal{I}_{-1},\mathcal{I}}$.
For each $\lambda\in\Irr(L_{\mathcal{I}_{-1},\mathcal{I}})$ denote by $\lambda_{-1}:={\rm Res}_{L_{\mathcal{I}_{-1}}}^{L_{\mathcal{I}_{-1},\mathcal{I}}}(\lambda)$.
Then $$(\wt{L}_{\mathcal{I}_{-1},\mathcal{I}}^+)_{\lambda}=\left( (\wt{L}_{\mathcal{I}_{-1}}^+)_{\lambda_{-1}} \times \prod_{\mathcal{O}} \wt{L}_{\mathcal{O}}^+\right)\circ \langle x\rangle.$$
In particular, $(\wt{L}_{\mathcal{I}_{-1},\mathcal{I}})_{\lambda_\mathcal{O}}=\wt{L}_{\mathcal{I}_{-1},\mathcal{I}}$, where $\lambda_{\mathcal{O}}:={\rm Res}_{L_{\mathcal{O}}}^{L_{\mathcal{I}_{-1},\mathcal{I}}}(\lambda)$ for each $\ol{w}$-orbit $\mathcal{O}\subset \mathcal{I}$.
\end{lm}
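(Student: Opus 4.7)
The plan is to exploit the direct product decomposition of $L_{\mathcal{I}_{-1},\mathcal{I}}$ and match it factor-by-factor with the decomposition of $\wt{L}_{\mathcal{I}_{-1},\mathcal{I}}^+$ given in Section~\ref{TiL0}.

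First I would observe that since $v_\mathcal{I}F$ permutes the factors $\bL_\mathcal{J}$ within each $\ol{w}$-orbit $\mathcal{O}\subset\mathcal{I}$ and stabilises $\bL_{\mathcal{I}_{-1}}$, taking fixed points in the decomposition $\bL_{\mathcal{I}_{-1},\mathcal{I}}=\bL_{\mathcal{I}_{-1}}\times\prod_{\mathcal{O}}\bL_{\mathcal{O}}$ yields
\[
L_{\mathcal{I}_{-1},\mathcal{I}}=L_{\mathcal{I}_{-1}}\times \prod_{\mathcal{O}} L_{\mathcal{O}}.
\]
Any $\lambda\in\Irr(L_{\mathcal{I}_{-1},\mathcal{I}})$ therefore splits as $\lambda=\lambda_{-1}\times \prod_\mathcal{O} \lambda_\mathcal{O}$.

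Next I would check that $\wt{L}_{\mathcal{I}_{-1},\mathcal{I}}^+$ normalises each direct factor of this decomposition separately. The factors $\bL_\mathcal{O}$, $\bL_{\mathcal{O}'}$ (for $\mathcal{O}\ne \mathcal{O}'$) and $\bL_{\mathcal{I}_{-1}}$ of $\bL_{\mathcal{I}_{-1},\mathcal{I}}$ pairwise commute, so $\wt{L}_\mathcal{O}^+\leq \bL_\mathcal{O}$ centralises $L_{\mathcal{O}'}$ and $L_{\mathcal{I}_{-1}}$, and dually $\wt{L}_{\mathcal{I}_{-1}}^+\leq \bL_{\mathcal{I}_{-1}}$ centralises each $L_\mathcal{O}$; the factor $\langle x\rangle\leq Z(\wt{\bG})$ centralises the whole of $\bG$. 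Hence each of these subgroups normalises $L_{\mathcal{I}_{-1},\mathcal{I}}$, giving the first assertion. Moreover, on $L_\mathcal{O}$ the group $\wt{L}_\mathcal{O}^+=\langle L_\mathcal{O},z_\mathcal{O}^+\rangle$ acts by inner automorphisms, since $z_\mathcal{O}^+\in Z(\bL_\mathcal{O})$ by Remark~\ref{zcirc} and hence centralises $L_\mathcal{O}$; likewise $x\in Z(\wt\bG)$ acts trivially.

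Combining these two observations, the stabiliser of $\lambda$ in $\wt{L}_{\mathcal{I}_{-1},\mathcal{I}}^+$ splits along the decomposition of Section~\ref{TiL0}: the factors $\wt{L}_\mathcal{O}^+$ and $\langle x\rangle$ automatically fix every character of the corresponding factor, so they contribute in full, while $\wt{L}_{\mathcal{I}_{-1}}^+$ (which really induces nontrivial diagonal automorphisms on $L_{\mathcal{I}_{-1}}$) contributes only its stabiliser of $\lambda_{-1}$. This yields
\[
(\wt{L}_{\mathcal{I}_{-1},\mathcal{I}}^+)_{\lambda}=\left((\wt{L}_{\mathcal{I}_{-1}}^+)_{\lambda_{-1}}\times \prod_{\mathcal{O}}\wt{L}_{\mathcal{O}}^+\right)\circ\langle x\rangle.
\]
Finally, the ``in particular'' statement follows by applying the same reasoning to the character $\lambda_\mathcal{O}$ of $L_\mathcal{O}$ alone: all other factors commute with $L_\mathcal{O}$ and $\wt{L}_\mathcal{O}^+$ acts by inner automorphisms on $L_\mathcal{O}$, so $\wt{L}_{\mathcal{I}_{-1},\mathcal{I}}^+$, and hence its subgroup $\wt{L}_{\mathcal{I}_{-1},\mathcal{I}}$, fixes $\lambda_\mathcal{O}$. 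There is no real obstacle here; the only subtle point is bookkeeping for the $\wt{L}_{\mathcal{I}_{-1}}^+$-factor, where the action is genuinely non-inner and forces the restriction to $(\wt{L}_{\mathcal{I}_{-1}}^+)_{\lambda_{-1}}$.
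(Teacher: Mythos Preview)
Your approach is essentially the paper's own, and the character-theoretic part (that the $\wt{L}_\mathcal{O}^+$ and $\langle x\rangle$ factors act trivially on the characters, so only the $\wt{L}_{\mathcal{I}_{-1}}^+$ factor matters) is handled correctly. There is, however, one genuine omission in the normalisation step.

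You argue that $\wt{L}_{\mathcal{I}_{-1}}^+\leq \bL_{\mathcal{I}_{-1}}$ centralises every $L_{\mathcal{O}}$ and then conclude that $\wt{L}_{\mathcal{I}_{-1}}^+$ normalises $L_{\mathcal{I}_{-1},\mathcal{I}}$. But you never check that $\wt{L}_{\mathcal{I}_{-1}}^+$ normalises the factor $L_{\mathcal{I}_{-1}}$ itself. This is not automatic: the element $z_{\mathcal{I}_{-1}}^+$ lies in $\bT_{\mathcal{I}_{-1}}\leq\bL_{\mathcal{I}_{-1}}$ but is \emph{not} central there (unlike $z_\mathcal{O}^+$ in $\bL_\mathcal{O}$), and it is not fixed by $v_\mathcal{I}F$, so there is no immediate reason why conjugation by it should preserve the fixed-point group $L_{\mathcal{I}_{-1}}=\bL_{\mathcal{I}_{-1}}^{v_\mathcal{I}F}$. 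The paper supplies exactly this missing step: since $\mathcal{L}_{v_\mathcal{I}F}(z_{\mathcal{I}_{-1}}^+)=z_{\mathcal{I}_{-1}}$, one has
\[
(L_{\mathcal{I}_{-1}})^{z_{\mathcal{I}_{-1}}^+}=(\bL_{\mathcal{I}_{-1}}^{v_\mathcal{I}F})^{z_{\mathcal{I}_{-1}}^+}=\bL_{\mathcal{I}_{-1}}^{\,z_{\mathcal{I}_{-1}}\cdot v_\mathcal{I}F}=\bL_{\mathcal{I}_{-1}}^{v_\mathcal{I}F}=L_{\mathcal{I}_{-1}},
\]
the last equality because $z_{\mathcal{I}_{-1}}\in Z(\bL_{\mathcal{I}_{-1}})$. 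You correctly flagged the $\wt{L}_{\mathcal{I}_{-1}}^+$ factor as the ``only subtle point'', but the subtlety is already at the level of normalisation, not only at the level of stabilisers of characters; inserting this one-line Lang-map argument completes your proof.
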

\begin{proof}
To show $\wt{L}_{\mathcal{I}_{-1},\mathcal{I}}^+$ normalises $L_{\mathcal{I}_{-1},\mathcal{I}}$, it suffices to show ${\wt{L}}_{\mathcal{I}_{-1}}^+$ normalises $L_{\mathcal{I}_{-1}}$. As $\mathcal{L}_{v_\mathcal{I}F}(z_{\mathcal{I}_{-1}}^+)=z_{\mathcal{I}_{-1}}$ it follows that $({\bf L}_{\mathcal{I}_{-1}}^{v_\mathcal{I}F})^{z_{\mathcal{I}_{-1}}^+}={\bf L}_{\mathcal{I}_{-1}}^{z_{\mathcal{I}_{-1}}v_\mathcal{I}F}={L}_{\mathcal{I}_{-1}}$.

The structure of $(\wt{L}_{\mathcal{I}_{-1},\mathcal{I}}^+)_{\lambda}$ follows from the fact that $Z_{\mathcal{O}}^+\leq Z(\bL_\mathcal{O})$ and $x\in Z(\wt{\bG})$.
\end{proof}

\begin{cor}\label{IntTilLNhat}
Let $\wt{N}_{\mathcal{I}_{-1},\mathcal{I}}:=\NNN_{\wt G}(\wt{\bf L}_{\mathcal{I}_{-1},\mathcal{I}})=\wt{L}_{\mathcal{I}_{-1},\mathcal{I}}N_{\mathcal{I}_{-1},\mathcal{I}}$ and $\wh{N}_{\mathcal{I}_{-1},\mathcal{I}}:=N_{\mathcal{I}_{-1},\mathcal{I}}E$, for $E$ defined in Section~\ref{endo}.
Then for any $\lambda\in\Irr(L_{\mathcal{I}_{-1},\mathcal{I}})$
\begin{enumerate}
\item $(\wt{N}_{\mathcal{I}_{-1},\mathcal{I}})_\lambda=(\wt{L}_{\mathcal{I}_{-1},\mathcal{I}})_\lambda (N_{\mathcal{I}_{-1},\mathcal{I}})_\lambda$ and
\item $(\wt{N}_{\mathcal{I}_{-1},\mathcal{I}}\wh{N}_{\mathcal{I}_{-1},\mathcal{I}})_\lambda=(\wt{N}_{\mathcal{I}_{-1},\mathcal{I}})_{\lambda}(\wh{N}_{\mathcal{I}_{-1},\mathcal{I}})_\lambda$.
\end{enumerate}
\end{cor}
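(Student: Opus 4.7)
The strategy will exploit the direct product decomposition $L_{\mathcal I_{-1},\mathcal I}=L_{\mathcal I_{-1}}\times\prod_{\mathcal O}L_{\mathcal O}$ from Section~\ref{ChoiceOfn} (with $\mathcal O$ ranging over the $\ol w$-orbits in $\mathcal I$) and the corresponding factorisation $\lambda=\lambda_{-1}\otimes\lambda_{\mathcal I}$, where $\lambda_{\mathcal I}:=\prod_{\mathcal O}\lambda_{\mathcal O}$ with $\lambda_{\mathcal O}:=\Res^{L_{\mathcal I_{-1},\mathcal I}}_{L_{\mathcal O}}(\lambda)$. The three ambient actions on $\Irr(L_{\mathcal I_{-1},\mathcal I})$ are supported ``orthogonally'': Lemma~\ref{InTilL} shows that every $\tilde l\in\wt L_{\mathcal I_{-1},\mathcal I}$ acts on $\Irr(L_{\mathcal I_{-1},\mathcal I})$ by modifying only $\lambda_{-1}$ (via the diagonal automorphism induced by $z_{\mathcal I_{-1}}^+$), and Proposition~\ref{SuppLinN} presents $N_{\mathcal I_{-1},\mathcal I}=L_{\mathcal I_{-1},\mathcal I}\cdot V_d^{\mathcal I}$ with $V_d^{\mathcal I}$ built from Chevalley generators for roots in $\Phi_{A_{\mathcal I}}$ which commute pointwise with $L_{\mathcal I_{-1}}$, so every $n\in N_{\mathcal I_{-1},\mathcal I}$ modifies only $\lambda_{\mathcal I}$.

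For part~(1), I will write $g=\tilde l\,n\in(\wt N_{\mathcal I_{-1},\mathcal I})_\lambda$ with $\tilde l\in\wt L_{\mathcal I_{-1},\mathcal I}$ and $n\in N_{\mathcal I_{-1},\mathcal I}$. By the orthogonality just described, the equation $\lambda^{\tilde l\,n}=\lambda$ decouples on the two tensor factors into $\lambda_{-1}^{\tilde l}=\lambda_{-1}$ and $\lambda_{\mathcal I}^{n}=\lambda_{\mathcal I}$, which (invoking Lemma~\ref{InTilL} again for the former) forces $\tilde l\in(\wt L_{\mathcal I_{-1},\mathcal I})_\lambda$ and $n\in(N_{\mathcal I_{-1},\mathcal I})_\lambda$ independently.

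For part~(2), I will use $\wt N_{\mathcal I_{-1},\mathcal I}\wh N_{\mathcal I_{-1},\mathcal I}=\wt L_{\mathcal I_{-1},\mathcal I}\,N_{\mathcal I_{-1},\mathcal I}\,E$ and decompose $g=\tilde l\,n\,e$. Since $e\in E$ acts on \emph{both} tensor factors as a field automorphism, the stabiliser condition now yields the coupled equations $\lambda_{-1}^{\tilde l\,e}=\lambda_{-1}$ and $\lambda_{\mathcal I}^{n\,e}=\lambda_{\mathcal I}$. The plan is to invoke the factorisation $(\wt L_{\mathcal I_{-1}}^+\cdot E)_{\lambda_{-1}}=(\wt L_{\mathcal I_{-1}}^+)_{\lambda_{-1}}\cdot E_{\lambda_{-1}}$ on the first equation. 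Since $\wt L_{\mathcal I_{-1}}^+\cap E=1$ inside $\wt G\rtimes E$, uniqueness of decomposition in this product then forces $\tilde l\in(\wt L_{\mathcal I_{-1}}^+)_{\lambda_{-1}}$ and $e\in E_{\lambda_{-1}}$ individually. By Lemma~\ref{InTilL} the former yields $\tilde l\in(\wt L_{\mathcal I_{-1},\mathcal I})_\lambda\subseteq(\wt N_{\mathcal I_{-1},\mathcal I})_\lambda$, and together with $e\in E_{\lambda_{-1}}$ (and the triviality of $n$ on $\lambda_{-1}$) the second equation gives $n\,e\in(\wh N_{\mathcal I_{-1},\mathcal I})_\lambda$ directly. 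The decomposition $g=\tilde l\cdot(n\,e)$ then exhibits $g$ in $(\wt N_{\mathcal I_{-1},\mathcal I})_\lambda\,(\wh N_{\mathcal I_{-1},\mathcal I})_\lambda$, proving~(2).

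The hard part will be the Sp-factorisation invoked above, which asserts that the diagonal and field automorphisms of $\Sp_{2m}(q)$ (with $m=|\mathcal I_{-1}|$) interact ``independently'' on $\Irr(\Sp_{2m}(q))$, in the sense that a joint stabiliser of a character splits as a product of individual stabilisers. This is a non-trivial structural input about characters in type~$\tC$ and will be drawn from the machinery developed for the inductive McKay and Alperin--McKay conditions for $\Sp_{2n}(q)$, in particular the character-parametrisation and equivariance results of \cite{CabSpMcKTypC,CabASFSp}.
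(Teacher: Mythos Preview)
Your proposal is correct and follows essentially the same approach as the paper: both exploit the decomposition $L_{\mathcal{I}_{-1},\mathcal{I}}=L_{\mathcal{I}_{-1}}\times\prod_s L_s$, the fact that $\wt L_{\mathcal{I}_{-1},\mathcal{I}}$ acts only on the $L_{\mathcal{I}_{-1}}$-factor (Lemma~\ref{InTilL}) while $V_d^{\mathcal I}$ centralises it, and the splitting of diagonal-versus-field stabilisers on $\Irr(\Sp_{2m}(q))$, which the paper pins down precisely as \cite[Theorem~3.1]{CabSpMcKTypC}. The only cosmetic differences are that the paper handles (1) and (2) in one stroke by proving $(\wt L_{\mathcal{I}_{-1},\mathcal{I}}\wh N_{\mathcal{I}_{-1},\mathcal{I}})_\lambda=(\wt L_{\mathcal{I}_{-1},\mathcal{I}})_\lambda(\wh N_{\mathcal{I}_{-1},\mathcal{I}})_\lambda$, and that your invocation of $\wt L_{\mathcal I_{-1}}^+$ should strictly be read as the diagonal automorphism that $\tilde l\in\wt L_{\mathcal I_{-1},\mathcal I}$ \emph{induces} on $L_{\mathcal I_{-1}}$.
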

\begin{proof}
For each $1\leq s\leq n$ set $\lambda_s$ to be the restriction of $\lambda$ to $L_s:=\prod_{i=1}^{t_s} L_{\mathcal{O}_i^s}$ and $\lambda_{-1}$ the restriction of $\lambda$ to $L_{\mathcal{I}_{-1}}$.
Recall from Proposition~\ref{SuppLinN} that $N_{\mathcal{I}_{-1},\mathcal{I}}=L_{\mathcal{I}_{-1},\mathcal{I}}V_d^\mathcal{I}$.
The construction of $V_d^\mathcal{I}$ given in the proof of Lemma~\ref{ExtWeylGpExtMap}, implies $[V_d^\mathcal{I},L_{\mathcal{I}_{-1}}]=1$ and $V_d^\mathcal{I}$ normalises $L_s$ for each $1\leq s\leq n$. 
Moreover, as $E$ acts as field automorphisms, it must normalise $L_{\mathcal{I}_{-1}}$ and each $L_s$ for $1\leq s\leq n$.
Thus for $\wt{l}\in \wt{L}_{\mathcal{I}_{-1},\mathcal{I}}$, $k\in \wh{N}_{\mathcal{I}_{-1},\mathcal{I}}$, Lemma~\ref{InTilL} implies
$$\lambda^{\wt{l}k}=\lambda_{-1}^{\wt{l}k} \times \prod_{s=1}^n \lambda_s^k.$$

If $\lambda^{\wt{l}k}=\lambda$, then $(\lambda_{-1})^{\wt{l}k}=\lambda_{-1}$.
On $L_{\mathcal{I}_{-1}}$, the element $\wt{l}$ induces a diagonal automorphism, while $k$ acts as a field automorphism arising from $E$.
Thus \cite[Theorem 3.1]{CabSpMcKTypC} implies that $(\lambda_{-1})^{\wt{l}}=\lambda_{-1}$.
Hence $\wt{l}\in (\wt{L}_{\mathcal{I}_{-1},\mathcal{I}})_\lambda$.
In other words $(\wt{L}_{\mathcal{I}_{-1},\mathcal{I}}\wh{N}_{\mathcal{I}_{-1},\mathcal{I}})_\lambda\leq  (\wt{L}_{\mathcal{I}_{-1},\mathcal{I}})_\lambda (\wh{N}_{\mathcal{I}_{-1},\mathcal{I}})_\lambda$.

From this it follows that $(\wt{N}_{\mathcal{I}_{-1},\mathcal{I}})_\lambda=(\wt{L}_{\mathcal{I}_{-1},\mathcal{I}})_\lambda (N_{\mathcal{I}_{-1},\mathcal{I}})_\lambda$ and 
\begin{align*}(\wt{L}_{\mathcal{I}_{-1},\mathcal{I}})_\lambda (\wh{N}_{\mathcal{I}_{-1},\mathcal{I}})_\lambda=(\wt{L}_{\mathcal{I}_{-1},\mathcal{I}})_\lambda (N_{\mathcal{I}_{-1},\mathcal{I}})_\lambda (\wh{N}_{\mathcal{I}_{-1},\mathcal{I}})_\lambda=(\wt{N}_{\mathcal{I}_{-1},\mathcal{I}})_\lambda (\wh{N}_{\mathcal{I}_{-1},\mathcal{I}})_\lambda. &  \qedhere
\end{align*}
\end{proof}
 
\subsubsection{Some properties of $\wt{L}_{\mathcal{I}_{-1},\mathcal{I}}^+$}  
 \indent
 
To finish this section, some properties relating to $\wt{L}_{\mathcal{I}_{-1},\mathcal{I}}^+$ are given.
First the following remark considers the action of $N_{\mathcal{I}_{-1},\mathcal{I}}$ on $Z(\wt{L}_{\mathcal{I}_{-1},\mathcal{I}}^+)$, which will be applied in Lemma~\ref{TilL0StabN} and in Section~\ref{WtilLCirc}.

\begin{rem}\label{VactZtilL}
Let $\mathcal{O}$ be a $\ol{w}$-orbit in $\mathcal{I}$.
Then $Z(\wt{L}_{\mathcal{O}}^+)=\langle Z_{\mathcal{O}}^+,Z_{{\mathcal{O}}}\rangle\leq \bT_\mathcal{O}$.
By Proposition~\ref{SuppLinN}, $N_{\mathcal{I}_{-1},\mathcal{I}}=L_{\mathcal{I}_{-1},\mathcal{I}}V_d^\mathcal{I}$, where $V_d^\mathcal{I}$ was constructed in the proof of Lemma~\ref{ExtWeylGpExtMap}.

As $\rho(V_d^\mathcal{I})=W_d^\mathcal{I}\leq \prod_{s=1}^n\sgnSymm(\ul{\mathcal{I}}_s)$ (see Section~\ref{CentwIinCentwulI}), it follows that $[V_d^\mathcal{I},{\bL}_{\mathcal{I}_{-1}}]=1$.
Recall that $V_d^\mathcal{I}=\prod_{s=1}^n V_d^{\mathcal{I}_s}$ and in a similar fashion, $[V_d^{\mathcal{I}_s},{\bL}_{\mathcal{O}}]=1$, whenever the $\ol{w}$-orbit $\mathcal{O}$ is not in $\mathcal{I}_s$.
As in Section~\ref{wPartNot}, for $1\leq s\leq n$, let $\mathcal{O}_1^s,\dots,\mathcal{O}_{t_s}^s$ denote the $\ol{w}$-orbits in $\mathcal{I}_s$ with $Q_i^s\subset \mathcal{J}^{d_0,a}$ corresponding to $\mathcal{O}_i^s$. 
Using the notation in the proof of Lemma~\ref{ExtWeylGpExtMap} and Section~\ref{CentwIinCentwulI},
$\rho(\kappa_s(h_{i}))=1$ and $\rho(\kappa_s(c_{i}))=w'_{Q_i^s}$ for $1\leq i\leq t_s$, while  $\rho(\kappa_s(p_{i}))=\prod_{j=1}^s \tau_{Q_{i,j}^s,Q_{i+1,j}^s}$ for $1\leq i\leq t_s-1$.
For $x\in V_d^{\mathcal{I}_s}=\kappa_s((V_d^{\mathcal{I}_s})^\circ)$, and $\rho(x)\in \sgnSymm (\ul{\mathcal{I}}_s)$, the Chevalley relations imply $h_{2e_i}(t)^x=h_{2e_{\rho(x)(i)}}(t)$. 
Hence it is immediate for any $1\leq s\leq n$ and $1\leq i\leq t_s$ that $[\kappa_s(h_{i}),Z_\mathcal{O}^+]=1$ for any $\ol{w}$-orbit $\mathcal{O}$ in $\mathcal{I}$.

Using the isomorphism $\Theta_{\mathcal{O}}:C_{q^{d_0}-\epsilon}\rightarrow Z_\mathcal{O}$ from Lemma~\ref{CentreLO} and the description of $z_\mathcal{O}^+$ given in Section~\ref{LangPreImage} it follows that 
$$(\Theta_{\mathcal{O}}(t))^{\kappa_s(p_{i})}=\left\{ \begin{array}{ll}
   \Theta_{\mathcal{O}_{i+1}^s}(t) & \ul{\mathcal{O}}=\ul{Q}_i^s;\\
 \Theta_{\mathcal{O}_{i}^s}(t) & \ul{\mathcal{O}}=\ul{Q}_{i+1}^s;\\
 \Theta_{\mathcal{O}}(t) & \text{otherwise};\\
      \end{array}
   \right.
\text{ and }   
   (z_{\mathcal{O}}^+)^{\kappa_s(p_{i})}=\left\{ \begin{array}{ll}
   z_{\mathcal{O}_{i+1}^s}^+ & \ul{\mathcal{O}}=\ul{Q}_i^s;\\
 z_{\mathcal{O}_{i}^s}^+ & \ul{\mathcal{O}}=\ul{Q}_{i+1}^s;\\
 z_{\mathcal{O}}^+ & \text{otherwise}.\\
      \end{array}
   \right.
   $$ 
In particular, $\kappa_s(p_{i})$ permutes the two groups $Z(\wt{L}_{\mathcal{O}_{i}^s}^+)$ and $Z(\wt{L}_{\mathcal{O}_{i+1}^s}^+)$ while the commutator $[\kappa_s(p_{i}), Z(\wt{L}_{\mathcal{O}}^+)]=1$ for every other $\ol{w}$-orbit $\mathcal{O}\subset\mathcal{I}$.

Finally for $1\leq s\leq n$ and $1\leq i\leq t_s$ consider the action of the elements $\kappa_s(c_{i})$.
Note that $[\kappa_s(c_{i}),{\bf L}_{\mathcal{O}}]=1$ whenever $\mathcal{O}\ne \mathcal{O}_i^s$.
Denote by $x_\mathcal{O}:=\kappa_s(c_{i})$, where $\mathcal{O}=\mathcal{O}_i^s$, so that $\rho(x_\mathcal{O})=w'_{Q_i^s}$.
Recall $\bT_\mathcal{O}=\langle h_{2e_{ja+i}}(t_{ja+i})\mid t_{ja+i}\in\ol{\mathbb{F}}_p^\times \text{, } i\in\mathcal{J}_\mathcal{O} \text{ and } 0\leq j\leq d_0-1\rangle$. 
For $i\in\mathcal{J}_\mathcal{O}$ and $0\leq j\leq d_0-1$,
$$(h_{2e_{ja+i}}(t_{ja+i}))^{x_\mathcal{O}^{-1}}=\left\{
\begin{array}{lr}
     h_{2e_{(j-1)a+i}}(t_{ja+i}) & j\ne 0; \\
     h_{2e_{(d_0-1)a+i}}(t_{i}^{-1}) & j=0.\\
      \end{array}
   \right.\\
$$
Note that $x_\mathcal{O}^{-1}$ is considered to make the following expressions easier to display.
Recall that the definition of $z_\mathcal{O}^+$ depended upon $d_0$ being odd or even.
These two cases will be studied separately. 

\indent

\ul{Case 1: \bf $d_0$ even}

In this case $(z_{\mathcal{O}}^+)^2=1$.
Take $\psi\in\ol{\mathbb{F}}_p^\times$ such that $\langle \psi\rangle= C_{q^{d_0}+1}$.
Then it follows that $Z(\wt{L}_{\mathcal{O}}^+)=\langle z_{\mathcal{O}}^+\rangle\times \langle \Theta_\mathcal{O}(\psi)\rangle$.
Direct calculation shows $(z_{\mathcal{O}}^+)^{x_\mathcal{O}^{-1}}=z_{\mathcal{O}}^+ \Theta_\mathcal{O}(-1)$ and $$\left(  \prod_{j=0}^{d_0-1} h_{2e_{ja+i}}(\psi^{q^{j}})\right)^{x_\mathcal{O}^{-1}}=\left(\prod_{j=0}^{d_0-2}h_{2e_{ja+i}}(\psi^{q^{j+1}})\right)h_{2e_{(d_0-1)a+i}}(\psi^{-q}).$$
Thus the action of ${x_\mathcal{O}^{-1}}$ on $Z(\wt{L}_{\mathcal{O}}^+)$ is given by:
$$
\begin{array}{ccc}
C_2\times C_{q^{d_0}+1} & \longrightarrow& C_2\times C_{q^{d_0}+1} \\

(z_\mathcal{O}^+,1) & \mapsto & (z_\mathcal{O}^+,\Theta_\mathcal{O}(\psi^{\frac{q^{d_0}+1}{2}}))\\
(1,\Theta_\mathcal{O}(\psi)) & \mapsto & (1,\Theta_\mathcal{O}(\psi^{q}))
\end{array}
$$

\ul{Case 2: \bf $d_0$ is odd}

\vspace*{1mm}
In this case 
$Z(\wt{L}_{\mathcal{O}}^+)=\langle z_{\mathcal{O}}^+\rangle$, where $z_{\mathcal{O}}^+$ was constructed via a fixed $\psi\in\ol{\mathbb{F}}_p^\times$ of order $2(q^{d_0}-\epsilon)$.
In particular, $(z_{\mathcal{O}}^+)^2=\Theta_\mathcal{O}(\psi^2)$ generates $Z_\mathcal{O}$.
Therefore $\Theta_\mathcal{O}$ extends to an isomorphism from $\langle \psi\rangle$ to  $Z_{\mathcal{O}}^+$, by defining $\Theta_\mathcal{O}(\psi)=z_{\mathcal{O}}^+$.
Direct calculation shows $$
 h_{2e_{a+i}}(-\psi^{q})^{x_\mathcal{O}^{-1}}=h_{2e_{i}}(-\psi^{q})$$
and
$$
h_{2e_{a+i}}((-1)^{\frac{d_0+1}{2}}\psi^{-q^{\frac{d_0+1}{2}}})^{x_\mathcal{O}^{-1}}=h_{2e_{i}}((-1)^{\frac{d_0+1}{2}}\psi^{-q^{\frac{d_0+1}{2}}}).
$$
Hence, as $-1=\psi^{q^{d_0}-\epsilon}$, the action of ${x_\mathcal{O}^{-1}}$ on $Z(\wt{L}_{\mathcal{O}}^+)$ is given by:
$$
\begin{array}{ccc}
C_{2(q^{d_0+1})} & \longrightarrow& C_{2(q^{d_0+1})} \\

\Theta_\mathcal{O}(\psi) & \mapsto & 
\left\{
\begin{array}{lr}
     \Theta_\mathcal{O}(\psi)^{q+q^{d_0}+1} & d \text{ even}\\
    \Theta_\mathcal{O}(\psi)^{(\frac{d_0+1}{2})(q^{d_0}-1)-q^{\frac{d_0+1}{2}}} & d \text{ odd}\\
      \end{array}
   \right.\\
\end{array}
$$

\end{rem}

\begin{lm}\label{TilL0StabN}
Let $\lambda\in \Irr(L_{\mathcal{I}_{-1},\mathcal{I}})$.
\begin{enumerate}
\item $\lambda$ extends to $(\wt{L}_{\mathcal{I}_{-1},\mathcal{I}}^+)_\lambda$.
\item  Both $\wt{L}_{\mathcal{I}_{-1},\mathcal{I}}^+$ and $(\wt{L}_{\mathcal{I}_{-1},\mathcal{I}}^+)_\lambda$ are normalised by ${N}_{\mathcal{I}_{-1},\mathcal{I}}=\NNN_G({\bL}_{\mathcal{I}_{-1},\mathcal{I}})$. 
\item Given extensions $\wt{\lambda}\in \Irr((\wt{L}_{\mathcal{I}_{-1}})_\lambda\mid \lambda)$ and $\wt{\lambda}^+\in \Irr((\wt{L}_{\mathcal{I}_{-1},\mathcal{I}}^+)_\lambda\mid \lambda)$ then 
$$(V_d^\mathcal{I})_{\wt{\lambda}^+}\leq (V_d^\mathcal{I})_{\wt{\lambda}} \leq (V_d^\mathcal{I})_{{\lambda}}.$$

\end{enumerate}
\end{lm}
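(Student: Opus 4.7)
My approach rests on the central product decomposition
$$\wt{L}_{\mathcal{I}_{-1},\mathcal{I}}^+ = \bigl(\wt{L}_{\mathcal{I}_{-1}}^+ \times \prod_{\mathcal{O}} \wt{L}_{\mathcal{O}}^+\bigr) \circ \langle x\rangle$$
introduced in Section~\ref{TiL0}, together with the explicit description of the $V_d^\mathcal{I}$-action on each $Z(\wt{L}_{\mathcal{O}}^+)$ computed in Remark~\ref{VactZtilL}. The three parts are treated essentially independently: extension is a factorwise problem, normalisation is read off from how $V_d^\mathcal{I}$ permutes and acts on the factors, and the chain of stabilisers follows from equivariance of restriction.

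For (1), Lemma~\ref{InTilL} identifies $(\wt{L}_{\mathcal{I}_{-1},\mathcal{I}}^+)_{\lambda}$ as a subproduct of the same shape, so it suffices to extend $\lambda$ factor by factor. On the symplectic factor $(\wt{L}_{\mathcal{I}_{-1}}^+)_{\lambda_{-1}}$ induces only diagonal automorphisms on $L_{\mathcal{I}_{-1}} \cong \mathrm{Sp}_{2|\mathcal{I}_{-1}|}(q)$, so \cite[Theorem~3.1]{CabSpMcKTypC} supplies the required extension of $\lambda_{-1}$. On each linear factor the central product $\wt{L}_{\mathcal{O}}^+ = Z_{\mathcal{O}}^+ \circ_{\langle (z_{\mathcal{O}}^+)^2\rangle} L_{\mathcal{O}}$ has cyclic kernel by Remark~\ref{zcirc} and $(z_{\mathcal{O}}^+)^2 \in Z(L_{\mathcal{O}})$, so $\lambda_{\mathcal{O}}$ extends by prescribing $\wt{\lambda}_{\mathcal{O}}(z_{\mathcal{O}}^+)$ to be any square root of the scalar $\lambda_{\mathcal{O}}((z_{\mathcal{O}}^+)^2)$. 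Taking the outer tensor product of these extensions and then extending once more over the central $\langle x\rangle$ is routine.

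For (2), Proposition~\ref{SuppLinN} gives $N_{\mathcal{I}_{-1},\mathcal{I}} = L_{\mathcal{I}_{-1},\mathcal{I}}V_d^\mathcal{I}$, so only the $V_d^\mathcal{I}$-action needs checking. Remark~\ref{VactZtilL} shows each generator of $V_d^\mathcal{I}$ either centralises $\wt{L}_{\mathcal{I}_{-1}}^+$ (since $\rho(V_d^\mathcal{I}) = W_d^\mathcal{I}$ is supported on $\pm\underline{\mathcal{I}}$), permutes the factors $\wt{L}_{\mathcal{O}}^+$ within a common $\mathcal{I}_s$, or acts inside a single $\wt{L}_{\mathcal{O}}^+$; moreover $x \in Z(\wt{\bG})$ is centralised. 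This yields normalisation of $\wt{L}_{\mathcal{I}_{-1},\mathcal{I}}^+$. Since $[V_d^\mathcal{I},L_{\mathcal{I}_{-1}}] = 1$ forces $\lambda_{-1}$ and hence $(\wt{L}_{\mathcal{I}_{-1}}^+)_{\lambda_{-1}}$ to be $V_d^\mathcal{I}$-invariant, while the remaining product $\prod_{\mathcal{O}}\wt{L}_{\mathcal{O}}^+$ is preserved as a whole by the permutation action, the stabiliser $(\wt{L}_{\mathcal{I}_{-1},\mathcal{I}}^+)_{\lambda}$ is also normalised.

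For (3), the second inclusion is immediate from equivariance of restriction: if $v \in (V_d^\mathcal{I})_{\wt{\lambda}}$ then $\lambda^v = \mathrm{Res}(\wt{\lambda}^v) = \mathrm{Res}(\wt{\lambda}) = \lambda$. For the first inclusion I would compare $\wt{\lambda}^+$ with its restriction to the domain of $\wt{\lambda}$: any $v \in (V_d^\mathcal{I})_{\wt{\lambda}^+}$ also fixes this restriction, which differs from $\wt{\lambda}$ by a linear character of the quotient, and the task is to verify that this difference is necessarily $v$-invariant. The main obstacle will be precisely this comparison: by Remark~\ref{VactZtilL} the elements $\kappa_s(c_i)$ act non-trivially on $Z_{\mathcal{O}}^+$, so $\wt{\lambda}^+$ is strictly finer than $\wt{\lambda}$ in general, and one must exploit the cyclic structure of the relevant linear character group (together with the fact that $V_d^\mathcal{I}$ acts trivially on $\wt{L}_{\mathcal{I}_{-1}}^+/L_{\mathcal{I}_{-1}}$) to rule out any $v$ that stabilises $\wt{\lambda}^+$ but escapes the stabiliser of $\wt{\lambda}$.
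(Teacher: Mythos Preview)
Your approach for parts (1) and (2) is essentially the same as the paper's, though you over-complicate (1): the paper simply notes that both $\wt{L}_{\mathcal{O}}^+/L_{\mathcal{O}}$ and $(\wt{L}_{\mathcal{I}_{-1}}^+)_{\lambda_{-1}}/L_{\mathcal{I}_{-1}}$ are cyclic (the latter being a subgroup of $\langle z_{\mathcal{I}_{-1}}^+\rangle L_{\mathcal{I}_{-1}}/L_{\mathcal{I}_{-1}}$), so extension is immediate and no appeal to \cite[Theorem~3.1]{CabSpMcKTypC} is needed.

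For (3), your outline is right---restrict $\wt{\lambda}^+$ to $(\wt{L}_{\mathcal{I}_{-1},\mathcal{I}})_\lambda$, write this restriction as $\mu\wt{\lambda}$ for a linear character $\mu$ of $(\wt{L}_{\mathcal{I}_{-1},\mathcal{I}})_\lambda/L_{\mathcal{I}_{-1},\mathcal{I}}$, and argue that $\mu$ is $V_d^\mathcal{I}$-invariant---but you have misidentified where the difficulty lies. Your worry about the non-trivial action of $\kappa_s(c_i)$ on $Z_{\mathcal{O}}^+$ is a red herring: that action concerns the larger group $\wt{L}_{\mathcal{I}_{-1},\mathcal{I}}^+$, whereas $\mu$ lives on the quotient $(\wt{L}_{\mathcal{I}_{-1},\mathcal{I}})_\lambda/L_{\mathcal{I}_{-1},\mathcal{I}}$. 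The clean observation the paper uses is simply
\[
[\wt{L}_{\mathcal{I}_{-1},\mathcal{I}},\,N_{\mathcal{I}_{-1},\mathcal{I}}]\leq L_{\mathcal{I}_{-1},\mathcal{I}},
\]
which holds because $N_{\mathcal{I}_{-1},\mathcal{I}}\leq G$ and $\wt{L}_{\mathcal{I}_{-1},\mathcal{I}}/L_{\mathcal{I}_{-1},\mathcal{I}}$ embeds into $\wt{G}/G$ (equivalently, $\wt{L}_{\mathcal{I}_{-1},\mathcal{I}}$ is generated over $L_{\mathcal{I}_{-1},\mathcal{I}}$ by elements of $Z(\wt{\bG})^F$). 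Hence $V_d^\mathcal{I}$ acts trivially on \emph{every} linear character of $(\wt{L}_{\mathcal{I}_{-1},\mathcal{I}})_\lambda/L_{\mathcal{I}_{-1},\mathcal{I}}$, giving $(V_d^\mathcal{I})_{\wt{\lambda}^+}\leq (V_d^\mathcal{I})_{\mu\wt{\lambda}}=(V_d^\mathcal{I})_{\wt{\lambda}}$ in one line. Neither the cyclic structure nor the restriction to the symplectic factor $\wt{L}_{\mathcal{I}_{-1}}^+/L_{\mathcal{I}_{-1}}$ is needed.
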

\begin{proof}
Set  $\lambda_{\mathcal{I}_{-1}}:={\rm Res}_{L_{\mathcal{I}_{-1}}}^{L_{\mathcal{I}_{-1},\mathcal{I}}}(\lambda)\in \Irr(L_{\mathcal{I}_{-1}})$ and $\lambda_{\mathcal{O}}:={\rm Res}_{L_{\mathcal{O}}}^{L_{\mathcal{I}_{-1},\mathcal{I}}}(\lambda)\in \Irr(L_\mathcal{O})$, for each $\ol{w}$-orbit $\mathcal{O}\subset\mathcal{I}$.
Due to Lemma~\ref{InTilL} and the description of characters in a central product given in \cite[Section 5]{IMNRedMcKay}, to prove $(1)$ it suffices to see each $\lambda_\mathcal{O}$ extends to $\wt{L}_{\mathcal{O}}^+$ and $\lambda_{{-1}}$ extends to $(\wt{L}_{\mathcal{I}_{-1}}^+)_{\lambda_{-1}}$.
This is immediate as the quotients are cyclic.

Now consider $(2)$.
By Proposition~\ref{SuppLinN}, $N_{\mathcal{I}_{-1},\mathcal{I}}=L_{\mathcal{I}_{-1},\mathcal{I}}V_d^\mathcal{I}$, where $V_d^\mathcal{I}$ was constructed in the proof of Lemma~\ref{ExtWeylGpExtMap}.
As in Remark~\ref{VactZtilL}, $[V_d^\mathcal{I},{\bL}_{\mathcal{I}_{-1}}]=1$ and for $1\leq s\leq n$ the commutator $[V_d^{\mathcal{I}_s},{\bL}_{\mathcal{O}}]=1$, whenever the $\ol{w}$-orbit $\mathcal{O}$ is not in $\mathcal{I}_s$.
Thus it suffices to show for each $1\leq s\leq n$ that  $\wt{L}_s^+:=\prod_{i=1}^{t_s} \wt{L}_{\mathcal{O}_i^s}^+$ is normalised by $V_d^{\mathcal{I}_s}$.
Moreover, as $\wt{L}_s^+=\langle L_s, Z(\wt{L}_s^+)\rangle$, it is sufficient to show $Z(\wt{L}_s^+)$ is normalised by $V_d^{\mathcal{I}_s}$.
The result now follows from Remark~\ref{VactZtilL}, as $Z(\wt{L}_s^+)=\langle Z(\wt{L}_{\mathcal{O}_i^s}^+)\mid 1\leq i\leq t_s\rangle$. 

For $(3)$ note that there is some $\ol{\mu}\in \Irr( (\wt{L}_{\mathcal{I}_{-1},\mathcal{I}})_\lambda/{L}_{\mathcal{I}_{-1},\mathcal{I}})$ such that ${\rm Res}_{\wt{L}_{\mathcal{I}_{-1},\mathcal{I}}}^{\wt{L}_{\mathcal{I}_{-1},\mathcal{I}}^+}(\wt{\lambda}^+)=\mu\wt{\lambda}$, for $\mu$ the inflation of $\ol{\mu}$.
As $(\wt{L}_{\mathcal{I}_{-1},\mathcal{I}}^+)_\lambda$ is normalised by $V_d^\mathcal{I}$ it follows that $(V_d^\mathcal{I})_{\wt{\lambda}^+}\leq (V_d^\mathcal{I})_{\wt{\lambda}\mu}$.
Moreover $[\wt{L}_{\mathcal{I}_{-1},\mathcal{I}},N_{\mathcal{I}_{-1},\mathcal{I}}]\leq L_{\mathcal{I}_{-1},\mathcal{I}}$, implies that $(V_d^\mathcal{I})_{\wt{\lambda}\mu}=(V_d^\mathcal{I})_{\wt{\lambda}}$.
\end{proof}

\subsection{An extension map for Theorem~\ref{thm41ii}}
\indent

Assume the pair $(\mathcal{I}_{-1},\mathcal{I})$ satisfies Remark~\ref{PartLabLevi} with respect to $w=\rho(v)\in\sgnSymm_n$ from Section~\ref{vwSylTwist}.
Let $\bL_{\mathcal{I}_{-1},\mathcal{I}}$ denote the corresponding $d$-split Levi subgroup in $(\bG,vF)$ containing the maximal torus $\bT$, as described in Section~\ref{StrdSpLev}. Additionally take $v_\mathcal{I}$ as defined in Section~\ref{vI}.

For $E=\GenGp{\wh{F}_p}$ as defined in Section~\ref{endo}, recall that Theorem~\ref{MainThmReq}(ii) requires the existence of an extension map with respect to $\bL_{\mathcal{I}_{-1},\mathcal{I}}^{vF}\lhd N_{\bG^{vF}}(\bL_{\mathcal{I}_{-1},\mathcal{I}})E$, such that each extension contains $v\wh{F}$ in its kernel.
By Lemma~\ref{TwistTwist} it suffices to prove the existence of an extension map with respect to $\bL_{\mathcal{I}_{-1},\mathcal{I}}^{v_\mathcal{I}F}\lhd \NNN_{\bG}(\bL_{\mathcal{I}_{-1},\mathcal{I}})^{v_\mathcal{I}F}E$, such that each extension contains $v_{\mathcal{I}}\wh{F}$ in its kernel

By Proposition~\ref{SuppLinN}, $N_{\mathcal{I}_{-1},\mathcal{I}}:=N_{\bG^{v_\mathcal{I}F}}(\bL_{\mathcal{I}_{-1},\mathcal{I}})=L_{\mathcal{I}_{-1},\mathcal{I}}V_d^\mathcal{I}$, where $V_d^\mathcal{I}$ was constructed in the proof of Lemma~\ref{ExtWeylGpExtMap}.
The required extension map will be constructed by applying \cite[Proposition 4.2]{BrSpAM}.
This breaks the process up into the existence of an extension map with respect to $H_d^\mathcal{I}=L_{\mathcal{I}_{-1},\mathcal{I}}\cap V_d^\mathcal{I}\lhd V_d^\mathcal{I}E$  and another extension map with respect to  $L_{\mathcal{I}_{-1},\mathcal{I}}\lhd L_{\mathcal{I}_{-1},\mathcal{I}}\rtimes (V_d^{\mathcal{I}}E)/H_d^{\mathcal{I}}$.

\begin{lm}\label{ExtHdVdE}
There is an extension map with respect to $H_d^\mathcal{I}\lhd V_d^{\mathcal{I}}E$ with $v_{\mathcal{I}}\wh{F}$ in the kernel of each extension.  
\end{lm}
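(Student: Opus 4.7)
The plan is to exploit the fact that $E$ acts trivially on $V_d^\mathcal{I}$ so that the extension splits into an ordinary extension across $V_d^\mathcal{I}$ (provided by Lemma~\ref{ExtWeylGpExtMap}) times a linear character of the cyclic factor $E$, chosen to kill $v_\mathcal{I}\wh F$.

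First I would verify $[V_d^\mathcal{I},E]=1$. By Section~\ref{endo}, $E=\langle\wh F_p\rangle$ acts on $\bG^{F_p^{em}}$ via the field automorphism $F_p$ with $F_p(x_\alpha(t))=x_\alpha(t^p)$. The Chevalley generator ${\bf n}_\alpha=x_\alpha(1)x_{-\alpha}(-1)x_\alpha(1)$ only involves $\pm 1$, and since $p$ is odd $(-1)^p=-1$, so $\wh F_p$ fixes every ${\bf n}_\alpha$ and hence also every $h_\alpha(-1)={\bf n}_\alpha^2$. The explicit generators of $V_d^\mathcal{I}$ produced in the proof of Lemma~\ref{ExtWeylGpExtMap} (the $\kappa_s(c_i)$, $\kappa_s(p_i)$, $\kappa_s(h_i)$) are products of conjugates of such ${\bf n}_\alpha$'s, so $E$ centralises $V_d^\mathcal{I}$. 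In particular $V_d^\mathcal{I}E=V_d^\mathcal{I}\times E$, $E$ fixes every $\lambda\in\Irr(H_d^\mathcal{I})$, and $(V_d^\mathcal{I}E)_\lambda=(V_d^\mathcal{I})_\lambda\times E$.

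Next I would upgrade the maximal extendibility for $H_d^\mathcal{I}\lhd V_d^\mathcal{I}$ from Lemma~\ref{ExtWeylGpExtMap} to a $V_d^\mathcal{I}$-equivariant extension map $\Lambda_0$. The extensions constructed there are built from a fixed extension $\Lambda_1$ on a single $\langle h_1,c_1\rangle$-block, propagated through the wreath structure by conjugation with $p_1\cdots p_{k-1}$ and then with the $\eta_j$ defined via $\kappa_s$; this is canonically compatible with $V_d^\mathcal{I}$-conjugation, so the resulting map is $V_d^\mathcal{I}$-equivariant. Given $\Lambda_0$, define $\Lambda(\lambda):=\Lambda_0(\lambda)\times\chi_\lambda\in\Irr((V_d^\mathcal{I})_\lambda\times E)$, where $\chi_\lambda\in\Irr(E)$ is determined by
\[
\chi_\lambda(\wh F):=\Lambda_0(\lambda)(v_\mathcal{I})^{-1}.
\]

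To see that $\chi_\lambda$ is well-defined, observe that in type $\tC_n$ the graph component $\gamma$ is trivial, so $F=F_p^m$ acts trivially on $V_{\tC_n}$ by the same argument as for $\wh F_p$; hence $(v_\mathcal{I}F)^e=v_\mathcal{I}^e F_p^{em}$, and the defining property $(vF)^e=F_p^{em}$ of $e$ forces $v_\mathcal{I}^e=1$. Thus $\Lambda_0(\lambda)(v_\mathcal{I})$ is an $e$-th root of unity, which matches the order of $\wh F=\wh F_p^m$ in $E\cong C_{em}$, so $\chi_\lambda$ exists. By construction $\Lambda(\lambda)(v_\mathcal{I}\wh F)=\Lambda_0(\lambda)(v_\mathcal{I})\chi_\lambda(\wh F)=1$, placing $v_\mathcal{I}\wh F$ in $\ker\Lambda(\lambda)$. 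Finally $V_d^\mathcal{I}E$-equivariance reduces to $V_d^\mathcal{I}$-equivariance (since $E$ acts trivially on $\Irr(H_d^\mathcal{I})$), which follows from equivariance of $\Lambda_0$ together with $v_\mathcal{I}\in Z(V_d^\mathcal{I})$ by Proposition~\ref{SuppLinN}: for $g\in V_d^\mathcal{I}$ one has $\Lambda_0(\lambda^g)(v_\mathcal{I})=\Lambda_0(\lambda)^g(v_\mathcal{I})=\Lambda_0(\lambda)(v_\mathcal{I})$, hence $\chi_{\lambda^g}=\chi_\lambda$.

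The main obstacle will be the promotion of maximal extendibility to a genuine equivariant extension map in the second step; this is not automatic, but the wreath-product/Mackey-style construction used in Lemma~\ref{ExtWeylGpExtMap} is sufficiently symmetric under $V_d^\mathcal{I}$-conjugation to allow a coherent choice. Once this is in place, the remaining ingredients — triviality of $E$ on $V_d^\mathcal{I}$, centrality of $v_\mathcal{I}$, and $v_\mathcal{I}^e=1$ — combine cleanly to produce the desired extension map.
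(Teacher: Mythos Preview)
Your proposal is correct and follows essentially the same approach as the paper: use $V_d^\mathcal{I}E=V_d^\mathcal{I}\times E$, take an extension $\tilde\theta$ of $\theta$ to $(V_d^\mathcal{I})_\theta$ via Lemma~\ref{ExtWeylGpExtMap}, and multiply by a linear character of $E$ sending $\wh F$ to $\tilde\theta(v_\mathcal{I})^{-1}$. The paper's proof is terser and does not spell out the centrality of $E$, the well-definedness of the linear character, or any equivariance; your additional equivariance discussion goes beyond what the stated lemma requires, though the argument you give for it (via $v_\mathcal{I}\in Z(V_d^\mathcal{I})$) is sound.
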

\begin{proof}
As $E=\GenGp{\wh{F}_p}$ which acts as a field automorphism, it follows that $V_d^{\mathcal{I}}E=V_d^{\mathcal{I}}\times E$.
Moreover, the exponent of $E={\rm Ord}(\wh{F}_p)$ is given by $|V_{{C_n}}|m$.

Let $\theta\in\Irr(H_d^{\mathcal{I}})$.
By Lemma~\ref{ExtWeylGpExtMap} there is an extension  $\tilde{\theta}\in (V_d^{\mathcal{I}})_{\theta}$ of $\theta$.
There is a character $\lambda_{\tilde{\theta}}\in\Irr (E)$ such that $\lambda_{\tilde{\theta}}(\wh{F})=\tilde{\theta}(v_{\mathcal{I}})^{-1}$.
Thus $\tilde{\theta}\times \lambda_{\tilde{\theta}}\in \Irr( (V_d^{\mathcal{I}}\times E)_{\theta})$ with $v_{\mathcal{I}}\wh{F}$ in its kernel.
\end{proof}

\begin{lm}\label{ExtLIVdE}
There is an extension map with respect to $L_{\mathcal{I}_{-1},\mathcal{I}}\lhd L_{\mathcal{I}_{-1},\mathcal{I}}\rtimes (V_d^{\mathcal{I}}E)/H_d^{\mathcal{I}}$ such that $\rho(v_{\mathcal{I}})\wh{F}$ lies in the kernel of each extension.
\end{lm}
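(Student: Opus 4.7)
The plan is to use the direct-product decomposition of $L_{\mathcal{I}_{-1},\mathcal{I}}$ and combine existing extension results factor-by-factor with the wreath-product extension lemma. Recall that
\[
L_{\mathcal{I}_{-1},\mathcal{I}} = L_{\mathcal{I}_{-1}} \times \prod_{s=1}^n L_s, \qquad L_s:=\prod_{i=1}^{t_s}L_{\mathcal{O}_i^s},
\]
and that (by the construction of $V_d^{\mathcal I}$ in Lemma~\ref{ExtWeylGpExtMap} and Remark~\ref{VactZtilL}) the quotient $V_d^{\mathcal I}/H_d^{\mathcal I}\cong \prod_s C_{2d_0}\wr\Symm_{t_s}$ centralises $L_{\mathcal{I}_{-1}}$ and restricts to an automorphism group of each $L_s$, while $E$ commutes with $V_d^{\mathcal I}$ and acts on every factor by field automorphisms. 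A character $\lambda\in\Irr(L_{\mathcal{I}_{-1},\mathcal{I}})$ therefore factors as $\lambda=\lambda_{-1}\times\prod_s\lambda_s$ and it suffices to extend each piece to its inertia subgroup.

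For the symplectic factor $L_{\mathcal{I}_{-1}}\cong \Sp_{2|\mathcal{I}_{-1}|}(q)$ only $E$ acts non-trivially, and \cite[Theorem 3.1]{CabSpMcKTypC} provides an extension of $\lambda_{-1}$ to $L_{\mathcal{I}_{-1}}\rtimes E_{\lambda_{-1}}$. For each $s$, the factor $L_s\cong \GL_s(\epsilon q^{d_0})^{t_s}$ (with $\epsilon=(-1)^{d+1}$) is permuted by $\Symm_{t_s}$, whereas the cyclic $C_{2d_0}$ and $E$ act on each individual copy $L_{\mathcal{O}_i^s}$ through a group of field and graph automorphisms in the sense of \cite[2.5.1]{GLS3}. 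Consequently Corollary~\ref{ExtGnSymm} yields an extension of each component character of $\GL_s(\epsilon q^{d_0})$ to its inertia subgroup under these automorphisms, and Lemma~\ref{ExtensionwithWreath} then extends $\lambda_s$ to its inertia subgroup inside the wreath product $(\GL_s(\epsilon q^{d_0})\rtimes (C_{2d_0}\times E))\wr\Symm_{t_s}$.

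The factorwise extensions combine to an extension of $\lambda$ to its inertia subgroup in $L_{\mathcal{I}_{-1},\mathcal{I}}\rtimes(V_d^{\mathcal I}E)/H_d^{\mathcal I}$. To arrange $\rho(v_{\mathcal I})\wh F\in\ker$, I would, mimicking the argument of Lemma~\ref{ExtHdVdE}, twist the constructed extension by a suitable linear character of $E$: since $E=\langle \wh F_p\rangle$ is cyclic of order $em$ and commutes with the image of $V_d^{\mathcal I}$ in the quotient, there exists $\eta\in\Irr(E)$ inflated to the whole group whose value at $\rho(v_\mathcal{I})\wh F$ equals $\wt\lambda_0(\rho(v_\mathcal{I})\wh F)^{-1}$ (after a routine verification that this target root of unity has order dividing the order of $\wh F$ in $E$).

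The main obstacle is step two, namely identifying the action of the generators $\kappa_s(c_i)$, $\kappa_s(p_{k})$ and $\wh F_p$ on the fixed-point group $L_{\mathcal{O}_i^s}\cong\GL_s(\epsilon q^{d_0})$ as lying in the field-and-graph automorphism class to which Corollary~\ref{ExtGnSymm} applies. This requires following the Chevalley relations of Section~\ref{ChevRel} through the description of the Sylow twist $v_{\mathcal I}$ in Section~\ref{vI}, and in particular tracking how the cycle structure of $\rho(v_\mathcal I)$ on $\pm\ul{\mathcal O}_i^s$ encodes the transpose-inverse graph automorphism in the unitary case $d$ even.
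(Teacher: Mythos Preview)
Your approach is essentially that of the paper: decompose $L_{\mathcal{I}_{-1},\mathcal{I}}$ factor by factor, pass to an enlarged $E_{\mathcal{I}_{-1}}\times\prod_\mathcal{O} E_\mathcal{O}$ so that each $L_s$ sits inside a genuine wreath product, identify the base action on $L_\mathcal{O}$ with field-and-graph automorphisms of $\GL_s(\epsilon q^{d_0})$, and invoke Corollary~\ref{ExtGnSymm}. The one organisational difference concerns the kernel condition. The paper passes at the outset to the quotient $\langle w'_{Q_i^s},\wh F_{p,Q_i^s}\rangle/\langle w_{Q_i^s}\wh F_{p,Q_i^s}^m\rangle$; since $w_{Q_i^s}\wh F_{p,Q_i^s}^m$ is exactly the restriction of the Steinberg endomorphism $v_\mathcal{I}F$ to $\bL_{\mathcal{O}_i^s}$, this quotient acts \emph{faithfully} on $L_{\mathcal{O}_i^s}$ through field and graph automorphisms, so Corollary~\ref{ExtGnSymm} applies on the nose and $\rho(v_\mathcal{I})\wh F$ lies in the kernel automatically. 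Your extend-then-twist route also works---the order check you flag is routine, as $(\rho(v_\mathcal{I})\wh F)^e=1$ in $W_d^\mathcal{I}\times E$ and $\wh F$ has order $e$ in $E$---but it leaves a small gap: $C_{2d_0}\times E$ does not act faithfully on $L_\mathcal{O}$, so Corollary~\ref{ExtGnSymm} strictly gives extensions only after passing to the image in $\Aut(L_\mathcal{O})$, and you must then inflate along the (central) kernel of that action. The paper's quotient-first organisation handles both points at once.
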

\begin{proof}
Recall that $L_{\mathcal{I}_{-1},\mathcal{I}}=L_{\mathcal{I}_{-1}}\times\prod_{\mathcal{O}}L_{\mathcal{O}}$ where the product runs over all $\ol{w}$-orbits on $\mathcal{I}$ and $E=\GenGp{\wh{F}_p}$ where $\wh{F}_p$ acts as a field automorphism.
Given a  $\ol{w}$-orbit $\mathcal{O}\subseteq \mathcal{I}$, the group $L_{\mathcal{O}}$ is $\wh{F}_p$-stable and thus set $E_{\mathcal{O}}:=\GenGp{\wh{F}_{p,\mathcal{O}}}$ where $\wh{F}_{p,\mathcal{O}}$ denotes the automorphism of $L_{\mathcal{I}_{-1},\mathcal{I}}$ which acts on the factor $L_\mathcal{O}$ as $\wh{F}_p$ and as the trivial map on the other factors.
Similarly, define $E_{\mathcal{I}_{-1}}$ with respect to $L_{\mathcal{I}_{-1}}$.
This yields an embedding $$E\hookrightarrow E_{\mathcal{I}_{-1},\mathcal{I}}:= E_{\mathcal{I}_{-1}}\times \prod_{\mathcal{O}}E_{\mathcal{O}}$$
with the restriction of $\wh{F}_p$ to ${L_{\mathcal{I}_{-1},\mathcal{I}}}$ given by $\wh{F}_p\mid_{L_{\mathcal{I}_{-1},\mathcal{I}}}=\wh{F}_{p,\mathcal{I}_{-1}}\times \prod_{\mathcal{O}}\wh{F}_{p,\mathcal{O}}$.
By construction both $W_d^\mathcal{I}\cong V_d^\mathcal{I}/H_d^\mathcal{I}$ and $E_{\mathcal{I}_{-1},\mathcal{I}}$ act as automorphisms on $L_{\mathcal{I}_{-1},\mathcal{I}}$, however the actions of $E_{\mathcal{I}_{-1},\mathcal{I}}$ and $W_d^\mathcal{I}$ need not commute.

Recall from Section~\ref{CentwIinCentwulI} that $W_d^\mathcal{I}:=\prod_{s=1}^n W_d^{\mathcal{I}_s}$ and for $Q_1^s,\dots, Q^s_{t_s}$ representing the $\ol{w}$-orbits in $\mathcal{I}_s$, as explained in Section~\ref{wPartNot}, $$W_d^{\mathcal{I}_s}=\left( \prod_{i=1}^{t_s}\langle w'_{Q_i^s}\rangle \right)\rtimes \langle \tau_{Q_i^s}\mid 1\leq i\leq t_s-1\rangle.$$  
For notation purposes, denote by $ \wh{F}_{p,Q_i^s}:=\wh{F}_{p,\mathcal{O}_i^s}$ and $E_{Q_i^s}=E_{\mathcal{O}_i^s}$.

Let $x\in  W_d^\mathcal{I}$ and take a $\ol{w}$-orbit $\mathcal{O}\subseteq \mathcal{I}$.
Composition yields an automorphism $x\wh{F}_{p,\mathcal{O}}x^{-1}$ of ${L_{\mathcal{I}_{-1},\mathcal{I}}}$ which coincides with $\wh{F}_{p,\mathcal{O}'}$ for $\mathcal{O}':=x(\mathcal{O})$, another $\ol{w}$-orbit in $\mathcal{I}$, as ${}^{x^{-1}}L_{\mathcal{O}'}=L_\mathcal{O}$.
Thus $W_d^\mathcal{I}E_{\mathcal{I}_{-1},\mathcal{I}}$ is a well defined group and
$$
W_d^\mathcal{I}E_{\mathcal{I}_{-1},\mathcal{I}}= E_{\mathcal{I}_{-1}}\times \prod_{s=1}^n\left(\left( \prod_{i=1}^{t_s} \langle w'_{Q_i^s}, \wh{F}_{p,Q_i^s}\rangle\right) \rtimes \langle \tau_{Q_i^s}\mid 1\leq i\leq t_s-1\rangle\right).
$$
In particular, as shown above, the permutation $\tau_{Q_k^s}$ permutes $\wh{F}_{p,Q_k^s}$ and $\wh{F}_{p,Q_{k+1}^s}$, while fixing every other $\wh{F}_{p,Q_j^s}$.
Hence
 $$\left( \prod_{i=1}^{t_s} \langle w'_{Q_i^s}, \wh{F}_{p,Q_i^s}\rangle\right) \rtimes \langle \tau_{Q_i^s}\mid 1\leq i\leq t_s-1\rangle= \langle w'_{Q_1^s}, \wh{F}_{p,Q_1^s}\rangle\wr\mathfrak{S}_{t_s}$$
and 
$$w_\mathcal{I}\wh{F}=\wh{F}_{p, \mathcal{I}_{-1}}^m\times \prod_{s=1}^n \left( \prod_{i=1}^{t_s}w_{Q_i^s}\wh{F}_{p,Q_i^s}^m\right).$$

It therefore suffices to provide an extension map with respect to $$L_{\mathcal{I}_s}\lhd \left( L_{Q_1^s} \rtimes \langle w_{Q_1^s}',\wh{F}_{p,Q_1^s}\rangle/\langle w_{Q_1^s}\wh{F}_{p,Q_1^s}^m\rangle \right) \wr \mathfrak{S}_{t_s}.$$ 
From the explicit construction, $$\langle w_{Q_i^s}',\wh{F}_{p,Q_i^s}\rangle/\langle w_{Q_i^s}\wh{F}_{p,Q_i^s}^m\rangle$$ is a subgroup of ${\rm Aut}(L_{Q_i^s})$ given by its own field and graph automorphisms.
Hence applying Lemma~\ref{ExtGnSymm} yields the required extension map.
\end{proof}

\begin{prop}\label{NhatExtMap}
There exists an extension map $\Lambda$ with respect to $L_{\mathcal{I}_{-1},\mathcal{I}}\lhd N_{\mathcal{I}_{-1},\mathcal{I}}$ such that 
\begin{enumerate}
\item $\Lambda$ is $\wh{N}$-equivariant.
\item Every character $\lambda$ has an extension $\wh{\lambda}\in\Irr(\wh{N}_{\lambda})$ with $\Res_{\NNN_{\lambda}}^{\wh{N}_{\lambda}}(\wh{\lambda})=\Lambda(\lambda)$ and $v_{\mathcal{I}}\wh{F}\in \ker(\wh{\lambda})$.
\end{enumerate}
\begin{proof}
As in \cite[Theorem 4.3]{BrSpAM} it suffices to define $\Lambda$ on a $\wh{N}$-transversal in $\Irr(L)$.
Moreover, as $H_d^\mathcal{I}=L_{\mathcal{I}_{-1},\mathcal{I}}\cap V_d^\mathcal{I}\leq Z(L_{\mathcal{I}_{-1},\mathcal{I}})$, applying \cite[Proposition 4.2]{BrSpAM} with the extension maps given in Lemma~\ref{ExtHdVdE} and Lemma~\ref{ExtLIVdE} proves the result.
\end{proof}
\end{prop}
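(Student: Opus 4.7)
The plan is to assemble the result from the two extension maps that have just been established (Lemma~\ref{ExtHdVdE} and Lemma~\ref{ExtLIVdE}) using the general gluing machinery of \cite[Proposition 4.2]{BrSpAM}. First I would recall the structural ingredients already in place: by Proposition~\ref{SuppLinN} we have the semi-direct factorisation $N_{\mathcal{I}_{-1},\mathcal{I}} = L_{\mathcal{I}_{-1},\mathcal{I}} V_d^{\mathcal{I}}$, the intersection $H_d^{\mathcal{I}} := L_{\mathcal{I}_{-1},\mathcal{I}} \cap V_d^{\mathcal{I}}$ is central in $L_{\mathcal{I}_{-1},\mathcal{I}}$, and maximal extendibility holds with respect to $H_d^{\mathcal{I}} \lhd V_d^{\mathcal{I}}$. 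Hence the hypotheses needed to apply \cite[Proposition 4.2]{BrSpAM} to the two filtration steps
\[
H_d^{\mathcal{I}} \lhd V_d^{\mathcal{I}} E \quad\text{and}\quad L_{\mathcal{I}_{-1},\mathcal{I}} \lhd L_{\mathcal{I}_{-1},\mathcal{I}} \rtimes (V_d^{\mathcal{I}} E)/H_d^{\mathcal{I}}
\]
are satisfied.

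Next I would reduce, as in the proof of \cite[Theorem 4.3]{BrSpAM}, to constructing $\Lambda$ only on a $\wh{N}$-stable transversal of $L_{\mathcal{I}_{-1},\mathcal{I}}$-characters, so that the $\wh{N}$-equivariance claim (item (1)) becomes automatic after transporting by the group action. The gluing step then works as follows: given $\lambda \in \Irr(L_{\mathcal{I}_{-1},\mathcal{I}})$, let $\theta := \Res^{L_{\mathcal{I}_{-1},\mathcal{I}}}_{H_d^{\mathcal{I}}}(\lambda)$ and let $\widetilde{\theta} \in \Irr((V_d^{\mathcal{I}} E)_\theta)$ be the extension of $\theta$ provided by Lemma~\ref{ExtHdVdE} (which has $v_{\mathcal{I}}\wh{F}$ in its kernel). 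Then apply Lemma~\ref{ExtLIVdE} to the character $\lambda$ viewed in $L_{\mathcal{I}_{-1},\mathcal{I}} \rtimes (V_d^{\mathcal{I}} E/H_d^{\mathcal{I}})_{\overline{\theta}}$ to obtain an extension on the quotient, with $\rho(v_{\mathcal{I}})\wh{F}$ in its kernel. Multiplying the two extensions (in the sense of \cite[Proposition 4.2]{BrSpAM}, where centrality of $H_d^{\mathcal{I}}$ guarantees the product makes sense and is a genuine character) produces $\wh{\lambda} \in \Irr(\wh{N}_\lambda)$ whose restriction to $N_\lambda$ defines $\Lambda(\lambda)$ and which has $v_{\mathcal{I}}\wh{F}$ in its kernel by construction.

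The point I expect to be most delicate is verifying that the two partial extensions can indeed be combined coherently; that is, checking the compatibility hypothesis of \cite[Proposition 4.2]{BrSpAM} on the $\wh{N}$-transversal, in particular that the kernel conditions ($v_{\mathcal{I}}\wh{F}$ in the kernel from Lemma~\ref{ExtHdVdE} and $\rho(v_{\mathcal{I}})\wh{F}$ in the kernel from Lemma~\ref{ExtLIVdE}) are preserved by the multiplication and that $\wh{N}$-equivariance of the resulting $\Lambda$ follows from the $\wh{N}$-equivariance of each constituent. Because $H_d^{\mathcal{I}} \leq Z(L_{\mathcal{I}_{-1},\mathcal{I}})$ and $E$ acts through field automorphisms commuting with the $V_d^{\mathcal{I}}$-action on the quotient, these coherence checks reduce to elementary observations about central products of character extensions, after which the conclusion of the proposition follows immediately.
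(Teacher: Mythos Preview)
Your proposal is correct and follows exactly the same approach as the paper: reduce to a $\wh{N}$-transversal via \cite[Theorem 4.3]{BrSpAM}, then apply \cite[Proposition 4.2]{BrSpAM} using the extension maps from Lemma~\ref{ExtHdVdE} and Lemma~\ref{ExtLIVdE}, with the centrality $H_d^{\mathcal{I}}\leq Z(L_{\mathcal{I}_{-1},\mathcal{I}})$ as the key structural input. The paper's proof is simply a two-sentence summary of the same argument you have written out in detail.
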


\section{The Inertial subgroups arising from the relative Weyl group}\label{CliffTh}

Take $v$ as defined in Section~\ref{vwSylTwist} so that $vF$ forms a Sylow $d$-twist of $\bG\cong {\rm Sp}_{2n}(\overline{\mathbb{F}}_p)$ and let $\bL$ a $d$-split Levi subgroup with respect to $(\bG,vF)$.
The aim of this subsection is to prove condition~\ref{thm41iii}, with respect to $\bL$ and $vF$.

From Section~\ref{StrdSpLev}, it can be assumed that $\bL=\bL_{\mathcal{I}_{-1},\mathcal{I}}$ where  $(\mathcal{I}_{-1},\mathcal{I})$ satisfies Remark~\ref{PartLabLevi} with respect to $w=\rho(v)\in\sgnSymm_n$. 
Moreover, for $v_\mathcal{I}$ as defined in Section~\ref{vI}, Lemma~\ref{TwistTwist} shows that it suffices to prove Theorem~\ref{MainThmReq} with respect to $\bL_{\mathcal{I}_{-1},\mathcal{I}}$ and $v_\mathcal{I}F$.

First some setup that will be used throughout this section is given.
For the maximal torus $\wt{\bT}$ from Section~\ref{endo}, the corresponding $v_\mathcal{I}F$-stable $d$-split Levi subgroup in $\wt{\bG}$ is given by $\wt{\bL}_{\mathcal{I}_{-1},\mathcal{I}}:=\wt{\bT}\bL_{\mathcal{I}_{-1},\mathcal{I}}$.
Set $L_{\mathcal{I}_{-1},\mathcal{I}}:=\bL_{\mathcal{I}_{-1},\mathcal{I}}^{v_\mathcal{I}F}$ and $\wt{L}_{\mathcal{I}_{-1},\mathcal{I}}:=\wt{\bL}_{\mathcal{I}_{-1},\mathcal{I}}^{v_\mathcal{I}F}$.
Let $N_{\mathcal{I}_{-1},\mathcal{I}}:=\NNN_{\bG}(\bL_{\mathcal{I}_{-1},\mathcal{I}})^{v_\mathcal{I}F}$, while for $E=\GenGp{\wh{F}_p}$ from Section~\ref{endo}, set $\wh{N}_{\mathcal{I}_{-1},\mathcal{I}}:=N_{\mathcal{I}_{-1},\mathcal{I}}E$.
According to Proposition~\ref{SuppLinN}, the group $N_{\mathcal{I}_{-1},\mathcal{I}}=L_{\mathcal{I}_{-1},\mathcal{I}}V_d^\mathcal{I}$ and combining Proposition~\ref{RelWeylGpPart} with the construction of $W_d^\mathcal{I}$ in Section~\ref{CentwIinCentwulI} yields $\rho(V_d^\mathcal{I})=W_d^\mathcal{I}\cong N_{\mathcal{I}_{-1},\mathcal{I}}/L_{\mathcal{I}_{-1},\mathcal{I}}$.

For $\lambda\in \Irr(L_{\mathcal{I}_{-1},\mathcal{I}})$ and $\wt{\lambda}\in \Irr(\wt{L}_{\mathcal{I}_{-1},\mathcal{I}}\mid \lambda)$ condition~\ref{thm41iii} is focused on the action of $\wh{N}_{\mathcal{I}_{-1},\mathcal{I}}/L_{\mathcal{I}_{-1},\mathcal{I}}$ on the irreducible characters of $(N_{\mathcal{I}_{-1},\mathcal{I}})_\lambda/L_{\mathcal{I}_{-1},\mathcal{I}}$ and $(N_{\mathcal{I}_{-1},\mathcal{I}})_{\wt{\lambda}}/L_{\mathcal{I}_{-1},\mathcal{I}}$.
As $(N_{\mathcal{I}_{-1},\mathcal{I}})_{{\lambda}}/(N_{\mathcal{I}_{-1},\mathcal{I}})_{\wt{\lambda}}$ and $E$ are cyclic groups, condition~\ref{thm41iii} parts $(1)$ and $(3)$ are automatically satisfied.
As $[E,V_d^\mathcal{I}]=1$, it is also true that $[E,W_d^\mathcal{I}]=1$.
Define $K(\lambda):=\NNN_{W_d^\mathcal{I}}((W_d^\mathcal{I})_\lambda,(W_d^\mathcal{I})_{\wt{\lambda}})$.
Then condition~\ref{thm41iii} reduces to verifying:

\begin{aim}
 
For every $\xi_0\in\Irr (W_{\wt{\lambda}})$ there exists  some $\xi\in \Irr (W_{\lambda}\mid \xi_0)$ which is ${K}(\lambda)_{\xi_0}$-invariant.
\end{aim}

Before studying the action of $W_d^\mathcal{I}$ on the finite groups $L_{\mathcal{I}_{-1},\mathcal{I}}$ and $\wt{L}_{\mathcal{I}_{-1},\mathcal{I}}$, the action on the underlying algebraic group $\bL_{\mathcal{I}_{-1},\mathcal{I}}$ is considered in the following remark.

\begin{rem}\label{WonBL}
As outlined in the start of Section~\ref{StrdSpLev}, $\bL_{\mathcal{I}_{-1},\mathcal{I}}=\bL_{\mathcal{I}_{-1}}\times \prod_{s=1}^n\bL_s$, where $\bL_s$ is the direct product of those subgroups $\bL_\mathcal{J}$ with $\mathcal{J}\in\mathcal{I}_s$.
According to the proof of Lemma~\ref{ExtWeylGpExtMap}, the group $W_d^\mathcal{I}=\prod_{s=1}^n W_d^{\mathcal{I}_s}$ with $\rho(V_d^{\mathcal{I}_s})=W_d^{\mathcal{I}_s}$.
Moreover, $[V_d^{\mathcal{I}_s},\bL_{s'}]=1$ whenever $s\ne s'$.
As in Section~\ref{wPartNot}, for $1\leq s\leq n$, let $\mathcal{O}_1^s,\dots,\mathcal{O}_{t_s}^s$ denote the $\ol{w}$-orbits in $\mathcal{I}_s$ with $Q_i^s\subset \mathcal{J}^{d_0,a}$ corresponding to $\mathcal{O}_i^s$. 
Using the notation in the proof of Lemma~\ref{ExtWeylGpExtMap} and Section~\ref{CentwIinCentwulI},
$$\rho(V_d^{\mathcal{I}_s})=W_d^{\mathcal{I}_s}=\left( \prod_{i=1}^{t_s}\langle w'_{Q_i^s}\rangle \right)\rtimes \langle \tau_{Q_k^s}\mid 1\leq k\leq t_s-1\rangle,$$
where $\rho(\kappa_s(c_{i}))=w'_{Q_i^s}$ for $1\leq i\leq t_s$, and $\tau_{Q_k^s}:=\rho(\kappa_s(p_{k}))=\prod_{j=1}^s \tau_{Q_{k,j}^s,Q_{k+1,j}^s}$ for $1\leq k\leq t_s-1$.
Moreover $[\kappa_s(c_{i}),\bL_\mathcal{J}]=1$ whenever $\mathcal{J}\not\in \mathcal{O}_i^s$; while $[\kappa_s(p_{k}),\bL_\mathcal{J}]=1$ whenever $\mathcal{J}\not\in \mathcal{O}_k^s\sqcup \mathcal{O}_{k+1}^s$.
\end{rem}

\begin{notation}\label{IsomWs}
For each $1\leq s\leq n$ let $\mathcal{G}_s=\langle g_s\rangle$ denote the cyclic group of order $2d_0$.
Denote by $\alpha_s$ the isomorphism  
$$
\begin{array}{cccc}
\alpha_s: &  \mathcal{G}_s\wr \Symm_{t_s} & \rightarrow & W_d^{\mathcal{I}_s}.\\
& (g_s^{j_1},\dots, g_s^{j_{t_s}}) & \mapsto & \prod_{i=1}^{t_s} (w'_{Q_i^s})^{j_i}\\
& (i,i+1) & \mapsto & \tau_{Q_i^s}\\
\end{array}
$$
This extends to an isomorphism $\alpha$ from $\prod_{s=1}^n (\mathcal{G}_s\rtimes \Symm_{t_s})$ to $W_d^\mathcal{I}$.

\end{notation}

\subsection{The action of $W_d^\mathcal{I}$ on $L_{\mathcal{I}_{-1},\mathcal{I}}$}\label{WonL}
\indent

Using the setup from Section~\ref{ChoiceOfn}, for $1\leq s\leq n$ set $L_s$ to be the product of those $L_\mathcal{O}:=\bL_{\mathcal{O}}^{v_\mathcal{I}F}=(\prod_{\mathcal{J}\in\mathcal{O}}\bL_\mathcal{J})^{v_\mathcal{I}F}$ with $\mathcal{O}$ a $\ol{w}$-orbit in $\mathcal{I}$ such that $\mathcal{J}_\mathcal{O}=\ul{\mathcal{O}}\cap \ul{a}$ contains $s$ elements.
Then $L_{\mathcal{I}_{-1},\mathcal{I}}=L_{\mathcal{I}_{-1}}\times \prod_{s=1}^n L_s$ with $L_s\cong {\rm GL}_s(\epsilon_d q^{d_0})^{t_s}$, for $\epsilon_d:=(-1)^{d+1}$. 

\begin{notation} \label{GORO}
Let $\mathcal{O}$ be a $\ol{w}$-orbit in $\mathcal{I}$.
If $\ul{\mathcal{O}}=\ul{Q}_i^s$, for a subset ${Q}_i^s\subset \mathcal{J}^{d_0,a}$ as in Section~\ref{wPartNot}, set $\mathcal{G}_\mathcal{O}:=\langle w'_{{Q}_i^s}\rangle$ and fix $\mathcal{R}_\mathcal{O}$ a $\mathcal{G}_\mathcal{O}$-transversal on $\Irr(L_{\mathcal{O}})$. 
Note that by construction $(w'_{Q_i^s})^{\tau_{Q_i^s}}=w'_{Q_{i+1}^s}$ and thus it can be assumed that $\mathcal{R}_{\mathcal{O}_i^s}^{\tau_{Q_i^s}}=\mathcal{R}_{\mathcal{O}_{i+1}^s}$.
\end{notation}

\begin{rem}\label{TranRs}
By Remark~\ref{WonBL}, whenever $\mathcal{O}$ and $\mathcal{O}'$ are two distinct $\ol{w}$-orbits in $\mathcal{I}$, it follows that $[\mathcal{G}_{\mathcal{O}'},\bL_\mathcal{O}]=1$.
Moreover $[\tau_{Q_k^s},\bL_\mathcal{O}]=1=[\tau_{Q_k^s},w'_\mathcal{O}]$ unless $\mathcal{O}\in\{\mathcal{O}_k^s,\mathcal{O}_{k+1}^s\}$ in which case $L_{\mathcal{O}_i^s}^{\tau_{Q_i^s}}=L_{\mathcal{O}_{i+1}^s}$ and $\mathcal{G}_{\mathcal{O}_i^s}^{\tau_{Q_i^s}}=\mathcal{G}_{\mathcal{O}_{i+1}^s}$.
Thus $\alpha_s$ from Notation~\ref{IsomWs} can be extended to an isomorphism 
$$
\alpha_s: \left( \GL_s(\epsilon_dq^{d_0})\rtimes \mathcal{G}_s \right)\wr\Symm_{t_s} \rightarrow  L_s\rtimes W_d^{\mathcal{I}_s}.
$$
Moreover, under the isomorphism $\alpha_s$, each $\mathcal{G}_{\mathcal{O}_i^s}$-transversal $\mathcal{R}_{Q_i^s}$ corresponds to a fixed $\mathcal{G}_s$-transversal $\mathcal{R}_s\subset\Irr(\GL_s(\epsilon_dq^{d_0}))$.
\end{rem}

\begin{notation}\label{Jlams}
Let $1\leq s\leq n$, $\mathcal{O}$ a $\ol{w}$-orbit in $\mathcal{I}_s$ and $\zeta\in \mathcal{R}_s$ the $\mathcal{G}_s$-transversal fixed in Remark~\ref{TranRs}.
\begin{enumerate}
\item Denote by $\lambda_{\mathcal{O},\zeta}\in \mathcal{R}_\mathcal{O}$ the character such that $\lambda_{\mathcal{O},\zeta}\circ \alpha_s=\zeta$ and set $\mathcal{G}_{\mathcal{O},\zeta}:=(\mathcal{G}_{\mathcal{O}})_{\lambda_{\mathcal{O},\zeta}}$.
\item For $\lambda\in \Irr (L_{\mathcal{I}_{-1},\mathcal{I}})$ set $\mathcal{J}_{\lambda,s}(\zeta):=\{1\leq i\leq t_s\mid \lambda_{\mathcal{O}_i^s}\in {\rm Orb}_{\mathcal{G}_\mathcal{O}}(\lambda_{\mathcal{O},\zeta})\}$, where $\mathcal{O}_1^s,\dots, \mathcal{O}_{t_s}^s$ are the $\ol{w}$-orbits in $\mathcal{I}_s$ and $\lambda_{\mathcal{O}}:={\rm Res}^{L_{\mathcal{I}_{-1},\mathcal{I}}}_{L_{\mathcal{O}}}(\lambda_s)$.
\end{enumerate}
\end{notation}

\begin{lm}\label{IntWonL}
Let $\lambda\in \Irr(L_{\mathcal{I}_{-1},\mathcal{I}})$ and assume that for each $\ol{w}$-orbit $\mathcal{O}\subseteq\mathcal{I}$ the character $\lambda_{\mathcal{O}}:={\rm Res}^{L_{\mathcal{I}_{-1},\mathcal{I}}}_{L_{\mathcal{O}}}(\lambda_s)\in \mathcal{R}_{\mathcal{O}}$.
Then $(W_d^\mathcal{I})_\lambda=\prod_{s=1}^n\left(\prod_{\zeta\in\mathcal{R}_s} (W_d^{\mathcal{I}_s})_{\lambda_s,\zeta}\right)$ where, for $\mathcal{J}_{\lambda,s}(\zeta)$ defined in Notation~\ref{Jlams},
 $$(W_d^{\mathcal{I}_s})_{\lambda_s,\zeta}:=\left( \prod_{i\in \mathcal{J}_{\lambda,s}(\zeta)}\mathcal{G}_{\mathcal{O}_i^s,\zeta}\right)\rtimes \alpha_s(\Symm(\mathcal{J}_{\lambda,s}(\zeta))).$$
\end{lm}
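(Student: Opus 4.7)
The plan is to decompose the action of $W_d^\mathcal{I}$ along the direct product structure of $L_{\mathcal{I}_{-1},\mathcal{I}}$, and then reduce each $s$-factor to a standard wreath-product stabiliser computation via the isomorphism $\alpha_s$.

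First I would invoke Remark~\ref{WonBL} (together with $[V_d^\mathcal{I},\bL_{\mathcal{I}_{-1}}]=1$ from Remark~\ref{VactZtilL}) to see that $W_d^\mathcal{I}=\prod_s W_d^{\mathcal{I}_s}$ acts trivially on $L_{\mathcal{I}_{-1}}$ and that $W_d^{\mathcal{I}_s}$ stabilises $L_s$ while centralising $L_{s'}$ for $s'\ne s$. Writing $\lambda_s:=\Res^{L_{\mathcal{I}_{-1},\mathcal{I}}}_{L_s}(\lambda)$, this immediately gives $(W_d^\mathcal{I})_\lambda=\prod_{s=1}^n(W_d^{\mathcal{I}_s})_{\lambda_s}$, reducing the claim to a fixed $s$.

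Next I would transport the whole picture through $\alpha_s$. By Remark~\ref{TranRs}, $\alpha_s$ extends to an isomorphism $(\GL_s(\epsilon_d q^{d_0})\rtimes\mathcal{G}_s)\wr\Symm_{t_s}\cong L_s\rtimes W_d^{\mathcal{I}_s}$ carrying the common transversal $\mathcal{R}_s$ onto each $\mathcal{R}_{\mathcal{O}_i^s}$. Under $\alpha_s^{-1}$, $\lambda_s$ becomes an external tensor product $\bigotimes_{i=1}^{t_s}\zeta_i$ with each $\zeta_i\in\mathcal{R}_s$, using the hypothesis $\lambda_{\mathcal{O}_i^s}\in\mathcal{R}_{\mathcal{O}_i^s}$. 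The sets $\mathcal{J}_{\lambda,s}(\zeta)$ then record which positions carry which transversal element, and they partition $\ul{t_s}$ as $\zeta$ runs through $\mathcal{R}_s$.

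The final step is the wreath-product stabiliser computation for $\bigotimes_i\zeta_i$. Since $\mathcal{R}_s$ is a $\mathcal{G}_s$-transversal, characters associated with distinct $\zeta$ lie in distinct $\mathcal{G}_s$-orbits, so no element of $(\GL_s(\epsilon_d q^{d_0})\rtimes\mathcal{G}_s)\wr\Symm_{t_s}$ can mix positions in $\mathcal{J}_{\lambda,s}(\zeta)$ with positions in $\mathcal{J}_{\lambda,s}(\zeta')$ when $\zeta\ne\zeta'$. This forces the stabiliser to split as a direct product over $\zeta\in\mathcal{R}_s$, and within each block the $\Symm_{t_s}$-part contributes $\Symm(\mathcal{J}_{\lambda,s}(\zeta))$ while the base $\mathcal{G}_s^{t_s}$-part contributes $\prod_{i\in \mathcal{J}_{\lambda,s}(\zeta)}\mathcal{G}_{s,\zeta}$. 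Transporting back through $\alpha_s$ identifies these with $\alpha_s(\Symm(\mathcal{J}_{\lambda,s}(\zeta)))$ and $\prod_{i\in\mathcal{J}_{\lambda,s}(\zeta)}\mathcal{G}_{\mathcal{O}_i^s,\zeta}$, yielding the claimed formula. The one point requiring care, and the main bookkeeping obstacle, is checking that $\alpha_s(\Symm(\mathcal{J}_{\lambda,s}(\zeta)))$ really does normalise $\prod_{i\in\mathcal{J}_{\lambda,s}(\zeta)}\mathcal{G}_{\mathcal{O}_i^s,\zeta}$; this follows from the conjugation relation $\mathcal{G}_{\mathcal{O}_i^s}^{\tau_{Q_i^s}}=\mathcal{G}_{\mathcal{O}_{i+1}^s}$ recorded in Remark~\ref{TranRs}, together with the fact that all the $\mathcal{G}_{\mathcal{O}_i^s,\zeta}$ transport to the same $\mathcal{G}_{s,\zeta}$ under $\alpha_s^{-1}$.
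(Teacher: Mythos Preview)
Your proposal is correct and follows essentially the same route as the paper: decompose across $s$ via Remark~\ref{WonBL}, transport through $\alpha_s$, and compute the stabiliser in $\mathcal{G}_s\wr\Symm_{t_s}$. The paper compresses your third step by citing the proof of Lemma~\ref{ExtensionwithWreath} for the wreath-product stabiliser formula, whereas you spell out the argument and add the check that the semidirect product is well-formed; both amount to the same computation.
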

\begin{proof}
Set $\lambda_s:={\rm Res}^{L_{\mathcal{I}_{-1},\mathcal{I}}}_{L_s}(\lambda)$, so that by Remark~\ref{WonBL} $(W_d^\mathcal{I})_\lambda=\prod_{s=1}^n(W_d^{\mathcal{I}_s})_{\lambda_s}$.
For a $\ol{w}$-orbit $\mathcal{O}\subset\mathcal{I}_s$, the character $\lambda_\mathcal{O}=\lambda_{\mathcal{O},\zeta}$ for some $\zeta\in \mathcal{R}_s$ and as in the proof of Lemma~\ref{ExtensionwithWreath},
$$
\alpha_s^{-1}\left( (W_d^{\mathcal{I}_s})_{\lambda_s}\right)=\left( \mathcal{G}_s\wr \Symm_{t_s}\right)_{\lambda_s\circ\alpha_s}=\prod_{\zeta\in\mathcal{R}_s}  (\mathcal{G}_s)_\zeta\wr \mathfrak{S}(\mathcal{J}_{\lambda,s}(\zeta)). 
$$
\end{proof}

\subsection{The action of $W_d^\mathcal{I}$ on $\wt{L}_{\mathcal{I}_{-1},\mathcal{I}}^+$}\label{WtilLCirc}
\indent

Given $\lambda\in\Irr(L_{\mathcal{I}_{-1},\mathcal{I}})$ and $\wt\lambda\in \Irr((\wt{L}_{\mathcal{I}_{-1},\mathcal{I}})_\lambda\mid \lambda)$, to study the structure of  $W_{\wt\lambda}$ it will be again convenient to consider $\wt{L}_{\mathcal{I}_{-1},\mathcal{I}}^+$ defined in Section~\ref{TiL0}. 
In particular, this section will require the use of characters arising in a central product hence the following remark is used to label the irreducible characters of such groups.

\begin{rem}\label{IrrCentProd}
Let $X,A\leq Y$ be finite groups with $[X,A]=1$ and $A$ abelian.
Set $Z=X\cap A$.
As $\GenGp{X,A}\cong X\circ_Z A:=X\times A/Z'$, where $Z':=\{(z,z^{-1})\mid z\in Z\}$, the properties of central products (see for example \cite[Section 5]{IMNRedMcKay}) show that 
$$\Irr(X\circ_Z A)=\{ \theta\cdot\mu \mid \mu\in \Irr(A) \text{ with } \lambda\in \Irr(X\mid {\rm Res}_Z^A(\mu))\}.$$ 
with $\theta\cdot\mu (xa):=\theta(x)\mu(a)$ for all $x\in X$ and $a\in A$.
\end{rem}

In Section~\ref{WonL}, the isomorphism $W_d^{\mathcal{I}_s}\cong C_{2d_0}\wr \Symm_{t_s}$ given by $\alpha_s$ provided a helpful way to describe $(W_d^\mathcal{I})_\lambda$.
The following remark sets up the analogous notation in the context of $\wt{L}_{\mathcal{I}_{-1},\mathcal{I}}^+$.

\begin{rem}\label{StructTilLPlusW}
Recall that for a fixed $x\in Z(\wt{\bG})$, of order $2(q-\epsilon_d)$ with $\epsilon_d=(-1)^{d+1}$, then
$$\wt{L}_{\mathcal{I}_{-1},\mathcal{I}}^+:=\left(\wt{L}_{\mathcal{I}_{-1}}^+ \times \prod_{\mathcal{O}} \wt{L}_{\mathcal{O}}^+\right)\circ \langle x\rangle,$$
where the product runs over all $\ol{w}$-orbits on $\mathcal{I}$.
Furthermore, for each $\ol{w}$-orbit $\mathcal{O}\subset\mathcal{I}$, there was an element $z_\mathcal{O}^+\in Z(\bL_\mathcal{O})$, such that for $Z_\mathcal{O}^+:=\GenGp{z_\mathcal{O}^+}$, the group $\wt{L}_{\mathcal{O}}^+$ arose as the central product $\wt{L}_{\mathcal{O}}^+=\GenGp{{L}_{\mathcal{O}},z_{\mathcal{O}}^+}\cong Z_{\mathcal{O}}^+\circ_{\langle (z_{\mathcal{O}}^+)^2\rangle}L_\mathcal{O}$.
In particular, each quotient $\wt{L}_{\mathcal{O}}^+/{L}_{\mathcal{O}}$ is cyclic and so each character of ${L}_{\mathcal{O}}$ extends to $\wt{L}_{\mathcal{O}}^+$.
Thus for $\mathcal{O}$ a $\ol{w}$-orbit in $\mathcal{I}$ and $\theta\in \mathcal{R}_\mathcal{O}$ fix an extension $\wt{\theta}\in\Irr (\wt{L}_{\mathcal{O}}^+\mid \zeta)$ and set 
 $$\wt{\mathcal{R}}_\mathcal{O}:=\{ \wt{\theta}\mid \theta\in \mathcal{R}_\mathcal{O}\}.$$
The action of $V_d^\mathcal{I}$ on the elements $z_{\mathcal{O}}^+$ was described in Remark~\ref{VactZtilL}.
In particular, for each $\ol{w}$-orbit $\mathcal{O}\subset\mathcal{I}$, the group $\mathcal{G}_\mathcal{O}$ normalises $\wt{L}_\mathcal{O}^+$.
Thus, as with $\mathcal{R}_\mathcal{O}$, it  can be assumed that $\wt{\mathcal{R}}_{\mathcal{O}_i^s}^{\tau_{Q_i^s}}=\wt{\mathcal{R}}_{\mathcal{O}_{i+1}^s}$.
Moreover  if ${\rm GL}_s(\epsilon_d q^{d_0})^+$ denotes the isomorphism type of each $\wt{L}_{\mathcal{O}_i^s}^+$, then as in Remark~\ref{TranRs}, $\alpha_s$ can be extended to an isomorphism 
$$ \alpha_s: \left( \GL_s(\epsilon_dq^{d_0})^+\rtimes \mathcal{G}_s \right)\wr\Symm_{t_s}\rightarrow \wt{L}_s^+\rtimes W_d^{\mathcal{I}_s} .
$$
Hence each $\wt{\mathcal{R}}_{\mathcal{O}_i^s}$ corresponds to a fixed subset $\wt{\mathcal{R}}_{s}\subset \Irr ( {\rm GL}_s(\epsilon_d q^{d_0})^+)$.
Therefore, as in Notation~\ref{Jlams}, for $\wt{\zeta}\in \wt{\mathcal{R}}_s$ extending $\zeta\in \mathcal{R}_s$, denote by $\wt{\lambda}_{\mathcal{O},\zeta}\in \wt{\mathcal{R}}_{\mathcal{O}}$ the extension of $\lambda_{\mathcal{O},\zeta}$ with $\wt{\lambda}_{\mathcal{O},\zeta}\circ\alpha_s=\wt{\zeta}$ and set $\mathcal{G}_{\mathcal{O},\wt{\zeta}}:=(\mathcal{G}_{\mathcal{O}})_{\wt{\lambda}_{\mathcal{O},\wt{\zeta}}}$.

\end{rem}

\begin{lm}\label{IntWonTilL0}
Let $\lambda\in \Irr(L_{\mathcal{I}_{-1},\mathcal{I}})$, $\wt{\lambda}^+\in\Irr ((\wt{L}_{\mathcal{I}_{-1},\mathcal{I}}^+)_\lambda)$  and assume that for each $\ol{w}$-orbit $\mathcal{O}\subseteq\mathcal{I}$ the character $\wt{\lambda}_{\mathcal{O}}^+:={\rm Res}_{\wt{L}_\mathcal{O}^+}^{\wt{L}_{\mathcal{I}_{-1},\mathcal{I}}^+}(\wt{\lambda}^+)\in \wt{\mathcal{R}}_\mathcal{O}$.
Then $(W_d^\mathcal{I})_{\wt{\lambda}^+}=\prod_{s=1}^n\left( \prod_{\zeta\in\mathcal{R}_s} (W_d^{\mathcal{I}_s})_{\wt{\lambda}_s^+,\zeta} \right)$ where, for $\mathcal{J}_{\lambda,s}(\zeta)$ defined in Notation~\ref{Jlams} and the notation given in Remark~\ref{StructTilLPlusW},
 $$(W_d^{\mathcal{I}_s})_{\wt{\lambda}_s^+,\zeta}:= \left( \prod_{i\in \mathcal{J}_{\lambda,s}(\zeta)}\mathcal{G}_{\mathcal{O}_i^s,\wt{\zeta}}\right)\rtimes \alpha_s(\Symm(\mathcal{J}_{\lambda,s}(\zeta))).$$
\end{lm}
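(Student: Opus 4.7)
The approach is to mirror the proof of Lemma~\ref{IntWonL} closely, replacing $L_{\mathcal{I}_{-1},\mathcal{I}}$ and $\lambda$ throughout by $\wt{L}_{\mathcal{I}_{-1},\mathcal{I}}^+$ and $\wt{\lambda}^+$. First I would invoke Lemma~\ref{InTilL} to rewrite
$(\wt{L}_{\mathcal{I}_{-1},\mathcal{I}}^+)_\lambda = \bigl((\wt{L}_{\mathcal{I}_{-1}}^+)_{\lambda_{-1}} \times \prod_{\mathcal{O}} \wt{L}_{\mathcal{O}}^+\bigr) \circ \langle x\rangle$; Lemma~\ref{TilL0StabN}(2) ensures that $V_d^\mathcal{I}$ normalises this group, so the action of $W_d^\mathcal{I}$ on $\wt{\lambda}^+$ by conjugation makes sense. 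By Remark~\ref{IrrCentProd}, $\wt{\lambda}^+ = \theta \cdot \mu$ for a unique $\mu \in \Irr(\langle x\rangle)$ and a character $\theta$ of the direct-product factor, and $\theta$ further decomposes as an external tensor product of $\wt{\lambda}_{-1}^+ \in \Irr((\wt{L}_{\mathcal{I}_{-1}}^+)_{\lambda_{-1}})$ and the $\wt{\lambda}_{\mathcal{O}}^+ \in \wt{\mathcal{R}}_\mathcal{O}$.

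The next step is to show that the piece $\wt{\lambda}_{-1}^+ \cdot \mu$ is always $W_d^\mathcal{I}$-stable, so that only the $\wt{\lambda}_{\mathcal{O}}^+$ factors constrain the stabiliser. Since $x \in Z(\wt{\bG})$ and $V_d^\mathcal{I} \leq \bG$, the group $\langle x\rangle$ is centralised by $V_d^\mathcal{I}$; and by Remark~\ref{WonBL} the projection $\rho(V_d^\mathcal{I}) = W_d^\mathcal{I}$ lies in $\prod_{s=1}^n \sgnSymm(\ul{\mathcal{I}}_s)$, so $V_d^\mathcal{I}$ centralises $\bL_{\mathcal{I}_{-1}}$ and hence $(\wt{L}_{\mathcal{I}_{-1}}^+)_{\lambda_{-1}}$. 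Combined with the factorisation $W_d^\mathcal{I} = \prod_{s=1}^n W_d^{\mathcal{I}_s}$ and the observation (again via Remark~\ref{WonBL}) that $W_d^{\mathcal{I}_s}$ acts trivially on each $\wt{L}_{\mathcal{O}'}^+$ whenever $\mathcal{O}' \not\subset \mathcal{I}_s$, this already gives the outer factorisation
$(W_d^\mathcal{I})_{\wt{\lambda}^+} = \prod_{s=1}^n (W_d^{\mathcal{I}_s})_{\wt{\lambda}_s^+}$,
where $\wt{\lambda}_s^+ := \prod_{i=1}^{t_s} \wt{\lambda}_{\mathcal{O}_i^s}^+$.

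To identify $(W_d^{\mathcal{I}_s})_{\wt{\lambda}_s^+}$, I would transport the situation through the isomorphism $\alpha_s$ of Remark~\ref{StructTilLPlusW} to $(\GL_s(\epsilon_d q^{d_0})^+ \rtimes \mathcal{G}_s) \wr \Symm_{t_s}$ acting on the tensor product $\bigotimes_{i=1}^{t_s} \wt{\zeta}_i$ where $\wt{\zeta}_i := \wt{\lambda}_{\mathcal{O}_i^s}^+ \circ \alpha_s \in \wt{\mathcal{R}}_s$. Partitioning the index set $\ul{t_s}$ according to which $\wt{\zeta} \in \wt{\mathcal{R}}_s$ each $\wt{\zeta}_i$ equals groups the indices exactly into the subsets $\mathcal{J}_{\lambda,s}(\zeta)$ (with $\zeta := \Res(\wt{\zeta})$); here the compatibility $\wt{\mathcal{R}}_{\mathcal{O}_i^s}^{\tau_{Q_i^s}} = \wt{\mathcal{R}}_{\mathcal{O}_{i+1}^s}$ chosen in Remark~\ref{StructTilLPlusW} guarantees that $\Symm(\mathcal{J}_{\lambda,s}(\zeta))$ honestly stabilises the tensor product. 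The standard stabiliser calculation for a tensor product under a wreath product (as in the proof of Lemma~\ref{ExtensionwithWreath}, applied component-wise) then yields $\prod_{\zeta} \bigl(\prod_{i \in \mathcal{J}_{\lambda,s}(\zeta)} \mathcal{G}_{\mathcal{O}_i^s,\wt{\zeta}}\bigr) \rtimes \alpha_s(\Symm(\mathcal{J}_{\lambda,s}(\zeta)))$, which is the claimed description.

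There is no fundamentally new difficulty compared with Lemma~\ref{IntWonL}; the only delicate point is the bookkeeping around the replacement of $\mathcal{G}_{\mathcal{O},\zeta}$ by the potentially smaller subgroup $\mathcal{G}_{\mathcal{O},\wt{\zeta}}$, and verifying that the fixed extensions $\wt{\lambda}_{\mathcal{O},\wt{\zeta}}$ are genuinely permuted among themselves by the $\Symm(\mathcal{J}_{\lambda,s}(\zeta))$-factor. Both of these are ensured by the coherent choice of the transversals $\wt{\mathcal{R}}_\mathcal{O}$ made in Remark~\ref{StructTilLPlusW}.
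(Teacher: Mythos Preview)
Your proposal is correct and follows essentially the same route as the paper's proof: decompose $(\wt{L}_{\mathcal{I}_{-1},\mathcal{I}}^+)_\lambda$ via Lemma~\ref{InTilL}, use Remark~\ref{IrrCentProd} to factor $\wt{\lambda}^+$, observe that the $\mathcal{I}_{-1}$- and $\langle x\rangle$-pieces are $W_d^\mathcal{I}$-stable so that the stabiliser factors over $s$, and then finish with the wreath-product argument from Lemma~\ref{IntWonL}. The paper's proof is terser (it cites Remark~\ref{StructTilLPlusW} in place of your explicit appeal to Remark~\ref{WonBL} and Lemma~\ref{TilL0StabN}, and simply says ``the same argument as in Lemma~\ref{IntWonL}''), but the content is the same.
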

\begin{proof}
According to Lemma~\ref{InTilL},
$$(\wt{L}_{\mathcal{I}_{-1},\mathcal{I}}^+)_\lambda=\left((\wt{L}_{\mathcal{I}_{-1}}^+)_{\lambda_{-1}} \times \prod_{s=1}^n \wt{L}_{s}^+\right)\circ \langle x\rangle,$$
which is a central product.
For $1\leq s\leq n$ set $\wt{\lambda}_s:={\rm Res}^{\wt{L}^+_{\mathcal{I}_{-1},\mathcal{I}}}_{\wt{L}_s^+}(\wt{\lambda}^+)$.
Then by Remark~\ref{IrrCentProd} the character $\wt{\lambda}^+=(\wt{\lambda}_{-1}^+\times\prod_{s=1}^n\wt{\lambda}_s^+)\cdot\mu$ for some $\wt{\lambda}_{-1}^+\in \Irr\left((\wt{L}_{\mathcal{I}_{-1}}^+)_{\lambda_{-1}}\right)$ and $\mu\in\Irr(\langle x\rangle)$.
In particular, by applying Remark~\ref{StructTilLPlusW}, $(W_d^{\mathcal{I}})_{\wt{\lambda}^+}=\prod_{s=1}^n(W_d^{\mathcal{I}_s})_{\wt{\lambda}_{s}^+}$.
The same argument as in Lemma~\ref{IntWonL} proves the result. 
\end{proof}

\subsubsection{The inertia subgroups $(\mathcal{G}_{s})_\zeta$ and $(\mathcal{G}_s)_{\wt{\zeta}}$}
\indent

According to Lemma~\ref{IntWonL} and Lemma~\ref{IntWonTilL0} the difference between the inertia groups $(W_d^{\mathcal{I}_s})_{\lambda}$ and $(W_d^{\mathcal{I}})_{\wt{\lambda}^+}$ is determined by those $\zeta\in \mathcal{R}_s$ such that 
$(\mathcal{G}_s)_\zeta\ne (\mathcal{G}_{s})_{\wt{\zeta}}$ for $\wt{\zeta}\in \wt{\mathcal{R}}_s$ the fixed extension of $\zeta$.
An explicit description is possible through the associated central character.

For $\mathcal{O}$ a $\ol{w}$-orbit in $\mathcal{I}$ set $\wt{Z}_\mathcal{O}^+:=Z(\wt{L}^+_{\mathcal{O}})$, which is stable under $\mathcal{G}_{\mathcal{O}}$.
In particular, as $Z_\mathcal{O}=Z(L_\mathcal{O})$ is of index two in $\wt{Z}_\mathcal{O}^+$, it follows that $[\mathcal{G}_\mathcal{O},\wt{Z}_\mathcal{O}^+]\leq {Z}_\mathcal{O}$.
Moreover, as with $\wt{L}_\mathcal{O}^+$, the group $\wt{Z}_\mathcal{O}^+\cong Z_\mathcal{O}\circ_{\GenGp{(z_\mathcal{O}^+)^2}} Z_{\mathcal{O}}^+$.

\begin{lm}\label{GStabCharZ}
Let $\mathcal{O}$ be a $\ol{w}$-orbit in $\mathcal{I}$, $\eta\in \Irr(Z_\mathcal{O})$ and $\wt{\eta}\in \Irr(Z(\wt{L}_{\mathcal{O}}^+)\mid \eta)$.
Then $(\mathcal{G}_{\mathcal{O}})_\eta\ne (\mathcal{G}_{\mathcal{O}})_{\wt\eta}$ if and only if $o(\eta)= 2$.
\end{lm}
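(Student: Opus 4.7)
The plan is to analyze the homomorphism
\begin{equation*}
\phi_{\wt\eta}:(\mathcal{G}_\mathcal{O})_\eta\to\Irr(\wt Z_\mathcal{O}^+/Z_\mathcal{O})\cong C_2,\qquad x\mapsto \wt\eta^x\wt\eta^{-1},
\end{equation*}
which is well-defined because $[\mathcal{G}_\mathcal{O},\wt Z_\mathcal{O}^+]\leq Z_\mathcal{O}$ as noted above, and whose kernel is $(\mathcal{G}_\mathcal{O})_{\wt\eta}$. The lemma becomes: $\phi_{\wt\eta}$ is non-trivial if and only if $o(\eta)=2$.

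For the forward direction, since $Z_\mathcal{O}$ is cyclic by Lemma~\ref{CentreLO}, the quadratic character $\eta$ is the unique one of order $2$ and hence $\mathcal{G}_\mathcal{O}$-invariant, so $(\mathcal{G}_\mathcal{O})_\eta=\mathcal{G}_\mathcal{O}$. I would then evaluate $\phi_{\wt\eta}$ on the generator $x_\mathcal{O}$ directly from Remark~\ref{VactZtilL}. In the $d_0$ even case the formula $x_\mathcal{O}^{-1}(z_\mathcal{O}^+)=z_\mathcal{O}^+\Theta_\mathcal{O}(-1)$ gives $\phi_{\wt\eta}(x_\mathcal{O}^{-1})=\eta(\Theta_\mathcal{O}(-1))=-1$, as $\Theta_\mathcal{O}(-1)$ is the unique involution of $Z_\mathcal{O}$. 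In the $d_0$ odd case, a direct check that the exponent $e$ from Remark~\ref{VactZtilL} satisfies $e\equiv -1\pmod 4$ in both subcases $d$ even and $d$ odd combines with the fact that $\wt\eta(z_\mathcal{O}^+)$ has order $4$ to give $\phi_{\wt\eta}(x_\mathcal{O}^{-1})=\wt\eta(z_\mathcal{O}^+)^{e-1}=-1$.

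For the converse, the case $o(\eta)=1$ is immediate since $\mathcal{G}_\mathcal{O}$ acts trivially on $\wt Z_\mathcal{O}^+/Z_\mathcal{O}$. Assume now $o(\eta)>2$. The key observation is that the formulae in Remark~\ref{VactZtilL} imply $x_\mathcal{O}^{d_0}$ acts on $Z_\mathcal{O}$ as the inversion $t\mapsto t^{-1}$: this is the $q$-power map in the $d_0$ even case and in the $d_0$ odd $d$ even subcase (since $q^{d_0}\equiv -1\pmod{q^{d_0}+1}$), while in the $d_0$ odd $d$ odd subcase the action exponent $e_0=-q^{(d_0+1)/2}$ satisfies $e_0^{d_0}\equiv -1\pmod{q^{d_0}-1}$. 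Since $\eta=\overline\eta$ forces $o(\eta)\mid 2$, we obtain $x_\mathcal{O}^{d_0}\notin(\mathcal{G}_\mathcal{O})_\eta$; as $x_\mathcal{O}^{d_0}$ is the unique involution of the cyclic group $\mathcal{G}_\mathcal{O}$, every subgroup avoiding it lies in the unique index-$2$ subgroup $\langle x_\mathcal{O}^2\rangle$, so $(\mathcal{G}_\mathcal{O})_\eta=\langle x_\mathcal{O}^{2k}\rangle$ for some $k\mid d_0$.

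It then suffices to show $\phi_{\wt\eta}(x_\mathcal{O}^{2k})=1$. In the $d_0$ even case, direct computation gives $x_\mathcal{O}^{-2}(z_\mathcal{O}^+)=z_\mathcal{O}^+\Theta_\mathcal{O}(-1)^{1+q}=z_\mathcal{O}^+$ using $q$ odd, so $x_\mathcal{O}^{-2k}$ fixes $z_\mathcal{O}^+$ and $\phi_{\wt\eta}$ is trivial on all of $\langle x_\mathcal{O}^2\rangle$. In the $d_0$ odd case, I would substitute the explicit form of $e$ from Remark~\ref{VactZtilL} to compute $e^{2k}\pmod{2(q^{d_0}-\epsilon_d)}$: namely $e^{2k}\equiv q^{2k}$ when $d$ is even and $e^{2k}\equiv q^k+(k\bmod 2)(q^{d_0}-1)$ when $d$ is odd; the required congruence $o(\wt\eta)\mid e^{2k}-1$, where $o(\wt\eta)\mid 2o(\eta)$ and $o(\eta)\mid e^{2k}-1$, then follows from a short $2$-adic analysis. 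I expect the main computational obstacle to be the $d$-odd subcase, handled by observing that $(q^k-1)+(q^{d_0}-1)$ has $2$-adic valuation strictly greater than $v_2(q-1)$ because both summands have valuation exactly $v_2(q-1)$ (since both $k$ and $d_0$ make the bases congruent to $q$ mod higher powers of $2$ only through even exponents via lifting-the-exponent).
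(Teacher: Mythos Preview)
Your treatment of the direction $o(\eta)=2\Rightarrow(\mathcal{G}_\mathcal{O})_\eta\ne(\mathcal{G}_\mathcal{O})_{\wt\eta}$ is correct and is essentially the paper's argument: both compute $\eta([x_\mathcal{O}^{-1},z_\mathcal{O}^+])=-1$ from the explicit formulas in Remark~\ref{VactZtilL} (your condition $e\equiv-1\pmod 4$ is exactly the paper's statement that $m/2$ is odd, where $m=e-1$). You also correctly fill in the easy case $o(\eta)=1$, which the paper glosses over.

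For the converse with $o(\eta)>2$, however, you work far too hard and leave the $d_0$ odd case as an incomplete sketch. You have already observed the key fact: the involution $x_\mathcal{O}^{d_0}$ acts by inversion on $Z_\mathcal{O}$, so $x_\mathcal{O}^{d_0}\notin(\mathcal{G}_\mathcal{O})_\eta$. Since $\mathcal{G}_\mathcal{O}$ is cyclic, this means $|(\mathcal{G}_\mathcal{O})_\eta|$ is \emph{odd}, and hence your own homomorphism $\phi_{\wt\eta}:(\mathcal{G}_\mathcal{O})_\eta\to C_2$ is automatically trivial. This is the paper's one-line argument (phrased contrapositively: if the stabilisers differ then the index is $2$, so $|(\mathcal{G}_\mathcal{O})_\eta|$ is even and contains the involution, forcing $o(\eta)\mid 2$). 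There is no need to pass to $\langle x_\mathcal{O}^2\rangle$, no need for the $e^{2k}$ computations, and no need for the $2$-adic analysis you outline for $d$ odd; indeed that last part, as written, is only a plan and would require further justification to be a proof. Note also that the paper never computes the action of $x_\mathcal{O}^{d_0}$ via the exponent $e$: it simply observes that in $\sgnSymm_n$ the element $(w'_{Q_i^s})^{d_0}=\prod_{k\in\ul{\mathcal{O}}}(k,-k)$ visibly sends each $h_{2e_k}(t)$ to $h_{2e_k}(t^{-1})$.
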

\begin{proof}
It follows that $|\mathcal{G}_{\mathcal{O},\eta}:\mathcal{G}_{\mathcal{O},\wt{\eta}}|$ divides $|\wt{Z}_\mathcal{O}^+:Z_\mathcal{O}|=2$.
Therefore if $(\mathcal{G}_{\mathcal{O}})_\eta\ne(\mathcal{G}_{\mathcal{O}})_{\wt\eta}$, then the involution of $\mathcal{G}_\mathcal{O}$ must fix $\eta$.
However the involution is given by $\prod_{i\in\ul{\mathcal{O}}}(i,-i)$ and acts by inversion on $Z_\mathcal{O}$.
Hence $2\mid \mathcal{G}_{\mathcal{O},\eta}$ if and only if $\eta(g)=\eta(g)^{-1}$ for all $g\in Z_\mathcal{O}$ which is equivalent  to $o(\eta)\mid 2$.
Thus $(\mathcal{G}_{\mathcal{O}})_\eta\ne (\mathcal{G}_{\mathcal{O}})_{\wt\eta}$ implies that $o(\eta)=2$.

Assume that $o(\eta)=2$ so that $\mathcal{G}_{\mathcal{O},\eta}=\mathcal{G}_\mathcal{O}$.
In Remark~\ref{VactZtilL} an element $x_\mathcal{O}\in V_d^\mathcal{I}$ was fixed such that $\mathcal{G}_\mathcal{O}=\GenGp{\rho(x_\mathcal{O})}$ and the action of $x_\mathcal{O}^{-1}$ on $\wt{Z}_\mathcal{O}^+$ was explicitly described.
As $\wt{\eta}$ is a linear character and $[x_\mathcal{O},g]\in Z_\mathcal{O}$ for all $g\in \wt{Z}_\mathcal{O}^+$, it follows that $\mathcal{G}_{\mathcal{O},\wt{\eta}}=\mathcal{G}_\mathcal{O}$ if and only if $[x_\mathcal{O}^{-1},z_\mathcal{O}^+]\in {\rm ker}(\eta)$.
In the case $d_0$ is even $[x_\mathcal{O}^{-1},z_\mathcal{O}^+]=z_\mathcal{O}$.
The element $z_\mathcal{O}$ is the unique $2$-element in $Z_\mathcal{O}$, as $(q^{d_0}+1)_2=2$, and thus $\eta([x_\mathcal{O}^{-1},z_\mathcal{O}^+])=-1$.
In the case $d_0$ is odd $[x_\mathcal{O}^{-1},z_\mathcal{O}^+]=(z_\mathcal{O}^+)^m$ with $$
m:=\left\{
\begin{array}{lr}
    {q+q^{d_0}} & d \text{ even}\\
  {(\frac{d_0+1}{2})(q^{d_0}-1)-q^{\frac{d_0+1}{2}}-1} & d \text{ odd}\\
      \end{array}
      \right.
$$ 
Thus $m/2$ is odd and $\eta([x_\mathcal{O}^{-1},z_\mathcal{O}^+])=\eta((z_\mathcal{O}^+)^2)^{\frac{m}{2}}=-1$.
\end{proof}

\begin{lm}\label{StabGCharTilL}
Let $\mathcal{O}$ be a $\ol{w}$-orbit in $\mathcal{I}$, $\eta\in \Irr(Z_\mathcal{O})$, $\theta\in \Irr(L_\mathcal{O}\mid \eta)$ and $\wt{\theta}\in \Irr(\wt{L}_{\mathcal{O}}^+\mid \theta)$.
Then $(\mathcal{G}_{\mathcal{O}})_{\wt{\theta}}\ne (\mathcal{G}_{\mathcal{O}})_{\theta}$ if and only if $o(\eta)=2$ and $\mathcal{G}_{\mathcal{O},\theta}$ contains the Sylow $2$-subgroup of $\mathcal{G}_\mathcal{O}$.
 
\end{lm}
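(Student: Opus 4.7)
The plan is to reduce the question to Lemma~\ref{GStabCharZ}, which already handles the analogous statement for the central character $\eta$ in place of $\theta$. The key observation I will exploit is that $\wt{L}_{\mathcal{O}}^+\cong L_{\mathcal{O}}\circ_{Z_{\mathcal{O}}} Z(\wt{L}_{\mathcal{O}}^+)$ (from Section~\ref{TiL0}), so by Remark~\ref{IrrCentProd} the extension $\wt{\theta}$ factors as $\wt{\theta}=\theta\cdot\wt{\eta}$ for some $\wt{\eta}\in\Irr(Z(\wt{L}_{\mathcal{O}}^+)\mid\eta)$. Since $\mathcal{G}_{\mathcal{O}}$ normalises both factors of this central product (as it stabilises $L_\mathcal{O}$ and, by Remark~\ref{VactZtilL}, also $Z(\wt{L}_\mathcal{O}^+)$), the action satisfies $(\theta\cdot\wt{\eta})^g=\theta^g\cdot\wt{\eta}^g$ and I can read off the decomposition
\[(\mathcal{G}_\mathcal{O})_{\wt{\theta}}=(\mathcal{G}_\mathcal{O})_\theta\cap (\mathcal{G}_\mathcal{O})_{\wt{\eta}}.\]
So $(\mathcal{G}_\mathcal{O})_{\wt{\theta}}\ne (\mathcal{G}_\mathcal{O})_\theta$ is equivalent to $(\mathcal{G}_\mathcal{O})_\theta\not\subseteq (\mathcal{G}_\mathcal{O})_{\wt{\eta}}$.

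Next I would split on $o(\eta)$. If $o(\eta)\ne 2$, Lemma~\ref{GStabCharZ} gives $(\mathcal{G}_\mathcal{O})_{\wt{\eta}}=(\mathcal{G}_\mathcal{O})_\eta$, and $(\mathcal{G}_\mathcal{O})_\theta\subseteq(\mathcal{G}_\mathcal{O})_\eta$ by standard Clifford theory (the central character is determined by $\theta$), so the two inertia groups coincide and neither side of the claimed equivalence holds. If instead $o(\eta)=2$, then since $Z_\mathcal{O}$ is cyclic of even order (by Lemma~\ref{CentreLO}, the order is $q^{d_0}-\epsilon$ with $q$ odd), $\eta$ is the \emph{unique} order-two character of $Z_\mathcal{O}$, hence $\mathcal{G}_\mathcal{O}$-invariant, giving $(\mathcal{G}_\mathcal{O})_\eta=\mathcal{G}_\mathcal{O}$. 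Lemma~\ref{GStabCharZ} together with the bound $[\wt{Z}_\mathcal{O}^+:Z_\mathcal{O}]=2$ then forces $(\mathcal{G}_\mathcal{O})_{\wt{\eta}}$ to be an index-two subgroup of $\mathcal{G}_\mathcal{O}$.

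The remaining step is elementary group theory: because $\mathcal{G}_\mathcal{O}\cong C_{2d_0}$ is cyclic, its unique index-two subgroup is the subgroup of squares, and a subgroup $H\leq\mathcal{G}_\mathcal{O}$ fails to lie inside this index-two subgroup exactly when the $2$-part of $|H|$ attains the maximum $2(d_0)_2$, i.e.\ exactly when $H$ contains the (unique, cyclic) Sylow $2$-subgroup of $\mathcal{G}_\mathcal{O}$. Specialising to $H=(\mathcal{G}_\mathcal{O})_\theta$ delivers the stated criterion. I do not anticipate a genuine obstacle: the lemma is essentially a Clifford-theoretic bookkeeping on top of Lemma~\ref{GStabCharZ}, and the only place where something could go subtly wrong is the uniqueness of the order-two character of $Z_\mathcal{O}$, which however is immediate from its cyclicity.
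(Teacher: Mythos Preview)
Your proposal is correct and follows essentially the same approach as the paper: both reduce to the identity $(\mathcal{G}_\mathcal{O})_{\wt{\theta}}=(\mathcal{G}_\mathcal{O})_\theta\cap(\mathcal{G}_\mathcal{O})_{\wt{\eta}}$ and then invoke Lemma~\ref{GStabCharZ} together with the cyclicity of $\mathcal{G}_\mathcal{O}$. The only difference is cosmetic: the paper derives the intersection identity by an explicit commutator computation with $z_\mathcal{O}^+$ (using the description $\wt{L}_\mathcal{O}^+\cong Z_\mathcal{O}^+\circ_{\langle (z_\mathcal{O}^+)^2\rangle}L_\mathcal{O}$ from Section~\ref{TiL0}), whereas you obtain it more conceptually by restricting $\wt{\theta}=\theta\cdot\wt{\eta}$ to the two factors of the central product $L_\mathcal{O}\circ_{Z_\mathcal{O}}Z(\wt{L}_\mathcal{O}^+)$, which is an equivalent presentation of the same group.
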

\begin{proof}
By Remark~\ref{IrrCentProd}, there is a $\mu\in \Irr(Z_{\mathcal{O}}^+)$ such that $\theta\in  \Irr\left(L_{\mathcal{O}}\mid {\rm Res}_{\langle (z_{\mathcal{O}}^+)^2\rangle}^{Z_{\mathcal{O}}^+}(\mu)\right)$ and $\wt{\theta}(gy)=\theta (g)\mu(y)$ for all $g\in L_\mathcal{O}$ and $y\in Z_\mathcal{O}^+$.
Moreover, $\wt{\theta}\in \Irr(\wt{L}_\mathcal{O}^+\mid \wt{\eta})$, where the character $\wt{\eta}:=\eta\cdot\mu\in \Irr(\wt{Z}_\mathcal{O}^+)$.

Let $\sigma\in \mathcal{G}_{\mathcal{O},\theta}\leq \mathcal{G}_{\mathcal{O},\eta}$.
As $[z_\mathcal{O}^+,\sigma^{-1}]\in Z_\mathcal{O}$, it follows for any $g\in L_\mathcal{O}$ that $$\wt{\theta}^\sigma(z_\mathcal{O}^+ g)=\wt{\theta} (z_\mathcal{O}^+[z_\mathcal{O}^+,\sigma^{-1}] g^{\sigma^{-1}})=\mu(z_\mathcal{O}^+)\theta([z_\mathcal{O}^+,\sigma^{-1}]g^{\sigma^{-1}})=\mu(z_\mathcal{O}^+)\eta([z_\mathcal{O}^+,\sigma^{-1}])\theta(g)$$
and similarly for any $x\in Z_\mathcal{O}$ 
$$\wt{\eta}^\sigma(z_\mathcal{O}^+ x)=\mu (z_\mathcal{O}^+)\eta([z_\mathcal{O}^+,\sigma^{-1}])\eta(x).$$
Hence $\sigma\in \mathcal{G}_{\mathcal{O},\theta}$ if and only if $[z_\mathcal{O}^+,\sigma^{-1}]\in {\rm ker}(\eta)$, which  is equivalent to $\sigma\in \mathcal{G}_{\mathcal{O},\wt{\eta}}$.
Thus $\mathcal{G}_{\mathcal{O},\wt{\theta}}=\mathcal{G}_{\mathcal{O},\theta}\cap \mathcal{G}_{\mathcal{O},\wt{\eta}}$.
Moreover, as $|\mathcal{G}_{\mathcal{O},\theta}:\mathcal{G}_{\mathcal{O},\wt{\theta}}|\leq 2$, it follows that $\mathcal{G}_{\mathcal{O},\wt{\theta}}\ne \mathcal{G}_{\mathcal{O},\theta}$ if and only if $\mathcal{G}_{\mathcal{O},\wt{\eta}}<\mathcal{G}_{\mathcal{O},\theta}\mathcal{G}_{\mathcal{O},\wt{\eta}}=\mathcal{G}_{\mathcal{O},{\eta}}$.

By Lemma~\ref{GStabCharZ} if $o(\eta)\ne 2$, then $\mathcal{G}_{\mathcal{O},\wt{\theta}}=\mathcal{G}_{\mathcal{O},\theta}$.
Furthermore in the case that $o(\eta)=2$, then $\mathcal{G}_{\mathcal{O},\wt{\eta}}>\mathcal{G}_{\mathcal{O},\eta}=\mathcal{G}_{\mathcal{O}}$.
That is $\mathcal{G}_{\mathcal{O},\wt{\theta}}\ne\mathcal{G}_{\mathcal{O},\theta}$ if and only if $\mathcal{G}_{\mathcal{O},\theta}$ contains the Sylow $2$-subgroup of $\mathcal{G}_{\mathcal{O}}$.
\end{proof}

\begin{rem} \label{Rs1}
For a positive integer $s$ set $$\mathcal{R}_{s}^{1}:=\{\zeta\in \mathcal{R}_{s}\mid (\mathcal{G}_s)_\zeta\ne (\mathcal{G}_s)_{\wt{\zeta}}\}$$ and $\mathcal{R}_{s}^{2}:=\mathcal{R}_s\setminus\mathcal{R}_{s}^{1}$.
By Lemma~\ref{IntWonL} and Lemma~\ref{IntWonTilL0}, the difference between $(W_d^{\mathcal{I}})_{\lambda}$ and $(W_d^{\mathcal{I}})_{\wt{\lambda}^+}$ depends on the factors of $\lambda$ which are $W_d^\mathcal{I}$ conjugate to characters in $\mathcal{R}_s^1$.
Moreover, by Lemma~\ref{StabGCharTilL}, $\zeta\in \mathcal{R}_{s}^1$, if and only $\zeta$ lies above the central character of order $2$ and $(\mathcal{G}_s)_{\zeta}$ contains the Sylow $2$-subgroup of $\mathcal{G}_s$.
\end{rem}

\subsection{ The structure of $W_{\tilde{\lambda}}$}\label{WtilLam}
\indent

Let $\lambda\in \Irr(L_{\mathcal{I}_{-1},\mathcal{I}})$ and $\wt{\lambda}\in \Irr((\wt{L}_{\mathcal{I}_{-1},\mathcal{I}})_\lambda\mid \lambda)$.
Then $|(W_d^\mathcal{I})_\lambda:(W_d^\mathcal{I})_{\wt{\lambda}}|\leq 2$ and thus there exists a linear character $\nu\in \Irr((W_d^\mathcal{I})_\lambda)$ such that $(W_d^\mathcal{I})_{\wt{\lambda}}={\rm ker}(\nu)$.
The aim of this section is to make use of the groups $(\mathcal{G}_{s})_{\zeta}/(\mathcal{G}_{s})_{\wt{\zeta}}$ for $\zeta\in \mathcal{R}_s$ to construct this character $\nu$.

\begin{notation}\label{nulambda}
Let $\mathcal{O}$ be a $\ol{w}$-orbit in $\mathcal{I}$ and $\zeta\in \mathcal{R}_{|\mathcal{J}_{\mathcal{O}}|}$.
Set  ${\nu}_{\mathcal{O},\zeta}$ to be the lift of the generator for $\Irr\left( \mathcal{G}_{\mathcal{O},\zeta}/\mathcal{G}_{\mathcal{O},\wt{\zeta}}\right)$ to $\Irr( \mathcal{G}_{\mathcal{O},\zeta})$, for $\mathcal{G}_{\mathcal{O},\zeta}$ and $\mathcal{G}_{\mathcal{O},\wt{\zeta}}$ as defined in Notation~\ref{Jlams}.

Assume $\lambda\in \Irr(L_{\mathcal{I}_{-1},\mathcal{I}})$ such that $\lambda_\mathcal{O}:={\rm Res}_{L_\mathcal{O}}^{L_{\mathcal{I}_{-1},\mathcal{I}}}(\lambda)\in \mathcal{R}_\mathcal{O}$ for each $\ol{w}$-orbit $\mathcal{O}\subseteq\mathcal{I}$. 
According to Lemma~\ref{IntWonL} a character $\nu'_\lambda\in \Irr((W_d^\mathcal{I})_\lambda)$ can be defined such that for each $1\leq s\leq n$ and $\zeta\in \mathcal{R}_s$, 
$$
\begin{array}{rccc}
\nu_{\lambda,s,\zeta}:={\rm Res}_{(W_d^{\mathcal{I}_s})_{\lambda_s,\zeta}}^{(W_d^{\mathcal{I}})_{\lambda}}(\nu'_\lambda):&  (W_d^{\mathcal{I}_s})_{\lambda_s,\zeta} & \rightarrow &  \mathbb{C}^\times \\
& \left(\prod_{i\in \mathcal{I}_{\lambda_s}(\zeta)}h_i\right)\sigma & \mapsto & \prod_{i\in \mathcal{I}_{\lambda_s}(\zeta) }\nu_{\mathcal{O}_i^s,\zeta}(h_i)\\
\end{array}.
$$
In particular, $\nu_{\lambda,s,\zeta}$ is non-trivial only for those $\zeta\in\mathcal{R}_{s}^{1}$ with $|\mathcal{I}_{\lambda,s}(\zeta)|\ne 0$ and $\nu_\lambda'$ is a character of order at most two.
Finally define 
$$
\nu_\lambda:=
\left\{
\begin{array}{cl}
\nu_\lambda' & \text{ if } (\wt{L}_{\mathcal{I}_{-1},\mathcal{I}})_\lambda= \wt{L}_{\mathcal{I}_{-1},\mathcal{I}};\\
(\nu_\lambda')^2 & \text{ if } (\wt{L}_{\mathcal{I}_{-1},\mathcal{I}})_\lambda\ne \wt{L}_{\mathcal{I}_{-1},\mathcal{I}}.\\
\end{array}
\right.
$$
\end{notation}

\begin{prop}\label{WtilLamKerNu}
Take $\lambda\in \Irr (L_{\mathcal{I}_{-1},\mathcal{I}})$ as in Lemma~\ref{IntWonL}, $\nu_\lambda\in \Irr((W_d^\mathcal{I})_\lambda)$ as in Notation~\ref{nulambda} and $\wt{\lambda}\in \Irr( (\wt{L}_{\mathcal{I}_{-1},\mathcal{I}})_\lambda \mid \lambda)$.
Then $(W_d^\mathcal{I})_{\wt{\lambda}}={\rm ker}(\nu_\lambda)$.
\end{prop}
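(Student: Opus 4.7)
The plan is to reduce the proof to the corresponding statement for the auxiliary group $\wt{L}^+_{\mathcal{I}_{-1},\mathcal{I}}$ from Section~\ref{TiL0}, where the character theory is cleaner because the central extension is explicit. Fix an extension $\wt{\lambda}^+ \in \Irr((\wt{L}^+_{\mathcal{I}_{-1},\mathcal{I}})_\lambda \mid \lambda)$, available by Lemma~\ref{TilL0StabN}(1). Comparing Lemma~\ref{IntWonL} and Lemma~\ref{IntWonTilL0} factor-by-factor with the construction of $\nu'_\lambda$ in Notation~\ref{nulambda}, one sees directly that the character $\nu_{\lambda,s,\zeta}$ is supported on those $\mathcal{G}_{\mathcal{O}_i^s,\zeta}$ with $\mathcal{G}_{\mathcal{O}_i^s,\zeta}\ne\mathcal{G}_{\mathcal{O}_i^s,\wt\zeta}$ and its kernel on each such factor is precisely $\mathcal{G}_{\mathcal{O}_i^s,\wt\zeta}$. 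Combining over $s$ and $\zeta$ yields the identification $(W_d^\mathcal{I})_{\wt{\lambda}^+} = \ker(\nu'_\lambda)$.

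Next, I would establish that the stabilizer $(W_d^\mathcal{I})_{\wt\lambda}$ does not depend on the chosen extension $\wt\lambda$. Since $\wt{L}_{\mathcal{I}_{-1},\mathcal{I}} = L_{\mathcal{I}_{-1},\mathcal{I}}\langle z^+ x\rangle$ with $x\in Z(\wt{\bG})$ centralized by $V_d^\mathcal{I}$ and with $[z^+, y^{-1}]\in L_{\mathcal{I}_{-1},\mathcal{I}}$ for every $y\in V_d^\mathcal{I}$ by Remark~\ref{VactZtilL}, the action of $W_d^\mathcal{I}$ on $\wt{L}_{\mathcal{I}_{-1},\mathcal{I}}/L_{\mathcal{I}_{-1},\mathcal{I}}$ is trivial. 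Consequently, for two extensions $\wt\lambda_1$, $\wt\lambda_2$ of $\lambda$ to $(\wt{L}_{\mathcal{I}_{-1},\mathcal{I}})_\lambda$ differing by $\mu\in\Irr((\wt{L}_{\mathcal{I}_{-1},\mathcal{I}})_\lambda/L_{\mathcal{I}_{-1},\mathcal{I}})$, we have $\mu^y=\mu$ for every $y\in (W_d^\mathcal{I})_\lambda$ and hence $(W_d^\mathcal{I})_{\wt\lambda_1}=(W_d^\mathcal{I})_{\wt\lambda_2}$. The chain $(W_d^\mathcal{I})_{\wt\lambda^+}\subseteq (W_d^\mathcal{I})_{\wt\lambda}\subseteq (W_d^\mathcal{I})_\lambda$ from Lemma~\ref{TilL0StabN}(3) is then the key tool.

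From here, I would split into two cases matching the dichotomy in Notation~\ref{nulambda}. In Case~A, where $(\wt{L}_{\mathcal{I}_{-1},\mathcal{I}})_\lambda=\wt{L}_{\mathcal{I}_{-1},\mathcal{I}}$ and $\nu_\lambda=\nu'_\lambda$, by the independence above I may choose $\wt\lambda=\operatorname{Res}^{(\wt{L}^+_{\mathcal{I}_{-1},\mathcal{I}})_\lambda}_{\wt{L}_{\mathcal{I}_{-1},\mathcal{I}}}(\wt\lambda^+)$. For $y\in(W_d^\mathcal{I})_{\wt\lambda}$, the quotient $(\wt\lambda^+)^y/\wt\lambda^+$ is a linear character of $(\wt{L}^+_{\mathcal{I}_{-1},\mathcal{I}})_\lambda/L_{\mathcal{I}_{-1},\mathcal{I}}$ trivial on $\wt{L}_{\mathcal{I}_{-1},\mathcal{I}}$, so is either trivial or the sign character of $\wt{L}^+_{\mathcal{I}_{-1},\mathcal{I}}/\wt{L}_{\mathcal{I}_{-1},\mathcal{I}}\cong C_2$. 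Evaluating at $z^+$ reduces, via the explicit formulas of Remark~\ref{VactZtilL} and Lemmas~\ref{GStabCharZ} and~\ref{StabGCharTilL}, to $\lambda([z^+,y^{-1}])$, which contributes a factor $-1$ exactly at those $\ol w$-orbits $\mathcal{O}$ where $\lambda_\mathcal{O}$ has a central character of order $2$ on $Z_\mathcal{O}$ and $y$ acts nontrivially on $Z_\mathcal{O}^+$. This coincides precisely with the value $\nu'_\lambda(y)$, so $y\in(W_d^\mathcal{I})_{\wt\lambda^+}=\ker(\nu'_\lambda)$.

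In Case~B, where $(\wt{L}_{\mathcal{I}_{-1},\mathcal{I}})_\lambda\ne \wt{L}_{\mathcal{I}_{-1},\mathcal{I}}$, the definition gives $\nu_\lambda=(\nu'_\lambda)^2$, which is trivial since $\nu'_\lambda$ has order at most two, so I must show $(W_d^\mathcal{I})_{\wt\lambda}=(W_d^\mathcal{I})_\lambda$. Here only the $\mathcal{I}_{-1}$-factor $\lambda_{-1}$ can be moved by $\wt{L}_{\mathcal{I}_{-1},\mathcal{I}}/L_{\mathcal{I}_{-1},\mathcal{I}}$, since each $L_\mathcal{O}$ is centralized by $Z(\wt{\bG})$ and by $z^+_\mathcal{O}$. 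I would argue that the sign computed in Case~A, coming from the evaluation of $\wt\lambda^+$ at $z^+$, is now absorbed: the Clifford correspondent $\wt\lambda$ lives on the proper subgroup $\wt{L}_{\mathcal{I}_{-1},\mathcal{I},\lambda}$, and the extra coset in $\wt{L}_{\mathcal{I}_{-1}}\setminus\wt{L}_{\mathcal{I}_{-1},\lambda_{-1}}$ carries a factor that cancels the contribution $\nu'_\lambda(y)$ when one computes $(\wt\lambda^+)^y|_{\wt{L}_{\mathcal{I}_{-1},\mathcal{I},\lambda}}$, via the same Remark~\ref{VactZtilL}/Lemma~\ref{GStabCharZ} analysis applied on $\wt{L}_{\mathcal{I}_{-1}}^+$. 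I expect this second case to be the main obstacle: justifying that the non-invariance of $\lambda$ under $\wt{L}_{\mathcal{I}_{-1},\mathcal{I}}$ produces exactly the additional sign needed to trivialize $\nu'_\lambda$ requires a careful bookkeeping of central characters on the $L_{\mathcal{I}_{-1}}$-component, together with a verification that the extra twist $\epsilon\in\Irr(\wt{L}^+_{\mathcal{I}_{-1},\mathcal{I}}/\wt{L}_{\mathcal{I}_{-1},\mathcal{I}})$ pulls back to the generator of $\Irr(\wt{L}^+_{\mathcal{I}_{-1},\mathcal{I},\lambda}/\wt{L}_{\mathcal{I}_{-1},\mathcal{I},\lambda})$ under the restriction considered.
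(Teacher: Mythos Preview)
Your Case~A argument is in essence the paper's: define $\mu_g$ by $(\wt\lambda^+)^g=\wt\lambda^+\mu_g$, note $\mu_g(x)=1$ since $x\in Z(\wt\bG)$, and reduce $g\in(W_d^\mathcal{I})_{\wt\lambda}$ to the single condition $\mu_g(z^+)=1$; the paper then decomposes $g=\prod_{s,\zeta}(\prod_j g_{s,\zeta,j})\sigma_{s,\zeta}$ and computes $\mu_g(z^+)=\prod_{s,\zeta,j}\mu_{g_{s,\zeta,j}}(z^+_{Q_j^s})=\nu_\lambda(g)$ factor by factor. Two inaccuracies in your write-up are worth flagging. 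First, your opening identification $(W_d^\mathcal{I})_{\wt\lambda^+}=\ker(\nu'_\lambda)$ is false: the kernel of the sign character $(h_i)_i\sigma\mapsto\prod_i\nu_{\mathcal{O}_i^s,\zeta}(h_i)$ on $(\prod_i\mathcal{G}_{\mathcal{O}_i^s,\zeta})\rtimes\Symm(\mathcal{J}_{\lambda,s}(\zeta))$ is an index-$2$ subgroup \emph{strictly containing} $(\prod_i\mathcal{G}_{\mathcal{O}_i^s,\wt\zeta})\rtimes\Symm(\mathcal{J}_{\lambda,s}(\zeta))$ whenever two or more factors are nontrivial. This would break your intended use of Lemma~\ref{TilL0StabN}(3) to obtain the reverse inclusion $\ker(\nu'_\lambda)\subseteq(W_d^\mathcal{I})_{\wt\lambda}$. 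The fix is that the computation $\mu_g(z^+)=\nu'_\lambda(g)$ is valid for every $g\in(W_d^\mathcal{I})_\lambda$, so the equivalence $g\in(W_d^\mathcal{I})_{\wt\lambda}\Leftrightarrow\mu_g(z^+)=1\Leftrightarrow\nu'_\lambda(g)=1$ gives both inclusions directly, with no detour through $(W_d^\mathcal{I})_{\wt\lambda^+}$. Second, $\wt L^+_{\mathcal{I}_{-1},\mathcal{I}}/\wt L_{\mathcal{I}_{-1},\mathcal{I}}$ is not $C_2$ in general; what matters is only that $\wt L_{\mathcal{I}_{-1},\mathcal{I}}=\langle L_{\mathcal{I}_{-1},\mathcal{I}},z^+x\rangle$, so $\mu_g|_{\wt L_{\mathcal{I}_{-1},\mathcal{I}}}$ is determined by its value at $z^+x$.

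The genuine gap is Case~B. Your proposed cancellation via ``extra cosets in $\wt L_{\mathcal{I}_{-1}}\setminus\wt L_{\mathcal{I}_{-1},\lambda_{-1}}$'' cannot work as stated: $\wt\lambda$ lives only on $(\wt L_{\mathcal{I}_{-1},\mathcal{I}})_\lambda$ and $\wt\lambda^+$ only on $(\wt L^+_{\mathcal{I}_{-1},\mathcal{I}})_\lambda$, so there is no coset outside these stabilizers on which to evaluate and produce a compensating sign. The paper handles this case in one line and without any reference to $\wt L^+$. By Corollary~\ref{IntTilLIndex2}, when $(\wt L_{\mathcal{I}_{-1},\mathcal{I}})_\lambda\ne\wt L_{\mathcal{I}_{-1},\mathcal{I}}$ one has $(\wt L_{\mathcal{I}_{-1},\mathcal{I}})_\lambda=\langle L_{\mathcal{I}_{-1},\mathcal{I}},x^2\rangle$; since $x^2\in Z(\wt\bG)$ is centralised by $V_d^\mathcal{I}$, the central-product description (Remark~\ref{IrrCentProd}) gives $(W_d^\mathcal{I})_{\wt\lambda}=(W_d^\mathcal{I})_\lambda=\ker(\nu_\lambda)$ immediately. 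The obstacle you anticipated is thus an artefact of routing this case through $\wt L^+$.
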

\begin{proof}
According to Corollary~\ref{IntTilLIndex2} $\wt{L}_{\mathcal{I}_{-1},\mathcal{I}}=\GenGp{ L_{\mathcal{I}_{-1},\mathcal{I}},z^+ x}$ with $(z^+)^2\in {\bf L}_{\mathcal{I}_{-1},\mathcal{I}}$ and $x\in Z(\wt{\bG})$.
Moreover, by Corollary~\ref{IntTilLIndex2}, $(\wt{L}_{\mathcal{I}_{-1},\mathcal{I}})_{\lambda}=\wt{L}_{\mathcal{I}_{-1},\mathcal{I}}$ or $\GenGp{L_{\mathcal{I}_{-1},\mathcal{I}},x^2}$.
In the latter case, as $x^2\in Z(\wt{\bG})$, the description of characters in a central product in Remark~\ref{IrrCentProd} implies $(W_d^\mathcal{I})_{\wt{\lambda}}=(W_d^\mathcal{I})_\lambda={\rm ker}(\nu_\lambda)$. 

Assume that $(\wt{L}_{\mathcal{I}_{-1},\mathcal{I}})_{\lambda}=\wt{L}_{\mathcal{I}_{-1},\mathcal{I}}$ which, by  Lemma~\ref{InTilL}, implies $(\wt{L}_{\mathcal{I}_{-1},\mathcal{I}}^+)_\lambda= \wt{L}_{\mathcal{I}_{-1},\mathcal{I}}^+$.
Furthermore take $\wt{\lambda}^+$ as in Lemma~\ref{IntWonTilL0} and from the proof of  Lemma~\ref{TilL0StabN} it can be assumed that $\wt{\lambda}^+$ restricts to $\wt{\lambda}$.
For $g\in (W_d^\mathcal{I})_{\lambda}$ there is a linear character $\ol{\mu}_g\in\Irr(\wt{L}_{\mathcal{I}_{-1},\mathcal{I}}^+/L_{\mathcal{I}_{-1},\mathcal{I}})$ such that $(\wt{\lambda}^+)^g=\wt{\lambda}^+\mu_g$.
Moreover, as $x\in Z(\wt{\bG})$, it follows that $\mu_g(x)=1$.
Hence $g\in (W_d^\mathcal{I})_{\wt{\lambda}}$ if and only if $\mu_g( z^+)=1$.

Recall that $z^+:=z^+_{-1}\times \prod_{\mathcal{O}}z_\mathcal{O}^+$ with the product running over all $\ol{w}$-orbits in $\mathcal{I}$. 
Set $\lambda_{\mathcal{I}_{-1}}:={\rm Res}_{{L}_{\mathcal{I}_{-1}}}^{{L}_{\mathcal{I}_{-1},\mathcal{I}}}({\lambda})$ and similarly define $\wt{\lambda}_{\mathcal{I}_{-1}}^+$.
As $[V_d^\mathcal{I},\bL_{\mathcal{I}_{-1}}]=1$, it follows that $\wt{\lambda}_{\mathcal{I}_{-1}}=(\wt{\lambda}_{\mathcal{I}_{-1}})^g=\wt{\lambda}_{\mathcal{I}_{-1}}\mu_g$ as an extension of $\lambda_{\mathcal{I}_{-1}}$.
Thus by Clifford theory $\mu_g(z_{-1})=1$ and so $\mu_g(z^+)=\prod_{\mathcal{O}}\mu_g(z_\mathcal{O}^+)$. 

For $1\leq s\leq n$ let $\mathcal{O}_1^s,\dots,\mathcal{O}_{t_s}^s$ denote the $\ol{w}$-orbits in $\mathcal{I}_s$ and $\wt{\lambda}_{\mathcal{O}_i^s}^+:={\rm Res}_{\wt{L}_{\mathcal{O}_i^s}^+}^{\wt{L}_{\mathcal{I}_{-1},\mathcal{I}}^+}(\wt{\lambda}^+)$.
Using the description of $(W_d^\mathcal{I})_\lambda$ from Lemma~\ref{IntWonL} write $g=\prod_{s=1} \left( \prod_{\zeta\in \mathcal{R}_s}\left( \prod_{j\in\mathcal{J}_{\lambda,s}(\zeta)}g_{s,\zeta,j}\right) \sigma_{s,\zeta}\right)$ with $g_{s,\zeta,j}\in (\mathcal{G}_{\mathcal{O}_j^s})_\zeta$ and $\sigma_{s,\zeta}\in\alpha_s(\mathfrak{S}(\mathcal{J}_{\lambda,s}(\zeta)))$.
There are linear characters ${\mu}_{g_{s,\zeta,j}}\in\Irr(\wt{L}_{Q_j^s}^+)$ such that $$\left( \prod_{j\in\mathcal{I}_{\lambda_s}(\zeta)} \wt{\lambda}_{Q_j^s}\right)^{g}=\left( \prod_{j\in\mathcal{I}_{\lambda_s}(\zeta)} (\wt{\lambda}_{Q_j^s})^{g_{s,\zeta,j}}\right)^{\sigma_{s,\zeta}}=\left( \prod_{j\in\mathcal{I}_{\lambda_s}(\zeta)} (\wt{\lambda}_{Q_j^s})\mu_{g_{s,\zeta,j}}\right)^{\sigma_{s,\zeta}}.$$
Therefore, as each $(z_{Q_{\sigma^{-1}(j)}^s}^+)^{\sigma_{s,\zeta}}= z_{Q_{j}^s}^+$, it follows that
$$\prod_{\mathcal{O}}\mu_g(z_{\mathcal{O}}^\circ)=\prod_{s=1}\left( \prod_{\zeta\in\mathcal{R}_s}\left(\prod_{j\in\mathcal{I}_{\lambda_s}(\zeta)} \mu_{g_{s,\zeta,j}} (z_{Q_{j}^s}^+)\right)\right).$$
While from the definition $$\nu_{\lambda}(g)=\prod_{s=1}^n\left( \prod_{\zeta\in\mathcal{R}_s} \left( \prod_{j\in \mathcal{I}_{\lambda_s}(\zeta)}\nu_{s,\zeta}(g_{s,\zeta,j})\right)\right).$$
Finally for each $j\in\mathcal{I}_{\lambda_s}(\zeta)$, by construction it can be seen that $\nu_{s,\zeta}(g_{s,\zeta,j})=\mu_{g_{s,\zeta,j}}(z_{Q_{j}^s}^+)$ as
$$
\begin{array}{ccl}
\nu_{s,\zeta}(g_{s,\zeta,j})=1 & \Leftrightarrow & g_{s,\zeta,j}\in (\mathcal{G}_{Q_j^s})_{\wt\zeta} \\
 & \Leftrightarrow & \mu_{g_{s,\zeta,j}}=1_{\wt{L}_{Q_j^s}}\\
  & \Leftrightarrow & \mu_{g_{s,\zeta,j}}(z_{Q_{j}^s}^+)=1
\end{array}
$$
Hence $\nu_\lambda(g)=\mu_g(z^+ x)$ and so $(W_d^\mathcal{I})_{\wt\lambda}={\rm ker}(\nu_{\lambda})$.
\end{proof}

\subsection{A criterion to verify condition~\ref{thm41iii}}
\indent

\begin{rem}\label{NormPreAlpha}
Assume $\lambda\in \Irr(L_{\mathcal{I}_{-1},\mathcal{I}})$ such that for each $\mathcal{O}$ a $\ol{w}$-orbit in $\mathcal{I}$ the character $\lambda_{\mathcal{O}}:={\rm Res}^{L_{\mathcal{I}_{-1},\mathcal{I}}}_{L_{\mathcal{O}}}(\lambda)\in \mathcal{R}_{\mathcal{O}}$.
In other words, for $s=|\mathcal{J}_\mathcal{O}|$ and $\alpha_s$ from Notation~\ref{IsomWs}, $\lambda_\mathcal{O} \circ \alpha_s\in \mathcal{R}_s$.
According to  Corollary~\ref{NormInWrOfWr},
$$
\NNN_{\mathcal{G}_s\wr \Symm_{t_s}}\left(\prod_{\zeta\in \mathcal{R}_s} (\mathcal{G}_{s})_\zeta \wr \Symm (\mathcal{J}_{\lambda,s}(\zeta))\right)=\left( Z_{\lambda_s\circ \alpha_s}\left(\prod_{\zeta\in \mathcal{R}_s} (\mathcal{G}_{s})_\zeta \wr \Symm (\mathcal{J}_{\lambda,s}(\zeta))\right)\right)\rtimes S_{\lambda_s\circ\alpha_s}
$$
where $Z_{\lambda_s\circ\alpha_s}:=\prod_{\zeta\in\mathcal{R}_s} \Delta_{\mathcal{J}_{\lambda,s}(\zeta)}\mathcal{G}_s$ and $$S_{\lambda\circ\alpha}:=\langle \tau_{\mathcal{J}_{\lambda,s}(\zeta), \mathcal{J}_{\lambda,s}(\zeta')}\mid |\mathcal{J}_{\lambda,s}(\zeta)|=|\mathcal{J}_{\lambda,s}(\zeta')| \text{ and } (\mathcal{G}_s)_{\zeta}= (\mathcal{G}_s)_{\zeta'} \rangle.$$
Set $Z_{\lambda\circ\alpha}:=\prod_{s=1}^n (Z_{\lambda_s\circ\alpha_s})$ and $S_{\lambda\circ\alpha}:=\prod_{s=1}^n (S_{\lambda_s\circ\alpha_s})$
\end{rem}

\begin{lm}\label{NWdWLam}\label{StructK}
Let $\lambda\in \Irr(L_{\mathcal{I}_{-1},\mathcal{I}})$ and assume that for each $\ol{w}$-orbit $\mathcal{O}\subseteq\mathcal{I}$ the character $\lambda_{\mathcal{O}}:={\rm Res}^{L_{\mathcal{I}_{-1},\mathcal{I}}}_{L_{\mathcal{O}}}(\lambda_s)\in \mathcal{R}_{\mathcal{O}}$.
Using the setup from Notation~\ref{NormPreAlpha}, set $Z_\lambda:=\alpha(Z_{\lambda\circ\alpha})$ and $S_\lambda:=\alpha(S_{\lambda\circ\alpha})$.
Then
$$\NNN_{W_d^{\mathcal{I}}}((W_d^\mathcal{I})_\lambda)=Z_\lambda\cdot (W_d^{\mathcal{I}})_{\lambda}\rtimes S_{\lambda}.$$
Additionally let $\wt{\lambda}\in \Irr( (\wt{L}_{\mathcal{I}_{-1},\mathcal{I}})_\lambda \mid \lambda)$ and take $\nu_\lambda\in \Irr((W_d^\mathcal{I})_\lambda)$ as defined in Notation~\ref{nulambda}.
Then
$$\NNN_{W_d^\mathcal{I}}((W_d^\mathcal{I})_\lambda,(W_d^\mathcal{I})_{\wt{\lambda}})=Z_\lambda\cdot (W_d^\mathcal{I})_\lambda\rtimes (S_{\lambda})_{\nu_{\lambda}}.$$
\end{lm}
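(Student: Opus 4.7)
The plan is to deduce the first identity from Corollary~\ref{NormInWrOfWr} applied via the isomorphism $\alpha$ of Notation~\ref{IsomWs}, and then to derive the second identity by using Proposition~\ref{WtilLamKerNu} to rewrite $(W_d^\mathcal{I})_{\wt\lambda}$ as the kernel of a linear character $\nu_\lambda$ of order at most two.

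First I would observe that Lemma~\ref{IntWonL} gives the decomposition
\[
\alpha^{-1}((W_d^\mathcal{I})_\lambda)=\prod_{s=1}^n\prod_{\zeta\in\mathcal{R}_s}(\mathcal{G}_s)_\zeta\wr \Symm(\mathcal{J}_{\lambda,s}(\zeta)),
\]
which is precisely of the form $\prod_s H_{\mathcal{I}'_s}$ appearing in Corollary~\ref{NormInWrOfWr}, with partition $\mathcal{I}'_s=\{\mathcal{J}_{\lambda,s}(\zeta)\mid\zeta\in\mathcal{R}_s\}\vdash\ul{t_s}$ and abelian subgroups $A_{\mathcal{J}_{\lambda,s}(\zeta)}=(\mathcal{G}_s)_\zeta$. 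Since the normaliser in the product $\prod_s(\mathcal{G}_s\wr\Symm_{t_s})$ is the product of the normalisers in each factor, applying Corollary~\ref{NormInWrOfWr} factor-by-factor and transporting the result back through $\alpha$ would yield the first identity.

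For the second identity I would use Proposition~\ref{WtilLamKerNu} to write $(W_d^\mathcal{I})_{\wt\lambda}=\ker(\nu_\lambda)$ for a linear character $\nu_\lambda$ of order dividing two. Since such a character is determined by its kernel, any $g\in \NNN_{W_d^\mathcal{I}}((W_d^\mathcal{I})_\lambda)$ normalises $\ker(\nu_\lambda)$ if and only if $\nu_\lambda^g=\nu_\lambda$. It will then suffice to compute the stabiliser of $\nu_\lambda$ in $Z_\lambda\cdot(W_d^\mathcal{I})_\lambda\rtimes S_\lambda$. The subgroup $(W_d^\mathcal{I})_\lambda$ fixes $\nu_\lambda$ automatically (inner automorphisms act trivially on its characters), and I would check that $Z_\lambda$ also fixes $\nu_\lambda$ by verifying that $Z_\lambda$ centralises $(W_d^\mathcal{I})_\lambda$: under $\alpha^{-1}$, elements of $Z_\lambda$ are diagonal tuples in the abelian base group $\mathcal{G}_s^{t_s}$ that are constant on each block $\mathcal{J}_{\lambda,s}(\zeta)$, so they commute both with the abelian part of the inertia group and with every permutation in $\Symm(\mathcal{J}_{\lambda,s}(\zeta))$. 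The stabiliser in $S_\lambda$ is then by definition $(S_\lambda)_{\nu_\lambda}$, giving the asserted equality.

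The hard part will be the bookkeeping to confirm that $Z_\lambda$ truly centralises the full inertia subgroup --- in particular the permutation factors, since $Z_\lambda$ lies only in the abelian base --- but this reduces immediately to the observation that a tuple constant on a block $\mathcal{J}_{\lambda,s}(\zeta)$ is fixed by $\Symm(\mathcal{J}_{\lambda,s}(\zeta))$, so no further computation is required beyond what is already packaged in Corollary~\ref{NormInWrOfWr}.
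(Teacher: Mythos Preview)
Your proposal is correct and follows essentially the same approach as the paper's proof: the first identity is obtained by transporting through $\alpha$ and applying Corollary~\ref{NormInWrOfWr} factorwise (the paper packages this application in Remark~\ref{NormPreAlpha}), and the second identity is obtained by invoking Proposition~\ref{WtilLamKerNu} together with the observation that $Z_\lambda$ centralises $(W_d^\mathcal{I})_\lambda$, reducing the question to the stabiliser of $\nu_\lambda$ in $S_\lambda$. Your justification that $Z_\lambda$ centralises the permutation factors is exactly the content of the phrase ``$Z_\lambda$ acts trivially on $(W_d^\mathcal{I})_\lambda$'' in the paper.
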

\begin{proof}
The structure of $\NNN_{W_d^{\mathcal{I}}}((W_d^\mathcal{I})_\lambda)$ follows from Remark~\ref{NormPreAlpha} as 
$$\alpha^{-1}\left( \NNN_{W_d^{\mathcal{I}}}((W_d^\mathcal{I})_\lambda)\right)=\prod_{s=1}^n\alpha_s^{-1}\left( \NNN_{W_d^{\mathcal{I}_s}}((W_d^{\mathcal{I}_s})_{\lambda_s})\right)= \prod_{s=1}^n\NNN_{\mathcal{G}_s\wr \mathfrak{S}_{t_s}}\left(\prod_{\zeta\in\mathcal{R}_s}  (\mathcal{G}_s)_\zeta\wr \mathfrak{S}(\mathcal{I}_{\lambda_s}(\zeta))\right).$$
As $Z_{\lambda}$ acts trivially on $(W_d^{\mathcal{I}})_{\lambda}$ and $(W_d^\mathcal{I})_{\wt\lambda}\lhd (W_d^\mathcal{I})_\lambda$, it follows that 
$$
\NNN_{W_d^{\mathcal{I}}}((W_d^\mathcal{I})_\lambda,(W_d^{\mathcal{I}})_{\wt{\lambda}})=Z_\lambda\cdot  (W_d^{\mathcal{I}})_{\lambda}\rtimes \NNN_{S_\lambda}((W_d^{\mathcal{I}})_{\wt{\lambda}}).
$$
The result now follows from Proposition~\ref{WtilLamKerNu} as $(W_d^{\mathcal{I}})_{\wt{\lambda}}={\rm ker}(\nu_\lambda)$ for $\nu_\lambda\in{\rm Lin}((W_d^{\mathcal{I}})_{\lambda})$ of order at most two and so $\NNN_{S_\lambda}({\rm ker}(\nu_\lambda))=(S_\lambda)_{\nu_\lambda}$.
\end{proof}

\begin{prop}\label{KInva}
Let $\lambda\in \Irr(L_{\mathcal{I}_{-1},\mathcal{I}})$ and $\xi_0\in\Irr((W_d^\mathcal{I})_{\wt\lambda})$.
Assume for each $1\leq s\leq n$ and any pair of distinct characters $\zeta_1\ne \zeta_2\in \mathcal{R}_{s}^1$ whenever $|\mathcal{I}_{\lambda,s}(\zeta_1)|=|\mathcal{I}_{\lambda,s}(\zeta_2)|\ne 0$ then $(\mathcal{G}_s)_{\zeta_1}\ne (\mathcal{G}_s)_{\zeta_2}$.
Then every $\xi\in \Irr((W_d^\mathcal{I})_\lambda\mid \xi_0)$ is $K(\lambda)_{\xi_0}$-invariant where $K(\lambda)=\NNN_{W_d^\mathcal{I}}((W_d^\mathcal{I})_{\wt\lambda},(W_d^\mathcal{I})_{\lambda})$.
\end{prop}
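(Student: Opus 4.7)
The plan is to reduce to the unique nontrivial configuration and then exploit the structure of $K(\lambda)$ together with the hypothesis to rule out the problematic swap of extensions.

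First I would dispatch the easy configurations. By Proposition~\ref{WtilLamKerNu}, $(W_d^\mathcal{I})_{\wt\lambda}=\ker(\nu_\lambda)$ has index at most $2$ in $(W_d^\mathcal{I})_\lambda$. If $\nu_\lambda$ is trivial then $\xi=\xi_0$ is the unique element of $\Irr((W_d^\mathcal{I})_\lambda\mid\xi_0)$, and if $\xi_0$ is not $(W_d^\mathcal{I})_\lambda$-invariant then $\Ind_{(W_d^\mathcal{I})_{\wt\lambda}}^{(W_d^\mathcal{I})_\lambda}(\xi_0)$ is the unique element of $\Irr((W_d^\mathcal{I})_\lambda\mid\xi_0)$; both are automatically $K(\lambda)_{\xi_0}$-invariant. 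So the only case demanding work is $o(\nu_\lambda)=2$ with $\xi_0$ invariant, in which there are precisely two extensions $\xi$ and $\xi\nu_\lambda$ and I must show that no $\sigma\in K(\lambda)_{\xi_0}$ interchanges them.

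Next I would use the decomposition $K(\lambda)=Z_\lambda(W_d^\mathcal{I})_\lambda\rtimes (S_\lambda)_{\nu_\lambda}$ from Lemma~\ref{StructK}. The factor $(W_d^\mathcal{I})_\lambda$ acts on $\Irr((W_d^\mathcal{I})_\lambda)$ by inner automorphisms, and $Z_\lambda$ centralises $(W_d^\mathcal{I})_\lambda$ by the proof of Corollary~\ref{NormInWrOfWr}; both actions on characters are trivial. Hence it suffices to show $\xi^\sigma=\xi$ for every $\sigma\in (S_\lambda)_{\nu_\lambda}\cap K(\lambda)_{\xi_0}$.

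The key step — the main obstacle — is to translate the hypothesis into the statement that $(S_\lambda)_{\nu_\lambda}$ commutes with every $\mathcal{R}_s^1$-component $(W_d^{\mathcal{I}_s})_{\lambda_s,\zeta}$ (with $\mathcal{J}_{\lambda,s}(\zeta)\neq\emptyset$). I would argue this by examining the generating transpositions $\tau_{\mathcal{J}_{\lambda,s}(\zeta_1),\mathcal{J}_{\lambda,s}(\zeta_2)}$ of $S_\lambda$ and using the component-wise description of $\nu_\lambda$ in Notation~\ref{nulambda} to check when such a transposition fixes $\nu_\lambda$. In the mixed case $\zeta_1\in \mathcal{R}_s^1$, $\zeta_2\in \mathcal{R}_s^2$ the swap carries the nontrivial character $\nu_{\lambda,s,\zeta_1}$ into the trivial slot, so $\nu_\lambda$ is not fixed; in the case $\zeta_1,\zeta_2\in\mathcal{R}_s^1$ with both $\mathcal{J}_{\lambda,s}(\zeta_i)\neq\emptyset$ the hypothesis $(\mathcal{G}_s)_{\zeta_1}\neq(\mathcal{G}_s)_{\zeta_2}$ rules out the existence of the transposition in $S_\lambda$ altogether. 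So every nontrivial element of $(S_\lambda)_{\nu_\lambda}$ moves only $\mathcal{R}_s^2$-indices, and hence commutes with every $\mathcal{R}_s^1$-factor.

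Finally, since $\nu_\lambda\neq 1$ there exist $s$ and $\zeta\in\mathcal{R}_s^1$ with $\mathcal{J}_{\lambda,s}(\zeta)\neq\emptyset$. Restricting to the factor $(W_d^{\mathcal{I}_s})_{\lambda_s,\zeta}$, on which $\sigma$ acts trivially, gives
\[
\xi^\sigma\big|_{(W_d^{\mathcal{I}_s})_{\lambda_s,\zeta}}=\xi\big|_{(W_d^{\mathcal{I}_s})_{\lambda_s,\zeta}},
\qquad
(\xi\nu_\lambda)\big|_{(W_d^{\mathcal{I}_s})_{\lambda_s,\zeta}}=\xi\big|_{(W_d^{\mathcal{I}_s})_{\lambda_s,\zeta}}\cdot\nu_{\lambda,s,\zeta},
\]
and these two restrictions differ because $\nu_{\lambda,s,\zeta}\neq 1$. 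Combined with $\xi^\sigma\in\{\xi,\xi\nu_\lambda\}$ (forced by $\xi_0^\sigma=\xi_0$), this gives $\xi^\sigma=\xi$, completing the proof.
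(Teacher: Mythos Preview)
Your route is the paper's: reduce to $o(\nu_\lambda)=2$ with $\xi_0$ extending, use $K(\lambda)=Z_\lambda(W_d^\mathcal{I})_\lambda\rtimes(S_\lambda)_{\nu_\lambda}$ from Lemma~\ref{StructK}, observe that $Z_\lambda(W_d^\mathcal{I})_\lambda$ acts trivially on $\Irr((W_d^\mathcal{I})_\lambda)$, and exploit the hypothesis to show $(S_\lambda)_{\nu_\lambda}$ centralises the product $H_1$ of all $\mathcal{R}_s^1$-factors. The paper then finishes by writing $\xi=\xi_1\times\xi_2$ and $\xi_0=\xi_{0,1}\times\xi_2$ along the decomposition $H_1\times H_2$ and noting that the stabilisers of $\xi$ and of $\xi_0$ in $(S_\lambda)_{\nu_\lambda}$ both coincide with the stabiliser of $\xi_2$.

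Your final step, however, has a genuine gap. You restrict to a \emph{single} factor $A=(W_d^{\mathcal{I}_s})_{\lambda_s,\zeta}$ and claim $\xi|_A\ne(\xi\nu_\lambda)|_A$ ``because $\nu_{\lambda,s,\zeta}\ne1$''. But $\xi|_A$ is a scalar multiple of the irreducible tensor factor $\xi_{s,\zeta}\in\Irr(A)$, and it can happen that $\xi_{s,\zeta}\nu_{\lambda,s,\zeta}=\xi_{s,\zeta}$: this occurs precisely when $\xi_{s,\zeta}$ is induced from $\ker\nu_{\lambda,s,\zeta}$, as for the $2$-dimensional irreducible of $C_2\wr\Symm_2$. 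One checks that such a collapse at one factor is compatible with $\xi|_{\ker\nu_\lambda}=\xi_0$ being irreducible (provided some other $\mathcal{R}_s^1$-factor does not collapse), so your displayed inequality is not justified. The fix is immediate and is what the paper does: since you have already established that $(S_\lambda)_{\nu_\lambda}$ centralises \emph{all} of $H_1$, compare $H_1$-components instead of one factor. From $\xi\ne\xi\nu_\lambda$ together with $\nu_\lambda|_{H_2}=1$ you get $\xi_1\ne\xi_1\cdot\nu_\lambda|_{H_1}$, while $\xi^\sigma$ has $H_1$-component $\xi_1$; hence $\xi^\sigma\ne\xi\nu_\lambda$, and so $\xi^\sigma=\xi$.

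A smaller point on your key step: analysing individual transpositions and then asserting that every element of $(S_\lambda)_{\nu_\lambda}$ moves only $\mathcal{R}_s^2$-indices skips a beat, since a subgroup of a symmetric group need not be generated by the transpositions it contains. The direct argument (implicit in your case analysis) is cleaner: if $\sigma\in(S_\lambda)_{\nu_\lambda}$ moved some $\zeta\in\mathcal{R}_s^1$ with $\mathcal{J}_{\lambda,s}(\zeta)\ne\emptyset$ to $\zeta'\ne\zeta$, then $\zeta'$ has the same size and $\mathcal{G}_s$-stabiliser and (since $\sigma$ fixes $\nu_\lambda$) a nontrivial $\nu$-factor, so $\zeta'\in\mathcal{R}_s^1$; this contradicts the hypothesis.
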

\begin{proof}
After suitable $W_d^\mathcal{I}$-conjugation, it can be assumed that for each $\ol{w}$-orbit $\mathcal{O}\subset\mathcal{I}$ the character $\lambda_\mathcal{O}:={\rm Res}_{L_{\mathcal{O}}}^{L_{\mathcal{I}_{-1},\mathcal{I}}}(\lambda)\in \mathcal{R}_\mathcal{O}$.
Moreover it can also be assumed that $\nu_\lambda=\nu_\lambda'$,  for $\nu_\lambda\in \Irr((W_d^\mathcal{I})_\lambda)$ from Notation~\ref{nulambda}, otherwise $\nu_\lambda$ is trivial and so $(W_d^\mathcal{I})_{\wt\lambda}=(W_d^\mathcal{I})_{\lambda}$ by Proposition~\ref{WtilLamKerNu}.
Thus, from Corollary~\ref{IntTilLIndex2}, it follows that $(W_d^\mathcal{I})_\lambda/(W_d^\mathcal{I})_{\wt{\lambda}}$ has order  two. 
Hence ${\rm Res}_{(W_d^\mathcal{I})_{\wt{\lambda}}}^{(W_d^\mathcal{I})_\lambda}(\xi)=\xi_0$ or $\xi={\rm Ind}_{(W_d^\mathcal{I})_{\wt{\lambda}}}^{(W_d^\mathcal{I})_\lambda}(\xi_0)$.
In the latter case it follows that $\xi $ is $K(\lambda)_{\xi_0}$-stable.
Therefore it can additionally be assumed that  ${\rm Res}_{(W_d^\mathcal{I})_{\wt{\lambda}}}^{(W_d^\mathcal{I})_\lambda}(\xi)=\xi_0$, so that by Lemma~\ref{StructK}, $K(\lambda)_{\xi_0}=(Z_\lambda (\cdot W_d^\mathcal{I})_\lambda)\rtimes ((S_{\lambda})_{\nu_\lambda'})_{\xi_0}$.
It suffices to prove in this case that $((S_{\lambda})_{\nu_\lambda'})_{\xi_0}=((S_{\lambda})_{\nu_\lambda'})_{\xi}$.

Set $\nu_{\lambda\circ\alpha}'=\nu_\lambda'\circ \alpha\in \Irr(\alpha^{-1}((W_d^\mathcal{I})_\lambda))$.
Then the isomorphism $\alpha$ implies the statement $((S_{\lambda})_{\nu_\lambda'})_{\xi_0}=((S_{\lambda})_{\nu_\lambda'})_{\xi}$ is equivalent to $((S_{\lambda\circ \alpha})_{\nu_{\lambda\circ \alpha}'})_{\xi_0\circ \alpha}=((S_{\lambda\circ\alpha})_{\nu_{\lambda\circ\alpha}'})_{\xi\circ\alpha}$.
Note that $\alpha^{-1}((W_d^\mathcal{I})_\lambda)=H_1\times H_2$, where $H_i:=\prod_{s=1}^n\left( \prod_{\zeta\in \mathcal{R}_s^i}(\mathcal{G}_s)_\zeta\wr \Symm(\mathcal{J}_{\lambda,s}(\zeta))\right)$.
For the character $\nu_{\lambda\circ\alpha,1}':={\rm Res}_{H_1}^{\alpha^{-1}((W_d^\mathcal{I})_\lambda)}(\nu_{\lambda\circ\alpha}')$ it follows that $\nu_{\lambda\circ\alpha}'=\nu_{\lambda\circ\alpha,1}'\times 1_{H_2}$  as the factor $\nu_{\lambda,s,\zeta}\circ\alpha_s$ of $\nu_{\lambda\circ\alpha}'$ is non-trivial if and only if $\zeta\in\mathcal{R}_{s}^{1}$ with $|\mathcal{J}_{\lambda,s}(\zeta)|\ne 0$.
Hence ${\rm ker}(\nu_{\lambda\circ\alpha}')={\rm ker}(\nu_{\lambda\circ\alpha,1}')\times H_2$ and thus there are characters $\xi_{0,1}\in \Irr({\rm ker}(\nu_{\lambda\circ\alpha,1}))$, $\xi_{1}\in \Irr(H_1)$ and $\xi_2\in \Irr(H_2)$ such that $\xi_0\circ \alpha=\xi_{0,1}\times \xi_2$ and $\xi\circ\alpha=\xi_1\times \xi_2$.

By Remark~\ref{NWdWLam}, $ S_{\lambda_s\circ\alpha_s}=\langle \tau_{\mathcal{J}_{\lambda,s}(\zeta), \mathcal{J}_{\lambda,s}(\zeta')}\mid |\mathcal{J}_{\lambda,s}(\zeta)|=|\mathcal{J}_{\lambda,s}(\zeta')| \text{ and } (\mathcal{G}_s)_{\zeta}= (\mathcal{G}_s)_{\zeta'} \rangle$.
Hence for $\mathcal{J}_{\lambda,s}(\mathcal{R}_{s}^{1}):=\{\mathcal{J}_{\lambda,s}(\zeta)\mid \zeta\in\mathcal{R}_{s}^{1}\}$, it follows that 
$$(S_{\lambda\circ\alpha})_{\nu_{\lambda\circ\alpha}'}=\prod_{s=1}\{\sigma_s\in S_{\lambda_s\circ\alpha_s}\mid \sigma_s\left( \mathcal{J}_{\lambda,s}(\mathcal{R}_{s}^{1})\right)=\mathcal{J}_{\lambda,s}(\mathcal{R}_{s}^{1})\}.$$
The assumption that for any pair of distinct characters $\zeta_1\ne \zeta_2\in \mathcal{R}_{s}^1$ either $|\mathcal{J}_{\lambda,s}(\zeta_1)|\ne |\mathcal{J}_{\lambda,s}(\zeta_2)|$ or $(\mathcal{G}_s)_{\zeta_1}\ne (\mathcal{G}_s)_{\zeta_2}$ implies that the group $(S_{\lambda\circ\alpha})_{\nu_{\lambda\circ\alpha}'}$ acts trivially on $H_1$ and thus also on ${\rm ker}(\nu_{\lambda\circ\alpha,1}')$.
Therefore $((S_{\lambda\circ \alpha})_{\nu_{\lambda\circ \alpha}'})_{\xi_0\circ \alpha}=((S_{\lambda\circ \alpha})_{\nu_{\lambda\circ \alpha}'})_{\xi_{0,2}}=((S_{\lambda\circ\alpha})_{\nu_{\lambda\circ\alpha}'})_{\xi\circ\alpha}((S_{\lambda\circ\alpha})_{\nu_{\lambda\circ\alpha}'})_{\xi\circ\alpha}$ completing the proof.
\end{proof}

\subsection{\bf The proof of Theorem~\ref{thmA}}
\indent

\begin{proof}[Proof of Theorem~\ref{thmA}]
It suffices to prove the conditions of Theorem~\ref{MainThmReq} with respect to the Sylow twist $vF$ from Section~\ref{vwSylTwist}.
As explained in Section~\ref{RootSysdLevi}, up to conjugation, it can be assumed that $\bT\leq \bL$ and the root system $\Phi_{\bL}$ coincides with $\Phi_{\mathcal{I}_{-1},\mathcal{I}}$ where $(\mathcal{I}_{-1},\mathcal{I})$ satisfies Remark~\ref{PartLabLevi} with respect to $w=\rho(v)\in\sgnSymm_n$. 
Furthermore by Lemma~\ref{TwistTwist} it suffices to verify the conditions for $\bL_{\mathcal{I}_{-1},\mathcal{I}}$ with fixed points taken with respect to $v_\mathcal{I}F$ defined in Section~\ref{vI}.
Corollary~\ref{IntTilLNhat} and Proposition~\ref{NhatExtMap} prove that conditions~\ref{thm41i} and~\ref{thm41ii} holds for $\bL_{\mathcal{I}_{-1},\mathcal{I}}^{v_\mathcal{I}F}$.
Thus to prove Theorem~\ref{thmA} it remains to prove that condition~\ref{thm41iii} holds for any $d$-cuspidal character $\lambda\in \Irr (L_{\mathcal{I}_{-1},\mathcal{I}})$.

Let $\mathcal{O}$ be a $\ol{w}$-orbit in $\mathcal{I}$ and set $\lambda_\mathcal{O}:={\rm Res}_{L_\mathcal{O}}^{L_{\mathcal{I}_{-1},\mathcal{I}}}(\lambda)$.
As in \cite[Ex. 3.5.29]{GeckMal}, the character $\lambda_\mathcal{O}$ is a $d$-cuspidal character of $L_\mathcal{O}\cong {\rm GL}_s^\epsilon(q^{d_0})$ which when considered as a group over $\mathbb{F}_q^{d_0}$ becomes a $d/d_0$-cuspidal character.
Assume $s\geq 2$ and that $(\mathcal{G}_\mathcal{O})_{\lambda_\mathcal{O}}$ contains the unique involution of the cyclic group $\mathcal{G}_\mathcal{O}$.
Then Proposition~\ref{CuspTypeA} implies $Z(L_\mathcal{O})\leq {\rm ker}(\lambda_\mathcal{O})$ and hence $\lambda_\mathcal{O}$ lies above the trivial character of $Z(L_\mathcal{O})$. 
Thus for each $\zeta\in \mathcal{R}_s^1$ the set $\mathcal{J}_{\lambda,s}(\zeta)$ is empty.
Additionally, by Lemma~\ref{GStabCharZ}, $\mathcal{R}_1^1$ contains a unique character, the one of order $2$.
Thus $\lambda$ satisfies the requirements of Proposition~\ref{KInva}, which proves that condition~\ref{thm41iii} holds with respect to $\lambda$.
\end{proof}

\begin{rem}
Using the proof above, the condition of Theorem~\ref{thmA} holds if $d$-cuspidal character is replaced by any character satisfying the assumptions of Proposition~\ref{KInva}. 
Additionally, as in the proof of Proposition~\ref{KInva}, it will also hold whenever $(\wt{L}_{\mathcal{I}_{-1},\mathcal{I}})_\lambda\ne   \wt{L}_{\mathcal{I}_{-1},\mathcal{I}}$.
\end{rem}

\bibliographystyle{plain}
\bibliography{bibfile}

\begin{thebibliography}{10}

\bibitem{BMGenHC}
M.~Brou\'e and G.~Malle.
\newblock Generalized {H}arish-{C}handra theory.
\newblock In {\em Representations of reductive groups}, volume~16 of {\em Publ.
  Newton Inst.}, pages 85--103. Cambridge Univ. Press, Cambridge, 1998.

\bibitem{BrRuAM2}
J.~Brough and L.~Ruhstorfer.
\newblock On the {A}lperin--{M}c{K}ay conjecture for 2-blocks of maximal
  defect.
\newblock {\em J. LMS}, To appear, 2022.

\bibitem{BrSpAM}
J.~Brough and B.~Sp\"{a}th.
\newblock On the {A}lperin--{M}c{K}ay conjecture for simple groups of type {A}.
\newblock {\em J. Algebra}, 558:221--259, 2020.

\bibitem{BrSpAW}
J.~Brough and B.~Sp\"{a}th.
\newblock A criterion for the inductive {A}lperin weight condition.
\newblock {\em Bull. LMS}, To appear, 2022.

\bibitem{CabEnRedGp}
M.~Cabanes and M.~Enguehard.
\newblock {\em Representation theory of finite reductive groups}, volume~1 of
  {\em New Mathematical Monographs}.
\newblock Cambridge University Press, Cambridge, 2004.

\bibitem{CabASFSp}
M.~Cabanes, A.~A.~Schaeffer Fry, and B.~Sp\"{a}th.
\newblock On the inductive {A}lperin-{M}c{K}ay conditions in the maximally
  split case.
\newblock {\em Math. Z.}, 299(3-4):2419--2441, 2021.

\bibitem{CabSpAMTypeA}
M.~Cabanes and B.~Sp{\"a}th.
\newblock On the inductive {A}lperin--{M}c{K}ay condition for simple groups of
  type {${A}$}.
\newblock {\em J. Algebra}, 442:104--123, 2015.

\bibitem{CabSpMcKTypC}
M.~Cabanes and B.~Sp\"{a}th.
\newblock Inductive {M}c{K}ay condition for finite simple groups of type
  {${C}$}.
\newblock {\em Represent. Theory}, 21:61--81, 2017.

\bibitem{GeckMal}
M.~Geck and G.~Malle.
\newblock {\em The character theory of finite groups of {L}ie type}, volume 187
  of {\em Cambridge Studies in Advanced Mathematics}.
\newblock Cambridge University Press, Cambridge, 2020.

\bibitem{GLS3}
D.~Gorenstein, R.~Lyons, and R.~Solomon.
\newblock {\em The classification of the finite simple groups. {N}umber 3.
  {P}art {I}. {C}hapter {A}}, volume~40 of {\em Mathematical Surveys and
  Monographs}.
\newblock American Mathematical Society, Providence, RI, 1998.

\bibitem{IMNRedMcKay}
I.~M. Isaacs, G.~Malle, and G.~Navarro.
\newblock A reduction theorem for the {M}c{K}ay conjecture.
\newblock {\em Invent. Math.}, 170(1):33--101, 2007.

\bibitem{BAWMa}
G.~Malle.
\newblock On the inductive {A}lperin--{M}c{K}ay and {A}lperin weight conjecture
  for groups with abelian {S}ylow subgroups.
\newblock {\em J. Algebra}, 397:190--208, 2014.

\bibitem{MalSpOddDeg}
G.~Malle and B.~Sp\"{a}th.
\newblock Characters of odd degree.
\newblock {\em Ann. of Math. (2)}, 184(3):869--908, 2016.

\bibitem{TestMal}
G.~Malle and D.~Testerman.
\newblock {\em Linear algebraic groups and finite groups of {L}ie type}, volume
  133 of {\em Cambridge Studies in Advanced Mathematics}.
\newblock Cambridge University Press, Cambridge, 2011.

\bibitem{NavSpBrHe0}
G.~Navarro and B.~Sp{\"a}th.
\newblock On {B}rauer's height zero conjecture.
\newblock {\em J. Eur. Math. Soc.}, 16(4):695--747, 2014.

\bibitem{RoCTC}
D.~Rossi.
\newblock Character {T}riple {C}onjecture. {T}owards the inductive condition
  for {D}ade's conjecture for groups of {L}ie type.
\newblock {\em Dissertation}, Wuppertal, 2022.

\bibitem{RuQIAM}
L.~Ruhstorfer.
\newblock {Q}uasi-isolated blocks and the {A}lperin--{M}c{K}ay conjecture.
\newblock {\em arXiv:2103.06394}, https://arxiv.org/abs/2103.06394, 2021.

\bibitem{SpSylowTori2}
B.~Sp{\"a}th.
\newblock Sylow {$d$}-tori of classical groups and the {M}c{K}ay conjecture.
  {II}.
\newblock {\em J. Algebra}, 323(9):2494--2509, 2010.

\bibitem{AMSp}
B.~Sp{\"a}th.
\newblock A reduction theorem for the {A}lperin--{M}c{K}ay conjecture.
\newblock {\em J. Reine Angew. Math.}, 680:153--189, 2013.

\bibitem{SpTyDI}
B.~Sp{\"a}th.
\newblock Extensions of characters in type {D} and the inductive {M}c{K}ay
  condition, {I}.
\newblock {\em arXiv:2109.08230}, https://arxiv.org/abs/2109.08230, 2021.

\bibitem{SpringReg}
T.~A. Springer.
\newblock Regular elements of finite reflection groups.
\newblock {\em Invent. Math.}, 25:159--198, 1974.

\end{thebibliography}

\end{document}